\documentclass[11pt,psamsfonts]{amsart}
\usepackage{palatino}
\usepackage{euler,color}
\usepackage[normalem]{ulem}
\usepackage{enumitem,comment}
\usepackage{manfnt} 
\usepackage{soul}

\setlength{\topmargin}{-1pc}
\setlength{\textheight}{225mm}
\setlength{\oddsidemargin}{.05cm}
\setlength{\evensidemargin}{.05cm}
\setlength{\textwidth}{165mm}
\pagestyle{headings}
\usepackage[all]{xy}
\usepackage{amssymb,amscd,amsmath,amsthm,dsfont}
\usepackage{tikz-cd}
\usepackage{mathabx,epsfig}

\chardef\bslash=`\\ 





\hfuzz1pc 


\newtheorem{theorem}[equation]{Theorem}
\newtheorem{cor}[equation]{Corollary}
\newtheorem{lemma}[equation]{Lemma}
\newtheorem{proposition}[equation]{Proposition}
\newtheorem{prop}[equation]{Proposition}

\theoremstyle{remark}
\newtheorem{remark}[equation]{Remark}

\theoremstyle{definition}
\newtheorem{definition}[equation]{Definition}

\numberwithin{equation}{section}

\newcommand{\thmref}[1]{Theorem~\ref{#1}}
\newcommand{\secref}[1]{Section~\ref{#1}}
\newcommand{\proref}[1]{Proposition~\ref{#1}}
\newcommand{\lemref}[1]{Lemma~\ref{#1}}
\newcommand{\corref}[1]{Corollary~\ref{#1}}
\newcommand{\remref}[1]{Remark~\ref{#1}}

\newcommand{\defref}[1]{Definition~\ref{#1}}

\newcommand{\clsp}{\overline{\operatorname{span}}}
\newcommand{\lsp}{\operatorname{span}}

 \newcommand{\Aut}{\operatorname{Aut}}

\newcommand{\cq}{\mathcal{C}_{\Q}}

\newcommand{\qd}{\widehat{\Q}_+^*}

\newcommand{\nxx}[1]{{\mathbb N^\times}\! \ltimes (\mathbb Z/ #1 \mathbb Z)}
\newcommand{\nxxf}[1]{{\mathbb N^\times}\! \ltimes ({\tfrac{1}{#1}}\Z/\Z)}
\newcommand{\nxxqz}{{\mathbb N^\times}\! \ltimes (\mathbb Q/ \mathbb Z)}
\newcommand{\tnxxqz}{\TT(\nx\!\ltimes (\mathbb{Q}/\mathbb{Z}))}

\newcommand{\tnxx}[1]{\TT(\nx\!\ltimes (\mathbb{Z}/#1\mathbb{Z}))}
\newcommand{\tnxxf}[1]{\TT(\nx\ltimes (\tfrac{1}{#1}\Z/\Z))}

\newcommand{\nx}{\mathbb N^{\times}}
\newcommand{\inv}{^{-1}}

\newcommand{\ve}{\varepsilon}

\newcommand{\Gal}{\mathrm{Gal}}

\newcommand{\N}{\mathbb N}

\newcommand{\tnxnx}{\mathcal T(\nxnx)}
\newcommand{\tzxnx}{\mathcal T(\zxnx)}

\newcommand{\Z}{\mathbb Z}
\newcommand{\Q}{\mathbb Q} 
\newcommand{\qx}{\mathbb Q^\times_+}

%
%
%

\newcommand{\C}{\mathbb C}
\newcommand{\R}{\mathbb R}
\newcommand{\T}{\mathbb T}

\newcommand{\TT}{\mathcal T}
\newcommand{\HH}{\mathcal H}
\newcommand{\F}{\mathcal F}

\newcommand{\QQ}{\mathcal{Q}}

\newcommand{\mfc}{\mathfrak{C}}

\newcommand{\primes}{\mathcal P}

\def\lcm{\operatorname{lcm}}

\newcommand{\ord}{\mathrm{ord}}
\newcommand{\supp}{\operatorname{supp}}
\newcommand{\subg}{\operatorname{\mathrm {Subg}}}

\newcommand{\euler}{\varphi}

\def\eps{\varepsilon}

\newcommand{\wt}[1]{\widetilde{#1}}
\newcommand{\im}{\mathrm{im}}

\newcommand{\nxxn}{{\mathbb N^\times}\! \ltimes \mathbb N}
\newcommand{\nxxz}{{\mathbb N^\times}\! \ltimes \mathbb Z}
\newcommand{\zxnx}{\mathbb{Z}\rtimes \nx}

\newcommand{\qxxq}{{\mathbb Q_+^\times}\!\! \ltimes \mathbb Q}
\newcommand{\nxnx}{{\mathbb N \rtimes \mathbb N^\times}}

\def\tnxxn{\TT(\nxxn)}
\def\tnxxz{\TT(\nxxz)}
\def\bcmu{\upsilon}

\def\mfd{\mathfrak D}
\def\mfb{\mathfrak B}
\def\Spec{\operatorname{Spec}}
\def\zed{\mathfrak z}
\newcommand{\Ad}{\operatorname{Ad}}

\newcommand{\<}[1]{\left<#1\right>}

\newcommand{\KMS}[1]{\text{$\mathrm{KMS}_{#1}$}}
\def\kmsb{$\mathrm{KMS}_\beta$ }
\def\kms1{$\mathrm{KMS}_1$}
\newcommand{\mt}{\mathcal{M}(\mathbb{T})}
\newcommand{\mz}[1]{\mathcal{M}(Z_{#1})}
\def\M{\mathcal M}

\newcommand{\III}[1]{\text{$\mathrm{III}_{#1}$}}
\newcommand{\wh}[1]{\widehat{#1}}

\RequirePackage{color}

\def\tcb{\textcolor{blue}}

\newcounter{listpause}

\title[Supercritical phase transition]{Supercritical phase transition on the  Toeplitz algebra of $\nxxz$}
\date{8 October 2025}
\thanks{This research was supported by the Natural Sciences and Engineering Research Council of Canada.}
\keywords{Toeplitz algebra; Boundary quotient; KMS states}

\author[Marcelo~Laca]{Marcelo Laca}
\address{ Department of Mathematics and Statistics, University of
Victoria, Victoria, BC V8W 3P4, Canada}
\email{laca@uvic.ca}
\author[Tyler~Schulz]{Tyler Schulz}
\email{tschulz@uvic.ca}


\begin{document}



\begin{abstract} 
We study the  high-temperature equilibrium for the C*-algebra $\tnxxz$ recently considered by an Huef, Laca and Raeburn.  We show that the simplex of \kmsb states at each inverse temperature $\beta$ in the critical interval $(0,1]$ is a Bauer simplex whose space of extreme points is homeomorphic to  $\N \sqcup\{\infty\}$. This is in contrast to the uniqueness of equilibrium at high temperature observed in previously considered systems arising from number theory. We also show that quotients of our system exhibit spontaneous symmetry-breaking by finite cyclotomic Galois groups and establish their connection to the Bost-Connes phase transition.
\end{abstract}

\dedicatory{To the memory of Iain Raeburn}
\maketitle
\section{Introduction}\label{sec:introduction}

The study of equilibrium states of C*-dynamical systems from number theory has been an increasingly active area of research since the seminal paper \cite{bos-con} in which  Bost and Connes exhibit a phase transition with spontaneous symmetry-breaking on a noncommutative Hecke C*-algebra. Their construction has been generalized in several ways, to  semigroup crossed products, to more general Hecke algebras, to groupoid C*-algebras,  to Toeplitz algebras of $ax+b$ monoids of algebraic integers, and to  C*-algebras associated to $K$-lattices \cite{bcalg,har-lei,LvF,LLN,ha-pau,CM2006,CMR,LR-advmath,CDL}. In a vast majority of the existing constructions there is a critical value $T_c = 1/\beta_c$ of the temperature above which the simplex of \kmsb states consists of a single point, but below which the nontrivial structure of the simplex  sheds light on the original structure used in the construction. Notably, for Bost--Connes type systems associated to number fields, the extremal equilibrium states at low temperature carry a free transitive action of the Galois group of the maximal abelian extension of the field, pointing to a tantalizing connection with concrete class field theory \cite{CMR,LNT,yal}.

Developed along similar lines in \cite{cun2,LR-advmath,CDL} are the Toeplitz-type systems for $ax+b$ semigroups of algebraic integers, which have played an important role  in the study of C*-algebras of general semigroups \cite{XL}. Furthermore, the phase transition observed at low temperature for these systems has brought about an important characterization of KMS states (and, in particular, traces) in terms of orbits and isotropy groups for groupoid C*-algebras \cite{nes2013}.
Another interesting avenue of research motivated by this is the study of phase transition of a system at low temperatures, and the analysis of the Toeplitz-type  systems suggest  that this `crystallization' process is related to the $K$-theory of the C*-algebra \cite{LNY}.

In recent work \cite{aHLR21}, an Huef, Laca, and Raeburn studied the structure of the Toeplitz C*-algebra $\tnxxn$ generated by the left regular representation of $\nxxn$ on $\ell^2(\nxxn)$. Here $\nxxn$ denotes the semidirect product of the nonzero natural numbers $\nx$ acting by multiplication on $\N$, where the operation is  $(a,m) (b,n) = (ab,bm+n)$ for $a,b \in \nx$ and $m,n \in \N$. They showed that $\tnxxn$ has a natural dynamics and that for large inverse temperatures ($\beta\in (1,\infty)$) the \kmsb states of the resulting Toeplitz system correspond to probability measures on the unit circle. Intriguingly, they pointed out that there are more than one \kmsb states at the critical inverse temperature $\beta =1$  \cite[Examples 9.1--9.3]{aHLR21}. 
This unprecedented high-temperature phase transition motivates the present work, in which we advance the study of equilibrium for the Toeplitz system of $\nxxn$ by describing the simplex of \kmsb states in the supercritical temperature range $T= \beta\inv \geq 1$, that is, for inverse temperatures $\beta \in [0,1]$, see \thmref{thm:main} below.

We choose to focus on the monoid $\nxxz$ from the onset because all \kmsb states of $\tnxxn$ factor through a surjective homomorphism to $\tnxxz$ (cf. \remref{rem:bound-quot}). The C*-algebra $\tnxxz$ is generated by a unitary $U$ and isometries $V_a$, $a\in \nx$ acting on $\ell^2(\nxxz)$ by
$$U\ve_{(b,n)} = \ve_{(b, n+b)},\qquad V_a \ve_{(b,n)} = \ve_{(ab,n)}.$$
We'll see  in \proref{pro:presentationtnxxz} that  the elements of the form $V_a U^m V_b^*$ span a dense *-subalgebra, so the dynamics and the \kmsb states are determined by their values on these elements.

Our formulas for the evaluation of \kmsb states are expressed in terms of elementary functions from number theory.
Recall that the Euler totient function $\euler$ counts the numbers between $1$ and a given positive integer $n$ that are relatively prime to $n$; equivalently, $\euler(n)$ is the order of the group $(\Z/n\Z)^*$ of invertible elements in the ring $\Z/n\Z$.   In terms of the prime factors of $n$,
$$\euler(n)  = n\prod_{p|n} (1-p^{-1}).$$
It will be convenient for us to introduce a {\em generalized totient function} $\euler_\beta:\nx\rightarrow \R$ that includes an additional inverse temperature parameter $\beta \geq 0$ and is given by
\[
\euler_\beta(n) := n^\beta \prod_{p|n} (1-p^{-\beta}).
\]
In particular, $\euler_1$ is Euler's function $\euler$, and $\euler_0 = \delta_1$.

We also make use of the M\"obius function $\mu: \nx \to \{-1, 0, 1\}$, which vanishes if $n$ is not square-free, and satisfies $\mu(n) = 1$ (respectively, $-1$) if  $n$ is square-free and  has an even (respectively, odd) number of distinct prime factors.

\begin{theorem}\label{thm:main}
Let $\sigma$ be the natural  dynamics on $\tnxxz$ determined by
\[
\sigma_t(V_a U^k V_b^*) = (a/b)^{it} V_a U^k V_b^* \qquad a,b \in \nx, \ k\in \Z,\  t\in \R.
\]  
Suppose $\beta \in(0, 1]$. Then
 
\begin{enumerate}
\item[\textup{(a)}] for each $n\in \nx $ there is an extremal  \kmsb state $\psi_{\beta,n}$ of type \III{1} determined by
\begin{equation}\label{eqn:finite-states}
\psi_{\beta,n} (V_a U^k V_b^*) = \delta_{a,b} a^{-\beta} \Big( \frac{n}{\gcd(n,k)}\Big)^{-\beta}\sum_{d|\frac{n}{\gcd(n,k)}}\mu\left(d\right) \frac{\euler_\beta(d)}{\euler(d)};
\end{equation}
  
 \medskip \item[\textup{(b)}]  for $n = \infty$ there is an extremal \kmsb state $\psi_{\beta,\infty}$ of type \III{} determined by
\begin{equation}\label{eqn:infinite-state}
\psi_{\beta,\infty} (V_a U^k V_b^*) =   \delta_{a,b} \delta_{k,0} a^{-\beta};
\end{equation}

\medskip \item[\textup{(c)}]  
the  simplex  $K_\beta$ of \kmsb states of $(\tnxxz,\sigma)$  is a Bauer simplex with extreme boundary 
\[
\partial_eK_\beta = \{\psi_{\beta,n} : n \in \nx\sqcup \{\infty\}\};
\]  
specifically, the map $n \mapsto \psi_{\beta,n}$ is a homeomorphism of the one-point compactification
$\nx\sqcup \{\infty\}$ onto the space $ \partial_eK_\beta $  with the weak-* topology.
\end{enumerate}
Suppose $\beta =0$. Then all the $\psi_{0,n}$ for finite $n$ coalesce into one and the system has exactly two extremal \KMS{0} states (i.e. invariant traces) $\psi_{0,1}$ and $\psi_{0,\infty}$; they are given by
\[
\psi_{0,1} (V_a U^k V_b^*) = \delta_{a,b} \quad \text{and} \quad \psi_{0,\infty} (V_a U^k V_b^*) =   \delta_{a,b} \delta_{k,0}.
\]
\end{theorem}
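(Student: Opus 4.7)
The plan is to prove the theorem in three stages: reduce any \kmsb state of $(\tnxxz,\sigma)$ to a probability measure on $\T$; construct the claimed extremal states from explicit measures; and classify all admissible measures. The first stage uses only the KMS condition. Each $V_aU^kV_b^*$ is $\sigma$-analytic with $\sigma_{i\beta}(V_aU^kV_b^*)=(a/b)^{-\beta}V_aU^kV_b^*$, and $U^k$ is $\sigma$-fixed. Applying the KMS identity $\psi(XY)=\psi(Y\sigma_{i\beta}(X))$ first with $X=V_a$, $Y=U^kV_b^*$ gives $\psi(V_aU^kV_b^*)=a^{-\beta}\psi(V_b^*V_aU^k)$, and again with $X=V_b^*$, $Y=V_aU^k$ gives $\psi(V_b^*V_aU^k)=b^\beta\psi(V_aU^kV_b^*)$. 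Combining yields
\[
\psi(V_aU^kV_b^*)\;=\;(b/a)^\beta\psi(V_aU^kV_b^*),
\]
so for $\beta>0$ the value must vanish unless $a=b$, while for $a=b$ we get $\psi(V_aU^kV_a^*)=a^{-\beta}\psi(U^k)$. Thus any \kmsb state is completely determined by the probability measure $\mu_\psi$ on $\T$ with $\hat\mu_\psi(k)=\psi(U^k)$, and the problem reduces to identifying the admissible set of such measures.

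\textbf{Construction of the extremals.} For $n=\infty$ I take $\mu_{\beta,\infty}$ to be normalised Haar measure on $\T$, yielding $\hat\mu_{\beta,\infty}(k)=\delta_{k,0}$ and so formula \eqref{eqn:infinite-state}; positivity follows by realising $\psi_{\beta,\infty}$ as a trace-scaled vector state on the regular representation of $\nxxz$. For finite $n$ the putative $\hat\mu_{\beta,n}(k)$ in \eqref{eqn:finite-states} depends on $k$ only through $k\bmod n$, so $\psi_{\beta,n}$ factors through the canonical quotient $\tnxxz/(U^n-I)\cong\tnxx{n}$; I would construct it as a Gibbs-type vector state on this finite-additive analogue of $\tnxxz$ and then pull back. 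The closed form \eqref{eqn:finite-states} is a Möbius inversion identity: expand $\mu_{\beta,n}=\sum_{d\mid n}a_\beta(n,d)\,\euler(d)\,\nu_d$, where $\nu_d$ is the uniform probability on primitive $d$-th roots of unity, so that $\hat\mu_{\beta,n}(k)=\sum_{d\mid n}a_\beta(n,d)\,c_d(k)$ in terms of Ramanujan's sum $c_d(k)=\mu(d/\gcd(d,k))\euler(d)/\euler(d/\gcd(d,k))$; inverting on the divisor lattice of $n$ produces both the stated formula and the positivity $a_\beta(n,d)\ge 0$ for $\beta\in(0,1]$.

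\textbf{Exhaustion, topology, type, and $\beta=0$.} The main obstacle is the third stage. Beyond the diagonal reduction, the critical positivity constraint comes from
\[
\psi\Big(\prod_{p\in S}(I-V_pV_p^*)\,f(U)\Big)\;\ge\;0\qquad(S\ \text{a finite set of primes},\ f\ge 0),
\]
which, since $[V_pV_p^*,U]=0$ and $\psi(V_nV_n^*U^k)=n^{-\beta}\hat\mu(nk)$, translates via $\prod_{p\in S}(I-V_pV_p^*)=\sum_{T\subseteq S}(-1)^{|T|}V_{n_T}V_{n_T}^*$ into the signed-measure inequality $\sum_{T\subseteq S}(-1)^{|T|}n_T^{-\beta}(m_{n_T})_*\mu\ge 0$ on $\T$, with $n_T=\prod_{p\in T}p$ and $m_a\colon z\mapsto z^a$. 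Stratifying $\T$ by the multiplicative order of each point (a root of unity of order $d$, or of infinite order) forces this family of inequalities to pin down the unique decomposition $\mu=\sum_{n\in\nx}c_n\mu_{\beta,n}+c_\infty\mu_{\beta,\infty}$ with $c_n,c_\infty\ge 0$ summing to $1$, identifying the admissible simplex with the probability simplex on $\nx\sqcup\{\infty\}$. The topology in (c) follows by pointwise convergence $\hat\mu_{\beta,n}(k)\to\delta_{k,0}$ as $n\to\infty$ for each fixed $k\ne 0$: the prefactor $(n/\gcd(n,k))^{-\beta}\to 0$ dominates because the Möbius sum in \eqref{eqn:finite-states} remains bounded via the factorisation $\euler_\beta(d)/\euler(d)=d^{\beta-1}\prod_{p\mid d}(1-p^{-\beta})/(1-p^{-1})$ when $\beta\le 1$. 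Extremality and type follow from GNS analysis: the $V_a$ implement $\sigma$-scaling by $a^{-\beta}$, giving full Connes spectrum and hence type \III{1} for each finite $n$, with the Haar-measure case handled analogously. The case $\beta=0$ is immediate since $\euler_0=\delta_1$ collapses the Möbius sum in \eqref{eqn:finite-states} to $\delta_{1,\,n/\gcd(n,k)}$, merging every finite-$n$ state into the trace $V_aU^kV_b^*\mapsto\delta_{a,b}$ while $\psi_{0,\infty}$ remains distinct. The hardest point is verifying that the prime-complementary positivity displayed above is \emph{sufficient}: this reduces, via the conditional expectation $E\colon V_aU^kV_b^*\mapsto\delta_{a,b}V_aV_a^*U^k$ (so that $\psi=\psi|_D\circ E$ with $D=C^*(U,\{V_aV_a^*\})\cong C(\T\times\widehat{\N})$), to positivity on a commutative subalgebra, and here the restriction $\beta\le 1$ enters crucially through the divergence $\prod_p(1-p^{-\beta})=0$, which forces the $\widehat{\N}$-marginal off the finite supernaturals and yields the clean finite/infinite parametrisation.
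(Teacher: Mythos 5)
Your first two stages — the reduction of a \kmsb state to its restriction measure on $\T$ via the identity $\psi(V_aU^kV_b^*)=\delta_{a,b}a^{-\beta}\psi(U^k)$, and the construction of the extremals from the explicit measures $\nu_{\beta,n}$ on roots of unity together with Haar measure — match the paper's approach precisely. Your Möbius/Ramanujan expansion $\nu_{\beta,n}=\sum_{d\mid n}n^{-\beta}\euler_\beta(d)\ve_d$ is exactly the paper's \lemref{lem:atomiclist}, your argument for the topology of $\partial_e K_\beta$ via the decay of $\hat\nu_{\beta,n}(k)$ is the paper's, and the $\beta=0$ collapse via $\euler_0=\delta_1$ is a correct verification.

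However, there are two genuine gaps, and both concern assertions that you present as conclusions but that carry most of the paper's weight.

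First, the exhaustion: you write that the prime-complementary positivity inequalities, combined with ``stratifying $\T$ by the multiplicative order,'' force the decomposition $\mu=\sum c_n\nu_{\beta,n}+c_\infty\lambda$. This is exactly what needs to be \emph{proved}, and it is not easy. For the atomic part, the paper must first show that a $\beta$-subconformal measure restricted to $Z_k$ is again $\beta$-subconformal (a non-obvious inductive argument, \lemref{lem:restrictions}), and then that the extremal $\beta$-subconformal measures supported on $Z_k$ are precisely the $\nu_{\beta,d}$, $d\mid k$; the latter requires diagonalising the operators $A_{\beta,q}^{-1}$ on $\M(Z_k)$ in a basis of Dirichlet characters and invoking the non-vanishing/divergence behaviour of $\prod_q (1-q^{-\beta})/(1-\chi(q)q^{-\beta})$ as $q$ ranges over all primes (\proref{pro:finite-subspace}, \lemref{lem:Dirichletcharacterthm}), which is a theorem about $L$-functions, not a lattice-combinatorics observation. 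The nonatomic case is harder still: to show Haar measure is the \emph{only} nonatomic $\beta$-subconformal measure, the paper dilates the $\nx$-action to a $\qx$-action, proves the dilated system is ergodic following Neshveyev's Bost--Connes argument, and the ergodicity in turn rests on a subtle strengthening of Wiener's lemma (that a density-zero exceptional set has \emph{multiplicative} density zero, \corref{cor:density-0}, which itself needs the Dickman/de Bruijn asymptotics for $\Psi(x,y)$ from \appref{app:appendixB}). Your remark that ``the restriction $\beta\le1$ enters crucially through the divergence $\prod_p(1-p^{-\beta})=0$'' correctly identifies the heuristic but does not touch the actual argument.

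Second, the type classification: the claim that ``the $V_a$ implement $\sigma$-scaling by $a^{-\beta}$, giving full Connes spectrum and hence type $\mathrm{III}_1$'' is not a proof. The $V_a$ are isometries, not unitaries, and in the GNS representation the modular flow and the associated Connes invariant require real computation; one cannot read them off from the scaling relation on generators. The paper obtains type $\mathrm{III}_1$ indirectly: it shows (\proref{pro:bc-hom}, \proref{prop:bc-subalg}) that the GNS quotient $\tnxxz/\ker\pi_{\beta,n}$ is isomorphic to the fixed-point subalgebra $\cq^{G_n}$ of the Bost--Connes algebra, and then (\proref{pro:finite-index-subfactor}, \corref{cor:maintype}) that the GNS factor contains a finite-index type $\mathrm{III}_1$ subfactor imported from the Bost--Connes factor, so by Loi's theorem it is itself type $\mathrm{III}_1$. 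This link to class field theory is an essential ingredient, not a formality, and your outline omits it entirely.

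In short: the reduction and construction are right and mirror the paper, but the exhaustion of the parametrisation and the type assertion — which together are the substance of the theorem — are asserted rather than argued, and the paper's proofs of them require the Dirichlet-character diagonalisation, the Wiener-lemma/density estimate, and the Bost--Connes embedding, none of which appears in your sketch.
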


The proof of this  theorem will occupy most of the paper; for quick reference, the parametrization is achieved in
\proref{pro:kms-parametrized} and the classification of type is in \corref{cor:maintype}. Our methods address the question raised at the end of the introduction of \cite{NS19}  of how to classify \kmsb states of right LCM monoids in the region of divergence of the partition function and may provide useful insight for the groupoid approach.

A remarkable feature of the high-temperature phase transition is its spontaneous symmetry-breaking, which is related to the one observed for the Bost-Connes system in \cite{bos-con}. The formulas from \thmref{thm:main} are valid for all non-negative $\beta$, which indicates that the \kmsb states for $\beta \leq 1$ are linked by analytic continuation to \kmsb states for $\beta >1$. As the temperature decreases, the von Neumann type of the states from \eqref{eqn:finite-states} changes from factor of type \III{1} (for $\beta \leq 1$) to a uniform superposition 
\begin{equation}\label{eqn:subcrit-superposition}
\psi_{\beta,n} = \frac{1}{\euler(n)}\sum_{\xi\in Z_n^*} \phi_{\beta,\xi}
\end{equation}
of the type ${\rm I}_\infty$ factor states from \cite{aHLR21} corresponding to primitive $n^\mathrm{th}$ roots of unity (for $\beta>1$).

The symmetries of the system are expressed by an action of $\nx$ on $\tnxxz$ by injective endomorphisms $\kappa_q$  given by $\kappa_q(V_aU^kV_b^*) = V_aU^{qk}V_b^*$ for $q\in\nx$. These commute with the dynamics so they are symmetries in the sense of \cite{CM2006}; they resemble the Frobenius endomorphisms in finite characteristic. At the level of extremal \kmsb states, the endomorphism $\kappa_q$ acts as a {\em lowering operator} on the $\psi_{\beta,n}$ for finite $n$, effectively dividing $n$ by $\gcd(n,q)$. In particular, when $n$ and $q$ are relatively prime, $\psi_{\beta,n}$ is fixed by $\kappa_q$ and moreover, the GNS representation determines a quotient of $\tnxxz$ on which $\kappa_q$ becomes an automorphism. Symmetry is broken at low temperature because the $\kappa_q$ permute the factor states in \eqref{eqn:subcrit-superposition}. Specifically, the transformation is $\phi_{\beta,\xi}\circ \kappa_q = \phi_{\beta,\xi^q}$, resembling Artin's reciprocity law for the cyclotomic extension $\Q(\sqrt[n]{1})/\Q$. In \secref{sec:bc-symm}, we realize the GNS quotient of $\tnxxz$ as the fixed-point subalgebra of the Bost-Connes algebra for the symmetries $\Gal(\Q^{\rm cycl}/\Q(\sqrt[n]{1}))$, establishing a link between KMS states of our system and class field theory of $\Q$.

The values of extremal \kmsb states given
in \eqref{eqn:finite-states} are expressed in terms of basic arithmetic functions, cf. \cite[Remark 26]{bos-con}. These expressions are quite efficient but do not provide by themselves much insight on the underlying structure or  method of proof. To shed some light on this, we recall  that 
 by \cite[Proposition 7.2]{aHLR21} a state $\psi$ of $\tnxxz$ satisfies the \kmsb condition if and only if
 \begin{equation}\label{eqn:KMSfromrestriction}
 \psi(V_a U^k V_b^*) = \delta_{a,b} a^{-\beta} \psi(U^k), \qquad k\in \N, \  a,b\in \nx.
 \end{equation}
Hence, every \kmsb state $\psi$ is completely determined by its restriction to $C^*(U) \cong C(\T)$, or rather by the probability measure on $\T$ representing this restriction through the Riesz-Markov-Kakutani theorem. Thus, the extremal \kmsb states from  \thmref{thm:main} can also be characterized using probability measures on $\T$. The trouble is that not all such probability measures extend to states of $\tnxxz$ via \eqref{eqn:KMSfromrestriction};  the issue here is positivity of the extension, which depends on whether a given measure $\nu$ satisfies 
\[
- \sum_{1\neq d|n}\mu(d) d^{-\beta}  \int_\T f(z^d) \,d\nu(z) \leq  \int_\T f(z) \,d\nu(z)
 \qquad \forall f \in C(\T)_+,  \ \forall n\in \nx;
 \]
that is, on whether $\nu$ is {\em $\beta$-subconformal} in the sense of \defref{def:subconformal} below, cf. \cite{ALN20,lacaN}. This effectively reduces the problem of finding the \kmsb states of $\tnxxz$ to that of finding all the probability measures on $\T$ that are $\beta$-subconformal  for the transformations $z \mapsto z^d$ for $d\in \nx$.
Thus, our strategy to prove  \thmref{thm:main} is to first obtain  a characterization of extremal $\beta$-subconformal probability measures on $\T$.
This is summarized in the following theorem.
\begin{theorem} \label{thm:affinehomeom}\ 

\begin{enumerate}
\item For each state $\psi$ of $\tnxxz$ let $\nu_\psi$ be the probability measure representing the restriction of $\psi$ to $C^*(U) \cong C(\T)$.  The mapping $\psi \mapsto \nu_\psi$ is an affine weak-* homeomorphism of the $\sigma$-\kmsb states onto the $\beta$-subconformal probability measures on $\T$. 
\item For $\beta \in (0,1]$, the extremal $\beta$-subconformal probability measures are parametrized by $\nx \sqcup\{\infty\}$ and are given as follows.
For each $n\in \nx$ the atomic probability measure $\nu_{\beta,n}$ on $\T$ is given by
 \[
 \nu_{\beta,n}(\{z\}) :=  \begin{cases}n^{-\beta}\frac{\euler_\beta(\ord(z))}{\euler(\ord(z))} & \text{ if  } z^n=1,\\
 0 &
 \text{ otherwise;}
 \end{cases}
 \]
 and  $\nu_{\beta,\infty}$   normalized Lebesgue measure on $\T$.
 Moreover, the mapping $\psi_{\beta,n}  \mapsto  \nu_{\beta,n} $ is a weak*-homeomorphism of the extremal \kmsb states onto  $\{\nu_{\beta,n} : n = 1, 2, \ldots \}\cup \{\nu_{\beta,\infty}\}$. 
 \end{enumerate}
 \end{theorem}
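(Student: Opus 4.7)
The argument splits along the two parts.

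\emph{Part (1).} One direction is essentially \eqref{eqn:KMSfromrestriction}: the restriction of a \kmsb state $\psi$ to $C^*(U)\cong C(\T)$ corresponds via Riesz--Markov--Kakutani to a probability measure $\nu_\psi$ on $\T$. The map $\psi\mapsto\nu_\psi$ is clearly affine, weak-* continuous, and injective. The reverse direction takes a probability measure $\nu$ on $\T$ and defines $\psi_\nu$ on the spanning set $\{V_a U^k V_b^*\}$ by formula \eqref{eqn:KMSfromrestriction}. I would verify positivity by expanding $\psi_\nu(T^*T)$ for an arbitrary finite linear combination $T=\sum_j\lambda_j V_{a_j}U^{k_j}V_{b_j}^*$; the products $V_{b_i}^*V_{a_j}$ simplify using the right-LCM property of $\nxxz$ recalled in the introduction, and a careful bookkeeping identifies the resulting positivity requirement with the $\beta$-subconformal inequality applied to a suitable non-negative trigonometric polynomial built from the $\lambda_j$ and the indices. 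Once positivity is established, the \kmsb condition follows from \cite[Proposition~7.2]{aHLR21}.

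\emph{Part (2).} I would proceed in three steps. First, verify subconformality of the candidates. For $\nu_{\beta,\infty}$ = normalized Lebesgue measure, $z\mapsto z^d$ preserves $\nu_{\beta,\infty}$, so the inequality reduces to $\sum_{d|n}\mu(d)d^{-\beta}=\prod_{p|n}(1-p^{-\beta})\ge 0$, valid for $\beta\ge 0$. For finite $n$, the pushforward of $\nu_{\beta,n}$ under $z\mapsto z^d$ regroups atoms according to how $z\mapsto z^d$ acts on primitive $e$-th roots for $e\mid n$, and the inequality reduces to an identity in the weights $\varphi_\beta(e)/\varphi(e)$ obtained from multiplicativity of $\varphi_\beta$ and Möbius inversion.

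Second, prove completeness. Split an arbitrary $\beta$-subconformal $\nu$ as $\nu=\nu_{\mathrm{tor}}+\nu_{\mathrm{free}}$ into the parts supported on roots of unity and on non-torsion points; each $z\mapsto z^d$ preserves this splitting, so each piece is $\beta$-subconformal. Write $\nu_{\mathrm{tor}}=\sum_{n\in\nx}c_n\sigma_n$ where $\sigma_n$ is the uniform probability measure on $Z_n^*$; the subconformal inequalities become linear conditions on the non-negative sequence $(c_n)$, whose extremal solutions, by a Möbius-inversion argument, correspond exactly to the $\nu_{\beta,n}$. For the non-torsion part, Fourier analysis should convert $\beta$-subconformality into constraints on the coefficients $\widehat{\nu_{\mathrm{free}}}(k)$ which, combined with the fact that the orbit $\{z^d:d\in\nx\}$ is dense for non-torsion $z$, force $\widehat{\nu_{\mathrm{free}}}(k)=0$ for $k\neq 0$, so $\nu_{\mathrm{free}}$ is a non-negative multiple of Lebesgue. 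Third, extract extremality and topology: extremality of each $\nu_{\beta,n}$ (finite $n$) follows from the rigidity of the coefficients identified in the completeness step; extremality of $\nu_{\beta,\infty}$ follows from its unique characterization as the non-torsion-supported $\beta$-subconformal probability measure; the weak-* topology matches the one-point compactification because the atomic mass $\nu_{\beta,n}(\{\zeta\})$ at any fixed root of unity $\zeta$ vanishes as $n\to\infty$ (outside finitely many multiples), so $\nu_{\beta,n_k}\to\nu_{\beta,\infty}$ weak-* along any escape sequence.

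\emph{Main obstacle.} The hardest point is the non-torsion analysis in the completeness step: ruling out exotic $\beta$-subconformal measures on $\T\setminus\{\text{roots of unity}\}$ beyond multiples of Lebesgue. Subconformal inequalities on $\nu_{\mathrm{free}}$ can entangle Fourier coefficients across many indices, and extracting their vanishing from a one-sided family of inequalities requires careful combinatorial or dynamical input. A possible alternative route, if available, is to first establish directly that $K_\beta$ is a Bauer simplex and then invoke Choquet theory to reduce completeness to verifying that the listed measures exhaust the extreme boundary and form a weak-* closed set, which is manageable given the explicit formulas.
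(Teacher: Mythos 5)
Your Part (1) takes a genuinely different route from the paper. Rather than expanding $\psi_\nu(T^*T)$ directly, the paper reduces positivity to the diagonal fixed-point algebra $\mfd$ (every \kmsb\ state factors through the conditional expectation $E$) and then exploits the explicit isomorphism $\Gamma_a\colon C(X_a)\cong\mfd_a$ from \corref{cor:system-spectra}, under which the positive cone of $\mfd_a$ is a direct sum of copies of $C(\T)_+$. This makes the identification of positivity with $\beta$-subconformality essentially transparent (\lemref{lem:prime-positivity}). Your direct $T^*T$ bookkeeping is morally equivalent because positivity of $\psi=\psi|_{\mfd}\circ E$ is equivalent to positivity of $\psi|_{\mfd}$, but executing it without the structural description of $\Spec\mfd$ amounts to rediscovering that description; the quadratic form you get is supported on pairs with $b_i a_j'=b_j a_i'$, and recognizing that it encodes the subconformal inequality needs exactly the spectral decomposition the paper sets up in \secref{sec:specdiag}.

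For the atomic half of Part (2), your outline (split atomic/nonatomic, Möbius inversion to pin down coefficients) matches the paper's strategy, though you suppress two non-trivial points: that the restriction of a subconformal $\nu$ to each $Z_k$ is again subconformal (\lemref{lem:restrictions}, whose proof requires a delicate case analysis on $\omega_p^{-1}(\{z\})\cap Z_k$), and that the extremal solutions are identified by diagonalizing the operators $A_{\beta,q}^{-1}$ on $\M(Z_k)$ via Dirichlet characters and then taking an infinite-product limit that converges only because of non-vanishing at $s=1$ of Dirichlet $L$-series (\lemref{lem:Dirichletcharacterthm}, \proref{pro:finite-subspace}).

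The genuine gap is the nonatomic case, and you flagged it correctly, but the fallback you sketch will not work. The fact that the orbit $\{z^d:d\in\nx\}$ is dense for a fixed non-torsion $z$ says nothing about an arbitrary nonatomic measure, and subconformality is a one-sided family of inequalities over divisor sums, not a set of linear constraints on individual Fourier coefficients; there is no direct way to "force $\widehat{\nu_{\mathrm{free}}}(k)=0$" from it. The paper's actual argument is substantial: it dilates the $\nx$-action on $\mfd$ to a $\qx$-action, proves that a nonatomic $1$-subconformal measure makes this dilated action \emph{ergodic} (\proref{pro:ergodic}), and concludes uniqueness from the resulting factoriality. The ergodicity proof in turn relies on a version of Wiener's lemma along $B$-smooth integers, which requires the analytic estimate \proref{prop:summation}/\corref{cor:density-0} developed in \appref{app:appendixB} using Mertens' theorem, Abel summation, and the de Bruijn--Dickman $\Psi(x,y)$ asymptotics. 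Your alternative suggestion (show $K_\beta$ is Bauer first, then apply Choquet theory) doesn't avoid the problem: you still have to determine the extreme boundary, and that is precisely what the ergodicity argument accomplishes.
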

Part (1) of the theorem is proved in \thmref{thm:KMScharact}. The case of  atomic measures in part (2) is proved in \thmref{thm:atomic} and uniqueness of the nonatomic conformal measure is obtained in \thmref{thm:leb-extreme}.

Next we describe the main contents section by section, highlighting the role of each section in the proof of the main results.
In \secref{sec:toepsyst} we give a presentation of $\tnxxz$ and discuss the basics of \kmsb states for the natural dynamics. In \secref{sec:specdiag} we dive into the structure of $\tnxxz$. We describe the fixed point algebra $\mfd$ of the gauge action of $\qd$ and in \proref{proj-limit-present} we realize its spectrum as a projective limit over $a\in \nx$ of copies of the unit circle indexed by the divisors of $a$. This result is instrumental for the passage from subconformal measures on $\T$ to \kmsb states. 

\secref{sec:subconformal} is about $\beta$-subconformal measures on $\T$ with respect to the semigroup of `wrap-around' transformations $z\mapsto z^n$. We work through the projective limit realization of the spectrum of the diagonal subalgebra $\mfd$ to show that $\beta$-subconformal probability measures on $\T$
extend to states \kmsb of $\tnxxz$. The main result of this section, \thmref{thm:KMScharact}, establishes that this correspondence is an affine isomorphism of simplices. We also observe that atomic and nonatomic measures can be studied separately. In \secref{sec:atomic} we focus on atomic $\beta$-subconformal measures,  giving a complete description in \thmref{thm:atomic}. \secref{app:appendixB} is purely about obtaining a number theoretic estimate for partial sums over $\nx$, \proref{prop:summation}, which is crucial to analyze the nonatomic case. The main result of \secref{sec:nonatomic}, \thmref{thm:leb-extreme}, is that the only nonatomic $\beta$-subconformal probability measure on $\T$ is normalized Lebesgue measure. The argument follows the strategy used by Neshveyev in \cite{nes} to prove uniqueness of the \kmsb state of the Bost-Connes system on the critical interval. This relies on a multiplicative version of Wiener's lemma, \proref{pro:leb-extreme} obtained through the estimate from \secref{app:appendixB}. In \secref{sec:proofofmainthm} we collect the results of the preceding sections and prove \thmref{thm:main} without the type assertion.

In preparation for the type classification, in \secref{sec:eqv-quot} we introduce a sequence of  equivariant quotients of $\tnxxz$.  We realize them in a natural way as Toeplitz algebras of monoids of affine transformations associated to arithmetic modulo $n$, and characterize their equilibrium states in \thmref{thm:finite-kms}. In \secref{sec:bc-symm} we show that these modular quotients have natural homomorphisms to the Bost-Connes  C*-algebra $\cq$, \proref{pro:bc-hom}. This allows us to import the type classification from the known results for the Bost--Connes system, which we do in \corref{cor:maintype}. In \secref{sec:nxxqz} we observe that these modular quotients can also be assembled together to form  another natural Toeplitz C*-algebra, namely $\tnxxqz$, for which we give a presentation. The main result here is the associated phase transition of $\tnxxqz$ described in \thmref{thm:limit-kms} in terms of subgroups of $\Q/\Z$.

\section{The Toeplitz system of $\nxxz$}\label{sec:toepsyst}

%
Let  $\tnxxz$ be the C*-subalgebra  generated by the operators $T_{(a,m)}$ on  $\ell^2(\nxxz)$ defined on the canonical orthonormal basis by
\[
T_{(a,m)}\ve_{(b,n)} = \ve_{(ab, bm+n)}\qquad (a,m),\ (b,n)\in\nxxz.
\]
Then $T_{(a,m)} = V_a U^m$, where $V_a = T_{(a,0)}$ is an isometry for each $a$ and $U = T_{(1,1)}$ is a unitary. 
Next we give a presentation of $\tnxxz$ and use it to show that $\tnxxz$ is the additive boundary quotient of the C*-algebra  $\tnxxn$ studied in \cite{aHLR21}.

\begin{prop} \label{pro:presentationtnxxz} The generating elements $\{V_a: a\in \nx\}$ and $U$ of $\tnxxz$ satisfy 
\begin{enumerate}
\item[\textup{(AB0)}] $V_a^*V_a =1 =U^*U= UU^*$ 
\item[\textup{(AB1)}] $UV_a = V_a U^a$;
\smallskip\item[\textup{(AB2)}] $V_aV_b = V_{ab}$;
\smallskip\item[\textup{(AB3)}] $V_a^* V_b = V_bV_a^*$ when $\gcd(a,b) = 1$.
\end{enumerate}
Moreover, the relations \textup{(AB0)--(AB3)} constitute a presentation of $\tnxxz$ and imply 
\begin{enumerate}
\smallskip\item[\textup{(AB4)}]$U^* V_a = V_a U^{*a}$.
\end{enumerate}
The C*-algebra  $\tnxxz$ is  canonically isomorphic to the additive boundary quotient $\partial_{add}\tnxxn$ and
 \[
\tnxxz = \clsp \{V_a U^m V_b^*\ :\ a,b\in\nx,\ m\in \Z\}. 
\]
 \end{prop}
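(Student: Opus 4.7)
The plan is to handle the four assertions in turn: verification of (AB0)--(AB3), derivation of (AB4), the spanning formula, and the identification of the universal algebra for these relations with both $\tnxxz$ and the additive boundary quotient $\partial_{add}\tnxxn$.

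First, (AB0)--(AB3) are immediate from the action on the canonical basis. The operator $V_a$ sends $\ve_{(b,n)}$ to $\ve_{(ab,n)}$, so $V_a$ is an isometry; the operator $U$ sends $\ve_{(b,n)}$ to $\ve_{(b,b+n)}$, which is a bijection of the basis because translation by $b\in\nx$ is invertible on $\Z$, so $U$ is unitary as required by (AB0). The remaining three relations follow from the multiplication in $\nxxz$: (AB1) from $(1,1)(a,0)=(a,0)(1,a)$, (AB2) from $(a,0)(b,0)=(ab,0)$, and (AB3) by checking that when $\gcd(a,b)=1$, both $V_a^*V_b$ and $V_bV_a^*$ send $\ve_{(c,n)}$ to $\ve_{(bc/a,n)}$ if $a\mid c$ and to $0$ otherwise, using that $a\mid bc\iff a\mid c$ in this case. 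Multiplying (AB1) on the left by $U^*$ and on the right by $U^{-a}$ and invoking the unitarity of $U$ yields $V_aU^{-a}=U^*V_a$, which is (AB4).

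For the spanning formula, the collection of elements $V_aU^mV_b^*$ is closed under adjoints, so it suffices to show closure under products. Given a product $V_aU^mV_b^*V_cU^nV_d^*$, set $g=\gcd(b,c)$, $b_1=b/g$, $c_1=c/g$; by (AB2), $V_b=V_gV_{b_1}$ and $V_c=V_gV_{c_1}$, so $V_b^*V_c=V_{b_1}^*V_{c_1}$, and (AB3) gives $V_b^*V_c=V_{c_1}V_{b_1}^*$ since $\gcd(b_1,c_1)=1$. Combining this with $U^mV_{c_1}=V_{c_1}U^{mc_1}$ (iterated (AB1)) and its adjoint $V_{b_1}^*U^n=U^{nb_1}V_{b_1}^*$ (valid for all $n\in\Z$ because $U$ is unitary) collapses the product to $V_{ac_1}U^{mc_1+nb_1}V_{db_1}^*$. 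Hence the linear span is a self-adjoint subalgebra, and its closure is $\tnxxz$.

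For the presentation and boundary-quotient identification, let $\mathcal{A}$ denote the universal C*-algebra for (AB0)--(AB3). The concrete operators already verified to satisfy these relations in $\tnxxz$ furnish a canonical surjection $\pi:\mathcal{A}\twoheadrightarrow\tnxxz$. The Toeplitz algebra $\tnxxn$ of \cite{aHLR21} is universal for (AB1)--(AB3) together with the weaker condition $V_a^*V_a=U^*U=1$ in place of (AB0); adjoining $UU^*=1$ transforms the full list of relations into exactly (AB0)--(AB3), so $\partial_{add}\tnxxn\cong\mathcal{A}$ by construction. The main remaining task, which I expect to be the chief obstacle, is injectivity of $\pi$. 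My plan is a gauge-invariant uniqueness argument using the natural action of $\qd$ on $\mathcal{A}$ that scales $V_a$ by the character value at $a$ and fixes $U$: the fixed-point subalgebra is generated by the commuting diagonal elements $V_aU^mV_a^*$, and I would first verify that $\pi$ is injective on this commutative diagonal by using the explicit description of its spectrum given in \proref{proj-limit-present} below, then average with respect to the compact gauge group to transfer faithfulness to all of $\mathcal{A}$, thereby completing the identifications $\mathcal{A}\cong\partial_{add}\tnxxn\cong\tnxxz$.
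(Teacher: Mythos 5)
Your verifications of (AB0)--(AB4) and the spanning formula are correct and coincide with the paper's computations (the paper simply cites \cite[Example 3.9, Proposition 3.8]{aHLR21} for (AB1)--(AB4) and records the same product formula \eqref{eqn:monomial-product}), and your treatment of the identification with $\partial_{add}\tnxxn$ via the presentation in \cite{aHLR21} matches what the paper does. Where you genuinely diverge is the proof that (AB0)--(AB3) present $\tnxxz$. The paper does not use a gauge-invariant uniqueness argument there: it observes that $\nxxz$ is right LCM and embeds in the amenable group $\qxxq$, so by \cite[Corollary 5.6.45]{CELY} the concrete algebra $\tnxxz$ is itself universal for Nica covariant representations of $\nxxz$; it then checks that in the universal algebra $C^*(u,v_a)$ for (AB0)--(AB3) the elements $w_{(a,m)}=v_au^m$ form a Nica covariant isometric representation, which yields the inverse of the canonical surjection. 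Your route is instead the one the paper uses later for the finite quotients in \proref{prop:relations-modn}. The trade-off: the paper's argument outsources all the work to amenability and Nica covariance and needs no structure theory of the diagonal at this stage, while yours is more self-contained but must develop the fixed-point algebra of the \emph{universal} algebra.

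That is also where your sketch is too thin as written. \proref{proj-limit-present} and \corref{cor:system-spectra} describe the spectrum of the concrete diagonal $\mfd\subset\tnxxz$; by themselves they say nothing about the $\theta$-fixed subalgebra $\mathcal{A}^\theta$ of the universal algebra $\mathcal{A}$, and injectivity of $\pi$ on $\mathcal{A}^\theta$ is exactly the point at issue. To close this you must redo the diagonal analysis abstractly: using only (AB0)--(AB3), build the projections $e_{a,b}$, verify the product formula \eqref{eqn:productxy} and the M\"obius-inversion computation of \corref{cor:system-spectra} inside $\mathcal{A}$, so as to obtain for each $a$ a surjection $q_a:C(X_a)\to\mathcal{A}_a:=\clsp\{v_bf(u)v_b^*: b|a\}$ with $\pi\circ q_a=\Gamma_a$; since $\Gamma_a$ is injective and $q_a$ is onto, $\pi$ is injective on $\mathcal{A}_a$, hence on $\mathcal{A}^\theta=\varinjlim\mathcal{A}_a$, and the averaging argument then finishes as you say. (The faithfulness part of \lemref{lem:complete-projections}(3), which uses concrete basis vectors, is not needed at the universal level, and there is no circularity since Section 3 of the paper does not rely on the presentation.) Note also that the shortcut used in \proref{prop:relations-modn} --- linear independence of the images of the generators of the fixed-point algebra --- is not available here, because the $\mfd_a$ are infinite dimensional. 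With that step supplied, your argument is a valid alternative to the paper's.
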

\begin{proof} 

That the $V_a$ are isometries and $U$ is a unitary is obvious. That they satisfy relations (AB1)--(AB3) was verified in \cite[Example 3.9]{aHLR21}, while (4) is obtained on
multiplying (AB1)  by $U^*$ on the left and by $U^{*a}$ on the right, see the proof of  \cite[Proposition 3.8]{aHLR21}. 

Let  $C^*(u,v_a: a\in \nx)$ be the universal C*-algebra generated by isometries $\{ v_a : a\in\nx\}$ and a unitary $u$ satisfying the lowercase-analogues of the relations (AB1)--(AB3). By the preceding considerations, there is a canonical surjective homomorphism $C^*(u,v_a: a\in\nx)\rightarrow \tnxxz$, which we will show is an isomorphism.

Recall from \cite{aHLR21} that the monoid $ \nxxz $ is right LCM; indeed, the smallest common upper bounds of $(a,m)$ and  $(b,n) $ are the elements $(\lcm(a,b),k)$ for  $k\in \Z$ (so we may take, e.g. $(\lcm(a,b),0)$). Since $\nxxz$ embeds in  $\qxxq$, we have that $\tnxxz$ is universal for Nica covariant representations of $\nxxz$ by \cite[Corollary 5.6.45]{CELY}. 

The elements $w_{(a,n)} = v_a u^n$ form an isometric representation of $\nxxz$ in $C^*(u, v_a:a\in\nx)$ by (AB0)--(AB2). For $a,b\in \nx$, let $a' = a/\gcd(a,b)$ and $b' = b/\gcd(a,b)$. Then (AB3) implies
$$w_{(a,m)} w_{(a,m)}^* w_{(b,n)} w_{(b,n)}^* = v_a v_a^* v_b v_b^* = v_{ab'} v_{b'a}^* = w_{(\lcm(a,b),0)}w_{(\lcm(a,b),0)}.$$
This shows that $w$ is Nica covariant. Therefore, there is a canoncial surjective homomorphism $\tnxxz\rightarrow C^*(u,v_a: a\in\nx)$, which is the inverse to $C^*(u,v_a: a\in\nx)\rightarrow \tnxxz$.

The collection $\{V_a U^m V_b^*\ :\ a,b\in\nx,\ m\in \Z\}$ is obviously closed under taking adjoints, and 
 \begin{equation}\label{eqn:monomial-product}
(V_a U^m V_b^* )(V_c U^n V_d^*) = V_{ac'} U^{mc' + nb'} V_{b' d}^*,
\end{equation}
where $c' = \frac{c}{\gcd(b,c)}$ and $b' = \frac{b}{\gcd(b,c)}$, 
so this collection is also closed under multiplication. Hence its linear span is a self-adjoint subalgebra of $\tnxxz$, which is dense because  it contains  the generating elements $V_a$ for $a\in \nx$ and $U$.
\end{proof}

\begin{remark}\label{rem:bound-quot}
The presentation of $\tnxxz$ in \proref{pro:presentationtnxxz} agrees with that of $\partial_{\mathrm{add}} \tnxxn$ in \cite[Proposition 3.8]{aHLR21}, which implies that these C*-algebras are isomorphic. Moreover, by \cite[Proposition 7.1]{aHLR21}, the KMS states of $\tnxxn$ factor through the additive boundary quotient. This gives a 1-to-1 correspondence between KMS states of $\tnxxn$ and KMS states of $\tnxxz$.
\end{remark}

\begin{prop}\label{pro:condexptheta}
There exists a strongly continuous (gauge) action $\theta$ of the compact group $\wh{\qx}$ by  automorphisms of $\tnxxz$ such that 
\[
\theta\chi (V_a U^m V_b^*) =  \chi(a/b)V_a U^m V_b^*.
\]
The fixed point algebra $\mfd := \tnxxz^{\theta}$   is a commutative unital C*-algebra and there is a faithful conditional expectation 
$E: \tnxxz \longrightarrow \mfd $ determined by
\[
E( V_a U^m V_b^*) = \int_{ \wh{\qx}} \theta_\chi (V_a U^m V_b^*) d\chi = \delta_{a,b} V_a U^m V_b^*,
\]
with range $E(\tnxxz) = \mfd = \clsp\{V_a U^m V_a^*: a\in\nx,\ m\in\Z\}$.
\begin{proof}
The proof is by a standard argument and is almost entirely analogous to that of \cite[Proposition 8.2]{aHLR21},  the only difference being the computation of the product at the end. Here the additive generator $U$ is a unitary operator and thus, once we verify that 
\[
E( V_a U^m V_b^*) = \int_{ \wh{\qx}} \theta_\chi (V_a U^m V_b^*) d\chi = \int_{ \wh{\qx}} \chi(a/b) (V_a U^m V_b^*)d\chi  = \delta_{a,b} V_a U^m V_b^*,
\]
where $\delta_{a,b}$ is the Kronecker delta function, we may conclude that 
$ \mfd = \clsp\{V_a U^m V_a^*: a\in\nx,\ m\in\Z\}$. Setting $a=b$ and $d=c$ in \eqref{eqn:monomial-product}, we get the product
\begin{equation}\label{eqn:D-prod}
(V_b U^m V_b^*)(V_c U^n V_c^*) = V_{\lcm(b,c)} U^{mc' + nb'} V_{\lcm(b,c)}^* = V_{\lcm(b,c)} U^{\lcm(b,c) (\frac{m}{b}+ \frac{n}{c})} V_{\lcm(b,c)}^*,
\end{equation}
which shows that $\mfd$ is commutative.
\end{proof}
\end{prop}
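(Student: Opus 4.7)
My proof plan is to construct the gauge action $\theta$ by invoking the universal property established in \proref{pro:presentationtnxxz}, then define $E$ by averaging over the compact group $\wh{\qx}$, and finally read off the range and commutativity from the monomial product formula.

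First, for each character $\chi \in \wh{\qx}$, I would define candidate generators $\widetilde V_a := \chi(a) V_a$ and $\widetilde U := U$ inside $\tnxxz$ and check that they satisfy relations (AB0)--(AB3): the modulus-one scalars $\chi(a)$ clearly preserve (AB0), they commute with $U$ so (AB1) is immediate, the character property $\chi(ab) = \chi(a)\chi(b)$ gives (AB2), and (AB3) follows because $\overline{\chi(a)}\chi(b) = \chi(b)\overline{\chi(a)}$. By the universal property in \proref{pro:presentationtnxxz}, this extends to a $*$-endomorphism $\theta_\chi$ of $\tnxxz$, and the pair $\theta_\chi \theta_{\chi^{-1}} = \id$ shows it is an automorphism. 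On the spanning monomials one computes $\theta_\chi(V_a U^m V_b^*) = \chi(a)\overline{\chi(b)} V_a U^m V_b^* = \chi(a/b) V_a U^m V_b^*$. Strong continuity of $\chi \mapsto \theta_\chi$ follows from continuity of each evaluation $\chi \mapsto \chi(a/b)$ together with a standard $\varepsilon/3$ argument using the density of the span of the monomials and uniform boundedness $\|\theta_\chi\| = 1$.

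Next I would define $E(x) := \int_{\wh{\qx}} \theta_\chi(x)\, d\chi$ with Haar measure normalized to mass one. Routine properties of Bochner integration against a group of $*$-automorphisms show $E$ is a contractive, completely positive, $\mfd$-bimodule map onto the fixed-point algebra $\mfd = \tnxxz^\theta$, hence a conditional expectation; faithfulness comes from the fact that for $x \geq 0$, $E(x) = 0$ forces $\theta_\chi(x) = 0$ for Haar-a.e.\ $\chi$, and then all $\chi$ by strong continuity, so $x = 0$. Evaluating on the dense spanning set gives the key formula
\[
E(V_a U^m V_b^*) = \Big(\int_{\wh{\qx}} \chi(a/b)\, d\chi\Big) V_a U^m V_b^* = \delta_{a,b}\, V_a U^m V_b^*,
\]
since the Haar integral of the character $\chi \mapsto \chi(a/b)$ on $\wh{\qx}$ is the indicator of the trivial character, i.e.\ $1$ when $a/b = 1$ and $0$ otherwise. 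This identifies $\mfd$ as the closed linear span of $V_a U^m V_a^*$ for $a \in \nx$, $m \in \Z$.

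Finally, commutativity of $\mfd$ reduces to computing products of its spanning monomials. Specializing the general product identity \eqref{eqn:monomial-product} to $a = b$ and $d = c$ yields
\[
(V_b U^m V_b^*)(V_c U^n V_c^*) = V_{\lcm(b,c)}\, U^{\lcm(b,c)(m/b + n/c)}\, V_{\lcm(b,c)}^*,
\]
and this expression is symmetric in $(b,m)$ and $(c,n)$, so the spanning monomials of $\mfd$ commute pairwise and $\mfd$ is commutative. There is no serious obstacle here: the only step that requires genuine care is verifying the universal property applies cleanly to the scaled generators and confirming strong continuity, both of which are standard once the presentation in \proref{pro:presentationtnxxz} is in hand.
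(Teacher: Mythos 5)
Your proof is correct and follows the same route the paper takes, with the one difference that the paper delegates the construction of $\theta$, $E$, and the verification that $E$ is a faithful conditional expectation to the analogous \cite[Proposition 8.2]{aHLR21}, whereas you spell out those standard steps (universal property for the scaled generators, $\varepsilon/3$ continuity, Bochner averaging, character orthogonality, and faithfulness). The final computation you give for commutativity — specializing \eqref{eqn:monomial-product} to $a=b$, $c=d$ and observing the symmetry in $(b,m)\leftrightarrow(c,n)$ — is exactly the paper's argument. One minor point worth tightening: from $E(x)=0$ with $x\geq 0$ you cannot directly conclude $\theta_\chi(x)=0$ for a.e.\ $\chi$ at the operator level; the cleanest version applies a state $\omega$, so that $\int_{\wh{\qx}}\omega(\theta_\chi(x))\,d\chi=0$ with nonnegative integrand forces $\omega(\theta_\chi(x))=0$ for a.e.\ $\chi$, then all $\chi$ by continuity, and evaluating at $\chi=1$ gives $\omega(x)=0$ for every state, hence $x=0$.
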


We are interested in the C*-dynamical system $(\tnxxz, \sigma)$ in which $\sigma $ is the dynamics  determined by
\[
\sigma_t(V_a U^m V_b^*) = \Big(\frac{a}{b}\Big)^{it} \,V_a U^m V_b^*.
\]
The study of equilibrium on $(\tnxxz, \sigma)$ was initiated in \cite{aHLR21}, where it was shown that \kmsb states of $(\tnxxn, \sigma)$ factor through the additive boundary quotient. We briefly recall next the basic definitions and the key results needed in our analysis.

Whenever $\sigma$ is a time evolution, or dynamics,  on a C*-algebra $A$ (this means that $\sigma$ is a strongly continuous $\R$-action by automorphisms of $A$),  there is a dense *-subalgebra $A^\infty$ of {\em analytic elements} of $A$, consisting of elements $x\in A$ for which the function $F_x(t)=\sigma_t(x)$ for $t\in\R$  can be analytically continued to an entire function on $\C$. For $\beta\in[0,\infty)$, a state $\phi$ on $A$ satisfies the $\sigma$-\KMS{\beta} condition (or simply the \kmsb condition, when $\sigma$ is clear) if 
$$\phi(xy) = \phi(y \sigma_{i\beta} (x))\qquad \text{ for $x \in A^\infty$ and $y \in A$}$$
 in fact, because of bilinearity and continuity, it suffices to show that equality holds for $x$ and $y$ in a subset of $A^\infty$ whose linear span is $\sigma$-invariant and dense in $A$ \cite[Proposition 8.12.3]{ped}.
Every $\sigma$-\kmsb state is also $\sigma$-invariant (for $\beta = 0$ this is part of the definition, so that $\sigma$-KMS$_0$ states are $\sigma$-invariant traces). The set $K_\beta$ of $\sigma$-\kmsb states of $A$, endowed with the weak* topology, is a Choquet simplex, and hence is affinely isomorphic to the simplex of probability measures on the set $\partial K_\beta$ of its extreme points. 
We refer to Chapter 5 of \cite{bra-rob} and to Chapter 8 of \cite{ped} for  further details and background.

When we consider our system $(\tnxxz,\sigma)$ it is easy to see that the monomials $V_a U^m V_b^*$ are analytic for $\sigma$ because $\sigma_z(V_a U^m V_b^*) = (a/b)^{iz}V_a U^m V_b^*$. Moreover, the dynamics $\sigma$ is obtained by composing the continuous one-parameter subgroup of characters $\chi_t(r) = r^{it}$  of $\qx$ with the gauge action $\theta$, and  
the fixed-point subalgebra of $\sigma$ agrees with the fixed-point subalgebra of $\theta$. Hence, the $\sigma$-invariant states on $\tnxxz$, in particular the \kmsb states, are induced through the conditional expectation $E$ from traces on $\mfd$, or, equivalently, from measures on $X = \Spec \mfd$.

For later reference, we record the following result analogous to \cite[Proposition 8.3]{aHLR21}.
\begin{prop}\label{pro:semigroupaction}
 There exists an action $\alpha$ of $\nx$ by injective endomorphisms 
 $\alpha_a:\mfd\rightarrow\mfd$ defined by $\alpha_a(x) = V_a x V_a^*$  for each $a\in\nx$.   Each $\alpha_a$ has a  left-inverse given by $\gamma_a(x) = V_a^* x V_a$.  
Moreover, there is a semigroup crossed product decomposition 
 \begin{equation}\label{eqn:semiXprod}
 \tnxxz\cong \nx\ltimes_{\alpha} \mfd.
 \end{equation}
 \end{prop}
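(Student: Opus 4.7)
The plan is to follow the template of \cite[Proposition 8.3]{aHLR21} in three stages: (i) show $\alpha_a$ is a well-defined injective endomorphism of $\mfd$; (ii) identify $\gamma_a$ as a left inverse; (iii) use a universal-property argument together with the gauge action of \proref{pro:condexptheta} to obtain the crossed-product isomorphism.

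For (i), I would verify on the spanning monomials that $\alpha_a(V_bU^mV_b^*)=V_aV_bU^mV_b^*V_a^*=V_{ab}U^mV_{ab}^*\in\mfd$ by (AB2), and extend by linearity and continuity. The equation $V_a^*V_a=1$ from (AB0) shows $\alpha_a(xy)=\alpha_a(x)\alpha_a(y)$, so $\alpha_a$ is a $*$-endomorphism of $\mfd$; another application of (AB2) gives $\alpha_a\circ\alpha_b=\alpha_{ab}$. For (ii), to see $\gamma_a(\mfd)\subseteq\mfd$ I would compute on generators: writing $d=\gcd(a,b)$, $a=da'$, $b=db'$, relation (AB3) combined with $V_d^*V_d=1$ yields $V_a^*V_b=V_{a'}^*V_{b'}=V_{b'}V_{a'}^*$; iterating (AB1)/(AB4) and using $V_{a'}^*V_{a'}=1$ gives the crucial identity $V_{a'}^*U^mV_{a'}=U^{ma'}$ for all $m\in\Z$; combining these yields $\gamma_a(V_bU^mV_b^*)=V_{b'}U^{ma'}V_{b'}^*\in\mfd$. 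The identity $\gamma_a(\alpha_a(x))=V_a^*V_a\,x\,V_a^*V_a=x$ then gives both the left-inverse property and injectivity of $\alpha_a$.

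For (iii), I would invoke the universal property of the semigroup crossed product $\nx\ltimes_\alpha\mfd$: it is generated by a faithful copy of $\mfd$ and isometries $\{s_a\}_{a\in\nx}$ subject to $s_as_b=s_{ab}$ and the covariance $s_axs_a^*=\alpha_a(x)$ for $x\in\mfd$. Since the inclusion $\mfd\hookrightarrow\tnxxz$ together with the isometries $\{V_a\}_{a\in\nx}$ satisfies exactly these relations by construction of $\alpha$, the universal property supplies a $*$-homomorphism $\Phi\colon\nx\ltimes_\alpha\mfd\to\tnxxz$ with $\Phi(s_a)=V_a$. Surjectivity is immediate: the image contains every monomial $V_aU^mV_b^*=s_a(U^m)s_b^*$ and these span a dense subalgebra by \proref{pro:presentationtnxxz}.

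The main obstacle is injectivity of $\Phi$. I would handle it via a gauge-invariant uniqueness argument: the $\wh{\qx}$-action $\theta$ of \proref{pro:condexptheta} transports to a gauge action on the crossed product (given by $\chi\cdot s_a=\chi(a)s_a$, $\chi\cdot x=x$ for $x\in\mfd$), and $\Phi$ is equivariant for these two actions. Integration against Haar measure then provides faithful conditional expectations onto the respective fixed-point algebras, both of which are canonically identified with $\mfd$; since $\Phi$ restricts to the identity on $\mfd$, the standard averaging argument (cf.\ the proof of \cite[Proposition~8.3]{aHLR21}) forces $\Phi$ to be isometric, hence injective.
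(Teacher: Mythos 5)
Your proposal is correct and is essentially the argument the paper has in mind: the paper records this proposition without proof as the analogue of \cite[Proposition 8.3]{aHLR21}, and that analogue is exactly your route — check the covariance relations on the spanning monomials (including the identity $V_{a'}^*U^mV_{a'}=U^{ma'}$, which as you note needs (AB4) for negative powers), get $\Phi$ from the universal property, and deduce injectivity by equivariance and the faithful conditional expectations onto the common fixed-point algebra $\mfd$ as in \proref{pro:condexptheta}.
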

Following \cite[Section 4]{aHLR21} we use the `backwards'  notation for the above semigroup crossed product because it is more compatible with the semigroup operation in $\nxxz$. 
 
The action of $\nx$ on $\mfd$ respects the lattice structure, see \cite[Definition 3]{diri}, thus  \cite[Theorem 12]{diri} implies that the map $\tau\mapsto \tau\circ E$ is a one-to-one correspondence between the tracial states $\tau$ on $\mfd$ satisfying
\begin{equation}\label{eqn:rescale}
\tau(V_a x V_a^*) = a^{-\beta} \tau(x)\qquad\forall a\in\nx,\ x\in \mfd
\end{equation}
and the \kmsb states of $(\tnxxz,\sigma)$ for $\beta \in (0,\infty)$. 

For $\beta>1$, the tracial states satisfying \eqref{eqn:rescale}  were explicitly computed in \cite[Theorem 8.1]{aHLR21} using \cite[Theorem 20]{diri}; they are in one-to-one correspondence with the probability measures $\eta$ on $\T$ via the formula
$$\tau_{\eta,\beta}(V_a U^n V_a^*) = \frac{a^{-\beta}}{\zeta(\beta)} \sum_{c=1}^\infty c^{-\beta} \int_\T z^{nc} d\eta,$$
and the corresponding \kmsb state, obtained through the conditional expectation $E$, is given by
\begin{equation}\label{eqn:low-temp}
\phi_{\eta,\beta}(V_a U^n V_b^*) = \delta_{a,b} \frac{a^{-\beta}}{\zeta(\beta)} \sum_{c=1}^\infty c^{-\beta} \int_\T z^{nc} d\eta.
\end{equation}
\begin{remark}Obviously these formulas break down for $\beta\leq 1$. Partly because of this, the question of equilibrium for  $\beta\in (0,1]$ was left open in \cite{aHLR21}, except for three KMS$_1$ states exhibited as limits for $\beta\to 1^+$ in \cite[Section 9]{aHLR21}.
Those three states are recovered by the parametrization \eqref{eqn:finite-states} of \thmref{thm:main}. Indeed,  setting $\beta=1$ in \eqref{eqn:infinite-state} recovers the KMS$_1$ state obtained in
\cite[Example 9.1]{aHLR21} from Lebesgue measure on $\T$. Similarly, an easy computation shows that our $\psi_{1,1} $ is the state obtained in \cite[Example 9.2]{aHLR21} from the point mass at $1\in \T$, and a slightly more involved computation shows that 
\[\psi_{\beta,2}(V_a U^k V_b^*) = \begin{cases} \delta_{a,b} a^{-\beta}  & \text{if }  k \text { is  even} \\
 \delta_{a,b} a^{-\beta} (2^{1-\beta} - 1)&  \text{if }  k \text{ is odd,}
\end{cases}
\]
so that our $\psi_{1,2}$ is the state from \cite[Example 9.3]{aHLR21}. 
It is also clear from the parametrization that all these states  `persist' as the inverse temperature drops below  critical.
\end{remark}

\section{The diagonal and its spectrum}\label{sec:specdiag}
In this section we  provide a detailed description of the fixed point algebra $\mfd$ of the gauge action and its spectrum. We begin by outlining a unitarily equivalent copy of $\tnxxz$ obtained via the Fourier transform on the second coordinate of $\nxxz$. To be precise, we 
let  $\zed: z \mapsto z$ be the inclusion $\T\subseteq \C$ (viewed as a complex-valued function on $\T$), and we normalize Haar measure on $\T$ so that  the collection $\{\zed^k: k\in \Z\}$ of characters  is an orthonormal basis of  $L^2(\T)$. Then there is a unitary transformation 
 \[\F :   \ell^2(\nxxz) \to  \ell^2(\nx)\otimes L^2(\T) \qquad \quad \F (\ve_{(b,k)}) = \delta_b \otimes \zed^k, \quad (b,k) \in \nxxz.\]
 When we conjugate the  generators $U=T_{(1,1)}$ and $V_a = T_{(a,0)}$ of $\tnxxz$  by $\F$ we get operators
 $\Ad_\F(U): =  \F U \F\inv$ and $\Ad_\F(V_a): = \F V_a \F\inv $  on $\ell^2(\nx)\otimes L^2(\T)$, and when we compute these on the standard orthonormal basis $\{\delta_b \otimes \zed^m: b\in \nx, \ m\in \Z\}$ of $ \ell^2(\nx)\otimes L^2(\T)$ we get
\begin{equation}\label{eqn:adU}
\Ad_\F(U)(\delta_b\otimes\zed^k) =  \F U \F\inv (\delta_b\otimes\zed^k) =  \F U \ve_{(b,k)} =  \F   \ve_{(b,b+k)} = \delta_b\otimes \zed^b\zed^k
\end{equation}
and 
\begin{equation}\label{eqn:adV_a}
 \Ad_\F(V_a) (\delta_b\otimes\zed^k) = \F V_a \F\inv (\delta_b\otimes\zed^k) =  \F V_a \ve_{(b,k)} =  \F \ve_{(ab,k)} =  \delta_{ab}\otimes\zed^k.
\end{equation}
\begin{lemma}\label{lem:FourierTransform} 
For each $a\in \nx$ define a map\[
\omega_a:\T\rightarrow \T, \qquad  z\mapsto z^a.
\] 
 wrapping the circle $a$-times around itself. Denote 
by $\pi:C(\T) \to \mathcal B(\ell^2(\nx)\otimes L^2(\T))$ the representation of $C(\T)$ generated by the unitary $u := \Ad_\F(U)$ and let  $v_a := \Ad_\F(V_a )$.
Then 
\[ 
\pi(f) (\delta_b\otimes g) := \delta_b\otimes (f\circ \omega_b) g \qquad b\in \nx \ \ f,g \in C(\T),
\]
and the image of the fixed point algebra $\mfd$ under the isomorphism $\Ad_\F: \tnxxz \cong C^*(\pi,v ) $  
is   
\begin{equation}\label{oiyasdfasqw}
\Ad_\F(\mfd) = {\clsp}\{v_a \pi(f) v_a^*: f\in C(\T), a\in\nx\}.
\end{equation}

\end{lemma}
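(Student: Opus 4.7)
The plan is to first identify $u = \Ad_\F(U)$ as a direct sum of multiplication operators on $L^2(\T)$, then apply continuous functional calculus to read off the action of $\pi(f)$, and finally transport the description of $\mfd$ from \proref{pro:condexptheta} across $\Ad_\F$ using density of trigonometric polynomials.

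For the first claim, I would read off from \eqref{eqn:adU} that $u(\delta_b \otimes \zed^k) = \delta_b \otimes \zed^{b+k}$, so $u$ leaves each closed subspace $\delta_b \otimes L^2(\T)$ invariant and acts there as multiplication by the character $\zed^b$; equivalently, $u = \bigoplus_{b \in \nx} M_{\zed^b}$, where $M_h$ denotes the multiplication operator by $h$ on $L^2(\T)$. Since $\pi$ is the unique $*$-representation of $C(\T)$ sending $\zed$ to $u$ (obtained via continuous functional calculus of the unitary $u$), and functional calculus commutes with direct sums, $\pi(f) = \bigoplus_{b \in \nx} f(M_{\zed^b})$. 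The standard identity $f(M_h) = M_{f \circ h}$ for multiplication operators, applied to $h = \zed^b = \omega_b$, gives $f(M_{\zed^b}) = M_{f \circ \omega_b}$, yielding the claimed formula $\pi(f)(\delta_b \otimes g) = \delta_b \otimes (f \circ \omega_b) g$.

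For the second claim, I would invoke \proref{pro:condexptheta} to write $\mfd = \clsp\{V_a U^m V_a^* : a \in \nx,\, m \in \Z\}$, and apply the $*$-isomorphism $\Ad_\F$ to get $\Ad_\F(\mfd) = \clsp\{v_a \pi(\zed^m) v_a^* : a \in \nx,\, m \in \Z\}$. Since the trigonometric polynomials $\lsp\{\zed^m : m \in \Z\}$ are dense in $C(\T)$ by Stone--Weierstrass, and for each fixed $a$ the map $x \mapsto v_a x v_a^*$ is a bounded linear operator on $\pi(C(\T))$, the closed linear span above coincides with $\clsp\{v_a \pi(f) v_a^* : a \in \nx,\, f \in C(\T)\}$, which is exactly \eqref{oiyasdfasqw}.

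The only step requiring genuine care is the identification of the continuous functional calculus of $u$ with pointwise composition $f \mapsto f \circ \omega_b$ on each summand, and even this is a textbook consequence of the multiplication-operator functional calculus once the direct-sum decomposition of $u$ is noted. The second claim is then a routine density argument, so no substantive obstacle is expected.
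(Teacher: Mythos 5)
Your proof is correct and follows essentially the same route as the paper: both rest on the computation \eqref{eqn:adU} of $u$ on the basis vectors $\delta_b\otimes\zed^k$, density of the characters $\{\zed^m\}$ in $C(\T)$, and the description $\mfd=\clsp\{V_aU^mV_a^*: a\in\nx,\ m\in\Z\}$ from \proref{pro:condexptheta}. Your packaging of the first claim via the decomposition $u=\bigoplus_{b}M_{\zed^b}$ and the identity $f(M_h)=M_{f\circ h}$ is simply a more structural phrasing of the paper's direct verification on the monomials $f=\zed^m$, $g=\zed^k$ followed by density.
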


\begin{proof}
It is easy to show using \eqref{eqn:adU} that the first assertion holds for $f = \zed^m$ and $g=\zed^k$, and the general case follows from this
because the characters $\{\zed^m: m\in \Z\}$ span a dense subalgebra of $C(\T)$.
Since we already know that $\mfd = \clsp\{V_a U^m V_a^*: a\in\nx,\ m\in\Z\}$, the second assertion is also a direct consequence of this.
\end{proof}

To simplify the notation from now on we will write $V_a f V_a^*$ for  the element of $\mfd$ corresponding to $v_a\pi(f)v_a^*$, so that, e.g.   $V_a \zed^m V_a^* =V_a U^m V_a^* $. For each fixed $a\in \nx$ we also define
\[
\mfd_a := \clsp\{V_d f V_d^*: d|a,\ f\in C(\T)\}.
\]
This is a closed subspace which is closed under adjoints; it is also closed under multiplication because \eqref{eqn:D-prod} implies that
\begin{equation}\label{eqn:productxy}
V_c f V_c^* \, V_d gV_d^* = V_{c\vee d} (f\circ \omega_{d'})(g\circ \omega_{c'}) V_{c\vee d}^*, \qquad c,d \in \nx, \ g \in C(\T),
\end{equation}
where we have written $c\vee d$ for $\lcm(c,d)$, with $c' = \frac{c}{\gcd(b,c)}$ and $b' = \frac{b}{\gcd(b,c)}$ to streamline the notation. Hence $\mfd_a$ is  a C*-subalgebra of $\mfd$, 
and the inclusions $\iota_{a,b}:\mfd_a\hookrightarrow \mfd_b$ for $a|b$ give an injective system $(\mfd_a, \iota_{a,b})_{a\in \nx}$ of C*-algebras  whose union is dense in $\mfd$ by \lemref{lem:FourierTransform}, making  $\mfd$ the direct limit of the system. 

\begin{lemma}\label{lem:complete-projections}
For $a\in \nx$ define $e_a = \prod_{p|a} (1-V_p V_p^*)$ and for $b|a$ let $e_{a,b} = \alpha_b(e_{\frac{a}{b}}) = V_b e_{\frac{a}{b}} V_b^*$. Then $e_{a,b}$ is a projection and
\begin{enumerate}
 \smallskip\item $e_{a,b} = \sum_{d|\frac{a}{b}} \mu(d) V_{bd} V_{bd}^*$, and thus belongs to $\mfd_a$;
 \smallskip\item $\sum_{d|\frac{a}{b}} e_{a,bd} = V_b V_b^*$;
 \smallskip\item the map
 \begin{equation}\label{eqn:compression-iso}
 \gamma_{a,b}:C(\T)\rightarrow e_{a,b}\mfd_a e_{a,b},\qquad f\mapsto e_{a,b}V_b f V_b^* = \sum_{d|\frac{a}{b}} \mu(d) V_{bd} (f\circ \omega_d) V_{bd}^*
 \end{equation}
 is an isomorphism.

\end{enumerate}
\end{lemma}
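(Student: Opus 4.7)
My plan is to handle the three items in sequence, with the bulk of effort directed at surjectivity in part (3). As a standing observation, for distinct primes $p,q$ relations (AB2)--(AB3) give $V_pV_p^*V_qV_q^* = V_{pq}V_{pq}^*$, symmetric in $p$ and $q$, so the $V_pV_p^*$ commute pairwise and each $e_a$ is a projection in $\mfd$. Then $e_{a,b} = \alpha_b(e_{a/b})$ is a projection by \proref{pro:semigroupaction}.

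For (1), I expand the product: subsets $S$ of $\{p:p\mid a\}$ biject with the squarefree divisors $d\mid a$ via $S\mapsto\prod_{p\in S}p$, and (AB2) collapses $\prod_{p\in S}V_pV_p^*$ to $V_dV_d^*$, giving $e_a=\sum_{d\mid a}\mu(d)V_dV_d^*$. Applying $\alpha_b$ and using $V_bV_d=V_{bd}$ yields the stated formula, and each $V_{bd}V_{bd}^*$ lies in $\mfd_a$ since $bd\mid a$. For (2), I substitute (1) and reindex the double sum by $m=dd'$:
\[
\sum_{d\mid a/b}e_{a,bd}=\sum_{m\mid a/b}V_{bm}V_{bm}^*\sum_{e\mid m}\mu(e),
\]
and the inner Möbius identity $\sum_{e\mid m}\mu(e)=\delta_{m,1}$ leaves $V_bV_b^*$.

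For (3), the homomorphism property is immediate from commutativity of $\mfd$ (\proref{pro:condexptheta}) together with $V_b^*V_b=1$. For injectivity I compute $V_b^*\gamma_{a,b}(f)V_b=e_{a/b}\pi(f)$ using $V_b^*e_{a,b}V_b=e_{a/b}$; in the Fourier picture of \lemref{lem:FourierTransform}, $e_{a/b}$ restricts to the identity on $\delta_1\otimes L^2(\T)$ (since $\gcd(1,a/b)=1$), while $\pi(f)$ acts there by multiplication by $f$, so $e_{a/b}\pi(f)=0$ forces $f=0$.

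The main obstacle is surjectivity. Using commutativity of $\mfd$, the target algebra $e_{a,b}\mfd_ae_{a,b}$ is the closed span of $e_{a,b}V_dgV_d^*$ over $d\mid a$ and $g\in C(\T)$, so I plan to establish the dichotomy
\[
e_{a,b}V_dgV_d^*=\begin{cases}\gamma_{a,b}(g\circ\omega_{b/d}),& d\mid b,\\ 0,& d\mid a,\ d\nmid b.\end{cases}
\]
When $d\mid b$, the decomposition $V_b=V_dV_{b/d}$ together with $V_{b/d}^*\pi(g)=\pi(g\circ\omega_{b/d})V_{b/d}^*$ (a consequence of (AB1)/(AB4)) yields $V_bV_b^*V_dgV_d^*V_bV_b^*=V_b(g\circ\omega_{b/d})V_b^*$, and since $e_{a,b}\le V_bV_b^*$ the claim follows. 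When $d\nmid b$, I pick a prime $p$ with $v_p(d)>v_p(b)$; since $d\mid a$ this forces $p\mid a/b$, so the squarefree divisors of $a/b$ carry the sign-reversing involution $e\leftrightarrow e\triangle\{p\}$. A Nica-covariance reduction in the spirit of \eqref{eqn:productxy} produces $V_{be}V_{be}^*V_dgV_d^*=V_{\lcm(be,d)}(g\circ\omega_{be/\gcd(be,d)})V_{\lcm(be,d)}^*$, and the hypothesis $v_p(d)>v_p(b)$ pins the $p$-valuations of both $\lcm(be,d)$ and $be/\gcd(be,d)$ at $v_p(d)$ regardless of whether $p$ divides $e$. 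Hence the involution pairs identical summands with opposite Möbius signs, collapsing $e_{a,b}V_dgV_d^*=\sum_{e\mid a/b}\mu(e)V_{be}V_{be}^*V_dgV_d^*$ to zero. This prime-toggling cancellation, which exploits exactly the hypothesis $v_p(d)>v_p(b)$, is the delicate core of the argument.
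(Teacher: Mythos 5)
Your argument is correct, and parts (1), (2), the homomorphism property, and the case $d\mid b$ of surjectivity follow the paper's proof essentially verbatim; your injectivity argument (conjugate by $V_b^*$ and evaluate at $\delta_1\otimes 1_{\T}$) is a cosmetic variant of the paper's, which evaluates $\Ad_\F$ of the relevant element at $\delta_b\otimes 1_{\T}$. The one genuinely different step is the vanishing case $d\mid a$, $d\nmid b$. The paper gets this almost for free from part (2): taking $b=1$ there shows the projections $\{e_{a,c}: c\mid a\}$ are mutually orthogonal, and writing $V_d g V_d^* = \sum_{c\mid \frac{a}{d}} e_{a,dc}V_d g V_d^*$ gives $e_{a,b}V_d g V_d^* = 0$ whenever $b$ is not a multiple of $d$. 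You instead expand $e_{a,b}$ via part (1) and cancel the resulting sum by the fixed-point-free involution $e\mapsto pe$ (resp. $e/p$) on the square-free divisors of $\frac{a}{b}$, for a prime $p$ with $v_p(d)>v_p(b)$; this computation is valid, since both $\lcm(be,d)$ and $be/\gcd(be,d)$ are unchanged when $p$ is toggled in $e$, so paired summands are equal with opposite M\"obius signs. (One small slip in wording: the $p$-valuation of $be/\gcd(be,d)$ is pinned at $0$, not at $v_p(d)$ --- only $\lcm(be,d)$ has $p$-valuation $v_p(d)$; what your argument needs, and what holds, is the invariance under the toggle.) The paper's route is shorter and reuses part (2), while yours is more computational but self-contained and makes explicit exactly where the hypothesis $d\nmid b$ enters; in both treatments the passage from ``the image contains a dense subspace of $e_{a,b}\mfd_a e_{a,b}$'' to surjectivity rests on the standard fact that the range of a C*-homomorphism is closed, which you, like the paper, leave implicit.
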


\begin{proof}
For part (1), the case $b = 1$ follows from the usual inclusion-exclusion formula and $V_c V_c^* V_d V_d^* = V_{cd} V_{cd}^*$ for relatively prime divisors $c,d|a$. For more general $b$ we have
$$\alpha_b(e_{\frac{a}{b},1}) = V_b\left(\sum_{d|\frac{a}{b}} \mu(d) V_d V_d^*\right) V_b^* = \sum_{d|\frac{a}{b}} \mu(d) V_{bd} V_{bd}^*.$$


For part (2), we have
$$\sum_{d|\frac{a}{b}} e_{a,bd} = \sum_{d|\frac{a}{b}} \sum_{c|\frac{a}{bd}} \mu(c) V_{bcd} V_{bcd}^* = \sum_{e|\frac{a}{b}} V_{be} V_{be}^* \sum_{c|e} \mu(c) = V_b V_b^*,$$
where we have made use of the substitution $e = cd$ and the classical identity $\sum_{c| e} \mu(c) = \delta_{e,1}$.

For part (3), since $e_{a,b}$ is a projection in the commutative algebra $\mfd_a$, the map $\kappa_b:\mfd_a\rightarrow e_{a,b} \mfd_a e_{a,b}$, $x\mapsto  e_{a,b} x $ is a homomorphism. By part (1) and \eqref{eqn:productxy}, composing $\kappa_b$ with the map $f\mapsto V_b f V_b^*$ yields
$$e_{a,b} V_b f V_b^* = \sum_{d|\frac{a}{b}} \mu(d)V_{bd} V_{bd}^* V_b f V_b^* = \sum_{d|\frac{a}{b}} \mu(d) V_{bd} (f\circ \omega_d) V_{bd}^*$$
which is \eqref{eqn:compression-iso}, so this is a homomorphism from $C(\T)$ to $e_{a,b} \mfd_a e_{a,b}$. 

In order to show that \eqref{eqn:compression-iso} is surjective, consider $V_c f V_c^*$ for $c|a$ and $f\in C(\T)$. By part (2) with $b = 1$, the projections $\{e_{a,d}: d|a\}$ are mutually orthogonal. Again by part (2), for $c|a$,
$$V_c f V_c^* = V_cV_c^*V_c f V_c^*  = \sum_{d|\frac{a}{c}} e_{a,cd} V_c f V_c^*;$$
hence if $c\nmid b$, then $e_{a,b} V_c f V_c^* = 0$. If $c| b$, then using part (1) and \eqref{eqn:productxy},
$$e_{a,b} V_c f V_c^* = \sum_{d|\frac{a}{b}} \mu(d) V_{bd} V_{bd}^* V_c f V_c^* = \sum_{d|\frac{a}{b}} \mu(d) V_{bd} (f\circ \omega_{bd/c}) V_{bd}^*,$$
which is the image of $f\circ \omega_{b/c}$ under \eqref{eqn:compression-iso}. Since the elements $V_c f V_c^*$ span a dense subspace of $\mfd_a$, \eqref{eqn:compression-iso} is surjective.

Lastly, we show that \eqref{eqn:compression-iso} is faithful. If $e_{a,b} V_b f V_b^* = 0$, then by part (2), it follows that
\begin{align*}
V_b f V_b^* &= \sum_{c|\frac{a}{b}, c\neq 1} e_{a,bc} V_b f V_b^* = \sum_{c|\frac{a}{b},c\neq 1} \sum_{d|\frac{a}{bc}} \mu(d) V_{bcd} V_{bcd} V_b f V_b^*\\[5pt]
&= \sum_{c|\frac{a}{b},c\neq 1} \sum_{d|\frac{a}{bc}} \mu(d) V_{bcd} (f\circ\omega_{cd}) V_{bcd}^*.
\end{align*}
The condition $c\neq 1$ (hence $bcd\nmid b$) implies that the vector $\delta_b\otimes 1_{\T}$ belongs to the kernel of $\Ad_\F(V_{bcd}^*)$, where $\Ad_\F$ is the isomorphism of \lemref{lem:FourierTransform}; moreover, $\Ad_{\F}(V_b f V_b^*)(\delta_b \otimes 1_{\T}) = \delta_b\otimes f$, so $f = 0$.
\end{proof}

\begin{cor}\label{cor:system-spectra}
For each $a\in \nx $ let $\Delta_a := \{b\in\nx: b|a\}$  be the set of divisors of $a$ and define a space 
\[
 X_a := \T\times \Delta_a.
 \]
For each $f\in C(X_a)$ and  $b\in \Delta_a$ let 
$f|_b(z) = f(z,b)$, $z\in \T$. Then the map 
\[
\Gamma_a : f\mapsto \sum_{b|a} e_{a,b} V_b f|_bV_b^* = \sum_{b|a}\sum_{d|\frac{a}{b}} \mu(d) V_{bd}(f|_b\circ \omega_d)V_{bd}^*
\]
is an isomorphism of C*-algebras $\Gamma_a: C(X_a) \stackrel{\cong}{\longrightarrow} \mfd_a$. The inverse is determined by the formula
\begin{equation}\label{eqn:gamma-inv}
\Gamma_a^{-1}( V_b f V_b^*)(z,d) = \left\{\begin{array}{ll} f\circ\omega_{\frac{d}{b}}(z)&\text{if $b|d$,}\\
0&\text{otherwise.}
\end{array}\right.
\end{equation}
\end{cor}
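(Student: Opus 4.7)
The plan is to deduce the corollary as an assembly of the pieces already established in the preceding lemma, using the fact that the projections $\{e_{a,b}:b\mid a\}$ form an orthogonal partition of the unit in $\mfd_a$ and that each corner $e_{a,b}\mfd_a e_{a,b}$ is isomorphic to $C(\T)$.

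First I would verify that $\{e_{a,b}: b\mid a\}$ is a complete set of mutually orthogonal projections in $\mfd_a$. Completeness follows from part (2) of the preceding lemma applied with $b=1$, which gives $\sum_{d\mid a} e_{a,d}=V_1V_1^*=1$. For orthogonality, if $b_1\neq b_2$ are divisors of $a$, then by part (1) we may expand
\[
e_{a,b_1}e_{a,b_2}=\sum_{d_1\mid a/b_1}\sum_{d_2\mid a/b_2}\mu(d_1)\mu(d_2)V_{b_1d_1}V_{b_1d_1}^*V_{b_2d_2}V_{b_2d_2}^*,
\]
and a grouping of the double sum together with the identity $\sum_{c\mid n}\mu(c)=\delta_{n,1}$ forces this product to vanish (alternatively, this orthogonality is implicit in the identity in the proof of part (2)).

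Second, I would assemble the isomorphism. Identify $C(X_a)=C(\T\times\Delta_a)$ with the direct sum $\bigoplus_{b\mid a}C(\T)$ via $f\mapsto (f|_b)_{b\mid a}$. By part (3) of the preceding lemma, for each $b\mid a$ the map $\gamma_{a,b}:C(\T)\to e_{a,b}\mfd_a e_{a,b}$ is a *-isomorphism. Since the $e_{a,b}$ are pairwise orthogonal and sum to $1$, we have $\mfd_a=\bigoplus_{b\mid a}e_{a,b}\mfd_a e_{a,b}$, and therefore the map $\bigoplus_{b\mid a}\gamma_{a,b}:\bigoplus_{b\mid a}C(\T)\to\mfd_a$ is a *-isomorphism. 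Tracking through the identifications, its action is precisely
\[
f\longmapsto \sum_{b\mid a}\gamma_{a,b}(f|_b)=\sum_{b\mid a}e_{a,b}V_bf|_bV_b^*,
\]
which equals the second expression in the statement by part (1) of the preceding lemma; this is $\Gamma_a$.

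Third, the inverse formula \eqref{eqn:gamma-inv} can be read off directly from the computations carried out in the proof of surjectivity in part (3) of the preceding lemma. Given $c\mid a$ and $f\in C(\T)$, it was shown there that $e_{a,b}V_cfV_c^*=0$ whenever $c\nmid b$, whereas if $c\mid b$ then $e_{a,b}V_cfV_c^*=\gamma_{a,b}(f\circ\omega_{b/c})$. Summing over the fibers via the completeness of the $e_{a,b}$ shows that the preimage of $V_cfV_c^*$ under $\Gamma_a$ is the function on $X_a$ whose value at $(z,d)$ is $f\circ\omega_{d/c}(z)$ when $c\mid d$ and $0$ otherwise, which is exactly \eqref{eqn:gamma-inv}. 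I do not expect a substantive obstacle here: all the hard work (Möbius inversion, injectivity of $\gamma_{a,b}$, and the interaction of the $V_c$ with the corner projections) has been carried out in the preceding lemma, and the corollary is simply the bookkeeping that recasts $\mfd_a$ as the algebra of continuous functions on the disjoint union $X_a=\T\times\Delta_a$.
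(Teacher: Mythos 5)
Your proposal is correct and takes essentially the same approach as the paper: identify $C(X_a)$ with $\bigoplus_{b\mid a}C(\T)$, assemble $\Gamma_a = \bigoplus_{b\mid a}\gamma_{a,b}$, and use the completeness and orthogonality of the projections $e_{a,b}$ (guaranteed by parts (1)--(3) of the preceding lemma) to conclude it is an isomorphism. The only minor difference is in verifying the inverse formula: the paper applies $\Gamma_a$ to the candidate inverse and computes forward via M\"obius inversion, whereas you read off the decomposition $V_cfV_c^* = \sum_{c\mid b\mid a} e_{a,b}V_cfV_c^* = \sum_{c\mid b\mid a}\gamma_{a,b}(f\circ\omega_{b/c})$ directly from the surjectivity computations in the proof of Lemma~\ref{lem:complete-projections}(3); both are valid and involve the same underlying algebraic manipulations.
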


\begin{proof}
The algebra $C(X_a)$ is naturally identified with $\bigoplus_{b|a} C(\T)$, where a function $f\in C(X_a)$ corresponds to the tuple $(f|_b)_{b|a}$. Under this identification, $\Gamma_a$ becomes
$$\Gamma_a = \bigoplus_{b|a}\gamma_{a,b}:\bigoplus_{b|a} C(\T)\rightarrow \mfd_a,$$
where $\gamma_{a,b}:C(\T)\rightarrow \mfd_a$ is the isomorphism onto $e_{a,b}\mfd_a e_{a,b}$ from \lemref{lem:complete-projections} (3). Since the projections $\{e_{a,b}:b|a\}$ are mutually orthogonal and sum to the identity, $\Gamma_a$ is an isomorphism onto $\mfd_a$.

For $f\in C(\T)$ and $b\in \Delta_a$, let $\wt{f}\in C(X)$ denote the function in \eqref{eqn:gamma-inv}. Then, by the M\"obius inversion formula, we have
\begin{align*}
\Gamma_a (\wt{f}) &= \sum_{c|a} \sum_{d|\frac{a}{c}} \mu(d) V_{cd} (\wt{f}|_c\circ \omega_d) V_{cd}^* \\[5pt]
&= \sum_{c|\frac{a}{b}} \sum_{d|\frac{a}{c}}\ \mu(d) V_{bcd} (f\circ \omega_{cd}) V_{bcd}^*\\[5pt]
&= \sum_{c|\frac{a}{b}} V_{bc} (f\circ \omega_c) V_{bc}^*\left(\sum_{d|c} \mu(d) \right) = V_b f V_b^*.
\end{align*}
Since $\Gamma_a$ is an isomorphism, we conclude that $\wt{f} = \Gamma^{-1}_a(V_bf V_b^*)$. 
\end{proof}

\begin{proposition}\label{proj-limit-present}
For each $a\in \nx$ and for $a|b$, define $\Psi_{a,b} :X_b \to X_a$  by
\[
\Psi_{a,b} (z,d) := (z^{d/\gcd(a,d)}, \gcd(a,d)) \qquad \text{ for each } (z,d) \in X_b.
\]
Then $\Psi = (X_a,\Psi_{a,b})_{a\in \nx}$ is a projective system that is topologically conjugate to the projective system $\iota^* =(\Spec\mfd_a, \iota_{a,b}^*)_{a\in \nx}$ under the 
transformations $\Gamma_a^*: \Spec\mfd_a  \to X_a$, and this gives a homeomorphism 
\[\projlim_a(X_a,\Psi_{a,})_{a\in \nx}  \cong \Spec \mfd.\]
\end{proposition}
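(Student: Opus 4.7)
The plan is to exploit the direct-limit structure $\mfd = \overline{\bigcup_a \mfd_a}$ noted after \lemref{lem:FourierTransform}: Gelfand duality converts this into the projective limit $\Spec\mfd \cong \projlim_a(\Spec\mfd_a, \iota_{a,b}^*)$, where the transition maps are just restriction of characters. Since \corref{cor:system-spectra} already identifies $\Spec\mfd_a \cong X_a$ via $\Gamma_a^*$, the whole statement reduces to computing these restriction maps in the $X_a$-coordinates and verifying they equal $\Psi_{a,b}$.

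The central step is the coordinate computation. For $(z,d)\in X_b$, formula \eqref{eqn:gamma-inv} yields the explicit character
\[
\chi_{(z,d)}(V_c f V_c^*) = \begin{cases} f(z^{d/c}) & \text{if } c\mid d,\\ 0 & \text{otherwise,}\end{cases} \qquad c\mid b,\ f\in C(\T).
\]
I then restrict $\chi_{(z,d)}$ to $\mfd_a$ and hunt for the unique $(z',d')\in X_a$ with $\chi_{(z',d')} = \chi_{(z,d)}|_{\mfd_a}$. Matching supports, the divisors $c\mid a$ with $\chi_{(z,d)}(V_c f V_c^*)\neq 0$ are exactly the divisors of $\gcd(a,d)$; comparing with the support of $\chi_{(z',d')}$ forces $d' = \gcd(a,d)$. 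Evaluating at $c=d'$ and at a separating $f\in C(\T)$ then forces $z' = z^{d/\gcd(a,d)}$. This is exactly the formula defining $\Psi_{a,b}$.

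With the coordinate computation done, the rest is bookkeeping. Each $\Psi_{a,b}$ is continuous because $X_a$ is a finite disjoint union of circles and the restriction to each component $\T\times\{d\}$ is $z\mapsto(z^{d/\gcd(a,d)},\gcd(a,d))$. Compatibility $\Psi_{a,b}\circ\Psi_{b,c} = \Psi_{a,c}$ whenever $a\mid b\mid c$ is automatic from the analogous compatibility of the restriction maps $\iota_{a,b}^*$ already established. Hence $(X_a,\Psi_{a,b})$ is a projective system and the family $\{\Gamma_a^*\}$ furnishes a topological conjugacy with $(\Spec\mfd_a,\iota_{a,b}^*)$. Passing to inverse limits and invoking Gelfand duality delivers the homeomorphism $\projlim_a X_a \cong \Spec\mfd$.

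The only delicate point is the indexing: the divisor $d$ of $b$ encodes the \emph{level} of the character, while $d'$ must be a divisor of $a$, and the key observation is that under restriction the level drops precisely to $\gcd(a,d)$ while the circle coordinate is wrapped $d/\gcd(a,d)$ times around itself. Once this is handled cleanly, everything else is a routine functorial consequence of $\Spec$ exchanging direct limits of commutative C*-algebras for inverse limits of compact Hausdorff spaces.
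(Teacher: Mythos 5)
Your proof is correct and follows essentially the same route as the paper: both reduce the claim to verifying that $\Gamma_a^*\circ\iota_{a,b}^* = \Psi_{a,b}\circ\Gamma_b^*$ (equivalently, the dual identity $\Gamma_b^{-1}\circ\iota_{a,b} = \Psi_{a,b}^*\circ\Gamma_a^{-1}$ on $\mfd_a$) and then compute explicitly using \eqref{eqn:gamma-inv}. The only cosmetic difference is that you phrase the computation on the spectrum side (restricting characters and solving for $(z',d')$, first by matching supports to get $d'=\gcd(a,d)$, then evaluating to get $z'=z^{d/\gcd(a,d)}$), while the paper phrases it on the algebra side (evaluating both composites on a spanning element $V_cfV_c^*$ and observing the formulas agree); these are dual presentations of the same verification.
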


\begin{proof}
It suffices to show that $\Gamma_a^*\circ\iota_{a,b}^*=\Psi_{a,b} \circ \Gamma_b^*$ on $\Spec \mfd_b$, or dually, that $\Gamma_b^{-1}\circ \iota_{a,b} = \Psi_{a,b}^*\circ\Gamma_a^{-1}$ on $\mfd_a$. For $f\in C(\T)$ and $c|a$, applying the first homomorphism to $V_c f V_c^*$ and evaluating at $(z,d)\in \Delta_b$ gives
$$\Gamma_b^{-1}\circ \iota_{a,b}(V_c f V_c^*)(z,d) = \Gamma_b^{-1}(V_c f V_c^*)(z,d) =
\left\{\begin{array}{ll}
f\circ \omega_{\frac{d}{c}}(z)&\text{if $c|d$,}\\
0&\text{otherwise.}
\end{array}\right.$$
Applying the second homomorphism and evaluating at $(z,d)$ gives
$$\Psi_{a,b}^*\circ \Gamma_a^{-1}(V_c f V_c^*) = \Gamma_a^{-1}(V_c f V_c^*)(z^{\frac{d}{\gcd(a,d)}}, \gcd(a,d)) = \left\{\begin{array}{ll}
f\circ \omega_{\frac{\gcd(a,d)}{c}}(z^{\frac{d}{\gcd(a,d)}})&\text{if $c|\gcd(a,d)$,}\\
0&\text{otherwise.}
\end{array}\right.$$
Since $c|a$, the conditions $c|d$ and $c|\gcd(a,d)$ are equivalent, in which case both formulas agree.
\end{proof}

\section{KMS states and subconformal measures}\label{sec:subconformal}
According to \cite[Proposition 7.2]{aHLR21}, a state $\psi$ on $\tnxxz$ satisfies the \kmsb condition for the dynamics $\sigma$ on $\tnxxz$ if and only if 
\begin{equation}\label{eqn:KMScharact}
\psi(V_a U^kV_b^*) = \delta_{a,b} a^{-\beta} \psi(U^k)\quad\text{for all $a,b\in \nx$ and $k\in\Z$}.
\end{equation} 
Thus each \kmsb state is determined by its restriction to  $C^*(U) = V_1 C(\T) V_1^* \cong C(\T)$; we denote by  $\nu_\psi$ the probability measure on $\T$ representing this restriction, so that
\begin{equation}\label{eqn:nufromvarphi}
\int_\T f d\nu_\psi = \psi(V_1 f V_1^*) \qquad f\in C(\T).
\end{equation}
The map $\psi \mapsto \nu_\psi$ of \kmsb states to probability measures is injective but, as discussed in the Introduction after \thmref{thm:main}, it is not surjective. In order to determine its range we introduce the following  condition, cf. \cite[Equation (2.1)]{ALN20}.
\begin{definition} \label{def:subconformal}
A measure $\nu$ on $\T$ is {\em $\beta$-subconformal} if  it satisfies
\begin{equation}\label{eqn:positivitycondition}
\sum_{d|n} \mu(d)d^{-\beta} \omega_{d*}(\nu)\geq 0 \qquad \forall n\in \nx,
\end{equation}
or, more explicitly,
\[
\sum_{d|n} \mu(d)d^{-\beta}  \int_\T f(z^d) \,d\nu(z) \geq 0 
 \qquad \forall f \in C(\T)_+  \text{ and } \forall n\in \nx.
 \]
\end{definition}

It will be useful to formulate subconformality  in terms of a family of operators on the space $\mt$ of complex Borel measures on $\T$. 
\begin{lemma} 
\label{lem:eulerproduct} 
For each  $\beta\in[0,\infty)$ and $n\in \nx$  define an  operator $A_{\beta,n}: \mt \to \mt$ on the Banach space of complex Borel measures on $\T$  using the left hand side of \eqref{eqn:positivitycondition},
\begin{equation}\label{eqn:anbdefinition}
A_{\beta,n} (\nu):=\sum_{d|n} \mu(d) d^{-\beta} \omega_{d*} \nu, \qquad \text{ i.e. \ }  \int_\T f dA_{\beta,n}(\nu) = \sum_{d|n} \mu(d) d^{-\beta} \int_\T f(z^d) d \nu
\end{equation}
for each $\nu \in \mt$ and $f\in C(\T)$. Then  
 
 \begin{enumerate}
 \smallskip\item $A_{\beta,m} A_{\beta,n}  = A_{\beta,mn} $ whenever $\gcd(m,n) =1$;
\smallskip \item $ A_{\beta,n} = \prod_{p |n}(1-p^{-\beta} \omega_{p*})$;

\smallskip\item if $\beta\in(0,\infty)$, then $A_{\beta,n}$ has a positive inverse, which for prime $n=p$ is given by the norm-convergent series
\[ A_{\beta,p}\inv = (1-p^{-\beta} \omega_{p*})\inv = \sum_{n=0}^\infty p^{-\beta n} \omega_{p^n*},\]
moreover $\prod_{p\mid n} (1-p^{-\beta}) A_{\beta,n}\inv \nu $ is a probability measure whenever $\nu$ is a probability measure;
\item  $A_{\beta,m}^{-1} \mt^+\supseteq A_{\beta,n}^{-1}\mt^+$ whenever $m|n$.
\end{enumerate}
\end{lemma}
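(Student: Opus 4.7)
My plan is to prove the four parts in sequence. The common thread is that $A_{\beta,n}$ factors over the primes dividing $n$, and that the pushforward semigroup $\omega_{p*}\omega_{q*}=\omega_{pq*}$ makes these factors mutually commute; once this is in place each statement reduces to a short bookkeeping exercise.

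For part (1) I would expand
\[
A_{\beta,m}A_{\beta,n}\nu \ =\ \sum_{d_1\mid m}\sum_{d_2\mid n}\mu(d_1)\mu(d_2)(d_1 d_2)^{-\beta}\,\omega_{d_1 d_2\,*}\nu,
\]
and note that when $\gcd(m,n)=1$ every divisor of $mn$ is uniquely of the form $d_1 d_2$ with $d_1\mid m$, $d_2\mid n$, and necessarily $\gcd(d_1,d_2)=1$, so $\mu(d_1)\mu(d_2)=\mu(d_1 d_2)$. For part (2), since $\mu$ vanishes on non-square-free integers, $A_{\beta,p^k}=1-p^{-\beta}\omega_{p*}$ for every $k\geq 1$; writing $n$ as a product of prime powers and iterating (1) gives the Euler product.

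For part (3), each factor $1-p^{-\beta}\omega_{p*}$ from (2) is invertible in $\mathcal{B}(\mt)$: the pushforward is a contraction, so $\|p^{-\beta}\omega_{p*}\|\leq p^{-\beta}<1$ when $\beta>0$, and the Neumann series $\sum_{k\geq 0}p^{-k\beta}\omega_{p^k*}$ converges in norm to the inverse and visibly preserves $\mt^+$. Since the prime factors of $A_{\beta,n}$ commute and each has a positive inverse, so does $A_{\beta,n}$. For the normalization, applying the series to a probability measure $\nu$ gives a positive measure of total mass
\[
\prod_{p\mid n}\sum_{k\geq 0}p^{-k\beta}\ =\ \prod_{p\mid n}(1-p^{-\beta})^{-1}\ =\ \frac{n^{\beta}}{\euler_\beta(n)},
\]
so $\prod_{p\mid n}(1-p^{-\beta})\,A_{\beta,n}^{-1}\nu$ is a positive measure of mass one.

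For part (4), use (2) to factor $A_{\beta,n}=A_{\beta,m}\cdot B$ with $B=\prod_{p\mid n,\,p\nmid m}(1-p^{-\beta}\omega_{p*})$; this is legitimate because all the prime-indexed factors commute and, since $m\mid n$, every prime dividing $m$ also divides $n$. By (3), $B^{-1}$ is a finite product of positive operators and therefore preserves $\mt^+$. Hence whenever $A_{\beta,n}\nu\in\mt^+$ we have $A_{\beta,m}\nu=B^{-1}A_{\beta,n}\nu\in\mt^+$, which is the claimed containment $A_{\beta,n}^{-1}\mt^+\subseteq A_{\beta,m}^{-1}\mt^+$. I do not anticipate any real obstacle: the only things to watch are the identifications $\mu(d_1 d_2)=\mu(d_1)\mu(d_2)$ for coprime $d_i$ and the commutation of the $\omega_{p*}$, which together make every regrouping automatic.
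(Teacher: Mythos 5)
Your proof is correct and takes essentially the same route as the paper: the Euler-product factorization of $A_{\beta,n}$ over the primes dividing $n$, the Neumann series $\sum_k p^{-\beta k}\omega_{p^k*}$ for the inverse of each prime factor (using that pushforward is a contraction on $\mt$), the mass computation $\prod_{p\mid n}(1-p^{-\beta})^{-1}$ for the normalization, and the complementary-factor argument $A_{\beta,n}=A_{\beta,m}B$ with $B^{-1}$ positive for part (4). The only cosmetic difference is the order of (1) and (2): you prove (1) directly by expanding the double sum and using multiplicativity of $\mu$, then deduce (2), while the paper derives the product formula (2) first via inclusion--exclusion on the square-free part and reads (1) off from it.
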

\begin{proof}
A function on $\nx$ satisfying part  (1) is said to be {\em number-theoretic multiplicative}; notice that this will follow easily from part (2), which we prove next.
When $n=p$ is prime,  formula \eqref{eqn:anbdefinition} becomes  $A_{\beta,p} = 1-p^{-\beta} \omega_{p*}$. 
Since the operators $1-p^{-\beta} \omega_{p*}$ commute with each other, the usual inclusion-exclusion formula for the expansion of the product $ \prod_{p|n} (1-p^{-\beta} \omega_{p*})$ gives the formula in part (2) for square-free $n$. This suffices because  the M\"obius function eliminates the terms in which $d$ has repeated prime factors,  so that $ A_{\beta,n} =  A_{\beta,\prod_{p|n}p}$, where $ \prod_{p|n}p$ is square-free.  This proves part (2). 

For part (3) first notice that if $\nu$ is a positive measure, then
\[
\int_\T f(z) \,d\omega_{n*} \nu (z)= \int_\T f (z^n)\,d \nu(z) \geq 0 \qquad \forall f \in C(\T)^+,
\]
hence the operator $\omega_{n*}$ is positive, and setting $f=1$ shows that
\[
\|\omega_{n*} \nu\| = (\omega_{n*}\nu)(\T) = \nu(\omega_n^{-1}(\T)) = \nu(\T) = \|\nu\|, \qquad \nu\in \mt^+.
\]
For a general measure $\nu\in\mt$, write $\nu = \nu_+ - \nu_- + i\nu_i - i\nu_{-i}$ for the complex Hahn-Jordan decomposition of $\nu$, so that 
$\omega_{n*}\nu = \omega_{n*}\nu_+ - \omega_{n*}\nu_- + i\omega_{n*}\nu_i - i\omega_{n*}\nu_{-i}$ is a decomposition for $\omega_{n*}\nu$ and thus, by minimality,  $\omega_{n*}(\nu_+) \geq (\omega_{n*}\nu)_+$ and so on,  hence
\[
\|\nu\|= \|\nu_+\| + \|\nu_-\| + \|\nu_i\| + \|\nu_{-i}\| = \|\omega_{n*}\nu_+\| + \|\omega_{n*}\nu_-\| + \|\omega_{n*}\nu_i\| + \|\omega_{n*}\nu_{-i}\| \geq \|\omega_{n*}\nu\|.
\]
 Assume now that $n$ is a prime number $p$. Then $ \omega_{p*} ^k = \omega_{p^k*}$, and since $\| p^{-\beta} \omega_{p*}\| = p^{-\beta} <1$   the well-known  Neumann series $\sum_{k=0}^\infty p^{-\beta k} \omega_{p^k*}$ of positive operators converges in the Banach algebra $\mathcal B(\mt)$  and gives the formula for $A_{\beta,p}\inv$  in part (3). Taking inverses in part (2), we conclude that  
\[
 A_{\beta,n}\inv  = \prod_{p |n }\sum_{k=0}^\infty p^{-\beta k} \omega_{p^k*}
 \]
is a positive operator. The last assertion follows from normalizing $A_{\beta,p}\inv $ with the factor $\prod_{p |n} (1-\omega_{p*})$.

In order to prove (4), suppose $m|n$ and let $k$ be the product of the primes that divide $n$ but not $m$. 
By part (2) $A_{\beta,n} = A_{k,\beta}A_{\beta,m}$, and hence 
$A_{\beta,n}^{-1} = A_{\beta,m}^{-1}A_{\beta,k}^{-1}$.  Since the operator $A_{\beta, k}^{-1}$ is positive,  
 $
 A_{\beta,n}^{-1}\mt^+ = A_{\beta,m}^{-1}(A_{\beta,k}^{-1}\mt^+) \subseteq A_{\beta,m}^{-1}\mt^+$.
 \end{proof}
 Motivated by \lemref{lem:eulerproduct}(2), we extend the notation to include finite subsets $F\Subset \primes$ and define an operator  $A_{\beta,F}$ on the space $\mt$ of complex measures on $\T$ by 
\begin{equation}\label{eqn:definitionabn}
A_{\beta,F}\nu := \prod_{p \in F }(1-p^{-\beta} \omega_{p*}) \nu = \sum_{d\in\nx_F} \mu(d)d^{-\beta} \omega_{d*}\nu , \qquad \nu \in \mt,
\end{equation}
 where $\nx_F$ is the set of all natural numbers whose prime factors are in $F$. Thus, if $F$ is the set of prime divisors of a given $n \in \nx$, then $A_{\beta,F} = A_{\beta,n}$ .
\begin{prop} \label{pro:bsubcequivalence}
The following are equivalent for $\nu \in \mt$:
\begin{enumerate}

\smallskip\item $\nu $ is $\beta$-subconformal;

\smallskip\item $A_{\beta,n} \nu \geq 0$ for every $n\in \nx$;

\smallskip\item $A_{\beta,F} \nu\geq 0 $ for all    $F \Subset \primes$;
\smallskip\item $\nu \geq \sum_{ \emptyset \neq  A\subset F } (-1)^{|A|+1} \prod_{p \in A }(p^{-\beta} \omega_{p*}) \nu $ for all  finite  $F \Subset \primes$;
\smallskip\item the atomic part and the nonatomic part of $\nu$ are $\beta$-subconformal.
\setcounter{listpause}{\value{enumi}}
\end{enumerate}
If in addition $\beta\in(0,\infty)$, then these are also equivalent to:
\begin{enumerate}
\setcounter{enumi}{\value{listpause}}
\smallskip\item $\nu \in \bigcap_n A_{\beta,n}^{-1} \mt^+$;
\smallskip\item $\nu \in \bigcap_{F\Subset \primes} \prod_{p\in F} A_{\beta,F}\inv \mt^+$.
\end{enumerate}
\end{prop}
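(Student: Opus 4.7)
The plan is to establish the cycle of equivalences by short algebraic arguments based on \lemref{lem:eulerproduct}, together with one measure-theoretic argument for (5) about the atomic/nonatomic decomposition. Most of the listed conditions are simply repackagings of the inequality $A_{\beta,n}\nu\geq 0$.

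First I would handle (1)$\Leftrightarrow$(2) by unpacking \defref{def:subconformal} and \eqref{eqn:anbdefinition}: they say exactly the same thing. For (2)$\Leftrightarrow$(3), the point is that the M\"obius function in \eqref{eqn:anbdefinition} kills all non-squarefree divisors of $n$, so $A_{\beta,n}$ depends only on the set of prime divisors of $n$; concretely $A_{\beta,n}=A_{\beta,F}$ when $F$ is that prime support, and conversely $A_{\beta,F}=A_{\beta,\prod_{p\in F}p}$, so the two families of inequalities coincide. For (3)$\Leftrightarrow$(4), I would just expand the product formula $A_{\beta,F}=\prod_{p\in F}(1-p^{-\beta}\omega_{p*})$ from \lemref{lem:eulerproduct}(2) into a signed sum over subsets $A\subset F$, peel off the $A=\emptyset$ term as $\nu$, and move the rest to the right-hand side of the inequality.

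For (1)$\Leftrightarrow$(5) I would write $\nu=\nu_a+\nu_c$ with $\nu_a$ purely atomic and $\nu_c$ nonatomic (the continuous part). Since $\omega_d:\T\to\T$ is continuous and $d$-to-one, its pushforward preserves this decomposition: $\omega_{d*}\nu_a$ is purely atomic (supported on the images of the atoms of $\nu_a$) and $\omega_{d*}\nu_c$ is nonatomic. By linearity the decomposition is respected by each $A_{\beta,n}$, giving $(A_{\beta,n}\nu)_a=A_{\beta,n}\nu_a$ and $(A_{\beta,n}\nu)_c=A_{\beta,n}\nu_c$. A real signed measure is positive if and only if its atomic and continuous parts are each positive (they are mutually singular by construction), so the equivalence follows.

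Finally, for $\beta\in(0,\infty)$, I would deduce (2)$\Leftrightarrow$(6) and (3)$\Leftrightarrow$(7) from \lemref{lem:eulerproduct}(3), which makes each $A_{\beta,n}$ a bijection of $\mt$ with positive inverse. Under the natural reading $A_{\beta,n}^{-1}\mt^+=\{\nu:A_{\beta,n}\nu\in\mt^+\}$, the intersection conditions in (6) and (7) reformulate (2) and (3) verbatim; any positive scalar normalization such as $\prod_{p\in F}(1-p^{-\beta})$ does not affect membership in the cone. The only step that I expect to require genuine care is (5): it rests on the measure-theoretic facts that $\omega_{d*}$ respects the atomic/continuous decomposition and that positivity of a real signed measure is equivalent to positivity of each of its two parts. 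Both are standard, but this is the one place where the argument departs from purely formal manipulation with M\"obius sums.
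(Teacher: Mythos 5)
Your proposal is correct and follows essentially the same route as the paper: the formal equivalences (1)--(4) and (6)--(7) are repackagings via \lemref{lem:eulerproduct} (M\"obius support on square-free divisors, the product formula, and the positive invertibility of $A_{\beta,n}$ for $\beta>0$), and the one substantive step is exactly the one you flag --- that $\omega_{d*}$ preserves the atomic/nonatomic decomposition because $\omega_d$ is $d$-to-one, so $(A_{\beta,n}\nu)_a = A_{\beta,n}\nu_a$ and $(A_{\beta,n}\nu)_c = A_{\beta,n}\nu_c$, and positivity of a signed measure is positivity of both parts. You simply spell out more of what the paper compresses into ``clear from \lemref{lem:eulerproduct}''.
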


\begin{proof} The equivalence of properties (1) through (4) is clear from \lemref{lem:eulerproduct}, and so is the equivalence between (6) and (7), using $F =\{p\in \primes: p\mid n\}$.
Let $\nu = \nu_a + \nu_{c}$ be the decomposition of   $\nu$ into its atomic and nonatomic parts. 
Observe that  $(A_{\beta,n}\nu)_a = A_{\beta,n}(\nu_a)$ and $(A_{\beta,n}\nu)_{c} = A_{\beta,n}(\nu_{c})$ because for each $d$, the map $\omega_d$ is $d$-to-1. Since a measure is positive if and only if its atomic and non-atomic parts are positive, (5) is equivalent to (2). If $\beta\in(0,\infty)$, then since the set of measures satisfying  \eqref{eqn:positivitycondition} for a given $n\in \nx$ is $A_{\beta,n}^{-1} \mt^+$, we also see that (6) is equivalent to  (1).
\end{proof}

\begin{remark}\label{rem:low-temp}
For $\beta\in (1,\infty)$, the series $T_\beta = \frac{1}{\zeta(\beta)}\sum_{c= 1}^\infty c^{-\beta}\omega_{c*}$ defines a bounded linear transformation on $\mathcal{M}(\T)$.
Combining \cite[Theorem 8.1]{aHLR21} and \thmref{thm:KMScharact}, we see that $T_\beta$ is an affine isomorphism between the simplex of all probability measures on $\T$ and (the simplex of) $\beta$-subconformal probability measures on $\T$  (in the low-temperature range). 
\end{remark}

\begin{lemma}\label{lem:Xdecomposition}
For each finite  $F\Subset \primes$ define  $e_F :=\prod_{p\in F} (1- V_p V_p^*)$ and let $\alpha_a$, $a\in \nx$ be the endomorphisms from \proref{pro:semigroupaction}. 
Then, for $\beta\in(0,\infty)$ and $\psi$ a \kmsb state,
\smallskip
\begin{enumerate}
\item  \ $e_F  = \sum_{d\in \nx_F} \mu(d) V_d V_d^*$;

\medskip
\item \ 
$\alpha_a(e_F) \alpha_b(e_F) = \begin{cases} \alpha_a(e_F) & a=b \\  0 & a\neq b
\end{cases} \qquad a,b \in\nx_F;
$

\medskip \item \ $\psi(e_F) = {\zeta_F(\beta)}\inv$;

\medskip
\item \  $\sum_{a\in \nx_F}\psi(\alpha_a(e_F)) = 1$.

\end{enumerate}
\end{lemma}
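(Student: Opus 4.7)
The plan is to handle (1)--(4) in sequence; part (2) is the only step with any subtlety, while the others reduce to inclusion-exclusion and the KMS scaling.

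For (1), I expand $e_F = \prod_{p \in F}(1 - V_p V_p^*)$ by the usual inclusion-exclusion. Distinct primes $p,q$ are coprime, so (AB2) and (AB3) give $V_p V_p^* V_q V_q^* = V_{pq} V_{pq}^*$, and iterating one obtains $\prod_{p \in S} V_p V_p^* = V_{\prod S} V_{\prod S}^*$ for every $S \subseteq F$. Re-indexing by $d = \prod S$, with $\mu(d)$ providing both the sign and the automatic vanishing on non-squarefree $d \in \nx_F$, yields $e_F = \sum_{d \in \nx_F} \mu(d) V_d V_d^*$; this parallels the case $b = 1$ of \lemref{lem:complete-projections}(1) but indexed by $F$ rather than by the divisors of one integer.

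For (2), $\alpha_a(e_F) = V_a e_F V_a^*$ is automatically a projection because $e_F$ is one and $V_a$ is an isometry, so only the orthogonality for $a \neq b$ in $\nx_F$ needs work. Combining (1) with (AB2) gives $\alpha_a(e_F) = \sum_{d \in \nx_F} \mu(d) V_{ad} V_{ad}^*$. In the spatial realization on $\ell^2(\nxxz)$, each $V_c V_c^*$ is the projection onto $\clsp\{\ve_{(k,n)} : c \mid k\}$, so that $\alpha_a(e_F)$ is the projection onto $\clsp\{\ve_{(c,n)} : a \mid c \text{ and } p \nmid c/a \text{ for every } p \in F\}$. If $\alpha_a(e_F)\alpha_b(e_F) \neq 0$, some $\ve_{(c,n)}$ lies in both ranges, forcing $\lcm(a,b) \mid c$ while both $c/a$ and $c/b$ are free of prime factors from $F$; since $a,b \in \nx_F$, the quotients $\lcm(a,b)/a$ and $\lcm(a,b)/b$ lie in $\nx_F$ and divide $c/a$ and $c/b$ respectively, so both must equal $1$, forcing $a = b$. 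This divisibility bookkeeping is the main obstacle in the lemma.

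Parts (3) and (4) then follow from (1) together with the KMS characterization \eqref{eqn:KMScharact}: it gives $\psi(V_d V_d^*) = d^{-\beta}$, so
\[
\psi(e_F) = \sum_{d \in \nx_F} \mu(d) d^{-\beta} = \prod_{p \in F}(1 - p^{-\beta}) = \zeta_F(\beta)^{-1},
\]
which is (3). Applying the same identity to $\alpha_a(e_F) = \sum_{d \in \nx_F} \mu(d) V_{ad} V_{ad}^*$ yields $\psi(\alpha_a(e_F)) = a^{-\beta}\,\zeta_F(\beta)^{-1}$, and summing over $a \in \nx_F$ via the Euler product $\sum_{a \in \nx_F} a^{-\beta} = \prod_{p \in F}(1 - p^{-\beta})^{-1} = \zeta_F(\beta)$ produces (4). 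Once (2) is established, the remaining assertions are purely arithmetical consequences of the KMS scaling.
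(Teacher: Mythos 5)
Your proposal is correct, and parts (1), (3), (4) follow the same computational route as the paper (inclusion--exclusion for (1), then applying $\psi(V_dV_d^*)=d^{-\beta}$ and the Euler product for (3)--(4)). The one place you diverge is part (2): the paper identifies $\alpha_a(e_F)=e_{abn_F,a}$ and $\alpha_b(e_F)=e_{abn_F,b}$ and invokes the mutual orthogonality already established in \lemref{lem:complete-projections}(2), whereas you give a direct spatial argument on $\ell^2(\nxxz)$, reading off the ranges of the projections $\alpha_a(e_F)$ in the canonical basis and using the divisibility bookkeeping ($\lcm(a,b)/a\in\nx_F$ divides $c/a$, which has no prime factors in $F$, hence equals $1$) to force $a=b$. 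Both arguments are valid; yours is self-contained and slightly more elementary, while the paper's is shorter because it recycles a prior lemma.
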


\begin{proof} Let $n_F = \prod_{p\in F} p$; then $e_F$ is the projection $e_{n_F}$ of \lemref{lem:complete-projections}, so part (1) follows from \lemref{lem:complete-projections} (1). Similarly for part (2), if $a,b\in \nx_F$, we have $\alpha_a(e_F) = e_{ab n_F, a}$ and $\alpha_b(e_F) = e_{abn_F, b}$, and \lemref{lem:complete-projections} (2) implies that these projections are mutually orthogonal. 

Since $\psi(V_dV_d^*) = d^{-\beta}$, (3) follows from (1) and the M\"obius inversion formula,
\[
\psi(e_F) = \sum_{d\in\nx_F} \mu(d) d^{-\beta} = \frac{1}{\zeta_F(\beta)}.
\]

For (4), use (3) to compute 
\[
\sum_{a\in\nx_F}\psi(\alpha_a(e_F)) 
=\sum_{a\in\nx_F} a^{-\beta} \psi(e_F)
= \zeta_F(\beta) \frac{1}{\zeta_F(\beta)}=1.\qedhere
\]

\end{proof}

\begin{lemma}\label{lem:necessary} 
Suppose $\psi$ is a \kmsb state of $(\tnxxz,\sigma)$ and let $\nu_\psi$ be the  probability measure on $\T$ representing the restriction of $\psi$ to $C(\T)$ as in \eqref{eqn:nufromvarphi}. Then $\nu_\psi$ is $\beta$-subconformal and
\[\psi(e_F\,V_1fV_1^*\,e_F) = \int_\T f \,dA_{\beta,F} \nu_\psi\qquad\forall f\in C(\T),\ F\Subset \primes.\]

\begin{proof} Suppose $f\in C(\T)$ and $F\Subset \primes$. Then
\begin{equation}\label{eqn:claimforpositivity}
e_FV_1fV_1^*e_F = V_1fV_1^*e_F  = \sum_{d\in\nx_F}\mu(d)  V_1fV_1^*V_dV_d^* =  \sum_{d\in\nx_F} \mu(d) V_d (f\circ \omega_d) V_d^*,
\end{equation}
where the second equality follows from equation \eqref{eqn:productxy}.

Let $\nu_\psi$ be the probability measure on $\T$ representing the restriction of a \kmsb state $\psi$, and assume $f\geq 0$. Then
\begin{align*}
\int_\T f dA_{\beta,F} \nu_\psi &= \int_\T f d \Big(\sum_{d\in\nx_F} \mu(d) d^{-\beta} \omega_{d*} \nu_\psi\Big) 
= \sum_{d\in\nx_F} \mu(d) d^{-\beta} \int_\T (f\circ \omega_d)  d\nu_\psi\\[5pt]
&= \sum_{d\in\nx_F} \mu(d) d^{-\beta} \psi(V_1 (f\circ \omega_d) V_1^*)
= \sum_{d\in\nx_F} \mu(d) \psi(V_d (f\circ \omega_d) V_d^*)\\[5pt]
& = \psi\Big(e_F V_1fV_1^*\,e_F\Big) 
\geq 0,
\end{align*}
where the first three equalities are obvious, the fourth one holds because of the \kmsb condition, and the fifth one holds by  \eqref{eqn:claimforpositivity}.
\end{proof}
 
\end{lemma}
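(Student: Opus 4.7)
The plan is to establish the displayed identity $\psi(e_F V_1 f V_1^* e_F) = \int_\T f \, dA_{\beta,F} \nu_\psi$ by direct computation in $\mfd$, and then deduce $\beta$-subconformality as an immediate positivity consequence. First I would use the explicit formula $e_F = \sum_{d \in \nx_F} \mu(d) V_d V_d^*$, which is the content of \lemref{lem:complete-projections}(1) applied with $n = \prod_{p \in F} p$. Applying the product formula \eqref{eqn:productxy} with $c=1$ (so $c' = 1$ and $d' = d$) gives $V_1 f V_1^* \cdot V_d V_d^* = V_d (f \circ \omega_d) V_d^*$. Since $\mfd$ is commutative and $e_F$ is a projection, this yields
\[
e_F V_1 f V_1^* e_F = V_1 f V_1^* e_F = \sum_{d \in \nx_F} \mu(d) V_d (f \circ \omega_d) V_d^*.
\]

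Next I would apply $\psi$ term by term. The KMS characterization \eqref{eqn:KMScharact} gives $\psi(V_d U^k V_d^*) = d^{-\beta} \psi(U^k) = d^{-\beta} \int_\T z^k \, d\nu_\psi$ for each integer $k$; approximating $f \circ \omega_d$ in $C(\T)$ by trigonometric polynomials and using continuity of $\psi$ together with the natural isomorphism of $C^*(U)$ with $C(\T)$ extends this to
\[
\psi(V_d (f \circ \omega_d) V_d^*) = d^{-\beta} \int_\T f \circ \omega_d \, d\nu_\psi = d^{-\beta} \int_\T f \, d(\omega_{d*} \nu_\psi).
\]
Summing over $d \in \nx_F$ with weights $\mu(d)$ and comparing with the definition \eqref{eqn:definitionabn} of $A_{\beta,F}$ gives the claimed identity.

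Finally, for the $\beta$-subconformality assertion I would argue as follows. Whenever $f \in C(\T)_+$, the element $V_1 f V_1^*$ is positive in $\mfd$, so $e_F V_1 f V_1^* e_F$ is positive in $\tnxxz$, and hence $\psi$ assigns it a nonnegative value. By the identity just established, this means $\int_\T f \, dA_{\beta,F} \nu_\psi \geq 0$ for every $f \geq 0$ and every finite $F \Subset \primes$, so $A_{\beta,F} \nu_\psi \geq 0$ for all such $F$. This is condition (3) of \proref{pro:bsubcequivalence}, which is equivalent to $\beta$-subconformality of $\nu_\psi$.

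The main obstacle is really bookkeeping rather than any conceptual difficulty: one must apply the product formula \eqref{eqn:productxy} correctly and promote the KMS identity from trigonometric monomials $U^k$ to arbitrary continuous functions $g \in C(\T)$ via continuity, before recognizing the resulting Möbius-weighted sum as integration against $A_{\beta,F}\nu_\psi$. The lemma is then essentially a translation of the KMS condition and the inclusion-exclusion formula for $e_F$ into the language of measures on $\T$.
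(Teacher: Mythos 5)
Your proposal is correct and follows essentially the same route as the paper's proof: the same inclusion–exclusion formula for $e_F$, the same application of \eqref{eqn:productxy} to obtain $e_F V_1 f V_1^* e_F = \sum_{d\in\nx_F}\mu(d)V_d(f\circ\omega_d)V_d^*$, the same use of the KMS characterization \eqref{eqn:KMScharact} to evaluate $\psi$ on these terms, and the same positivity argument yielding subconformality via \proref{pro:bsubcequivalence}. The only cosmetic difference is that you spell out the approximation by trigonometric polynomials, which the paper leaves implicit.
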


Next we see that every measure on $\T$ gives rise to a linear functional on $\mfd_a$ via \eqref{eqn:KMScharact},
but only the $\beta$-subconformal ones extend to positive linear functionals on $\mfd = \lim \mfd_a$.

\begin{lemma}\label{lem:prime-positivity} Suppose 
$\nu$ is a  finite measure on $\T$ and let $\beta\in[0,\infty)$.  For each $a\in \nx$ there exists a unique linear functional $\psi_{\beta,\nu,a}$ on 
$\mfd_a := \lsp \{V_b f V_b^*: b|a, \ f\in C(\T)\}$ such that
\begin{equation}\label{eqn:extensionformula}
\psi_{\beta,\nu,a} (V_b f V_b^*) := b^{-\beta} \int_\T f d\nu  \qquad b |a, \ f\in C(\T),
\end{equation}
and  $(\psi_{\beta,\nu,a} )_{ a\in \nx}$ is a coherent family for the inductive system $(\mfd_a, \iota_{a,b})_{a\in \nx}$.

If in addition $\nu$  is $\beta$-subconformal,
 then  $\psi_{\beta,\nu,a}\geq 0$ for every $a$ and there is a unique positive linear functional $ \lim_a \psi_{\beta,\nu,a}$ on $\mfd = \lim \mfd_a$ 
 extending  $\psi_{\beta,\nu,a}$. If $\nu(\T) =1$, the gauge-invariant extension of the  limit functional is a \kmsb state of $\tnxxz$ given by

 \begin{equation}\label{eqn:KMScharactBoundQuot}
\psi_{\beta,\nu}(V_bfV_c^*) := (\lim_a \psi_{\beta,\nu,a}) \circ E(V_bfV_c^*) = \delta_{b,c} b^{-\beta} \int_\T fd\nu \qquad b,c \in \nx, \ f\in C(\T)
\end{equation}
where $E$ is the conditional expectation of \proref{pro:condexptheta}.
\end{lemma}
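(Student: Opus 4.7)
The plan is to define $\psi_{\beta,\nu,a}$ through the isomorphism $\Gamma_a:C(X_a)\stackrel{\cong}{\to}\mfd_a$ of \corref{cor:system-spectra}, by first specifying a suitable finite measure $\mu_{\beta,\nu,a}$ on $X_a=\T\times\Delta_a$ and then setting $\psi_{\beta,\nu,a}(x):=\int_{X_a}\Gamma_a^{-1}(x)\,d\mu_{\beta,\nu,a}$. Guided by the formula \eqref{eqn:gamma-inv}, I would define
\[
\mu_{\beta,\nu,a}\big|_{\T\times\{b\}} := b^{-\beta}\,A_{\beta,a/b}\nu, \qquad b\in\Delta_a,
\]
which packages the formal derivation suggested by computing $\psi_{\beta,\nu,a}\circ\Gamma_a$ on an arbitrary $f\in C(X_a)$.

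To verify the defining identity $\psi_{\beta,\nu,a}(V_bfV_b^*)=b^{-\beta}\int_\T f\,d\nu$, I would apply \eqref{eqn:gamma-inv}, substitute $d=bc$ so the resulting sum runs over $c\mid a/b$, expand each $A_{\beta,a/(bc)}\nu$ using \eqref{eqn:anbdefinition}, and regroup by $m=ce$. The inner sum should then collapse via the classical identity $\sum_{e\mid m}\mu(e)=\delta_{m,1}$, leaving exactly the desired value $b^{-\beta}\int_\T f\,d\nu$. This Möbius-inversion bookkeeping is the one computational step where care is needed; everything else is formal. Uniqueness of $\psi_{\beta,\nu,a}$ then follows because the elements $V_bfV_b^*$ span $\mfd_a$ and $\Gamma_a$ is an isomorphism, while coherence of the family $(\psi_{\beta,\nu,a})_{a\in\nx}$ under the inclusions $\iota_{a,b}$ is immediate from \eqref{eqn:extensionformula}, since both $\psi_{\beta,\nu,a}(V_cfV_c^*)$ and $\psi_{\beta,\nu,b}(V_cfV_c^*)$ must equal $c^{-\beta}\int f\,d\nu$ whenever $c\mid a\mid b$.

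For positivity when $\nu$ is $\beta$-subconformal, I would invoke \proref{pro:bsubcequivalence} to conclude $A_{\beta,n}\nu\geq 0$ for every $n\in\nx$; hence each $\mu_{\beta,\nu,a}$ is a positive Borel measure on $X_a$ and each $\psi_{\beta,\nu,a}$ is a positive linear functional on $\mfd_a$ of norm $\psi_{\beta,\nu,a}(1)=\nu(\T)$. Because the norms are uniformly bounded and the family is coherent, it passes to a unique positive linear functional $\lim_a\psi_{\beta,\nu,a}$ on the direct limit $\mfd=\overline{\bigcup_a\mfd_a}$. Finally, assuming $\nu(\T)=1$, this limit is a state, and composing with the faithful conditional expectation $E$ of \proref{pro:condexptheta} yields a gauge-invariant state $\psi_{\beta,\nu}$ of $\tnxxz$ that, by construction, satisfies $\psi_{\beta,\nu}(V_bfV_c^*)=\delta_{b,c}b^{-\beta}\int_\T f\,d\nu$, and in particular $\psi_{\beta,\nu}(U^k)=\int_\T z^k\,d\nu$. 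This is precisely the KMS characterization \eqref{eqn:KMScharact}, so $\psi_{\beta,\nu}$ is \kmsb by \cite[Prop.~7.2]{aHLR21}. The only substantive obstacle in the whole argument is the Möbius-inversion verification of \eqref{eqn:extensionformula}; the rest is a direct assembly of the isomorphism $\Gamma_a$, the characterization of $\beta$-subconformality from \proref{pro:bsubcequivalence}, and the description of KMS states via $E$.
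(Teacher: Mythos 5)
Your proof is correct and takes essentially the same route as the paper: both hinge on the isomorphism $\Gamma_a:C(X_a)\cong\mfd_a$ from \corref{cor:system-spectra} and the identification of the measure on the $b$-th slice as $b^{-\beta}A_{\beta,a/b}\nu$, and both pass to $\tnxxz$ via the conditional expectation $E$ and \cite[Proposition 7.2]{aHLR21}. The only cosmetic difference is order of operations — you build the positive measure on $X_a$ first and recover \eqref{eqn:extensionformula} by M\"obius inversion, whereas the paper defines $\psi_{\beta,\nu,a}$ directly from \eqref{eqn:extensionformula} (using the direct-sum decomposition of $\mfd_a$) and then checks positivity by evaluating $\psi_{\beta,\nu,a}\circ\Gamma_a$ on the basis functions $f^b$.
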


\begin{proof} 
From the proof of \corref{cor:system-spectra} we know that $\mfd_a$ is the linear space direct sum of the subspaces $V_b C(\T) V_b^*$ over the divisors $b$ of  $a$, and hence \eqref{eqn:extensionformula} defines a unique linear functional on $\mfd_a$. The resulting family $(\psi_{\beta,\nu,a})_{a\in \nx}$ of linear functionals is coherent with respect to inclusion because the right hand side does not depend on $a$ explicitly.

Suppose now that  $\nu$ is $\beta$-subconformal and notice, 
by setting   $n=1$ in \eqref{eqn:positivitycondition}, that $\nu$ is  positive, so we may as well assume without loss of generality that $\nu$ is a probability measure. We will show next that 
$\psi_{\beta,\nu,a}$ is a state of $\mfd_a$ for each $a \in \nx$. 
The isomorphism $\Gamma_a: C(X_a) \cong \mfd_a$ from \corref{cor:system-spectra} establishes a bijection between positive cones. For $f\in C(\T)$ and $b|a$, let $f^b\in C(X_a)$ be the function $f^b(z,d) = \delta_{b,d} f(z)$. Since the positive cone of $C(X_a) = C(\bigsqcup_{b|a} (\T\times \{b\}))$ is the direct sum of positive cones of the $C(\T\times \{b\})$, 
 the functional $\psi_{\beta,\nu,a}$ is positive if and only if 
 $\psi_{\beta,\nu,a}(\Gamma_a(f^b) ) \geq 0$ for every $b|a$ and every $f\in C(\T)^+$. We verify the latter condition by the following  direct computation using \corref{cor:system-spectra}:
\begin{align*}
\psi_{\beta,\nu,a}(\Gamma_a (f^b) ) 
&=  \psi_{\beta,\nu,a} \Big( \sum_{d|\frac{a}{b}} \mu(d) V_{bd}(f\circ \omega_d)V_{bd}^*\Big) \\
&= \sum_{d|\frac{a}{b}} \mu(d) (bd)^{-\beta} \int_{\T} (f\circ \omega_d ) d\nu \\ 
&= b^{-\beta} \int_\T f d\Big(\sum_{d|\frac{a}{b}} \mu(d) d^{-\beta} \omega_{d*} \nu\Big).
\end{align*}
This shows that  $\psi_{\beta,\nu,a}$ is positive as a linear functional on $\mfd_a$ if and only if condition \eqref{eqn:positivitycondition} holds for all divisors of $a$. Computing at the identity shows that $\psi_{\beta,\nu,a}$ is a state of $\mfd_a$. 
We have thus shown that $\{\psi_{\beta,\nu,a}\}_{a\in \nx}$ is a coherent system of states for the inductive system 
$(\mfd_a, \iota_{a,b})_{a\in \nx}$ and this uniquely defines a state $\lim_a \psi_{\beta,\nu,a}$ on the direct limit $\mfd$, which is  given by \eqref{eqn:KMScharactBoundQuot} with $b=c$.

Now let $\psi_{\beta,\nu} : = \lim_a \psi_{\beta,\nu,a} \circ E$ be the gauge-invariant extension induced via the conditional expectation of the gauge action. On the spanning monomials,  this extension is given by
\begin{equation}\label{eqn:psiformula}
\psi_{\beta,\nu}(V_b f V_c^*) =  \delta_{b,c} b^{-\beta} \int_\T f d\nu, \qquad b,c \in \nx, \ f\in C(\T),
\end{equation}
which obviously satisfies \eqref{eqn:KMScharact} and is thus a \kmsb state.
\end{proof}

We can now prove the first part of \thmref{thm:affinehomeom}.
\begin{theorem}[\thmref{thm:affinehomeom}(1)] 
\label{thm:KMScharact}
For each $\beta\in[0,\infty)$, the map that sends a \kmsb state $\psi$ to the measure $\nu_\psi$ on $\T$ representing the restriction of $\psi$ to $C^*(U)$, as in \eqref{eqn:nufromvarphi}, is an affine homeomorphism of the simplex of \kmsb states of $(\tnxxz, \sigma)$ onto the $\beta$-subconformal probability measures $\nu$ on $\T$.
The inverse map $\nu\mapsto \psi_{\beta,\nu}$ is given by \eqref{eqn:psiformula}.

\begin{proof}
By \lemref{lem:necessary}, the `restriction map' $\psi\mapsto \nu_\psi$ sends \kmsb states to $\beta$-subconformal probability measures. This map is clearly affine, weak* continuous, and also injective because of \eqref{eqn:KMScharact}, as noticed before. 
Suppose $\nu$ is a $\beta$-subconformal probability measure on $\T$ and let  $\psi_{\beta,\nu}$ be the \kmsb state constructed in  \lemref{lem:prime-positivity}. Setting $b=c=1$ in \eqref{eqn:psiformula} shows that the restriction of  $\psi_{\beta,\nu}$ to $C(\T) \cong V_1 C(\T) V_1^*$ is  $\nu$ again, proving at once that the map  $\psi\mapsto \nu_\psi$ is surjective and that its  inverse is $\nu \mapsto \psi_{\beta,\nu}$.  

Clearly the $\beta$-subconformal probability measures form a weak*-compact subset  of $\mt$, and,
being a continuous bijection of compact spaces, the map $\psi \mapsto \nu_\psi$ is a homeomorphism. Its image is a Choquet simplex in  $\mt$ because the \kmsb states form a Choquet simplex.
 \end{proof}
\end{theorem}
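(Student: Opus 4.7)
The plan is to combine the two key lemmas already established, namely \lemref{lem:necessary} (which shows that the restriction of a \kmsb state yields a $\beta$-subconformal measure) and \lemref{lem:prime-positivity} (which gives the converse construction), and then upgrade the resulting set-theoretic bijection to an affine homeomorphism of simplices by a standard weak* compactness argument.

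First I would observe that the map $\psi \mapsto \nu_\psi$ is well-defined from \kmsb states to probability measures on $\T$, and takes values in the $\beta$-subconformal measures by \lemref{lem:necessary}. It is affine because restriction is linear and the correspondence between positive linear functionals on $C(\T)$ and measures is affine. Weak* continuity of this map is immediate: a net $\psi_\lambda \to \psi$ in the weak* topology of states implies $\int_\T f \, d\nu_{\psi_\lambda} = \psi_\lambda(V_1 f V_1^*) \to \psi(V_1 f V_1^*) = \int_\T f \, d\nu_\psi$ for every $f \in C(\T)$. Injectivity is a direct consequence of \eqref{eqn:KMScharact}: a \kmsb state is completely determined by the values $\psi(U^k)$ for $k \in \Z$, which are the Fourier coefficients of $\nu_\psi$.

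For surjectivity, given any $\beta$-subconformal probability measure $\nu$ on $\T$, \lemref{lem:prime-positivity} produces the \kmsb state $\psi_{\beta,\nu}$ of $\tnxxz$ satisfying \eqref{eqn:psiformula}. Setting $b = c = 1$ in that formula gives $\psi_{\beta,\nu}(V_1 f V_1^*) = \int_\T f \, d\nu$ for all $f \in C(\T)$, which means $\nu_{\psi_{\beta,\nu}} = \nu$. This shows simultaneously that $\psi \mapsto \nu_\psi$ is surjective and that its inverse is precisely $\nu \mapsto \psi_{\beta,\nu}$.

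To conclude it is a homeomorphism onto its image, I would note that both sides are weak* compact Hausdorff spaces: the \kmsb states because they form a weak* closed subset of the state space, and the $\beta$-subconformal probability measures because each inequality $A_{\beta,n}\nu \geq 0$ from \defref{def:subconformal} is weak* closed (the map $\nu \mapsto \int f \, dA_{\beta,n}\nu$ is weak* continuous for fixed $f \in C(\T)_+$, being a finite linear combination of the weak* continuous functionals $\nu \mapsto \int f \circ \omega_d \, d\nu$), and the unit ball is weak* compact. A continuous bijection between compact Hausdorff spaces is automatically a homeomorphism, and the simplex structure is inherited directly since affine isomorphisms of compact convex sets preserve the Choquet simplex property. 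The heavy lifting has already occurred in the positivity argument of \lemref{lem:prime-positivity}, where subconformality of $\nu$ was used fiberwise over the projective limit decomposition of $\Spec \mfd$ from \proref{proj-limit-present}; here we merely assemble the pieces.
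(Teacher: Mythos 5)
Your proposal is correct and follows essentially the same route as the paper: \lemref{lem:necessary} for one direction, \lemref{lem:prime-positivity} with $b=c=1$ in \eqref{eqn:psiformula} for surjectivity and the inverse, and the compact-bijection argument for the homeomorphism (your extra remark on why the subconformal measures are weak*-closed just fills in what the paper calls ``clear''). No gaps.
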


\begin{remark}
It is possible to realize  $\tnxxz$ as the C*-algebra of a finitely aligned product system of correspondences over $\nx$, which makes \eqref{def:subconformal} a particular case of the general positivity condition from \cite[Theorem 2.1]{ALN20}. Further, the reduction of positivity  to generators  from \cite[Theorem 9.1]{ALN20} applies here too, because  $\nx$ can be viewed as the right-angled Artin monoid corresponding to the full graph with vertices on the prime numbers, see \lemref{lem:eulerproduct}(2).
\end{remark}

 \section{Atomic subconformal measures on $\T$.} \label{sec:atomic}

In this section, we produce the list of $\beta$-subconformal measures for $\beta\in(0,1]$ that appear in \thmref{thm:affinehomeom}.
 The main result is \thmref{thm:atomic}, where we compute the decomposition of an arbitrary atomic $\beta$-subconformal probability measure in terms of the extremal ones.  We verify directly that Haar measure $\lambda$ on $\T$ satisfies $A_{\beta,B} \lambda = \prod_{p\in B} (1-p^{-\beta}) \lambda \geq 0$, and conclude that it is $\beta$-subconformal by \proref{pro:bsubcequivalence}; this extends the case $\beta =1$, which was already exhibited in \cite{aHLR21}.  The proof that $\lambda$ is the unique nonatomic $\beta$-subconformal probability measure for $\beta \in (0,1]$ is more involved and is given in the following section.

For each $k \in \nx$  the set of $k^{\mathrm th}$ roots of unity will be denoted by $Z_k$ and the primitive $k^{\mathrm th}$ roots of unity will be denoted by $Z_k^*$.  Also, $\mz{k}$ denotes the space of measures on  $Z_k$, viewed as a subspace of $\mt$, with positive cone $\mz{k}^+$.

\begin{proposition}\label{pro:finitesupport}
For $\beta \in [0,1]$ every $\beta$-subconformal atomic probability measure on $\T$  is supported on the roots of unity. Moreover, the only $0$-subconformal atomic probability measure is $\delta_1$.
\end{proposition}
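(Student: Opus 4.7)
The plan is to extract from $\beta$-subconformality a single master estimate that controls $\omega_{n*}\nu$ for every $n\in\nx$, and then exploit the divergence of $\sum n^{-\beta}$ at the critical exponent $\beta=1$. The key input is that applying \defref{def:subconformal} with $n=p$ prime (equivalently, \proref{pro:bsubcequivalence}(3) with $F=\{p\}$) gives $\nu\geq p^{-\beta}\omega_{p*}\nu$ for every prime $p$. Each operator $\omega_{q*}$ is positivity-preserving on $\mt$ by \lemref{lem:eulerproduct}, so multiplying this inequality through by $\omega_{q*}$ and iterating over primes (with repetitions) yields, by induction on the number of prime factors,
\[
\nu \ \geq\ n^{-\beta}\,\omega_{n*}\nu \qquad \text{for every } n\in\nx.
\]
Evaluating at the singleton $\{z^n\}$ and keeping only the contribution of $z\in\omega_n^{-1}(\{z^n\})$ yields the pointwise estimate $\nu(\{z^n\})\geq n^{-\beta}\nu(\{z\})$.

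With this estimate in hand the first assertion is immediate. Suppose $\nu$ has an atom $z$ that is not a root of unity; then the powers $z,z^2,z^3,\ldots$ are pairwise distinct atoms of $\nu$, hence
\[
1\ =\ \nu(\T)\ \geq\ \sum_{n=1}^\infty \nu(\{z^n\})\ \geq\ \nu(\{z\})\sum_{n=1}^\infty n^{-\beta}.
\]
For $\beta\in[0,1]$ the series on the right diverges, forcing $\nu(\{z\})=0$, a contradiction. Thus every atom of $\nu$ lies on a root of unity.

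For the $\beta=0$ statement, the master estimate specializes to $\nu\geq\omega_{n*}\nu$; evaluating at $\{1\}$ yields $\nu(\{1\})\geq \nu(\omega_n^{-1}(\{1\}))=\nu(Z_n)$ for every $n$. Choose $n=N!$ and let $N\to\infty$: the sets $Z_{N!}$ increase to the full group of roots of unity, and by the first part $\nu$ is carried by this group, so continuity of measure from below gives $\nu(Z_{N!})\nearrow 1$. Hence $\nu(\{1\})\geq 1$, i.e.\ $\nu=\delta_1$.

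The only nonobvious step is passing from the single-prime inequalities to the composite estimate $\nu\geq n^{-\beta}\omega_{n*}\nu$. This is \emph{not} the same as subconformality $A_{\beta,n}\nu\geq 0$ (which has mixed signs) and relies on commutativity and positivity of the $\omega_{p*}$; once it is in hand, the rest of the argument is just the divergence of $\sum n^{-\beta}$ at the critical exponent $\beta=1$, which is precisely what singles out the supercritical range.
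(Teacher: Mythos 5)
Your proposal is correct and follows essentially the same route as the paper: iterate the single-prime inequality $\nu\geq p^{-\beta}\omega_{p*}\nu$ (using positivity of $\omega_{q*}$) to obtain $\nu(\{z^n\})\geq n^{-\beta}\nu(\{z\})$, then invoke divergence of $\sum n^{-\beta}$ for $\beta\leq 1$. Your $\beta=0$ step differs cosmetically — you take $n=N!\to\infty$ and use continuity from below to force $\nu(\{1\})\geq 1$, while the paper fixes a single root $z\neq 1$ of order $k$ and deduces $\nu(\{z\})=0$ directly from $\nu(\{1\})\geq\nu(\omega_k^{-1}\{1\})$ — but both are correct and of comparable length.
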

\begin{proof}
Suppose that $\nu$ is a finite  $\beta$-subconformal measure on $\T$  for $\beta \leq 1$ and $\nu(\{z\}) >0$; we will show that $z$ is a root of unity.
 For prime $n=p$, the definition of subconformality  \eqref{eqn:positivitycondition}  reads
$\nu \geq  p^{-\beta}\omega_{p*} \nu$. In particular, for each $a\in \nx$,
\[
\nu(\{z^{ap}\})\ \geq\ p^{-\beta} \nu(\omega_p^{-1}(\{z^{ap}\}))\ =\ p^{-\beta} \sum_{s:s^p = (z^{a})^p} \nu(\{s\})\ \geq\ p^{-\beta}  \nu(\{z^a\}).
\]
Iterating this procedure we see that $\nu(\{z^{ap^k}\}) \geq p^{-k\beta}  \nu(\{z^a\})$ for every $k\in \N$, and, more generally, using the prime factorization $n =  \prod_{p|n} p^{e_p(n)}$, we conclude that
\[
\nu(\{z^n\})  \geq n^{-\beta} \nu(\{z\}).
\]
Since $\beta\leq 1$, the series $\sum_n n^{-\beta}$ diverges. Hence the map $n \mapsto z^n$ cannot be injective, for otherwise $\nu(\{z^n:n\in \nx\})$ would be infinite by $\sigma$-additivity. Hence there  exist $n_1 \neq n_2$ such that $z^{n_1} =z^{n_2}$
and $z$ is an $(n_1-n_2)^\mathrm{th}$ root of unity.

Now suppose that $\beta = 0$ and $z\neq 1$ is a $k^\mathrm{th}$ root of unity. Then
\[\nu(\{1\}) \geq \nu(\omega_k^{-1}(\{1\})) \geq \nu(\{1\}) + \nu(\{z\}),\]
so that $\nu(\{z\}) = 0$. Therefore, $\nu = \delta_1$.
\end{proof}

\begin{lemma}\label{lem:restrictions}  
For each  $k\in \nx$  let $Z_k$ denote the set of $k^\mathrm{th}$ roots of unity, and 
for each measure $\nu$ on $\T$ denote by  $\nu|_k$ its restriction to $Z_k$, that is, $\nu|_k(A) := \nu(Z_k\cap A)$ for   measurable $A\subset \T$.
If  $\nu$ is $\beta$-subconformal, then so is  $\nu|_k$. Moreover, $\nu|_k$ converges to the atomic part of $\nu$  in the weak-* topology as $k \nearrow$ in $\nx$.
\end{lemma}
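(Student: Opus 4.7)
The statement has two parts, and the first is the main obstacle.

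For Part 1 (subconformality of $\nu|_k$), my plan is to verify $A_{\beta,n}(\nu|_k) \geq 0$ for every $n \in \nx$ via \proref{pro:bsubcequivalence}. Each pushforward $\omega_{d*}$ sends $Z_k$ into itself, so $A_{\beta,n}(\nu|_k)$ is automatically supported on $Z_k$, reducing the task to pointwise nonnegativity at each $y \in Z_k$. Splitting $\omega_d^{-1}\{y\} = (\omega_d^{-1}\{y\}\cap Z_k) \sqcup (\omega_d^{-1}\{y\}\setminus Z_k)$ yields the comparison identity
\[
A_{\beta,n}(\nu|_k)(\{y\}) \;=\; A_{\beta,n}\nu(\{y\}) \;-\; \sum_{d\mid n}\mu(d)\,d^{-\beta}\,\nu\bigl(\omega_d^{-1}\{y\}\setminus Z_k\bigr),
\]
in which $A_{\beta,n}\nu(\{y\}) \geq 0$ by hypothesis and each $\nu(\omega_d^{-1}\{y\}\setminus Z_k)$ is individually nonnegative. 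For $n=p$ prime the correction collapses (the $d=1$ term vanishes since $\{y\}\setminus Z_k = \emptyset$) to $-p^{-\beta}\,\nu(\omega_p^{-1}\{y\}\setminus Z_k)$, so $A_{\beta,p}(\nu|_k)(\{y\})$ is a sum of two nonnegative terms, furnishing the base case at once.

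The subtlety for composite $n$ is that the M\"obius signs alternate, so the correction is not manifestly one-signed. My plan is to proceed by induction on the number of distinct prime factors of $n$, aided by two reductions: by \proref{pro:bsubcequivalence}(5) the continuous part of $\nu$ contributes nothing to $\nu|_k$, and by \proref{pro:finitesupport} (valid here since $\beta\in(0,1]$) the atomic part $\nu_a$ is supported on $\bigcup_m Z_m$. For any fixed $n$ and $k$, the quantity $A_{\beta,n}(\nu|_k)$ then depends only on finitely many values $\{\nu(\{\zeta\}): \zeta \in Z_{kn}\}$, so the positivity assertion reduces to a finite-dimensional linear inequality on measures on the cyclic group $\Z/kn\Z$. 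I would settle this by discrete Fourier analysis on $\Z/kn\Z$, exploiting the multiplicativity of $A_{\beta,n}$ from \lemref{lem:eulerproduct} and the fact that each $\omega_{d*}$ acts on characters of $\Z/kn\Z$ by a simple index shift.

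Part 2 (weak-* convergence $\nu|_k \to \nu_a$) is essentially bookkeeping. By \proref{pro:bsubcequivalence}(5) the atomic part $\nu_a$ is itself $\beta$-subconformal, so \proref{pro:finitesupport} places its support inside the countable set $\bigcup_m Z_m$ of all roots of unity. Along any cofinal sequence $k_1 \mid k_2 \mid \cdots$ in $(\nx,\mid)$, every root of unity eventually lies in $Z_{k_j}$, so for each $f \in C(\T)$ the dominated convergence theorem gives
\[
\int f\,d\nu|_{k_j} \;=\; \sum_{\zeta \in Z_{k_j}} f(\zeta)\,\nu(\{\zeta\}) \;\longrightarrow\; \sum_{\zeta} f(\zeta)\,\nu(\{\zeta\}) \;=\; \int f\,d\nu_a,
\]
dominated by $\|f\|_\infty\,\nu_a(\{\zeta\})$ with $\sum_\zeta \nu_a(\{\zeta\}) \leq \nu_a(\T) < \infty$. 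The principal difficulty throughout is the finite-dimensional positivity claim needed for composite $n$ in Part 1; once that is in hand, everything else is routine.
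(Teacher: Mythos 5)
Your setup is sound as far as it goes: the comparison identity
\[
A_{\beta,n}(\nu|_k)(\{y\}) \;=\; A_{\beta,n}\nu(\{y\}) \;-\; \sum_{d\mid n}\mu(d)\,d^{-\beta}\,\nu\bigl(\omega_d^{-1}\{y\}\setminus Z_k\bigr)
\]
is correct, the prime case does follow immediately from it, and your Part 2 (dominated convergence over the atoms, which lie in $\bigcup_m Z_m$ by \proref{pro:finitesupport} and \proref{pro:bsubcequivalence}(5)) is essentially the paper's one-line argument spelled out. But the composite case, which you yourself identify as the crux, is not proved: ``reduce to a finite-dimensional inequality on $\Z/kn\Z$ and settle it by discrete Fourier analysis'' is a plan, not an argument, and it does not obviously close. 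Already for $n=pq$ the correction contains the negatively-signed term $-(pq)^{-\beta}\nu(\omega_{pq}^{-1}\{y\}\setminus Z_k)$, and the natural attempt to dominate it by applying subconformality to $B=\omega_p^{-1}\{y\}\setminus Z_k$ fails, because $\omega_q^{-1}(B)=\omega_{pq}^{-1}\{y\}\setminus\omega_q^{-1}(Z_k)$ misses the mass sitting in $\omega_q^{-1}(Z_k)\setminus Z_k$; controlling this requires knowing exactly how $\omega_p^{-1}\{y\}$ meets $Z_k$, which depends on whether $p\mid r$, $p\nmid r$, or $p\mid k/r$ (with $r=\ord(y)$). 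Moreover, the proposed finite-dimensional reduction is dangerously close to circular: the cone of admissible vectors $(\nu(\{\zeta\}))_{\zeta\in Z_{kn}}$ you would need to work inside is precisely ``restrictions of $\beta$-subconformal measures,'' which is what the lemma (and later \thmref{thm:atomic}) is meant to establish; you never identify which finitely many consequences of subconformality of the full $\nu$ you will feed into the linear problem, and positivity of a measure on a finite cyclic group is not transparent in Fourier coordinates, so the ``index shift'' observation buys nothing by itself.

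For comparison, the paper's proof runs exactly through the case analysis you are missing: for $z\in Z_k$ of order $r$ it records the trichotomy $\omega_p^{-1}(\{z\})\cap Z_k=\emptyset$, $=\{z^{1/p}\}$, or $=\omega_p^{-1}(\{z\})$ according to divisibility of $p$ into $r$ and $k/r$, factors a square-free $n$ as $n_1n_2n_3$ accordingly, uses multiplicativity (\lemref{lem:eulerproduct}(1)) to discard the $n_1$ part, and then controls the alternating-sign correction by an induction on the prime factors of $n_2$ in which subconformality of $\nu$ is applied not to singletons but to the auxiliary sets $\omega_d^{-1}(\{z\})\setminus\{z^{1/d}\}$ with the operators $A_{\beta,pm}$. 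Some mechanism of this kind — using the measure-level inequalities on carefully chosen Borel sets, not just atom values on a fixed finite cyclic group — is what your sketch would need to supply before it could be called a proof. (Two smaller points: \proref{pro:bsubcequivalence}(5) does not say the continuous part charges no roots of unity — that is just because a nonatomic measure has no atoms; and the paper's argument for Part 1 needs no restriction to $\beta\le 1$, whereas your reduction via \proref{pro:finitesupport} does.)
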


\begin{proof}
Fix $k\in \nx$ and suppose $z \in Z_k$ has primitive order $r| k$. For any prime $p$, there are three mutually exclusive and complementary possible cases for the set $\omega_p^{-1}(\{z\})\cap Z_k$:
\begin{equation}\label{eqn:pull-back-cases}
\omega_p^{-1}(\{z\}) \cap Z_k = \left\{\begin{array}{ll}
\emptyset & \text{if $p\nmid \frac{k}{r}$ and $p|r$}\\[2pt]
\{z^{1/p}\} & \text{if $p\nmid \frac{k}{r}$ and $p\nmid r$}\\[2pt]
\omega_p^{-1}(\{z\}) & \text{if $p | \frac{k}{r}$},
\end{array}\right.
\end{equation}
in the second case we have written $z^{1/p}$ for the unique element of $Z_k$ satisfying $(z^{1/p})^p = z$. 
Each square-free integer $n\in\nx$ factors uniquely as $n = n_1 n_2 n_3$, where $n_i$ is the product of prime factors corresponding to the $i^\mathrm{th}$ case of \eqref{eqn:pull-back-cases}. For $d|n_i$, since $n$ is square-free, it follows by induction on the number of prime factors of $d$ that \eqref{eqn:pull-back-cases} remains valid with $d$ in place of $p$. Consequently, $A_{\beta, n_1} \nu|_k(\{z\}) = \nu|_k(\{z\})$. By \lemref{lem:eulerproduct}(1) there is a (commuting) factorization yielding
$$A_{\beta,n} \nu|_k (\{z\})= A_{\beta, n_1} A_{\beta,n_2} A_{\beta,n_3} \nu|_k (\{z\})= A_{\beta,n_2} A_{\beta,n_3} \nu|_k (\{z\}).$$
For any $d_2|n_2$, the root $z^{1/d_2}$ is also a primitive $r^\mathrm{th}$ root of unity, so case (3) implies that $\omega_{d_3}^{-1}(\{z^{1/d_2}\})\cap Z_k = \omega_{d_3}^{-1}(\{z^{1/d_2}\})$ for any $d_3|n_3$. Hence:
\begin{align*}
A_{\beta,n_2} A_{\beta,n_3} \nu|_k (\{z\}) &= \sum_{d|n_2} \mu(d) d^{-\beta} A_{\beta,n_3} \nu|_k(\{z^{1/d}\})\\[5pt]
&=  \sum_{d|n_2} \mu(d) d^{-\beta} A_{\beta,n_3} \nu(\{z^{1/d}\})\\[5pt]
&= A_{\beta,n_2} A_{\beta,n_3} \nu(\{z\}) - \sum_{1\neq d | n_2} \mu(d) d^{-\beta} A_{\beta,n_3} \nu(\omega_d^{-1}(\{z\})\backslash \{z^{1/d}\}).
\end{align*}
Since $\nu$ is $\beta$-subconformal, \proref{pro:bsubcequivalence}(2) implies that the first term is positive; we will argue by induction that
\begin{equation}\label{eqn:positive-remainder}
-\sum_{1\neq d|n_2} \mu(d) d^{-\beta} A_{\beta,m} \nu(\omega_d^{-1}(\{z\})\backslash \{z^{1/d}\}) \geq 0
\end{equation}
when $m$ is relatively prime to $n_2$, from which it follows that the second term is also positive.

If $p$ is a prime dividing $n_2$, then we can write 
\begin{align*}
-&\sum_{1\neq d|n_2}\mu(d) d^{-\beta} A_{\beta,m}\nu(\omega_d^{-1}(z) \backslash \{z^{1/d}\}) \\[5pt]
&= p^{-\beta} A_{\beta,m}\nu(\omega_p^{-1}(z) \backslash \{z^{1/p}\})\\[5pt]
&\qquad\qquad- \sum_{1\neq d|\frac{n_2}{p}} \mu(d) d^{-\beta} \left[A_{\beta,m}\nu(\omega_d^{-1}(z) \backslash \{z^{1/d}\}) - p^{-\beta} A_{\beta,m}\nu(\omega_{pd}^{-1}(z) \backslash \{z^{1/pd}\})\right]\\[5pt]
&= p^{-\beta} A_{\beta,m}\nu(\omega_p^{-1}(z) \backslash \{z^{1/p}\}) - \sum_{1\neq d|\frac{n_2}{p}} \mu(d) d^{-\beta}A_{\beta,p}A_{\beta,m}\nu(\omega_d^{-1}(z) \backslash \{z^{1/d}\}).
\end{align*}
  The first term is positive since $\nu$ is $\beta$-subconformal. Since $p$ is relatively prime to $m$, \lemref{lem:eulerproduct}(1) says that $A_{\beta,p}A_{\beta,m} = A_{\beta,pm}$, so the second term is \eqref{eqn:positive-remainder} with $\frac{n_2}{p}$ and $pm$ in place of $n_2$ and $m$. Positivity then follows by induction on the number of prime factors of $n_2$. 

The final claim about the weak* limit is immediate because the atomic part of $\nu$  is supported on $\bigcup_k Z_k$ by \proref{pro:finitesupport}.
\end{proof}

As usual, we will write $\ord(z)$ for the order of $z$ in the group $\T$. That is, $\ord(z)$ is the primitive order if $z$ is a root of unity, and $\ord(z) = \infty$ if $z$ is not a root of unity. 
Recall the Euler totient function $\euler$ and  its  generalization $\euler_\beta$  defined in \secref{sec:introduction} by $\euler_\beta(n) := n^\beta \prod_{p|n} (1-p^{-\beta})$, where the product is over the primes that divide $n$.

\begin{lemma}\label{lem:atomiclist}
For  each $n \in \nx$ let $\eps_n$ be the atomic probability measure on $\T$ defined by 
\[
\eps_n(\{z\}) = \begin{cases} \frac{1}{\euler(n)} & \text{ if } \ord(z) =n,\\   0& \text{ otherwise;}
\end{cases}
\]
so that $\eps_n$ is evenly supported on the set $Z_n^*$ of primitive roots of unity of order $n$.
Define a measure
\begin{equation}\label{eqn:definitionnubetan}
\nu_{\beta,n} := \prod_{p|n} (1- p^{-\beta})  A_{\beta,n}\inv   \eps_n = \prod_{p|n} (1- p^{-\beta}) (1- p^{-\beta} \omega_{p*})\inv \eps_n
\end{equation}
for each $\beta \in (0,\infty)$. Then $\nu_{\beta,n}$ is a $\beta$-subconformal  atomic probability measure on $\T$  supported on $Z_n$ such that
\begin{equation}\label{eqn:nunbdefinition} 
\nu_{\beta,n}(\{z\}) = \left\{\begin{array}{ll}
n^{-\beta}\frac{\euler_\beta(\ord(z))}{\euler(\ord(z))},&\text{if }\ord(z) | n,\\
0,&\text{otherwise}.\end{array}\right.
\end{equation}
\end{lemma}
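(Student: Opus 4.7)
The plan is to first compute $\nu_{\beta,n}(\{z\})$ explicitly via a Neumann series expansion of $A_{\beta,n}^{-1}$, and then derive $\beta$-subconformality from an invariance property of the resulting measure. Since the operators $\omega_{p*}$ for distinct primes commute, I would write
\[
A_{\beta,n}^{-1} \;=\; \prod_{p|n}(1-p^{-\beta}\omega_{p*})^{-1} \;=\; \sum_{m \in \nx_{P(n)}} m^{-\beta}\omega_{m*},
\]
where $P(n)$ denotes the set of prime divisors of $n$. Evaluating $A_{\beta,n}^{-1}\eps_n$ at $\{z\}$ thus reduces the computation to the counts $N(m,z) := |\{w \in Z_n^* : w^m = z\}|$, divided by $\euler(n)$. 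The automorphisms $z \mapsto z^k$ of $Z_n$ for $k \in (\Z/n\Z)^\times$ commute with $\omega_m$ and act transitively on each $Z_r^* \subset Z_n$, so $N(m,z)$ depends only on $r := \ord(z)$. Summing $N(m,z)$ over $z \in Z_r^*$ yields $|\{w \in Z_n^* : \ord(w^m) = r\}|$, and since $\ord(w^m) = n/\gcd(n,m)$ when $\ord(w) = n$, this cardinality equals $\euler(n)$ precisely when $\gcd(n,m) = n/r$ (which forces $r|n$) and vanishes otherwise. Hence $N(m,z) = \euler(n)/\euler(r)$ in the former case.

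Next I would parameterize the contributing $m$'s as $m = (n/r)s$ with $s \in \nx_{P(n)\setminus P(r)}$ (equivalently $\gcd(s,r) = 1$ with all prime factors of $s$ in $P(n)$) and invoke the Euler product
\[
\sum_{s \in \nx_{P(n)\setminus P(r)}} s^{-\beta} \;=\; \prod_{p \in P(n)\setminus P(r)}(1-p^{-\beta})^{-1}.
\]
The normalizing factors of $\prod_{p|n}(1-p^{-\beta})$ indexed by $P(n)\setminus P(r)$ cancel against this product, leaving $\prod_{p|r}(1-p^{-\beta}) = r^{-\beta}\euler_\beta(r)$, and assembling the powers of $n/r$ and $r$ gives
\[
\nu_{\beta,n}(\{z\}) \;=\; (n/r)^{-\beta}\,\frac{r^{-\beta}\euler_\beta(r)}{\euler(r)} \;=\; n^{-\beta}\,\frac{\euler_\beta(r)}{\euler(r)},
\]
which is precisely \eqref{eqn:nunbdefinition}; support on $Z_n$ is then transparent. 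That $\nu_{\beta,n}$ is a probability measure I would verify either via \lemref{lem:eulerproduct}(3) or by summing the formula together with the classical identity $\sum_{r|n}\euler_\beta(r) = n^\beta$ (checked by multiplicativity from the prime-power case).

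For $\beta$-subconformality, by \proref{pro:bsubcequivalence}(3) it suffices to show that $A_{\beta,F}\nu_{\beta,n} \geq 0$ for every finite $F \Subset \primes$. The crucial observation, immediate from the explicit formula, is that whenever $p \nmid n$ the map $\omega_p$ permutes $Z_n$ (as $p$ is invertible modulo $n$) while preserving element orders, and hence $\omega_{p*}\nu_{\beta,n} = \nu_{\beta,n}$. Splitting $F = F' \sqcup F''$ with $F' = F \cap P(n)$ and $F'' \subseteq \primes \setminus P(n)$, commutation of the $A_{\beta,\cdot}$ operators together with the identity $A_{\beta,F'}A_{\beta,P(n)}^{-1} = A_{\beta,P(n)\setminus F'}^{-1}$ yields
\[
A_{\beta,F}\nu_{\beta,n} \;=\; \prod_{p \in F''}(1-p^{-\beta})\,\prod_{p|n}(1-p^{-\beta})\,\prod_{p \in P(n)\setminus F'}(1-p^{-\beta}\omega_{p*})^{-1}\eps_n,
\]
which is positive by \lemref{lem:eulerproduct}(3). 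The main technical step, which I expect to require the most care, is the combinatorial identification $N(m,z) = \euler(n)/\euler(r)$; everything else amounts to bookkeeping with Euler/Dirichlet products and exploiting the transparent $\omega_p$-invariance of $\nu_{\beta,n}$ for $p \nmid n$.
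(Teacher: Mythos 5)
Your proof is correct and carries through, but the explicit computation of $\nu_{\beta,n}(\{z\})$ takes a genuinely different route from the paper. The paper first establishes the pushforward identity $\omega_{k*}\eps_n = \eps_{n/\gcd(n,k)}$, then applies the Neumann series for $A_{\beta,p}^{-1}$ one prime at a time, and arrives at the intermediate expansion $\nu_{\beta,n} = \sum_{d\mid n} n^{-\beta}\euler_\beta(d)\,\eps_d$, from which \eqref{eqn:nunbdefinition} is read off. You instead expand the full product $A_{\beta,n}^{-1} = \sum_{m\in\nx_{P(n)}} m^{-\beta}\omega_{m*}$ and compute the coefficient of $\delta_z$ directly by counting $m$-th root preimages $N(m,z)$ in $Z_n^*$, using $\ord(w^m) = n/\gcd(n,m)$ and transitivity of the $(\Z/n\Z)^\times$-action on each $Z_r^*$ (together with surjectivity of $(\Z/n\Z)^\times\to(\Z/r\Z)^\times$ for $r\mid n$), then summing the resulting Dirichlet geometric series. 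Both are elementary and about the same length; the paper's route has the small advantage of exhibiting $\nu_{\beta,n}$ explicitly as a convex combination of the $\eps_d$, which is a cleaner structural statement. Your identification $N(m,z)=\euler(n)/\euler(r)$ when $\gcd(n,m)=n/r$ (and $0$ otherwise), and the parametrization of admissible $m$ as $(n/r)s$ with $s\in\nx_{P(n)\setminus P(r)}$, are both correct. For $\beta$-subconformality your argument is essentially the paper's: you invoke $\omega_{p*}\nu_{\beta,n}=\nu_{\beta,n}$ for $p\nmid n$ (which you deduce from the explicit formula), while the paper uses the equivalent fact $\omega_{q*}\eps_n=\eps_n$ for $\gcd(q,n)=1$ at the level of $\eps_n$, keeping the two halves of the lemma independent; the resulting factorization of $A_{\beta,F}\nu_{\beta,n}$ is the same in both treatments.
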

\begin{proof} Let $m\in \nx$ and write $m = a b$ in such a way that $(b,n) =1$ and all prime factors of $a$ divide $n$. Then   $A_{\beta,b}$ commutes with $A_{\beta,n}\inv = \prod_{p|n}  (1- p^{-\beta} \omega_{p*})\inv$ and 
$A_{\beta,b} \eps_n = \prod_{q|b} (1- q^{-\beta}) \eps_n$ because  $\omega_{q*} \eps_n = \eps_n$ whenever $\gcd(n,q) = 1$. Hence
\begin{align*}
A_{\beta,m} \nu_{\beta,n} &= A_{\beta,a} A_{\beta,b}\prod_{p|n} (1- p^{-\beta}) (1- p^{-\beta} \omega_{p*})\inv \eps_n \\ 
&=\prod_{p|n} (1- p^{-\beta}) A_{\beta,a} \prod_{p|n} (1- p^{-\beta} \omega_{p*})\inv A_{\beta,b}\eps_n \\
&= \prod_{p|n} (1- p^{-\beta}) \prod_{q\mid b}(1- q^{-\beta})\prod_{p|n, p\nmid a} (1- p^{-\beta} \omega_{p*})\inv \eps_n.
\end{align*}
 Since the last expression is $\geq 0$ by \lemref{lem:eulerproduct}(3), we conclude that $\nu_{\beta,n}$ is $\beta$-subconformal. By \lemref{lem:eulerproduct}(3) $\nu_{\beta,n}$ is a probability measure.

Before proving \eqref{eqn:nunbdefinition}, we point out that
\begin{equation}\label{eqn:epnpushforward}
\omega_{k*} \ve_n = \ve_{\frac{n}{\gcd(n,k)}}.
\end{equation}
This is computed directly:
\begin{align*}
\omega_{k*} \ve_n &= \omega_{k*} \left(\frac{1}{\euler(n)}\sum_{z\in\T, \ord(z) = n} \delta_z\right) \\[5pt]
&= \frac{1}{\euler(n)} \sum_{z\in\T, \ord(z) = n} \delta_{z^k}\\[5pt]
&= \frac{1}{\euler\left(\frac{n}{\gcd(n,k)}\right)} \sum_{z\in\T, \ord(z) = \frac{n}{\gcd(n,k)}} \delta_z\\[5pt]
&= \ve_{\frac{n}{\gcd(n,k)}},
\end{align*}
since the set of $z\in Z_n^*$ with $z^k = w$ contains $\euler(n)\euler\left(\frac{n}{\gcd(n,k)}\right)^{-1}$ elements for each $w \in Z_{\frac{n}{\gcd(n,k)}}^*$.

Now recall from \lemref{lem:eulerproduct}(3) that the operator $A_{\beta,p}^{-1}$ can be expressed as
$$A_{\beta,p}^{-1} = \sum_{m=0}^\infty p^{-\beta m} \omega_{p^m*}.$$
Letting $e=e_p(n)$ be the largest integer such that $p^e|n$, we have by \eqref{eqn:epnpushforward}:
$$(1-p^{-\beta})A_{\beta,p}^{-1} \ve_n = \sum_{m=0}^{e-1} (1-p^{-\beta})p^{-\beta m} \ve_{n/p^m} +p^{-\beta e} \ve_{n/p^e}.$$
Applying this to each prime divisor of $n$ gives the formula
$$\nu_{\beta,n} = \sum_{d|n} \left(\frac{n}{d}\right)^{-\beta} \left(\prod_{p|d} 1-p^{-\beta} \right)\ \ve_d= \sum_{d|n} n^{-\beta} \euler_\beta(d) \ve_d,$$
which proves \eqref{eqn:nunbdefinition}.
 \end{proof}

\begin{theorem}\label{thm:atomic} 
For $\beta \in (0,1]$ each atomic $\beta$-subconformal probability measure $\nu \in \mt$ can be written uniquely as a (possibly infinite) convex linear combination $\nu = \sum_n \lambda_n \nu_{\beta,n}$ with coefficients 
\[
\lambda_n = n^\beta \sum_{d\in\nx} \mu(d) \frac{1}{\euler_\beta(nd)} \nu(Z_{nd}^*) \qquad n\in \nx.
\]
In particular, $\{\nu_{\beta,n}: n \in \nx\}$ are the extremal  atomic $\beta$-subconformal probability measures.
\end{theorem}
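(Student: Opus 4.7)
The plan rests on two ideas: first, if $\nu = \sum_n \lambda_n \nu_{\beta,n}$ exists, then M\"obius inversion on the divisibility poset forces the claimed formula for $\lambda_n$ and compels $\nu$ to be ``Galois-uniform'' on each $Z_k^*$; second, every atomic $\beta$-subconformal measure is indeed Galois-uniform, a fact that hinges on Dirichlet's theorem and the divergence $\sum_{p \equiv q \,\pmod k} p^{-\beta} = \infty$ for $\beta \leq 1$.

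For the first part: suppose $\nu = \sum_n \lambda_n \nu_{\beta,n}$ with $\lambda_n \geq 0$. By \lemref{lem:atomiclist}, for $z \in Z_k^*$ we have $\nu(\{z\}) = (\euler_\beta(k)/\euler(k))\,H(k)$ where $H(k) := \sum_{n:\, k|n}\lambda_n n^{-\beta}$; hence $\nu(\{z\})$ depends only on $k = \ord(z)$ and $\nu(Z_k^*) = \euler_\beta(k) H(k)$. M\"obius inversion on $(\nx, |)$ then yields $\lambda_n n^{-\beta} = \sum_{d\geq 1} \mu(d) H(nd) = \sum_{d\geq 1}\mu(d)\nu(Z_{nd}^*)/\euler_\beta(nd)$, which rearranges to the claimed formula. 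In particular, the existence of a decomposition already requires $\nu|_{Z_k^*}$ to be a scalar multiple of the uniform probability measure $\eps_k$.

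The heart of the proof is establishing this uniformity. Let $z, z' \in Z_k^*$ with $z' = z^q$ and $\gcd(q,k) = 1$. By \lemref{lem:restrictions}, $\nu|_k$ is $\beta$-subconformal and supported on $Z_k$. By Dirichlet's theorem, the set $\mathcal{S}$ of primes $p \equiv q \pmod k$ is infinite, and for each such $p$ the pushforward $\omega_{p*}$ acts on measures supported on $Z_k$ exactly as the Galois permutation $\sigma_q : w \mapsto w^q$. Hence for any finite $F \subset \mathcal{S}$, $A_{\beta,F}\,\nu|_k = \prod_{p\in F}(1 - p^{-\beta}\sigma_q)\,\nu|_k \geq 0$. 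Decomposing $\nu|_k$ into $\sigma_q$-isotypic components, $A_{\beta,F}$ acts as the scalar $\prod_{p\in F}(1-p^{-\beta})$ on the $\sigma_q$-invariant part and as $\prod_{p\in F}(1-p^{-\beta}\chi(\sigma_q))$ on the eigenspace corresponding to a non-trivial character $\chi$. Since $\beta \leq 1$, $\sum_{p\in\mathcal{S}}p^{-\beta} = \infty$, so the invariant-part scalar tends to $0$ as $F$ exhausts $\mathcal{S}$, whereas the non-invariant scalars $\prod_{p\in F}|1-p^{-\beta}\chi(\sigma_q)|$ do not vanish comparably (for $\chi(\sigma_q) = -1$ they tend to $\infty$; for other roots of unity they converge to a nonzero limit). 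Combined with the requirement that $A_{\beta,F}\nu|_k$ remain a \emph{positive} real measure throughout, this imbalance forces each non-invariant component of $\nu|_k$ to vanish, i.e. $\sigma_q \nu|_k = \nu|_k$. Ranging over $q \in (\Z/k\Z)^\times$ and $k \in \nx$ establishes uniformity.

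Given uniformity, $\nu = \sum_k \nu(Z_k^*)\eps_k$ and the M\"obius identity from the first paragraph gives $\sum_n \lambda_n \nu_{\beta,n} = \nu$. Positivity $\lambda_n \geq 0$ follows from a separate check recasting the formula as a nonnegative functional of $\nu$ (using the subconformality condition $A_{\beta,n}\nu \geq 0$ and the identity $\omega_d^{-1}(Z_n^*)\cap \mu_\infty = Z_{nd}^*$ for $d|\operatorname{rad}(n)$), and $\sum_n \lambda_n = \nu(\T) = 1$ by total mass. Extremality of each $\nu_{\beta,n}$ is then immediate: if $\nu_{\beta,n} = s\mu_1 + (1-s)\mu_2$ with $0 < s < 1$ and atomic $\beta$-subconformal $\mu_i$, linearity of the formula yields $s\lambda^{(\mu_1)}_m + (1-s)\lambda^{(\mu_2)}_m = \delta_{m,n}$, and non-negativity forces $\mu_1 = \mu_2 = \nu_{\beta,n}$. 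The principal obstacle is the uniformity step: the eigenspace analysis of $\sigma_q$ (especially the complex-eigenvalue cases when $\sigma_q$ has order greater than $2$) combined with the Mertens-type comparison of products over primes in an arithmetic progression must be assembled carefully, and this is precisely where the hypothesis $\beta \leq 1$ enters essentially.
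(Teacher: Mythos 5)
Your central idea---establishing ``Galois uniformity'' of $\nu|_k$ by playing the isotypic components of the $\sigma_q$-action against the subconformality inequality, then deducing the coefficient formula by M\"obius inversion---is a valid alternative to the paper's route. The paper instead characterizes the $\beta$-subconformal measures on $Z_k$ as the range of $P_\beta := \lim_L \prod_{p\in L}(1-p^{-\beta})A_{\beta,p}^{-1}$ and uses Dirichlet characters of $(\Z/d\Z)^*$ to compute that limit explicitly (\lemref{lem:subconformaliffPbeta}, \proref{pro:finite-subspace}); both proofs ultimately rest on Dirichlet's density theorem. Your route is a bit more direct in establishing uniformity, since you apply the operators $A_{\beta,F}$ (with $F$ inside one residue class) rather than the composite $P_\beta$.

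However, two steps as written are incorrect or incomplete. First, the parenthetical eigenvalue analysis is wrong: for a root of unity $\lambda=\chi(\sigma_q)$ with $\Re\lambda>0$ (e.g.\ $\lambda=e^{\pi i/3}$) the product $\prod_{p\in F}|1-p^{-\beta}\lambda|$ tends to $0$, not to a nonzero limit. What is actually needed---and true---is that the \emph{ratio} $\prod_{p\in F}(1-p^{-\beta})/\prod_{p\in F}|1-p^{-\beta}\lambda|\to 0$ for every $\lambda\neq 1$ on the unit circle (the logarithm behaves like $-(1-\Re\lambda)\sum_{p\in F}p^{-\beta}$). This is precisely the content of the paper's \lemref{lem:Dirichletcharacterthm}. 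Combined with orthogonality of the $\sigma_{q*}$-eigenspaces in $\ell^2(Z_k)$ and the uniform bound on $\|c_1(F)^{-1}A_{\beta,F}\nu|_k\|$ coming from positivity and fixed total mass, one does get vanishing of the non-invariant components; but you need to state and use the ratio estimate, not the product limits you wrote. Second, the asserted positivity $\lambda_n\geq 0$ is not established by the sketch. The preimage identity $\omega_d^{-1}(Z_n^*)\cap\mu_\infty=Z_{nd}^*$ holds only for $d\mid\operatorname{rad}(n)$, but the M\"obius sum defining $\lambda_n$ ranges over all squarefree $d$, including those with prime factors outside $n$, so ``recasting as $A_{\beta,n}\nu$ on a set'' does not directly produce the formula. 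The clean route (and essentially what the paper does via \proref{pro:finite-subspace}) is: once $\nu|_k$ is uniform, $A_{\beta,k}\nu|_k$ is a \emph{positive} uniform measure $\sum_{d|k}b_d\eps_d$ with $b_d\geq 0$, and $A_{\beta,k}^{-1}\eps_d$ is a positive scalar multiple of $\nu_{\beta,d}$ (because $A_{\beta,p}^{-1}\eps_d=(1-p^{-\beta})^{-1}\eps_d$ for $p\nmid d$), so $\nu|_k=\sum_{d|k}\lambda_{k,d}\nu_{\beta,d}$ with $\lambda_{k,d}\geq 0$; then M\"obius inversion and the limit $k\nearrow\nx$ give the stated formula and its nonnegativity. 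Without this, the ``extremality'' conclusion at the end of your proof is not justified, since the simplex structure on the coefficients has not been established.
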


In order to prove the theorem we need to establish a few properties of the measures $\nu_{\beta,n}$ first.
\begin{lemma}\label{lem:div}
For $\beta \in(0,\infty)$ and every $n,k \in \nx$,  
$
\omega_{k*} \nu_{\beta,n} = \nu_{\beta,\frac{n}{\gcd(n,k)}}
$.
\end{lemma}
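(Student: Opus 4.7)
The plan is to exploit the operator-product formula \eqref{eqn:definitionnubetan} defining $\nu_{\beta,n}$ rather than the explicit atomic expression \eqref{eqn:nunbdefinition}. Since $\omega_k$ and $\omega_p$ are both power maps on $\T$, they commute, so the induced push-forwards $\omega_{k*}$ and $\omega_{p*}$ commute on $\mt$. Consequently, $\omega_{k*}$ commutes with each factor $(1-p^{-\beta}\omega_{p*})$ and, by the Neumann series in \lemref{lem:eulerproduct}(3), with its inverse as well. Moving $\omega_{k*}$ through the product in \eqref{eqn:definitionnubetan} and applying \eqref{eqn:epnpushforward} then gives
\[
\omega_{k*}\nu_{\beta,n} \;=\; \prod_{p\mid n}(1-p^{-\beta})(1-p^{-\beta}\omega_{p*})^{-1}\,\ve_{m},
\]
where $m := n/\gcd(n,k)$.

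Next I would split the primes dividing $n$ into those dividing $m$ and those not dividing $m$. For a prime $p\mid n$ with $p\nmid m$, we have $\gcd(m,p)=1$, so \eqref{eqn:epnpushforward} yields $\omega_{p*}\ve_m = \ve_m$; hence
\[
(1-p^{-\beta})(1-p^{-\beta}\omega_{p*})^{-1}\ve_m \;=\; (1-p^{-\beta})\sum_{j\geq 0} p^{-\beta j}\ve_m \;=\; \ve_m,
\]
so these factors contribute trivially. The remaining product is indexed exactly by $\{p: p\mid m\}$, which by definition equals $\nu_{\beta,m}$. This yields the desired equality $\omega_{k*}\nu_{\beta,n} = \nu_{\beta,m}$.

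There is no substantive obstacle: the whole proof is two lines once one notices the commutation of $\omega_{k*}$ with each $(1-p^{-\beta}\omega_{p*})^{-1}$ and the fact that the primes in the product that do not divide $m$ collapse to the identity on $\ve_m$. An alternative (but more tedious) route would be to evaluate both sides on each primitive root of unity using \eqref{eqn:nunbdefinition} and check agreement via multiplicativity of $\euler_\beta/\euler$, but the operator-theoretic approach above avoids any case analysis.
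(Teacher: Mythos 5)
Your proof is correct and is essentially the paper's argument: both commute $\omega_{k*}$ through $A_{\beta,n}^{-1}=\prod_{p\mid n}(1-p^{-\beta}\omega_{p*})^{-1}$ using the Neumann series, apply the pushforward identity $\omega_{k*}\ve_n=\ve_{n/\gcd(n,k)}$, and then observe that the factors indexed by primes $p\mid n$, $p\nmid m$ act trivially on $\ve_m$. The paper phrases the last step via the multiplicative factorization of $A_{\beta,n}$ from \lemref{lem:eulerproduct}(2), whereas you check the collapse factor by factor, but this is the same computation.
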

\begin{proof} It is clear from its definition that $A_{\beta,n}^{-1}$ commutes with $\omega_{k*}$, so that $\omega_{k*} \nu_{\beta,n} = A_{\beta,n}^{-1} \omega_{k*} \ve_n = A_{\beta,n}^{-1} \ve_{\frac{n}{\gcd(n,k)}}$ by \eqref{eqn:definitionnubetan} and \eqref{eqn:epnpushforward}.
Using \lemref{lem:eulerproduct}(2), we have
\begin{align*}
\prod_{p|n}( 1-p^{-\beta})A_{\beta,n}^{-1} \ve_{\frac{n}{\gcd(n,k)}} &= \prod_{p|n}( 1-p^{-\beta})A_{\beta,\frac{n}{\gcd(n,k)}}^{-1} \prod_{p|n,p\nmid \frac{n}{\gcd(n,k)}} A_{\beta,p}^{-1} \ve_{\frac{n}{\gcd(n,k)}} \\[5pt]
&= \prod_{p|\frac{n}{\gcd(n,k)}}( 1-p^{-\beta})A_{\beta,\frac{n}{\gcd(n,k)}}^{-1} \ve_{\frac{n}{\gcd(n,k)}}\\[5pt]
&= \nu_{\beta,\frac{n}{\gcd(n,k)}}.\qedhere
\end{align*}

\end{proof}

The following is a simple consequence of Dirichlet's density theorem.

\begin{lemma}\label{lem:Dirichletcharacterthm}
    Suppose that $k\in \nx$, that $F$ is a finite subset of $\primes$  containing all the prime factors of $k$, and that $\beta \in (0,1] $. If $\chi$ is a nontrivial Dirichlet character modulo $k$, then 
    \[
\prod_{q\in A} \frac{1-q^{-\beta}}{ 1- \chi(q)q^{-\beta}} \underset{A\nearrow \primes \setminus F}{\longrightarrow} 0,
    \]
where the limit is taken  over finite sets of primes disjoint from $F$.
\end{lemma}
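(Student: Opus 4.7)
The plan is to show that $\log\bigl|\prod_{q\in A}\frac{1-q^{-\beta}}{1-\chi(q)q^{-\beta}}\bigr| \to -\infty$ as $A \nearrow \primes\setminus F$. Each factor will turn out to have absolute value in $(0,1]$, so the logarithm is nonpositive term by term, and it suffices to produce a lower bound on the magnitude of each term whose sum diverges.

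First I would carry out the elementary quadratic identity using $|\chi(q)| = 1$, valid for $q \notin F$ because $F$ contains every prime factor of $k$. Writing $\chi(q) = e^{i\theta_q}$ and $c_q := 1 - \cos\theta_q \ge 0$, the identities $|1 - \chi(q)q^{-\beta}|^2 = 1 - 2\cos\theta_q\, q^{-\beta} + q^{-2\beta}$ and $(1-q^{-\beta})^2 = 1 - 2q^{-\beta} + q^{-2\beta}$ give
\[
\left|\frac{1-q^{-\beta}}{1-\chi(q)q^{-\beta}}\right|^2 = 1 - \frac{2 c_q q^{-\beta}}{|1-\chi(q)q^{-\beta}|^2} \in (0,1].
\]
Applying $\log(1-x) \le -x$ and the trivial bound $|1-\chi(q)q^{-\beta}|^2 \le (1+q^{-\beta})^2 \le 4$ then yields
\[
\log\left|\frac{1-q^{-\beta}}{1-\chi(q)q^{-\beta}}\right| \le -\tfrac{1}{4}\,c_q q^{-\beta}.
\]
Summing, the task is reduced to proving $\sum_{q\in A} c_q q^{-\beta} \to \infty$ as $A\nearrow \primes \setminus F$.

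This is where the arithmetic content enters. The quantity $c_q$ depends only on $q\bmod k$ and vanishes precisely on the proper subgroup $\ker\chi \subsetneq (\Z/k\Z)^*$. Since $\chi$ takes values in a fixed finite set of roots of unity, $c_q \ge c_0 > 0$ uniformly whenever $\chi(q)\ne 1$. Fixing any residue class $a\pmod k$ with $\chi(a)\ne 1$, I would invoke Dirichlet's theorem on primes in arithmetic progressions to see that $\sum_{q\le N,\, q\equiv a \,(\bmod\, k)} q^{-\beta}$ is asymptotically a positive fraction of $\sum_{q\le N} q^{-\beta}$, and the latter diverges for $\beta\in(0,1]$ by Mertens' theorem (together with the comparison $q^{-\beta}\ge q^{-1}$ for $\beta\le 1$). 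Because all summands $c_q q^{-\beta}$ are nonnegative, divergence along any cofinal sequence of initial segments forces divergence along the full net $A$.

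The main obstacle is precisely this combined appeal to Dirichlet and Mertens; it is the only step requiring genuine analytic number theory, and deeper tools (Siegel--Walfisz error terms, analytic continuation of $L(s,\chi)$, non-vanishing of $L(1,\chi)$) are not needed. What is used is only the weak form of equidistribution guaranteeing that each invertible residue class mod $k$ carries a positive density of primes, combined with the divergence $\sum_q q^{-1} = \infty$.
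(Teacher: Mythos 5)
Your proof is correct, and its essential content coincides with the paper's: both reduce the product to a divergent sum of the form $\sum c_q q^{-\beta}$ over a positive-density set of primes and then invoke Dirichlet's density theorem together with the divergence of $\sum_p p^{-\beta}$ for $\beta\in(0,1]$. The only difference lies in the local estimate on each factor. You expand $\bigl|\tfrac{1-q^{-\beta}}{1-\chi(q)q^{-\beta}}\bigr|^2 = 1 - 2c_q q^{-\beta}/|1-\chi(q)q^{-\beta}|^2$ and use $\log(1-x)\le -x$ together with the crude bound $|1-\chi(q)q^{-\beta}|\le 2$, which requires a uniform lower bound $c_q\ge c_0>0$ on the non-kernel primes. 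The paper instead restricts attention to the (still positive-density) set of primes $q$ with $\Re(\chi(q))<0$, for which $|1-\chi(q)q^{-\beta}|>1$ gives the one-line inequality $\bigl|\tfrac{1-q^{-\beta}}{1-\chi(q)q^{-\beta}}\bigr| < 1-q^{-\beta}$, and then uses $-\log(1-q^{-\beta})>q^{-\beta}$. This avoids both the quadratic expansion and the uniform bound on $c_q$, but does quietly rely on the observation that a nontrivial character always takes some value of negative real part (true because the image of $\chi$ is a cyclic group $\mu_m$ with $m\ge 2$, and every such group contains a root of unity with negative real part). Either route is fine; neither gains generality or dispenses with Dirichlet's theorem.
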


\begin{proof}
For every $q\in\primes$, one has $1-q^{-\beta}< |1-\chi(q)q^{-\beta}|$. If $\Re(\chi(q))<0$, then $1<|1-\chi(q)q^{-\beta}|$, which implies that $1-q^{-\beta}>\left|\frac{1-q^{-\beta}}{1-\chi(q)q^{-\beta}}\right|$. It follows that
$$\prod_{q\in A} \left|\frac{1-q^{-\beta}}{1-\chi(q) q^{-\beta}}\right| < \prod_{q\in A,\Re(\chi(q))<0} 1-q^{-\beta}.$$
Taking the logarithm of the right hand side, we have
$$-\log\left(\prod_{q\in A, \Re(\chi(q))<0} 1-q^{-\beta}\right) = \sum_{q\in A, \Re(\chi(q))<0} -\log(1-q^{-\beta}) > \sum_{q\in A, \Re(\chi(q))<0} q^{-\beta}.$$
The last series diverges for $\beta\in (0,1]$ as $A\nearrow\primes\setminus F$ by Dirichlet's density theorem \cite[Chapter IV, Section 4, Theorem 2]{ser}, whence the result follows.
\end{proof}

\begin{prop}\label{pro:finite-subspace}
Let $\beta \in (0,1]$ and fix $k\in \nx$. For each finite set of primes $L$ define an operator 
\[
P_{\beta,L} := \prod_{q\in L} (1-q^{-\beta}) A_{\beta,q}\inv : \M(Z_k) \to \M(Z_k).
\]
Then $P_{\beta,L}$ converges as $L \nearrow \primes$, and for each $\eta \in \M(Z_k)$, the limit $\lim_L P_{\beta,L}\eta $ is in $\lsp \{\nu_{\beta, d}: d\mid k\}$.   Moreover, 
if $\eta$ is a probability measure, then 
\begin{equation}\label{eqn:limitPbeta}
\lim_{L \nearrow \primes } \prod_{q\in L} (1-q^{-\beta}) A_{\beta,q}\inv \eta = \sum_{d\mid k} \lambda_d \nu_{\beta,d},
\end{equation}
with $\lambda_d\geq 0$  and $\sum_{d\mid k} \lambda_d =1$.
The limit is unchanged if one leaves out of the product an arbitrary finite subset  of primes that do not divide $k$.
\end{prop}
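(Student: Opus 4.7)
The plan is to reduce the problem to classical Dirichlet $L$-function analysis on the finite-dimensional space $\M(Z_k)$. First I would note that $z\in Z_k$ forces $z^q\in Z_k$, so each $\omega_{q*}$ (and hence each $(1-q^{-\beta})A_{\beta,q}^{-1}$) preserves $\M(Z_k)$; these operators mutually commute, and each has norm at most $1$ by the estimate in \lemref{lem:eulerproduct}. Set $F_0:=\{p\in\primes:p\mid k\}$, which is finite. Since the factors commute, whenever $F_0\subseteq L$ we may factor $P_{\beta,L}=P_{\beta,F_0}\cdot P_{\beta,L\setminus F_0}$, so everything reduces to analyzing $\lim_L P_{\beta,L\setminus F_0}$ on $\M(Z_k)=\bigoplus_{d\mid k}\M(Z_d^*)$.

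The key observation is that for $q\nmid k$ and $d\mid k$, $\omega_q$ restricts to a bijection of $Z_d^*$ implementing multiplication by $q$ in the finite abelian group $(\Z/d\Z)^*$. Under the identification $\M(Z_d^*)\cong \C[(\Z/d\Z)^*]=\bigoplus_{\chi}\C e_\chi$ coming from the character decomposition, $\omega_{q*}$ acts on $\C e_\chi$ as multiplication by the scalar $\chi(q)$, so $(1-q^{-\beta})A_{\beta,q}^{-1}$ becomes multiplication by
\[
\frac{1-q^{-\beta}}{1-\chi(q)q^{-\beta}}
\]
on that line. The whole proposition thus reduces to showing that, for every non-principal character $\chi$ of $(\Z/d\Z)^*$ with $d\mid k$, the partial product $\prod_{q\in L,\,q\nmid k}\frac{1-q^{-\beta}}{1-\chi(q)q^{-\beta}}$ tends to $0$ as $L\nearrow\primes$, the principal-character factor being identically $1$.

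I expect this vanishing to be the main obstacle. The strategy is to split the product as $\prod_{q\leq x}(1-q^{-\beta})\cdot\prod_{q\leq x}(1-\chi(q)q^{-\beta})^{-1}$: for $\beta\in(0,1]$ the first factor tends to $0$ because $\sum_q q^{-\beta}=\infty$, while the second remains uniformly bounded for non-principal $\chi$. The latter boundedness is classical for $\beta>\tfrac{1}{2}$ via partial summation against the bounded character sums $\sum_{n\leq x}\chi(n)$ together with absolute convergence of the higher-order contributions to $\log L(\beta,\chi)$; for $\beta\in(0,\tfrac{1}{2}]$ the required control would be supplied by the refined estimates developed in \appref{app:appendixB}.

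Granting this step, the limiting operator $\Pi:=\lim_L P_{\beta,L\setminus F_0}$ projects each summand $\M(Z_d^*)$ onto its principal-character line $\C e_1=\C\eps_d$, so $\Pi\eta=\sum_{d\mid k}\eta(Z_d^*)\,\eps_d$. Composing with the finite operator $P_{\beta,F_0}$ and using $\omega_{p*}\eps_d=\eps_d$ for $p\nmid d$ (so that the corresponding factors act trivially on $\eps_d$), together with the very definition $\nu_{\beta,d}=\prod_{p\mid d}(1-p^{-\beta})A_{\beta,p}^{-1}\eps_d$, gives $P_{\beta,F_0}\eps_d=\nu_{\beta,d}$, whence
\[
\lim_L P_{\beta,L}\eta=\sum_{d\mid k}\eta(Z_d^*)\,\nu_{\beta,d}\ \in\ \lsp\{\nu_{\beta,d}:d\mid k\}.
\]
For a probability measure $\eta$ the coefficients $\lambda_d=\eta(Z_d^*)$ are non-negative and sum to $\eta(Z_k)=1$, giving \eqref{eqn:limitPbeta}. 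The invariance of the limit under omitting a finite set $F$ of primes not dividing $k$ is then immediate: on principal-character lines every such factor is already $1$, and on non-principal lines finitely many bounded factors do not alter a limit that is being driven to $0$ by the infinite tail.
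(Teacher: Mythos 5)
Your structural reduction is exactly the paper's: decompose $\M(Z_k)=\bigoplus_{d\mid k}\M(Z_d^*)$, diagonalize the operators $(1-q^{-\beta})A_{\beta,q}^{-1}$ (for $q\nmid k$) on character lines with eigenvalue $\frac{1-q^{-\beta}}{1-\chi(q)q^{-\beta}}$, identify the limit as the projection onto the principal lines with coefficients $\lambda_d=\eta(Z_d^*)$, and then apply the finitely many factors for $p\mid k$ to turn $\eps_d$ into $\nu_{\beta,d}$; the invariance under omitting finitely many primes not dividing $k$ also comes out the same way. The one place where you diverge from the paper is the key analytic step, and there your argument has a genuine gap. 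You propose to split the eigenvalue product as $\prod_{q\le x}(1-q^{-\beta})\cdot\prod_{q\le x}(1-\chi(q)q^{-\beta})^{-1}$ and claim the second factor is uniformly bounded for non-principal $\chi$, "classically" for $\beta>\tfrac12$. Partial summation against the bounded sums $\sum_{n\le x}\chi(n)$ controls $\sum_n\chi(n)n^{-\beta}$, i.e.\ $L(\beta,\chi)$, but not the \emph{prime} sum $\sum_{q\le x}\chi(q)q^{-\beta}$, which is what governs $\log\prod_{q\le x}(1-\chi(q)q^{-\beta})^{-1}$ (the $m\ge2$ terms are indeed absolutely convergent for $\beta>\tfrac12$, but that is the easy part). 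Boundedness of those prime partial sums at a fixed $\beta\in(\tfrac12,1)$ would, by Dirichlet's test, make $\sum_q\chi(q)q^{-s}$ converge for $\Re s>\beta$ and hence force $\log L(s,\chi)$ to be analytic there, i.e.\ a zero-free half-plane $\Re s>\beta$ for $L(s,\chi)$ — far beyond the classical zero-free region (only $\beta=1$ is classical, via convergence of $\sum_q\chi(q)/q$). For $\beta\le\tfrac12$ you defer to \appref{app:appendixB}, but the appendix contains no estimate of that type; its relevant content is \lemref{lem:Dirichletcharacterthm}, which is precisely the statement you need and is proved by a different, elementary route.

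The fix is to abandon the splitting and bound the ratio product directly, as in \lemref{lem:Dirichletcharacterthm}: each factor satisfies $\bigl|\tfrac{1-q^{-\beta}}{1-\chi(q)q^{-\beta}}\bigr|\le 1$, and on the set of primes with $\Re\chi(q)<0$ (nonempty value set, hence of positive Dirichlet density by Dirichlet's density theorem) one has $|1-\chi(q)q^{-\beta}|>1$, so those factors are $<1-q^{-\beta}$; since $\sum q^{-\beta}$ over a positive-density set of primes diverges for $\beta\in(0,1]$, the subproduct over those primes tends to $0$ and hence so does the whole product, with no lower bound on $\prod_{q\le x}|1-\chi(q)q^{-\beta}|$ ever needed. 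With that lemma in place of your splitting, the rest of your argument goes through and coincides with the paper's proof.
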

\begin{proof}
The set $Z_k$ of $k^{\mathrm{th}
}$ roots of unity can be decomposed according to primitive order as a disjoint union $Z_k = \bigsqcup_{d\mid k} Z_d^*$ , and this gives a direct sum decomposition $\M(Z_k) = \bigoplus_{d\mid k} \M(Z_d^*)$ of  measure spaces
 (since $Z_k$ is finite we view measures as represented by their density functions). Let  $d$ be a divisor of $k$, and for each character $\chi \in \widehat{(\Z/d\Z)^*}$  consider
the vector $\tilde\chi \in \M(Z_d^*)$  obtained from $\chi$  through the identification $ (\Z / d\Z)^* \cong Z_d^*$ that sends the invertible element $u \in (\Z/d\Z)^*$ to the primitive $d^{\mathrm{th}}$ root of unity $\exp(2\pi i u / d) = (\xi_d)^u$; specifically,
\[
\tilde\chi((\xi_d)^u) = \chi(u), \qquad \chi \in \widehat{(\Z/d\Z)^*}
.\]
Then $\{\tilde\chi : \chi \in \widehat{(\Z/d\Z)^*}\}$ is a linear basis of $\M(Z_d^*) \cong \C^{\varphi(d)}$.
Suppose now that $q$ is a prime number that does not divide $k$ and let $\chi \in \widehat{(\Z/d\Z)^*}$.  Since
\[
(\omega_{q^m*} \tilde\chi) (\xi_d^u) = \tilde\chi (\xi_d^{uq^m}) = \chi(uq^m) = \chi(q)^m \chi(u) = \chi(q)^m\tilde\chi(\xi_d^u)\]
for every $m\geq 0$, \lemref{lem:eulerproduct}(3) shows that $\tilde\chi$ is an eigenvector of  $A_{\beta,q}\inv$,
\[
 A_{\beta,q}\inv \tilde\chi 
 = \sum_{m\geq 0} q^{-\beta m} (\omega_{q^m*} \tilde \chi)  
 = \sum_{m\geq 0} (q^{-\beta})^m  \chi(q)^m \tilde\chi =
  (1- \chi(q)q^{-\beta})\inv  \tilde\chi.
\]
Each $\chi \in \widehat{(\Z/d\Z)^*}$ can be extended to a Dirichlet character modulo $d$,  also denoted by $\chi$ and given by
 \[ 
 \chi(u) = \begin{cases} \tilde\chi(\xi_d^u) & \text{ if } \gcd(u,d) =1\\
 0 & \text{ if } \gcd(u,d) \neq 1
 \end{cases} \qquad u\in \Z.
 \]
Let $F$ be a fixed finite subset of primes not dividing $k$ and denote by $F\vee k$ the union of $F$ and the set of prime divisors of $k$. Suppose $\beta \in (0, 1]$. Then \lemref{lem:Dirichletcharacterthm} gives the following limit as $L \nearrow \primes $, with $1_d$  the trivial character in $\widehat{(\Z/d\Z)^*}$, 
\begin{equation}\label{eqn:Loverzeta}
 \Big(\prod_{\substack{q\in L \\ q\notin F\vee k}}(1-q^{-\beta}) A_{\beta,q}^{-1}\Big) \ \tilde\chi
 = \Big(\prod_{\substack{q \in L \\ q\notin F\vee k}} \frac{1-q^{-\beta}}{ 1- \chi(q)q^{-\beta}} \Big) \ \tilde\chi
 \ \underset{L\nearrow \primes}{\longrightarrow} \ \begin{cases} \tilde\chi &\text{ if } \chi = 1_d \\ 
			 0 &\text{ if }  \chi \in \widehat{(\Z/d\Z)^*}\setminus\{1_d\}.
\end{cases}
 \end{equation}

Suppose now $\eta \in \M(Z_k) $ and
combine all the bases of the $\M(Z_d^*)$ into a basis of $\M(Z_k)$, so $\eta$ can be written uniquely as  
$\eta = \sum_{d\mid k} \sum_{\chi \in \widehat{(\Z/d\Z)^*}} a_{d,\chi}
\tilde\chi$. 
Notice that the measure $\eps_d$ defined in \lemref{lem:atomiclist} is just 
$ \eps_d = \frac{1}{\varphi(d)} \tilde 1_d$.
Then  
\begin{equation}\label{eqn:limitaseps_d}
\lim_{L\nearrow \primes}  \Big(\prod_{\substack{q\in L \\ q\notin F \vee k}}(1-q^{-\beta}) A_{\beta,q}^{-1}\Big) \ \eta 
= \sum_{d\mid k}  a_{d,1_d} \varphi(d) \eps_d,
\end{equation}
because the contribution of the nontrivial characters vanishes in the limit by \eqref{eqn:Loverzeta}. By \lemref{lem:eulerproduct}(3) the measure above is positive and thus $\lambda_d:= a_{d,1_d} \varphi(d)\geq 0$ because the $\eps_d$ have disjoint support.

To finish the proof  simply apply the linear operator $\prod_{p|k} (1-p^{-\beta}) A_{\beta,p}^{-1}$ to both sides of \eqref{eqn:limitaseps_d}, using continuity on the left and the definition of $\nu_{\beta, d}$ on the right.
\end{proof}

\begin{lemma} \label{lem:subconformaliffPbeta}
Let $\beta \in (0,1]$.
A probability measure $\nu \in \mz{k}^+$ is $\beta$-subconformal if and only if 
\[
\nu = \lim_{L \nearrow \primes } \prod_{q\in L} (1-q^{-\beta}) A_{\beta,q}\inv \eta 
\]
for some probability $\eta \in \mz{k}^+$.
\end{lemma}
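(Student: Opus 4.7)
The plan is to handle the two implications separately, with \proref{pro:finite-subspace} supplying the backward direction and a construct-then-extract-a-limit argument handling the forward direction.

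\emph{Backward direction.} Suppose $\eta \in \mz{k}^+$ is a probability measure. By \proref{pro:finite-subspace}, the limit equals $\sum_{d\mid k}\lambda_d \nu_{\beta,d}$ with $\lambda_d \geq 0$ and $\sum_d \lambda_d = 1$. Each $\nu_{\beta,d}$ is a $\beta$-subconformal probability measure by \lemref{lem:atomiclist}, and convex combinations of $\beta$-subconformal probability measures are $\beta$-subconformal (immediate from \defref{def:subconformal}), so the limit is $\beta$-subconformal.

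\emph{Forward direction.} Let $\nu\in\mz{k}^+$ be $\beta$-subconformal. The key idea is to construct explicit approximants $\eta_L$ satisfying $P_{\beta,L}\eta_L=\nu$ on the nose and then pass to the limit. For each finite set $L$ of primes containing all prime divisors of $k$, set $N_L := \prod_{q\in L}(1-q^{-\beta})$ and
\[
\eta_L := N_L\inv \, A_{\beta,L}\nu.
\]
By $\beta$-subconformality (\proref{pro:bsubcequivalence}), $A_{\beta,L}\nu$ is positive; it is supported on $Z_k$ because $\omega_q$ maps $Z_k$ into itself; and its total mass is $N_L$ because $\omega_{q*}$ preserves total mass of positive measures. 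Hence $\eta_L \in \mz{k}^+$ is a probability measure. The identity
\[
P_{\beta,L}\eta_L \;=\; \prod_{q\in L} A_{\beta,q}\inv \cdot \prod_{q\in L} (1-q^{-\beta}\omega_{q*})\, \nu \;=\; \nu
\]
then holds by the cancellation $A_{\beta,q}\inv A_{\beta,q} = \id$ at every prime $q \in L$, since these factors all commute.

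To conclude, enumerate the primes $p_1 < p_2 < \cdots$ and set $L_n := \{p_1,\dots,p_n\}$, so $(\eta_{L_n})$ is a sequence in the norm-compact space of probability measures on the finite set $Z_k$; extract a convergent subsequence $\eta_{L_{n_i}} \to \eta$ with limit in $\mz{k}^+$. The uniform operator bound $\|P_{\beta,L}\| \leq 1$, obtained by estimating each commuting factor $(1-q^{-\beta})A_{\beta,q}\inv$ via the Neumann series of \lemref{lem:eulerproduct}(3), yields $\|P_{\beta,L_{n_i}}\eta - P_{\beta,L_{n_i}}\eta_{L_{n_i}}\| \leq \|\eta - \eta_{L_{n_i}}\| \to 0$. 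Combined with the convergence $P_{\beta,L_{n_i}}\eta \to \lim_L P_{\beta,L}\eta$ from \proref{pro:finite-subspace} and the identity $P_{\beta,L_{n_i}}\eta_{L_{n_i}} = \nu$, passing to the limit yields $\lim_L P_{\beta,L}\eta = \nu$. The main ingredients that require care are the verification that $\eta_L$ is a probability measure on $Z_k$ and the uniform norm bound $\|P_{\beta,L}\|\leq 1$; beyond these, the argument is formal cancellation and routine finite-dimensional compactness.
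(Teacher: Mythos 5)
Your proof is correct and takes essentially the same route as the paper: your approximants $\eta_L = N_L^{-1}A_{\beta,L}\nu$ are precisely the measures the paper denotes $P_{\beta,n}^{-1}y$, and both arguments conclude by positivity from subconformality, finite-dimensional compactness of the probability measures on $Z_k$, and passing to the limit along a cofinal sequence. The only cosmetic differences are that you handle the easy direction via \lemref{lem:atomiclist} and convexity rather than the factorization $P_\beta = P_{\beta,n}\prod_{p\nmid n}(1-p^{-\beta})A_{\beta,p}^{-1}$, and you use the uniform bound $\|P_{\beta,L}\|\leq 1$ together with the exact identity $P_{\beta,L}\eta_L=\nu$ where the paper invokes operator-norm convergence $P_{\beta,n_j}\to P_\beta$.
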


\begin{proof} 
Let $P_\beta$ denote the linear operator on $\mz{k}$  defined  by $P_\beta \eta = \lim_{L \nearrow \primes } \prod_{p\in L} (1-p^{-\beta}) A_{\beta,p}^{-1} \eta$. It suffices to show that 
\[
 P_\beta \mz{k}^+ = \bigcap_{n\in\nx}A_{\beta,n}^{-1}\mz{k}^+ 
\]
because the right hand side is the set of $\beta$-subconformal measures on $Z_k$ by \proref{pro:bsubcequivalence}(6).

Let $P_{\beta,n} := \prod_{p|n} (1-p^{-\beta})A_{\beta,p}^{-1}|_{\mz{k}}$.
That $P_\beta \mz{k}^+  \subseteq \bigcap_{n\in\nx}A_{\beta,n}^{-1}\mz{k}^+$ follows from \proref{pro:finite-subspace} because for each $n\in \nx$ 
\[
 P_\beta \mz{k}^+  = P_{\beta,n}  \prod_{p\in \primes, \ p\nmid n} (1-p^{-\beta}) A_{\beta,p}^{-1}   \mz{k}^+   \subseteq A_{\beta,n}^{-1}\mz{k}^+.
\]
It remains to  show that $\bigcap_{n\in\nx}A_{\beta,n}^{-1}\mz{k}^+ \subseteq P_\beta \mz{k}^+$.  Note that
\[
\|P_{n,\beta}\nu\| = (P_{n,\beta}\nu)(X) = \nu(X) = \|\nu\| \qquad \nu\in \mz{k}^+.
 \]
This shows that $\|P_{\beta,n}^{-1}y\| = \|y\|$ for every $y\in \bigcap_{n\in\nx} A_{\beta,n}^{-1} \mz{k}^+$; since $\M(Z_k)$ is finite-dimensional, the net $P_{\beta,n}^{-1}y$ has a subnet  $(P_{\beta,n_j}^{-1}y)$  converging to $x$. For $\ve>0$, choose some $K$ such that $\|P_{\beta,n_j}- P_\beta\|<\frac{\ve}{2\|y\|}$ and $\|P_{\beta,n_j}^{-1}y - x\| <\frac{\ve}{2\|P_\beta\|}$ for all $j>K$. Hence
\begin{align*}
\|y - P_\beta x\| &\leq \|y - P_\beta P_{\beta,n_j}^{-1}y\| + \|P_\beta P_{\beta,n_j}^{-1}y - P_\beta x\| \\[5pt]
&\leq \|P_{\beta,n_j} - P_\beta\|\cdot\|P_{\beta,n_j}^{-1}y\| + \|P_\beta\|\cdot\|P_{\beta,n_j}^{-1}y - x\| < \ve.
\end{align*}
Since $\ve$ is arbitrary, it follows that $P_\beta x = y$.
\end{proof}

\begin{proof}[Proof of \thmref{thm:atomic}]
Let $\nu$ be  an arbitrary $\beta$-subconformal atomic probability measure and fix $k\in \nx$.  By \lemref{lem:restrictions}, the restriction $\nu|_k := \nu(\cdot \cap Z_k)$ is $\beta$-subconformal, and hence  decomposes uniquely as $\sum_{n|k} \lambda_{k,n} \nu_{\beta,n},$ with $\lambda_{k,n}\geq 0$ and $\sum_{n\mid k} \lambda_{k,n}= \nu (Z_k)$, by \proref{pro:finite-subspace} and \lemref{lem:subconformaliffPbeta}. 
For each  $n|k$
\[
\nu(Z_n^*) = \sum_{d|\frac{k}{n}} \lambda_{k,nd} \nu_{ \beta,nd}(Z_n^*) = \sum_{d|\frac{k}{n}} \lambda_{k,nd}(nd)^{-\beta} \euler_\beta(n) = \sum_{d|\frac{k}{n}} \lambda_{k,\left(\frac{nd}{k}\right)k} \left(\frac{nd}{k}\right)^{-\beta} k^{-\beta} \euler_\beta(n).
\]
Reindexing the sum using the permutation $d\mapsto \frac{k}{nd}$  of  divisors of $\frac{k}{n}$ yields
$$\frac{1}{\euler_\beta(n)} \nu(Z_n^*)= \sum_{d|\frac{k}{n}} \lambda_{k,k/d} \left(\frac{k}{d}\right)^{-\beta}.$$
This relates the function $n\mapsto \frac{1}{\euler(k/n)} \nu(Z_{k/n}^*)$ to a summation of the function $d\mapsto \lambda_{k,k/d} \left(\frac{k}{d}\right)^{-\beta}$ over divisors of $n$. The M\"obius inversion formula then implies that
\begin{equation}\label{eqn:partialcoefficient}
\lambda_{k,n} =  n^\beta\sum_{d|\frac{k}{n}} \mu(d)\frac{1}{\euler_\beta(nd)} \nu(Z_{nd}^*).
\end{equation}
As $k$ increases in the directed set $\nx$ this gives rise to an absolutely convergent series 
because 
$\sum_{d|\frac{k}{n}} \left| \mu(d)\frac{1}{\euler_\beta(nd)} \nu(Z_{nd}^*)\right| \leq 
\nu(\bigsqcup_{d|\frac{k}{n}} Z_{nd}^*) \leq 1$. Thus we may define
\[
\lambda_n := n^\beta \sum_{d\in \nx} \mu(d) \frac{1}{\euler_\beta(nd)} \nu(Z_{nd}^*).
\]

It only remains to verify that $\nu = \sum_n \lambda_n\nu_{\beta,n}$. If $z$ is a primitive $k^\mathrm{th}$ root of unity, then 
\begin{align*}
\left(\sum_{n\in\nx} \lambda_n \nu_{\beta,n}\right)(\{z\}) &= \sum_{\substack{n\in\nx\\ k|n}} \left( n^\beta \sum_{d\in\nx} \mu(d) \frac{1}{\euler_\beta(nd)} \nu(Z_{nd}^*)\right)\left( n^{-\beta} \frac{\euler_\beta(k)}{\euler(k)}\right)\\[5pt]
&= \frac{\euler_\beta(k)}{\euler(k)} \sum_{\substack{n,d\in\nx\\k|n}} \mu(d) \frac{1}{\euler_\beta(nd)} \nu(Z_{nd}^*)\\[5pt]
&= \frac{\euler_\beta(k)}{\euler(k)} \sum_{\substack{m\in\nx\\ k|m}} \frac{1}{\euler_\beta(m)} \nu(Z_m^*) \sum_{d|\frac{m}{k}} \mu(d)\\[5pt]&= \frac{\euler_\beta(k)}{\euler(k)} \sum_{\substack{m\in\nx\\ k|m}} \frac{1}{\euler_\beta(m)} \nu(Z_m^*) 
\delta_{m,k}\\[5pt]
&= \frac{1}{\euler(k)} \nu(Z_k^*)\\[5pt]
&= \nu(\{z\}).
\end{align*}
The fourth equality holds because the M\"obius function satisfies $\sum_{d|\frac{m}{k}} \mu(d) = \delta_{m,k}$ and the last one holds because the value $\nu(\{z\}) = \nu|_k(\{z\})$ depends only on the order of $z$ and $|Z_k^*| = \euler(k)$.
\end{proof}

\begin{remark}
If we write  $Z_k^* = \bigcap_{p|k} Z_k \backslash Z_{k/a}$ and use the inclusion-exclusion principle, we get
$$\nu(Z_k^*) = \prod_{p|k}\Big(\omega_{k*} - \omega_{k/p*} \Big)\nu(\{1\}) = \sum_{a|k} \mu(a) \omega_{k/a*}\nu(\{1\}).$$
Which gives 
\begin{align*}
\lambda_n &= n^\beta \sum_{d\in\nx}\sum_{a|nd}  \frac{\mu(d)\mu(a)}{\euler_\beta(nd)}
  \omega_{nd/a*}\nu(\{1\})\\
  &= n^{\beta} \sum_{m\in \nx} \left( \sum_{a\in \nx} \frac{\mu(n' a) \mu(m' a)}{\euler_\beta((n\vee m) a)}\right)\omega_{m*}\nu(\{1\}),
\end{align*}
as an alternative expression  for $\lambda_n$  in terms of  the wrap-around maps $\omega_{d*}$ applied to $\nu$, where $n'm = m'n = n\vee m$. 
\end{remark}

\section{Asymptotic estimates for partial sums}
\label{app:appendixB}

Here we prove an asymptotic estimate for partial summation over $\nx$ using a partial order based on prime factorization. The multiplicative partial order plays an important role in the structure of the Toeplitz algebra and its KMS states, as shown by the subconformal condition. Our motivation is the application of \proref{prop:summation} in proving a multiplicative version of Wiener's lemma (cf. \proref{pro:leb-extreme}), but since its statement and proof rely solely on classical results from analytic number theory, we gather them in a separate section.

For each $n\geq 1$, let $p_n$ be the $n^{th}$ prime number and let $\primes_n = \{2,3,5, \ldots p_n\}$ be the set consisting of the first $n$ primes. We denote by $\nx_n$ the submonoid  of $ \nx$ generated by $\primes_n$, that is, $\nx_n$ consists of all natural numbers with no prime factors greater than $p_n$.
\begin{prop}\label{prop:summation}
Let $(a_n)_{n=1}^\infty$ be a bounded sequence of non-negative real numbers such that
\begin{equation}\label{eqn:propassumption}
\lim_{n\rightarrow\infty} \frac{1}{\log(n)}\sum_{m=1}^n \frac{a_m}{m} = 0.
\end{equation}
Then
\begin{equation}\label{eqn:mainapp}
\lim_{n\rightarrow\infty} \Big(\prod_{p\in \primes_n} (1- p^{-1})\Big)\sum_{m\in\nx_n} \frac{a_m}{m} = 0.
\end{equation}
\end{prop}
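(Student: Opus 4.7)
Write $T(n) := \sum_{m\in\nx_n} a_m/m$ and $S(x) := \sum_{m\leq x} a_m/m$, and let $M$ be an upper bound for $(a_m)$. By Mertens' theorem, $\prod_{p\in\primes_n}(1-p^{-1})\sim e^{-\gamma}/\log p_n$, and the prime number theorem gives $\log p_n\sim \log n$, so the conclusion \eqref{eqn:mainapp} is equivalent to $T(n)=o(\log n)$.

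My plan is to truncate the sum defining $T(n)$ at $Y_n := p_n^K$ for a parameter $K\geq 1$ to be optimized later, splitting $T(n)=T_{\le}(n)+T_>(n)$ according to whether $m\leq p_n^K$ or $m>p_n^K$. For the head, since $a_m\geq 0$ we drop the smoothness constraint and invoke the hypothesis:
\[
T_{\le}(n)\;\leq\; \sum_{m\leq p_n^K} \frac{a_m}{m}\;=\;S(p_n^K)\;\leq\; \ve K \log p_n,
\]
for any $\ve>0$, provided $n$ is large enough that $p_n^K$ exceeds the threshold $N_\ve$ supplied by the hypothesis. For the tail, the elementary inequality $1\leq (\log m)/(K\log p_n)$, valid for $m>p_n^K$, combined with $a_m\leq M$ yields
\[
T_>(n)\;\leq\; M\!\!\sum_{m\in\nx_n,\,m>p_n^K}\!\!\frac{1}{m}\;\leq\; \frac{M}{K\log p_n}\sum_{m\in\nx_n}\frac{\log m}{m}.
\]

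The crux of the argument is the number-theoretic estimate $\sum_{m\in\nx_n}(\log m)/m = O((\log p_n)^2)$. I will obtain it from the Dirichlet convolution $\log m = \sum_{d\mid m}\Lambda(d)$ (with $\Lambda$ the von Mangoldt function): exchanging summation, and noting that for $d\in\nx_n$ one has $\sum_{m\in\nx_n,\,d\mid m}1/m = \zeta_n(1)/d$ while the inner sum is empty for $d\notin\nx_n$, we get
\[
\sum_{m\in\nx_n}\frac{\log m}{m} \;=\; \zeta_n(1)\sum_{d\in\nx_n}\frac{\Lambda(d)}{d} \;=\; \zeta_n(1)\sum_{p\leq p_n}\frac{\log p}{p-1}.
\]
The classical Mertens estimate $\sum_{p\leq x}(\log p)/p = \log x + O(1)$ combined with the convergent correction $\sum_p (\log p)/(p(p-1))<\infty$ gives $\sum_{p\leq p_n}(\log p)/(p-1) = \log p_n + O(1)$; since Mertens also yields $\zeta_n(1)=O(\log p_n)$, the desired $O((\log p_n)^2)$ bound follows. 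Hence $T_>(n) = O(\log p_n/K)$, with an implicit constant depending only on $M$.

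Putting the head and tail bounds together,
\[
T(n)\;\leq\;(\ve K + C/K)\log p_n + O(1)
\]
for some constant $C$ depending only on $M$, uniformly for $n$ large enough (depending on $\ve$ and $K$). Given any $\eta>0$, I first choose $K$ large so that $C/K<\eta/2$, then pick $\ve<\eta/(2K)$; for all sufficiently large $n$ the hypothesis applies and we obtain $T(n)/\log n\leq \eta$. Since $\eta$ is arbitrary, $T(n)=o(\log n)$ and the proposition follows. The principal obstacle is securing the uniform-in-$n$ estimate on $\sum_{m\in\nx_n}(\log m)/m$; everything else is a standard threshold split plus a one-line Chebyshev inequality.
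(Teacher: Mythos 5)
Your proof is correct, and it takes a genuinely more elementary route than the paper while following the same overall truncation scheme: split $\sum_{m\in\nx_n}a_m/m$ at $p_n^u$, bound the head using the hypothesis, bound the tail uniformly in $n$, and let $u\to\infty$. The difference lies entirely in the tail estimate. The paper introduces the function $\delta(u):=\limsup_x\int_u^\infty\Psi(x^s,x)x^{-s}\,ds$ built from the de Bruijn counting function $\Psi(x,y)$, invokes Hildebrand's uniform version of de Bruijn's asymptotic to show $\delta'=-\rho$ (Dickman's function) and $\delta(u)\to0$, and obtains $\limsup_n(\log n)^{-1}\sum_{m\in\nx_n,\,m>p_n^u}1/m\leq\delta(u)$. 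You replace all of this with the elementary convolution identity $\log m=\sum_{d\mid m}\Lambda(d)$, which together with Mertens' first and third theorems gives $\sum_{m\in\nx_n}(\log m)/m=O((\log p_n)^2)$, and hence the Rankin-type bound $\sum_{m\in\nx_n,\,m>p_n^K}1/m=O((\log p_n)/K)$. Your estimate is much cruder than $\delta(u)$ (which decays like $u^{-u}$ rather than $1/K$), but that is irrelevant here: all one needs is that the normalized tail vanishes as the truncation parameter grows, and $O(1/K)$ suffices. The gain is a complete bypass of the de Bruijn/Dickman apparatus and Hildebrand's theorem, at essentially no cost to the argument.
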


For the proof we need to gather a few tools from analytic number theory. As usual, 
when  $\lim_{n\to\infty} {f(n)}/{g(n)} = 1$, we say that  $f$ and $g$ are {\em asymptotically equal} and we write $f(n) \sim g(n)$.
 
 Mertens' Third Theorem states that 
 \[
\lim_{n\rightarrow\infty} \log (n) \prod_{p \leq n }(1 - p\inv) = e^{-\gamma},
\]
where $\gamma = 0.57721566...$ is Euler's constant. If we replace first $n$ by $p_n$  in the formula above, and then use the prime number theorem $p_n \sim n\log(n)$ to change the factor $\log p_n$ back to $\log n \sim \log (n\log n)$, we obtain 
\begin{equation*}
 \lim_{n\rightarrow\infty} \log(n) \prod_{p \leq p_n} (1 - p^{-1}) = e^{-\gamma}.
\end{equation*}
If we now take inverses and use the Euler product formula for the monoid $\nx_n$, we see that
\begin{equation}\label{eqn:mertens3new}
 \lim_{n\rightarrow\infty} \frac{1}{\log(n)} \sum_{m\in \nx_n} \frac{1}{m}  = e^\gamma.
\end{equation}

Abel's summation formula states that
if  $(a_n)_{n=1}^\infty$ is a sequence in $\C$ and  $A(x) := \sum_{1\leq m \leq x}
a_m$ for each $x\geq 1$, then
\begin{equation}\label{eqn:abelsum}
\sum_{1\leq m \leq x} a_m f(m) = -\int_1^x A(t) f'(t) dt + A(x) f(x)
\end{equation}
for every continuously differentiable function $f$ on $[1,\infty)$.

 For real $x>0$ and $y\geq 2$ let $\Psi(x,y)$ be the number of positive integers less than $x$ that have no prime divisors greater than $y$; this $\Psi$  is often called the {\em de Bruijn function}. Improving on earlier work of \cite{Buch,dic,CV,ram}, 
de Bruijn showed that the asymptotic estimate
$$ \frac{\Psi(x^u, x)}{x^u} \sim \rho(u)$$
is uniform for $1\leq u\leq (\log x)^{3/8 - \ve}$,  for any fixed $\ve>0$, see \cite{dB51} and also \cite{Hi86} and the references thereof.
Here $\rho(u)$ denotes the Dickman function, usually defined as the continuous solution to the delay differential equation
\[
u\rho'(u) + \rho(u-1) = 0
\]
with initial conditions $\rho(u) = 1$ for $0\leq u \leq 1$. In addition, de Bruijn further showed in \cite{dB66} that the  Dickman function has total mass $e^\gamma$, that is, 
\[
\int_0^{\infty} \rho(u) du = e^\gamma;
\]
we refer to \cite[Theorem 3.5.1]{Lag13}  for the details.
To make the uniform approximation precise, we borrow the statement of Hildebrand's improvement of de Bruijn's result, with $x^u$ in place of $x$.

\begin{prop}\label{prop:hildebrand}\cite[Theorem 1]{Hi86}
Let $\ve >0$. Then the estimate
$$\frac{\Psi(x^u, x)}{x^u} -\rho(u) = O_\ve \left(\frac{\rho(u) \log(u+1)}{\log x}\right)$$
holds uniformly in the range $x\geq 3$ and $1\leq u \leq \log x/ (\log\log x)^{5/3 + \ve}$.
\end{prop}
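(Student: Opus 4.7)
The plan is to invoke this directly as Theorem~1 of \cite{Hi86}; a self-contained proof is a substantial piece of analytic number theory in its own right and would dwarf the rest of our appendix. Nevertheless, to orient the reader, I will sketch the route one would take. The two identities I would want to match up are Buchstab's functional identity
\[
\Psi(x,y) \;=\; 1 + \sum_{\substack{p \le y \\ p \le x}} \Psi(x/p,\, p),
\]
obtained by conditioning on the largest prime factor of a $y$-smooth integer $\le x$, and the companion integral equation $u\rho(u) = \int_{u-1}^u \rho(v)\, dv$ satisfied by the Dickman function, itself a consequence of the delay differential equation $u\rho'(u) = -\rho(u-1)$. Both arise from a ``first prime'' / ``first unit interval'' decomposition, which opens the door to a termwise comparison.

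First I would handle the base case $0 \le u \le 1$, where $\Psi(x^u, x) = \lfloor x^u\rfloor$ and $\rho(u) \equiv 1$. For larger $u$, I would feed the inductive hypothesis into Buchstab's identity, replace the sum over primes by its integral analogue using the prime number theorem with a classical de~la~Vall\'ee~Poussin error term, and identify the main term with $x^u\rho(u)$ via the integral equation for $\rho$. The factor $\log(u+1)/\log x$ in the stated estimate would emerge naturally from tracking how errors compound across the roughly $O(u)$ iterations needed to drive $u$ below~$1$, together with a careful bookkeeping of the size of $\rho$ at each level of the recursion.

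The hard part will be pushing the range of uniformity in $u$ from the classical $u = O(1)$ range (Dickman, de~Bruijn) all the way out to $u \le \log x / (\log\log x)^{5/3 + \varepsilon}$. This is precisely Hildebrand's contribution, and to achieve it one must supplement the recursive/combinatorial argument with a saddle-point analysis of the Dirichlet series
\[
\zeta(s,y) \;=\; \prod_{p \le y}\bigl(1 - p^{-s}\bigr)^{-1}
\]
through the Perron representation
\[
\Psi(x,y) \;=\; \frac{1}{2\pi i}\int_{c-i\infty}^{c+i\infty} \zeta(s,y)\, \frac{x^s}{s}\, ds,
\]
evaluated near the saddle point $\alpha = \alpha(x,y)$ defined implicitly by the equation $\sum_{p \le y}(\log p)/(p^{\alpha} - 1) = \log x$. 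The decisive obstacle---and the reason the range takes the precise form $(\log\log x)^{5/3+\varepsilon}$---is to obtain uniform bounds on $\zeta(s,y)$ in vertical strips away from the saddle, which in turn rests on the sharpest known zero-free region for the Riemann zeta function together with Vinogradov-type exponential sum estimates over primes. Since none of this machinery has any bearing on the operator-algebraic themes of the present paper, I will simply quote Hildebrand's estimate as stated and proceed.
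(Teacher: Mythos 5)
The paper does exactly what you propose: Proposition~\ref{prop:hildebrand} is simply quoted as Theorem~1 of \cite{Hi86}, with no proof given in the appendix, so your approach coincides with the paper's. (Your orientation sketch is harmless, though for accuracy Hildebrand's proof of this particular estimate proceeds by iterating a functional equation for $\Psi(x,y)$ involving $\log$-weighted sums over prime powers together with the Vinogradov--Korobov form of the prime number theorem, which is where the exponent $5/3+\ve$ originates; the saddle-point analysis of $\zeta(s,y)$ via Perron's formula that you describe is the later Hildebrand--Tenenbaum method, not the proof of the theorem being cited.)
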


Next define a function $\delta:[1,\infty)\rightarrow \R$ by
\[
\delta(u) := \limsup_x \int_{u}^\infty \frac{\Psi(x^s, x)}{x^s} ds= \limsup_x \frac{1}{\log(x)}\int_{x^u}^\infty \frac{\Psi(t, x)}{t^2} dt. 
\]
We will require the following properties of $\delta(u)$.

\begin{lemma}
The function $\delta$ is differentiable with $\delta'(u) = -\rho(u)$ and $\delta(1) = e^\gamma - 1$. Moreover, $\lim_{u\rightarrow\infty} \delta(u) = 0$. 
\end{lemma}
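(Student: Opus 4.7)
The plan is to establish the identity $\delta(u) = \int_u^\infty \rho(s)\, ds$, from which all three stated properties follow at once: $\delta'(u) = -\rho(u)$ by the fundamental theorem of calculus (the Dickman function $\rho$ being continuous by the defining delay differential equation), $\delta(1) = e^\gamma - \int_0^1 \rho(s)\,ds = e^\gamma - 1$ (since $\rho \equiv 1$ on $[0,1]$), and $\lim_{u\to\infty}\delta(u)=0$ by the integrability $\int_0^\infty \rho(s)\,ds = e^\gamma$. As a bonus, one sees that the $\limsup$ in the definition of $\delta$ is actually a limit.

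My first step is to compute $\int_0^\infty \Psi(x^s,x)/x^s\, ds$ exactly via Fubini's theorem. Since $\Psi(x^s,x) = \sum_{P(n)\leq x} \mathbf{1}[n \leq x^s]$, interchanging sum and integral yields
\[
\int_0^\infty \frac{\Psi(x^s, x)}{x^s}\, ds \;=\; \sum_{P(n)\leq x}\int_{\log n/\log x}^\infty x^{-s}\, ds \;=\; \frac{1}{\log x}\sum_{P(n)\leq x}\frac{1}{n} \;=\; \frac{1}{\log x}\prod_{p\leq x}(1-p^{-1})^{-1},
\]
whose limit as $x \to \infty$ equals $e^\gamma$ by Mertens' third theorem.

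The second step is to control the truncated integral $\int_0^u \Psi(x^s,x)/x^s\,ds$ for fixed $u \geq 1$ and show it converges to $\int_0^u \rho(s)\,ds$ as $x \to \infty$. The integrand is uniformly bounded by $1$ on the compact interval $[0,u]$ and converges pointwise to $\rho$: on $[0,1]$ we have $\Psi(x^s,x)/x^s = \lfloor x^s\rfloor/x^s \to 1 = \rho(s)$, while on $[1,u]$ \proref{prop:hildebrand} gives uniform convergence once $x$ is large enough that Hildebrand's range of validity $1 \leq s \leq \log x/(\log\log x)^{5/3+\ve}$ contains $[1,u]$. Dominated convergence then delivers the claim, and subtracting from the identity of the previous paragraph yields $\delta(u) = e^\gamma - \int_0^u \rho(s)\,ds = \int_u^\infty \rho(s)\,ds$, whence the three stated assertions.

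No serious technical obstacle is anticipated; the only subtlety is the dominated convergence step, which is essentially automatic given the trivial pointwise bound $\Psi(x^s,x)/x^s \leq 1$ on the compact interval $[0,u]$. The arguments above sidestep the need to control the tail $\int_{\log x/(\log\log x)^{5/3+\ve}}^\infty \Psi(x^s,x)/x^s\,ds$ directly, since the full integral admits a closed form via Fubini.
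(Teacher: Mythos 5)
Your proof is correct, and it takes a genuinely different (and arguably cleaner) route than the paper's. You evaluate the full integral $\int_0^\infty \Psi(x^s,x)x^{-s}\,ds$ in closed form by Tonelli, getting $\frac{1}{\log x}\prod_{p\le x}(1-p^{-1})^{-1}\to e^\gamma$ by Mertens' third theorem, then combine the trivial bound $\Psi(x^s,x)x^{-s}\le 1$ with \proref{prop:hildebrand} on $[1,u]$ and the identity $\Psi(y,x)=\lfloor y\rfloor$ for $y\le x$ on $[0,1]$ to get $\int_0^u \Psi(x^s,x)x^{-s}\,ds\to\int_0^u\rho(s)\,ds$; subtracting yields the closed form $\delta(u)=e^\gamma-\int_0^u\rho(s)\,ds=\int_u^\infty\rho(s)\,ds$, from which all three assertions (plus the bonus that the $\limsup$ is a limit) follow at once. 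The paper instead argues locally: Hildebrand's estimate on increments $[u,u+h]$ gives $\delta(u)=\delta(u+h)+\int_u^{u+h}\rho(s)\,ds$, hence $\delta'=-\rho$, and $\delta(1)$ is computed separately via Abel summation, rewriting $\int_1^\infty \Psi(t,x)t^{-2}\,dt$ as $\sum_{m\in\nx_n}\frac{1}{m}$ and invoking Mertens together with the harmonic-number asymptotics; note your Fubini identity is essentially that same computation after the substitution $t=x^s$. Both arguments rest on the same external inputs (Hildebrand, Mertens, and de Bruijn's $\int_0^\infty\rho(s)\,ds=e^\gamma$), but your version buys a single formula for $\delta$ that makes all three claims immediate --- including $\lim_{u\to\infty}\delta(u)=0$, which the paper's written proof leaves essentially implicit --- at no extra technical cost.
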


\begin{proof}
Fix $\ve>0$. By  Proposition \ref{prop:hildebrand} there exists a constant $C_\ve>0$  such that 
$$\left|\frac{\Psi(x^s, x)}{x^s} - \rho(s)\right| \leq C_\ve \cdot \frac{\log(s+1)}{\log(x)}$$
for $1\leq s \leq \log(x)/(\log\log(x))^{5/3 + \ve}$  (we may drop the factor $\rho(u) \leq 1$ from the r.h.s.). Then 
$$\int_u^{u+h} \left|\frac{\Psi(x^s, x)}{x^s} - \rho(s) \right|ds \leq \frac{C_\ve}{\log(x)} \int_u^{u+h} \log(s+1) ds$$
for $h > 0$ and sufficiently large $x$. The right hand side converges to $0$ as $x\rightarrow\infty$, hence
$$\lim_{x\rightarrow\infty} \int_u^{u+h} \frac{\Psi(x^s, x)}{x^s} ds = \int_u^{u+h} \rho(s) ds.$$
It then follows that
\begin{align*}
\delta(u) &= \limsup_x \int_u^\infty \frac{\Psi(x^s, x)}{x^s} ds\\[5pt]
&= \limsup_x \int_{u+h}^\infty \frac{\Psi(x^s, x)}{x^s} ds + \int_u^{u+h} \rho(s) ds\\[5pt]
&= \delta(u+h) + \int_u^{u+h} \rho(s) ds,
\end{align*}
which implies $\delta'(u) = -\rho(u)$.

In order to see that $\delta(1) = e^\gamma - 1$, fix $x>0$ and let $n = \pi(x)$, so that  $\nx_n$ is the set of positive integers with no prime factors larger than $x$ . Consider the sequence $b_m = 1$ if $m\in \nx_n$ and $0$ if $m\notin \nx_n$. Then  $B(y) = \sum_{1\leq m \leq y} b_m = \Psi(y,x)$ and Abel's summation formula \eqref{eqn:abelsum} with $f(x) = 1/x$  gives
$$\sum_{1\leq m \leq y} \frac{b_m}{m} = \int_1^y \frac{\Psi(t,x)}{t^2} dt + \frac{\Psi(y,x)}{y}.$$
Taking limits as  $y\rightarrow\infty$ we see that
$$\sum_{m\in \nx_n} \frac{1}{m} = \int_1^\infty \frac{\Psi(t,x)}{t^2} dt,$$
which gives 
\begin{equation*}
\frac{1}{\log(x)}\int_{x}^\infty \frac{\Psi(t,x)}{t^2} dt = \frac{1}{\log(x)} \Big(\sum_{m\in \nx_n} \frac{1}{m} - \sum_{1\leq m\leq x}\frac{1}{m} - 1\Big).
\end{equation*}
The right-hand side converges to $e^\gamma - 1$ as $x\rightarrow\infty$ because of \eqref{eqn:mertens3new} and the asymptotic formula for the harmonic numbers $H_n:= \sum_{m=1}^{n}\frac{1}{m} \approx \log(n) + \gamma$.
\end{proof}

\begin{proof}[Proof of Proposition \ref{prop:summation}]
Assume that $0\leq a_m\leq 1$ for every $m$. Then, for fixed $u\geq 1$,
\begin{equation}\label{eqn:inequality1}
\frac{1}{\log(n)}\sum_{m\in\nx_n} \frac{a_m}{m}\leq \frac{1}{\log(n)}\sum_{1\leq m\leq p_n^u} \frac{a_m}{m} + \frac{1}{\log(n)}\sum_{\substack{m\in \nx_n\\ m>p_n^u  }} \frac{1}{m} \qquad (n\geq1).
\end{equation}
The first summand on the right of \eqref{eqn:inequality1} converges to 0 as $n\rightarrow \infty$ by assumption \eqref{eqn:propassumption}, since $\log p_n^u\sim u \log n$. For the second summand, Abel's summation formula gives 
$$\frac{1}{\log(n)} \sum_{\substack{m\in\nx_n\\ m> p_n^u}}\frac{1}{m} = \frac{1}{\log(n)}\int_{p_n^u}^\infty \frac{\Psi(t, p_n)}{t^2} dt  - \frac{1}{\log(n)}\frac{\Psi(p_n^u, p_n)}{p_n^u}.$$
The first term is bounded above by $\delta(u) + \ve$ for each $\ve>0$ and sufficiently large $n$, while the second term converges to $0$ as $n\rightarrow\infty$. Thus, we have the following bound for each $u\geq 1$,
$$\limsup_{n\rightarrow\infty} \frac{1}{\log(n)}\sum_{m\in\nx_n}\frac{a_m}{m} \leq \delta(u).$$
Since $\inf \delta(u) = 0$, it follows that 
\[
\lim_{n\rightarrow\infty} \frac{1}{\log(n)}\sum_{m\in\nx_n}\frac{a_m}{m}  =0.
\]
Finally, by \eqref{eqn:mertens3new} we may put $\prod_{p\in\primes_n} \left(1-p^{-1}\right)$ in place of $\frac{1}{\log(n)}$ above (the factor $e^{\gamma}$ is irrelevant), which yields \eqref{eqn:mainapp}, as required.
\end{proof}

Recall that a subset $J\subseteq \N$ has natural density 0 if 
$\lim_{n\to\infty}\#\{ j \in J : j\leq n\}/n = 0$.
Next we see that if a subset $J\subseteq\N$ has natural density 0, then it has \emph{multiplicative density 0}.
\begin{cor}\label{cor:density-0}
If $J\subseteq \nx$ is a set of natural density $0$, then
$$\lim_{n\rightarrow\infty} \Big(\prod_{p\in \primes_n} (1 - p^{-1}) \Big)\sum_{m\in \nx_n\cap J} \frac{1}{m} = 0.$$
\end{cor}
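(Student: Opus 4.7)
The plan is to deduce this as a direct application of \proref{prop:summation} with $a_m = \mathbf{1}_J(m)$. Setting $a_m$ equal to the indicator function of $J$ gives a bounded non-negative sequence and reduces the corollary to verifying the hypothesis \eqref{eqn:propassumption} of the proposition, namely
\[
\lim_{n\to\infty} \frac{1}{\log n} \sum_{\substack{1\le m \le n \\ m\in J}} \frac{1}{m} = 0.
\]

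To establish this, I would write $J(x) := \#\{m \in J : m \le x\}$ and apply Abel's summation formula \eqref{eqn:abelsum} with $a_m = \mathbf{1}_J(m)$ and $f(t) = 1/t$. This gives
\[
\sum_{\substack{1\le m \le n \\ m\in J}} \frac{1}{m} \;=\; \int_1^n \frac{J(t)}{t^2}\, dt \;+\; \frac{J(n)}{n}.
\]
The boundary term $J(n)/n$ tends to $0$ by the natural-density hypothesis, so the task reduces to bounding the integral after division by $\log n$. Given $\varepsilon > 0$, choose $N$ large enough that $J(t)/t < \varepsilon$ for all $t \ge N$, split the integral at $N$, and estimate
\[
\int_1^n \frac{J(t)}{t^2}\, dt \;\le\; \int_1^N \frac{J(t)}{t^2}\, dt \;+\; \varepsilon \int_N^n \frac{dt}{t} \;\le\; C_N + \varepsilon \log n,
\]
where $C_N$ depends only on $N$. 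Dividing through by $\log n$ and letting $n \to \infty$ yields $\limsup \le \varepsilon$, and since $\varepsilon$ was arbitrary the limit is $0$.

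Having verified the hypothesis of \proref{prop:summation}, the conclusion is immediate:
\[
\Big(\prod_{p\in \primes_n}(1 - p^{-1})\Big) \sum_{m\in \nx_n \cap J} \frac{1}{m} \;=\; \Big(\prod_{p\in \primes_n}(1 - p^{-1})\Big) \sum_{m\in \nx_n} \frac{a_m}{m} \;\xrightarrow[n\to\infty]{}\; 0.
\]
I do not anticipate any real obstacle here, since the Abel summation argument is standard; the substantive work (Mertens' third theorem, the de Bruijn--Hildebrand estimates, and the integral of Dickman's $\rho$) has already been done inside the proof of \proref{prop:summation}, and this corollary is the clean packaging of that proposition for characteristic functions of density-zero sets.
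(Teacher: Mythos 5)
Your proof is correct and follows essentially the same route as the paper: both apply Abel's summation formula \eqref{eqn:abelsum} to the indicator function of $J$ with $f(t)=1/t$, use natural density zero to bound the counting function $J(t)/t$ by $\varepsilon$ beyond a threshold, split the integral, and then invoke \proref{prop:summation}. No gaps to report.
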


\begin{proof}
Let $b_n = 1$ if $n\in J$ and $b_n = 0$ if $n\notin J$, and define $B(x) := \sum_{1\leq m \leq x} b_m$.
Abel's summation formula \eqref{eqn:abelsum} gives
$$\sum_{1\leq m \leq x} \frac{b_m}{m} = \int_1^x \frac{B(t)}{t^2} dt + \frac{B(x)}{x}.$$
Since $J$ has natural density 0, the function ${B(t)}/{t}$ converges to $0$ as $t\rightarrow\infty$, so for each $\ve>0$ we may choose  $T\geq 1$ such that ${B(t)}/{t}<\ve$ for all $t>T$. Hence 
\begin{align*}
\frac{1}{\log(x)}\left(\int_1^x \frac{B(t)}{t^2} dt + \frac{B(x)}{x}\right) < \frac{1}{\log(x)} \left(\int_1^T \frac{B(t)}{t^2} dt + \frac{B(x)}{x}\right) + \ve\left(1 - \frac{\log T}{\log x}\right).
\end{align*}
The first term on the right converges to 0 as $x\rightarrow\infty$, while the second term converges to $\ve$. Since $\ve>0$ was arbitrary, 
\[\lim_{n\to \infty}\frac{1}{\log(n)}\sum_{m=1}^n \frac{b_m}{m} = 0, \]
and the result follows from \proref{prop:summation}.
\end{proof}

\section{Uniqueness of nonatomic subconformal measure}\label{sec:nonatomic}

We now turn our efforts to showing that   \thmref{thm:main} gives a complete list of extremal \kmsb states for each $\beta \in (0,1]$. After \thmref{thm:atomic}, all that remains to show is that Haar measure is the only nonatomic $\beta$-subconformal measure on $\T$. Our argument is inspired by \cite{nes} for the Bost-Connes system: we show that a certain dilation of a given nonatomic $\beta$-subconformal measure is ergodic and, hence, that there is a unique such measure.

For each $B\subseteq \primes$ the subset $\{V_a f V_a^*: a\in\nx_B, f\in C(\T)\} \subset \mfd $ is self-adjoint  and \eqref{eqn:productxy} shows it is closed under multiplication. Hence
\[
\mfd_B := \clsp \{V_a f V_a^*: a\in\nx_B, f\in C(\T)\} = \varinjlim (\mfd_a, \iota_{a,b})_{a\in\nx_B}
\]
 is a unital C*-subalgebra of $\mfd$. The inclusion $\iota_B: \mfd_B\hookrightarrow \mfd$ induces a surjective continuous map of spaces $\iota_B^*:X= \Spec \mfd \rightarrow X^B:= \Spec\mfd_B$ and also a continuous linear map taking a measure $\tau$ on $X$ to the measure $\iota^*_B(\tau)$ on $X^B$. 

Now let $\nu$ be a $\beta$-subconformal probability measure on $\T$ and $\psi_{\beta,\nu}$ the corresponding \kmsb state from \thmref{thm:KMScharact}. For any subset $B\subseteq \primes$, the restriction $\psi_{\beta,\nu}|_{\mfd_B}$ is a state on $\mfd_B$ that, according to the Riesz-Markov-Kakutani representation theorem, is represented by integration against a measure which we call $\nu_{\beta,B}$. Thus, $\nu_{\beta,\primes}$ is the measure on $X = X^{\primes}$ corresponding to $\psi_{\nu,\beta}$, and, at the other extreme, with $B =\emptyset$, $\nu_{\beta, \emptyset} = \nu$ is the measure on $X_\emptyset = \Spec V_1 C(\T) V_1^*\cong \T$ that already appeared in  \eqref{eqn:nufromvarphi}. These measures satisfy $\iota_B^*(\nu_{\beta,\primes}) = \nu_{\beta,B}$.

\begin{lemma}\label{lem:local-GNS}
Suppose $\nu$ is a $\beta$-subconformal probability  on $\T$ and $B\Subset \primes$ a finite subset of primes.
 Then there is a representation $\rho_B$
 of $\mfd_B$ on $\HH_B := \bigoplus_{n\in\nx_B} L^2(\T, n^{-\beta} A_{\beta,B} \nu)$
such that for each $V_a f V_a^* \in \mfd_B$ and each $n\in \nx_B$, 
\[
\rho_B(V_a f V_a^*) g_n = 
\begin{cases}
(f\circ \omega_{n/a}) g_n &\text{if $a|n$},\\
0&\text{otherwise,}
\end{cases} \qquad  \ g_n \in L^2(\T, n^{-\beta} A_{\beta,B} \nu).
\]
The vector $\Omega_B := (1_\T)_{n\in \nx_B}$ is cyclic for $\rho_B$ and $(\HH_B, \rho_B,\Omega_B)$ is canonically unitarily equivalent to the GNS representation of the restriction to $\mfd_B$ of the \kmsb state ${\psi}_{\beta,\nu}$ from \lemref{lem:prime-positivity}.

Consequently, if  $\nu_{\beta,B}$ denotes the measure on $X_B$ representing $\psi_{\nu,\beta}|_{\mfd_B}$ and $(L^2(X^B, \nu_{\beta,B}), \lambda_B, \Omega_0)$ its GNS triple, there is a (unique) unitary intertwiner $T:\HH\rightarrow L^2(X^B,\nu_{\beta,B})$ for the representations $\rho_B$ and $\lambda_B$ that satisfies $T\Omega = \Omega_0$.

\end{lemma}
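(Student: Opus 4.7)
The strategy is to verify first that the stated formula defines a bounded $*$-representation $\rho_B$ of $\mfd_B$, then to check that $\Omega_B$ is cyclic and that $x\mapsto \langle\rho_B(x)\Omega_B,\Omega_B\rangle$ reproduces $\psi_{\beta,\nu}|_{\mfd_B}$ as in \lemref{lem:prime-positivity}. Uniqueness of the GNS triple will then deliver the canonical unitary equivalence, and the final assertion about the intertwiner $T$ will follow by applying that same uniqueness to $(L^2(X^B,\nu_{\beta,B}),\lambda_B,\Omega_0)$, whose cyclicity and state-implementing property come from Gelfand duality and the Riesz--Markov--Kakutani theorem.

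I would first set up, for each $n\in\nx_B$, the representation $\rho_B^{(n)}$ on $L^2(\T,n^{-\beta}A_{\beta,B}\nu)$ by multiplication by $f\circ\omega_{n/a}$ when $a\mid n$ and by zero otherwise. The measure $n^{-\beta}A_{\beta,B}\nu$ is genuinely positive because $\nu$ is $\beta$-subconformal, by \proref{pro:bsubcequivalence}. Each $\rho_B^{(n)}(V_a f V_a^*)$ is a bounded multiplication operator of norm at most $\|f\|_\infty$, and compatibility with the product formula $(V_c f V_c^*)(V_d g V_d^*) = V_{c\vee d}(f\circ\omega_{d'})(g\circ\omega_{c'})V_{c\vee d}^*$ reduces to the elementary fact that $c'\cdot n/(c\vee d) = n/c$, so both sides vanish unless $c\vee d\mid n$ and otherwise both act as multiplication by $(f\circ\omega_{n/c})(g\circ\omega_{n/d})$. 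Adjoints are preserved automatically by multiplication operators. The direct sum $\rho_B = \bigoplus_n \rho_B^{(n)}$ is then a $*$-representation of norm at most $\|x\|$ on $\mfd_B$.

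Next, for cyclicity of $\Omega_B$, I would produce a vector concentrated in each single component using the projections $\alpha_n(e_B) = \sum_{d\in\nx_B}\mu(d) V_{nd}V_{nd}^*$ from \lemref{lem:Xdecomposition}. A direct computation using the Mobius identity $\sum_{d\mid k}\mu(d) = \delta_{k,1}$ gives
$$\rho_B^{(m)}(\alpha_n(e_B))\,1_\T = \delta_{m,n}\cdot 1_\T \qquad (m\in\nx_B),$$
so that $\rho_B(\alpha_n(e_B))\Omega_B$ is $1_\T$ in the $n$-th slot and zero elsewhere. Acting by $\rho_B(V_n f V_n^*)$ then places an arbitrary $f\in C(\T)$ in that slot, and density of $C(\T)$ in each $L^2(\T,n^{-\beta}A_{\beta,B}\nu)$ completes the cyclicity argument.

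Finally, to compare the vector state with $\psi_{\beta,\nu}$, for $a\in\nx_B$ and $f\in C(\T)$ the expansion
$$\langle\rho_B(V_a f V_a^*)\Omega_B,\Omega_B\rangle = \sum_{n\in\nx_B,\, a\mid n} n^{-\beta}\int_\T (f\circ\omega_{n/a})\, dA_{\beta,B}\nu$$
reduces after the substitution $n=am$ to $a^{-\beta}\int_\T f\, d\bigl(\sum_{m\in\nx_B}m^{-\beta}\omega_{m*}A_{\beta,B}\nu\bigr)$. The main algebraic point, which I expect to require the most care, is the identity $\sum_{m\in\nx_B}m^{-\beta}\omega_{m*} = \prod_{p\in B}(1-p^{-\beta}\omega_{p*})^{-1}$, a norm-convergent product of commuting Neumann series in $\BB(\mt)$ already implicit in \lemref{lem:eulerproduct}(3); together with $A_{\beta,B} = \prod_{p\in B}(1-p^{-\beta}\omega_{p*})$ this collapses the integral to $a^{-\beta}\int_\T f\, d\nu$, matching the formula for $\psi_{\beta,\nu}(V_a f V_a^*)$ from \lemref{lem:prime-positivity}. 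By linearity and continuity this extends to all of $\mfd_B$, completing the GNS identification.
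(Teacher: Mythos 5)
Your proof is correct and takes essentially the same route as the paper: the same M\"obius-type projections to isolate each $L^2$ summand for cyclicity, and an equivalent computation of the vector state (your operator identity $\sum_{m\in\nx_B}m^{-\beta}\omega_{m*} = A_{\beta,B}^{-1}$ is the Euler-product packaging of the paper's direct double-sum rearrangement followed by $\sum_{d\mid n}\mu(d)=\delta_{n,1}$). One small slip: in your product-formula check, the exponent for the $f$-factor comes from $d'\cdot n/(c\vee d) = n/c$ rather than $c'\cdot n/(c\vee d)$ (the latter equals $n/d$), though the final form $(f\circ\omega_{n/c})(g\circ\omega_{n/d})$ you state is correct.
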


\begin{proof}
The operators $\rho_B(V_dfV_d^*)$ satisfy the product formula \eqref{eqn:productxy} so that $\rho_B$ defines a representation of $\mfd_a$ for each $a\in\nx_B$, cf. \corref{cor:system-spectra}. These representations are coherent for the inductive system $(\mfd_a,\iota_{a,b})_{a\in\nx_B}$, and therefore determine a representation of the limit $\mfd_B$. The vector $\Omega$ is cyclic  because  
$$\rho_B(\sum_{d\in\nx_B} \mu(d) V_{kd} f\circ\omega_d V_{kd}^*)\Omega = (\delta_{n,k} f),\qquad f\in C(\T)$$
and these vectors span a dense subspace of $\HH_B$. Lastly, the vector state of $\Omega$ yields
\begin{align*}
\<{\rho_B(V_afV_a^*)\Omega, \Omega} &= a^{-\beta}\sum_{n\in\nx_B} n^{-\beta} \int_\T f\circ \omega_n d(A_{\beta,B}\nu)\\[5pt]
&= a^{-\beta} \sum_{n\in\nx_B} n^{-\beta} \sum_{d\in \nx_B} \mu(d)d^{-\beta} \int_\T f\circ \omega_{nd} d\nu\\[5pt]
&= a^{-\beta} \sum_{n\in\nx_B} n^{-\beta} \int_\T f\circ \omega_n d\nu \sum_{d|n} \mu(d)\\[5pt]
&= a^{-\beta} \int_\T f d\nu = \int_{X_B} (V_a f V_a^*) d\nu_{\beta,B}.\qedhere
\end{align*}
\end{proof}

\begin{lemma}\label{lem:coisometries}
Suppose $B\subseteq \primes$ (not necessarily finite) and consider $(L^2(X^B,\nu_{\beta,B}), \lambda_B, \Omega_0)$, the usual GNS representation of $\mfd_B$ for the state corresponding to $\nu_{\beta,B}$. Then for $a\in \nx$, the map
$$S_a:L^2(X^B,\nu_{\beta,B})\rightarrow L^2(X^B, \nu_{\beta,B}),\qquad \lambda_B(x)\Omega_0\mapsto \lambda_B(V_a^* x V_a)\Omega_0$$
defines a bounded operator and the map $a\mapsto S_a$ is multiplicative. For finite $B\Subset \primes$ and $T:\HH_B\rightarrow L^2(X^B,\nu_{\beta,B})$ the unitary intertwiner of \lemref{lem:local-GNS}, the operator $T^* S_a T$ can be described explicitly by the formulae
\[
T^*S_aT(g_n)_{n\in\nx_B} =\left\{\begin{array}{ll} (g_{an})_{n\in\nx_B} &\text{if $a\in \nx_B$}\\[2pt]
(g_n\circ \omega_a)_{n\in\nx_B} & \text{if $a\in \nx_{\primes\backslash B}$}
\end{array}\right.
\]
and by prime factorization for general $a\in\nx$. 
\end{lemma}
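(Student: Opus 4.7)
The approach is to establish boundedness and multiplicativity of $S_a$ first, and then to identify $T^*S_aT$ explicitly on the dense cyclic subspace $\rho_B(\mfd_B)\Omega$ for finite $B$, extending to all of $\HH_B$ by density.

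For boundedness, I would estimate
\begin{equation*}
\|S_a\lambda_B(x)\Omega_0\|^2 = \psi_{\beta,\nu}\bigl(V_a^*x^*(V_aV_a^*)xV_a\bigr) \leq \psi_{\beta,\nu}(V_a^*x^*xV_a)
\end{equation*}
using $V_aV_a^* \leq 1$, and then apply the \kmsb condition to the analytic element $V_a^*$, for which $\sigma_{i\beta}(V_a^*) = a^\beta V_a^*$, to rewrite the right-hand side as $a^\beta\psi_{\beta,\nu}(x^*xV_aV_a^*)$. A standard centralizer argument for the $\sigma$-invariant projection $V_aV_a^*$ will bound this by $a^\beta\psi_{\beta,\nu}(x^*x)$, so $\|S_a\|\leq a^{\beta/2}$. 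Multiplicativity is immediate: commutativity of $\nx$ combined with (AB2) gives $V_aV_b = V_{ab} = V_bV_a$, whence $V_a^*V_b^*xV_bV_a = V_{ab}^*xV_{ab}$ and $S_aS_b = S_{ab}$.

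For the explicit formula when $B\Subset\primes$, the intertwiner identity $T^*S_aT\rho_B(y)\Omega = \rho_B(V_a^*yV_a)\Omega$ reduces the problem to checking agreement on the generators $y = V_bfV_b^*$, $b\in\nx_B$, $f\in C(\T)$, whose image under $\rho_B$ at $\Omega$ is given componentwise by \lemref{lem:local-GNS}. I would split by the prime type of $a$. For $a\in\nx_B$, set $d=\gcd(a,b)$, $a'=a/d$, $b'=b/d$; combining (AB2) with (AB3) yields the key rewriting $V_a^*V_b = V_{b'}V_{a'}^*$, and the identity $V_{a'}^*fV_{a'} = f\circ\omega_{a'}$ (which comes from \lemref{lem:FourierTransform}) gives $V_a^*V_bfV_b^*V_a = V_{b'}(f\circ\omega_{a'})V_{b'}^*$. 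The $n$-th component of its $\rho_B$-action on $\Omega$ then reads $(f\circ\omega_{an/b})$ on the set $\{b'\mid n\}$; since $\gcd(a',b')=1$ forces $b'\mid n \Leftrightarrow b\mid an$, this matches the entry at index $an$ of the original family, which is precisely the advertised shift. For $a\in\nx_{\primes\setminus B}$, every $b\in\nx_B$ is coprime to $a$, so (AB3) directly yields $V_a^*V_bfV_b^*V_a = V_b(f\circ\omega_a)V_b^*$, whose $\rho_B$-action produces the $\omega_a$-composition componentwise. The general case then follows from multiplicativity after factoring $a$ into its $B$-part and its $B^c$-part.

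The main subtlety will be showing that the shift $(g_n)_n \mapsto (g_{an})_n$ is a bounded operator on all of $\HH_B$, not only on the cyclic dense subspace. This I would verify by a direct change of summation variable $m = an$:
\begin{equation*}
\sum_{n\in\nx_B}n^{-\beta}\int_\T|g_{an}|^2\,d(A_{\beta,B}\nu) = a^\beta\sum_{m\in a\nx_B}m^{-\beta}\int_\T|g_m|^2\,d(A_{\beta,B}\nu) \leq a^\beta\|(g_m)_m\|_{\HH_B}^2,
\end{equation*}
yielding the shift's norm $\leq a^{\beta/2}$, consistent with the bound on $S_a$. Boundedness of the $\omega_a$-composition in the second case will be inherited from that of $S_a$ through $T$, and density of $\rho_B(\mfd_B)\Omega$ will then propagate the agreement of $T^*S_aT$ with the advertised formulas from the generators to all of $\HH_B$.
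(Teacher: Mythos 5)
Your proposal is correct and takes essentially the same approach as the paper: the same $\mathrm{KMS}_\beta$ trick with the analytic element $V_a^*$ (together with commutativity of $\mfd$) for the bound $\|S_a\|\leq a^{\beta/2}$, the same rewriting $V_a^*V_bfV_b^*V_a = V_{b'}(f\circ\omega_{a'})V_{b'}^*$ via (AB2)--(AB3) to identify $T^*S_aT$ on the generators, and the same coprimality argument $b'\mid n \Leftrightarrow b\mid an$. The only cosmetic differences are the order in which $V_aV_a^*\leq 1$ and the KMS condition are applied in the boundedness estimate, and your additional (technically redundant, since $S_a$ bounded and $T$ unitary already give it) direct verification that the shift $(g_n)\mapsto(g_{an})$ is bounded on $\HH_B$.
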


\begin{proof}
Let $\psi_{\nu,\beta}$ denote the \kmsb state associated to $\nu$ in equation \eqref{eqn:KMScharactBoundQuot}. Then for $x\in \mfd_B$,
$$\|\lambda_B(V_a^* x V_a)\Omega_0\|^2 = \psi_{\nu,\beta} ( V_a ^*x^* V_a V_a^* x V_a) = a^\beta \psi_{\nu,\beta}(x^* V_a V_a^* x) \leq a^\beta\psi_{\nu,\beta}(x^* x) = a^\beta\|\lambda_B(x)\Omega_0\|^2,$$
where we have used the fact that $x$ commutes with $V_a V_a^*$. That $a\mapsto S_a$ is multiplicative follows from commutativity of the isometries $\{V_a: a\in\nx\}$.

If $a\in\nx_B$, $b\in \nx_B$, and $f\in C(\T)$, then letting 
$$(f_n)_{n\in\nx_B} = \rho_B(V_a f V_a^*) \Omega,$$
and $a' = \frac{\lcm(a,b)}{b}$, $b' = \frac{\lcm(a,b)}{a}$,
$$ T^*S_aT \rho_B(V_b f V_b^*)\Omega = \rho_B(V_a^* V_b f V_b^* V_a) \Omega = \rho_B(V_{b'} f\circ\omega_{a'} V_{b'}^*)\Omega = (f_n')_{n\in\nx_B},$$
where 
$$f_n' = \left\{\begin{array}{ll}
f\circ \omega_{a'n/b'}&\text{if $b'|n$},\\[2pt]
0 & \text{otherwise}
\end{array}\right.$$
Now if $b'| n$, then $b | \lcm(a,b) | an$; conversely, if $b| an$, then $b' | a'n$, which implies $b'|n$ since $a'$ and $b'$ are relatively prime. Since $\frac{a'n}{b'} = \frac{an}{b}$ when $b'|n$, it follows that $f_n' = f_{an}$. 

Lastly, if $a\in \nx_{\primes\backslash B}$, then $a$ and $b$ are relatively prime, and we have $T^*S_aT\rho_B(V_b f V_b^*)\Omega = \rho_B (V_b f\circ \omega_a V_b^*)\Omega.$
\end{proof}

Recall  the {\em periodic zeta function} defined  by the series 
\[
F(\beta,z) := \sum_{n\in\nx} m^{-\beta} z^m, \qquad  \Re(\beta)>1, \ z\in\T,
\]
(where we have chosen to deviate slightly from standard  practice by using  $z = \exp(2\pi i n \alpha)\in\T$ instead of $\alpha \in \R$ for the second variable).
For each finite $B\Subset \primes$ define also  a partial periodic zeta function by the partial series
\[
F_B(\beta,z) := \sum_{m\in\nx_B} m^{-\beta} z^m ,
\qquad  \Re(\beta)>0, \ z\in \T,\]
where convergence is absolute. Write $\primes_n= \{2,3,5, ...,p_n\}$ for the set of the first $n$ primes, $\nx_n := \nx_{\primes_n}$ for the monoid generated by $\primes_n$, and, accordingly,
  \[
  \zeta_n(\beta):= \zeta_{\primes_n}(\beta) = \sum_{m\in\nx_n} \frac{1}{m^\beta} \qquad \text{and} \qquad
  F_n(\beta,z) := F_{\primes_n}(\beta,z) = \sum_{m\in\nx_n}  \frac{z^m}{m^\beta}
  \]
We will need the following consequence of Wiener's Lemma; see \cite[Theorem III.24]{wie} for the original statement and \cite[Theorem 1.1]{CEF19} for the version of the lemma that we use here. Our proof relies on  \corref{cor:density-0}.

\begin{prop}\label{pro:leb-extreme}
Suppose $\nu$ is a nonatomic probability measure  on $\T$ and fix $B\Subset \primes$.  
Then
\begin{equation}\label{eqn:cont-measure}
\lim_{n\rightarrow\infty} \frac{1}{\zeta_n(1)} \sum_{m\in \nx_{\primes_n\backslash B}} \frac{1}{m}\wh{\nu}(\ell m + k ) 
= 0\quad \forall \ell\in \Z \setminus \{0\},\ k\in\Z.
\end{equation}
\end{prop}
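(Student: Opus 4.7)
The plan is to split the target sum according to the size of $\wh{\nu}(\ell m+k)$: use a Wiener-type lemma along the arithmetic progression $m\mapsto \ell m+k$ to show that the set of $m$ on which this value is large has natural density zero, and then transfer this to multiplicative density via \corref{cor:density-0} before bounding the two pieces separately.

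To make this precise, I would first invoke the version of Wiener's Lemma from \cite[Theorem 1.1]{CEF19}, which together with the non-atomicity of $\nu$ yields, for every $\ell\in \Z\setminus\{0\}$ and $k\in\Z$,
\begin{equation*}
\lim_{N\to\infty}\frac{1}{N}\sum_{m=1}^N |\wh{\nu}(\ell m+k)|^2 \;=\; 0.
\end{equation*}
By Chebyshev's inequality, this implies that for each $\eps>0$ the set
$J_\eps := \{m\in\N:|\wh{\nu}(\ell m+k)|\geq\eps\}$
has natural density zero in $\N$.

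Next I would apply \corref{cor:density-0} (whose proof is the content of \appref{app:appendixB}) to conclude that $J_\eps$ has multiplicative density zero, i.e.
\begin{equation*}
\lim_{n\to\infty}\frac{1}{\zeta_n(1)}\sum_{m\in J_\eps\cap\nx_n}\frac{1}{m}\;=\;0,
\end{equation*}
where I have used the Euler product $\prod_{p\in\primes_n}(1-p^{-1})=\zeta_n(1)^{-1}$. Splitting the sum in \eqref{eqn:cont-measure} according to membership in $J_\eps$, and using $|\wh{\nu}|\leq 1$ together with $\nx_{\primes_n\setminus B}\subseteq \nx_n$, the contribution of indices in $J_\eps$ is bounded in modulus by $\zeta_n(1)^{-1}\sum_{m\in J_\eps\cap\nx_n}m^{-1}$, which tends to $0$, while the complementary contribution is bounded by $\eps\cdot\zeta_n(1)^{-1}\sum_{m\in\nx_n}m^{-1}=\eps$. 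Letting $\eps\to 0$ completes the argument.

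The main obstacle is precisely the passage from natural density zero to multiplicative density zero, which is the subtle point requiring \appref{app:appendixB}; everything else is a routine Chebyshev-and-split argument, once the arithmetic-progression form of Wiener's Lemma is in hand.
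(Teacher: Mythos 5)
Your proof is correct and follows essentially the same route as the paper's: both rest on Wiener's Lemma (via \cite[Theorem 1.1]{CEF19}) to control the Fourier coefficients in density along the progression $m\mapsto\ell m+k$, followed by Corollary~\ref{cor:density-0} to upgrade natural density zero to multiplicative density zero, and then a split of the sum. The only cosmetic difference is that you extract the exceptional set via Chebyshev applied to level sets $J_\eps$ and finish with a limit in $\eps$, whereas the paper invokes \cite[Lemma 2.1]{CEF19} to produce a single density-zero set $J$ along whose complement $\wh\nu$ tends to zero pointwise; these are equivalent formulations of convergence in density.
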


\begin{proof}
Since $\nu$ is a probability measure, $\wh{\nu}(-m) = \overline{\wh{\nu}(m)}$ and $|\wh{\nu}(m)|\leq 1$ for each $m\in\Z$, and since $\nu$ is assumed to be nonatomic,  \cite[Theorem 1.1]{CEF19} implies that 
\[
\lim_{N\to \infty} \frac{1}{N} \sum_{m =1}^N |\wh{\nu}(m)|^2 = 0.
\]
By \cite[Lemma 2.1]{CEF19},  the sequence $\wh{\nu}(m)$ converges in density to $0$ as $m \to \infty$; that is, there exists a set $J\subset \N$ with natural density $0$ such that $\lim_{n\in \N \backslash J} \wh{\nu}(n) = 0$. Since $\nx_B$ has natural density $0$, we may assume without loss of generality that $\nx_B\subseteq J$. 
If we now let 
\[
a_m = \begin{cases} \wh{\nu}(\ell m + k) & \text{if } |\ell m + k|\in \N\backslash J \\
0& \text{if } |\ell m + k|\in J,
\end{cases}
\]
 then  $a_m \to 0$ as $m\to \infty$, and
 \begin{equation}\label{eqn:twoterms}
\frac{1}{\zeta_n(1)} \left|\sum_{m\in \nx_{\primes_n\backslash B}} \frac{1}{m}\wh{\nu}(\ell m + k)\right| \ \leq \ \frac{1}{\zeta_n(1)}\left|\sum_{m\in \nx_n} \frac{a_m}{m}\right| + \frac{1}{\zeta_n(1)} \sum_{m\in \nx_n\cap J} \frac{1}{m}.
\end{equation}
Using the Euler product formula for the first $n$ primes,
\[
\frac{1}{\zeta_n(\beta)} = \prod_{p\in \primes_n} (1 - p^{-\beta}), \qquad \beta >0,
\]  
 with $\beta =1$ and applying  \corref{cor:density-0}, we see that the second term on the right hand side of \eqref{eqn:twoterms} converges to $0$ as $n\to \infty$. That the first term also converges to $0$ is a consequence of the following general observation.
Suppose  that $\alpha_m$ is a sequence converging to $0$ and let $\ve>0$. Choose  $N$ such that $|\alpha_m|<\ve$ for all $m>p_N$. Then, for each $n\geq N$,
\begin{align*}
\frac{1}{\zeta_{n}(1)} \left|\sum_{m\in \nx_n} \frac{\alpha_m}{m}\right| \leq \frac{1}{\zeta_n(1)} \sum_{m\leq p_N} \frac{|\alpha_m|}{m} + \frac{1}{\zeta_{n}(1)} \sum_{\substack{m\in \nx_n \\  m > p_N}} \frac{|\alpha_m|}{m} 
< \frac{1}{\zeta_n(1)} \sum_{m\leq p_N} \frac{|\alpha_m|}{m} + \ve.
\end{align*}
Since ${\zeta_{n}(1)} \to \infty$, the right hand side tends to $\ve$ as $n\rightarrow\infty$, and since $\ve$ is arbitrary, the left hand side tends to 0.
This shows that $\frac{1}{\zeta_n(1)} \left|\sum_{m\in \nx_n} \frac{a_m}{m}\right|$ converges to $0$ and completes the proof of \eqref{eqn:cont-measure}.
\end{proof}

As explained in Appendix A, the action $\alpha$ of $\nx$ on $\mfd$ of \proref{pro:semigroupaction} can be dilated to an action $\wt{\alpha}$ of $\qx$ on a commutative C*-algebra $\wt{\mfd}$ so that $\nx\ltimes \mfd$ embeds as a full-corner in $\qx\ltimes \wt{\mfd}$. In this dilation, the spectrum $X$ of $\mfd$ can be realized as a compact open subset of the spectrum  $\wt{X}$ of $\wt{\mfd}$. \lemref{lem:dilationmeasure} shows that if $\nu$ is a $\beta$-subconformal measure on $\T$, then there is a unique Radon measure $\wt{\nu}_{\beta,\primes}$ on $\wt{X}$  that extends $\nu_{\beta,\primes}$ and satisfies rescaling: $\wt{\alpha}_{a*} \wt\nu_{\beta,\primes} = a^{-\beta} \wt\nu_{\beta,\primes}$ for $a\in \qx$.

\begin{prop}\label{pro:ergodic}
Suppose  $\nu$ is a nonatomic $1$-subconformal measure on $\T$, and let $\wt{\nu}_{1,\primes}$ be the dilated measure on $\tilde X$ from \lemref{lem:dilationmeasure}.
 Then the action of $\qx$ on $(\tilde X,\wt{\nu}_{1,\primes})$ is ergodic.
\end{prop}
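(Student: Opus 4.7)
The plan is to adapt Neshveyev's ergodicity argument for the Bost-Connes system \cite{nes} to the present setting, with \proref{pro:leb-extreme} serving as the crucial Fourier-analytic input. Ergodicity of the $\qx$-action on $(\tilde X,\wt\nu_{1,\primes})$ amounts to showing that every $\qx$-invariant $f\in L^\infty(\tilde X,\wt\nu_{1,\primes})$ is a.e.\ constant. Since $X\subset\tilde X$ is a compact open subset whose $\qx$-translates cover $\tilde X$ and $\wt\alpha_{a*}\wt\nu_{1,\primes}=a^{-1}\wt\nu_{1,\primes}$, any such $f$ is determined by its restriction $g:=f|_X\in L^\infty(X,\nu_{1,\primes})$, which must be invariant under the endomorphism action of $\nx$ (\proref{pro:semigroupaction}). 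Thus it suffices to show that such a $g$ is $\nu_{1,\primes}$-a.e. constant.

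First I would exploit the filtration $\mfd=\varinjlim_{B\Subset\primes}\mfd_B$ together with the GNS realization from \lemref{lem:local-GNS}: for each finite $B\Subset\primes$, the restriction of $g$ to $X^B$ corresponds to a vector $(g_n)_{n\in\nx_B}$ in $\HH_B=\bigoplus_{n\in\nx_B}L^2(\T,n^{-1}A_{1,B}\nu)$. Via \lemref{lem:coisometries}, invariance of $g$ translates into the index-shift condition $g_n=g_{an}$ for $a\in\nx_B$ (forcing $g_n\equiv g$ to be independent of $n$) together with the wrap-around condition $g=g\circ\omega_a$ for every prime $a\in\primes\setminus B$. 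Expanding $g\sim\sum_k c_k z^k$ as a Fourier series on $\T$, wrap-around invariance gives $c_k=c_{ak}$ for every such prime $a$; iterating, one obtains $c_{\ell m}=c_\ell$ for every $\ell\in\Z$ and every $m\in\nx_{\primes\setminus B}$.

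Next I would compute the inner product of $g$ with $z^\ell$ against $A_{1,B}\nu$ (via Parseval) and average it by the Ces\`aro-type weights appearing in \proref{pro:leb-extreme}. Concretely, summing $|c_\ell|^2$ over the orbit of $\ell$ under multiplication by elements of $\nx_{\primes_n\setminus B}$ and using $c_{\ell m}=c_\ell$, one obtains an equality relating $|c_\ell|^2\cdot\zeta_n(1)$ to a weighted sum of the form
\[
\frac{1}{\zeta_n(1)}\sum_{m\in\nx_{\primes_n\setminus B}}\frac{1}{m}\,\wh\nu(\ell m+k)
\]
for suitable integers $k$. By \proref{pro:leb-extreme}, which uses nonatomicity of $\nu$ in an essential way, this averaged quantity tends to $0$ as $n\to\infty$ for every $\ell\neq 0$. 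Letting $B\nearrow\primes$ and passing to the limit forces $c_\ell=0$ for all $\ell\neq 0$, so $g$ is constant.

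The main obstacle I anticipate is the precise Fourier-analytic bookkeeping needed to arrange the squared coefficients into exactly the averaged form of \proref{pro:leb-extreme}: the measure on $\T$ carrying $g$ is not $\nu$ but the modified measure $A_{1,B}\nu$, whose Fourier coefficients are convolutions of $\wh\nu$ with Dirichlet-type sums, and at $\beta=1$ absolute convergence is lost so that only the Ces\`aro summability underlying \proref{pro:leb-extreme} and the refined Wiener-type estimate of \appref{app:appendixB} is available. Coordinating the limit $B\nearrow\primes$ with the inductive-limit structure of the GNS space, and controlling the passage from the finite corners $\HH_B$ back to the full dilation $(\tilde X,\wt\nu_{1,\primes})$, is where I expect the technical weight of the argument to lie.
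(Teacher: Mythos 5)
You have the right overall strategy — Neshveyev's argument with \proref{pro:leb-extreme} as the Fourier-analytic input — but two steps in the middle do not work as stated. First, you analyze a hypothetical invariant function $g$ by ``restricting'' it to a finite level $X^B$ and asserting that the resulting vector $(g_n)_{n\in\nx_B}\in\HH_B$ still satisfies the exact invariance relations $g_n=g_{an}$ for $a\in\nx_B$ and $g=g\circ\omega_a$ for primes $a\notin B$. An invariant element of $L^2(X,\nu_{1,\primes})$ need not lie in any $\iota_B^*(L^2(X^B,\nu_{1,B}))$, and the orthogonal projection onto that subspace cannot be assumed to commute with the non-self-adjoint operators $S_a$ (the subspace is $S_a$-invariant, but that is weaker), so the finite-level component of an invariant vector need not be invariant. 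The paper's proof runs in the opposite direction: it never manipulates an invariant function directly, but instead projects the spanning vectors $\chi_\ell=T(\zed^\ell\delta_{n,1})$ onto the invariant subspace and shows those projections vanish for $\ell\neq 0$. This requires the explicit averaging formula $P_Af=\zeta_A(1)^{-1}\sum_{n\in\nx_A}n^{-1}S_nf$ for the projection onto $\nx_A$-invariant vectors ($A\Subset\primes$ finite), proved by decomposing $X$ into the translates $n\cdot W_A$ via \lemref{lem:Xdecomposition} and checking $\langle f,g\rangle=\langle f_A,g\rangle$ for invariant $g$, together with the strong convergence $P_A\to P$ as $A\nearrow\primes$; both of these ingredients are absent from your outline, and they are the engine of the argument.

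Second, the Fourier bookkeeping you defer is not a technicality but the point at which your computation breaks: the characters $z^k$ are not orthogonal in $L^2(\T,A_{1,B}\nu)$, so there is no Parseval identity and no well-defined coefficients $c_k$ satisfying $c_{ak}=c_k$ to manipulate, and the claimed relation between $|c_\ell|^2\,\zeta_n(1)$ and the averages of $\wh{\nu}(\ell m+k)$ has no derivation (as written its left side diverges while its right side tends to $0$). In the paper, what replaces this is an exact computation, via \lemref{lem:coisometries}, of $P_A\chi_\ell$ as $T\bigl(\delta_{1,k}\,F_{A\backslash B}(1,z^\ell)/\zeta_A(1)\bigr)_{k\in\nx_B}$, whose pairing with $\zed^k$ in $L^2(\T,A_{1,B}\nu)$ equals $\sum_{n\in\nx_B}\mu(n)n^{-1}\zeta_A(1)^{-1}\sum_{m\in\nx_{A\backslash B}}m^{-1}\wh{\nu}(n(\ell m+k))$; \proref{pro:leb-extreme} then gives weak convergence $P_A\chi_\ell\to 0$, the identity $\|P_A\chi_\ell\|^2=\langle P_A\chi_\ell,\chi_\ell\rangle$ upgrades this to norm convergence, and density of the vectors $\alpha_n(\chi_\ell)$ in $L^2(X^B,\nu_{1,B})$ completes the proof. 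So the plan is sound, but the two key mechanisms — the averaging formula for $P_A$ and the exact pairing computation replacing your Parseval step — still need to be supplied.
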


\begin{proof}
We argue along the lines set out in \cite{nes} for the Bost--Connes system; the idea is to show that the subspace of $\qx$-invariant functions in $L^2(\tilde X,\wt{\nu}_{1,\primes})$ consists only of constant functions. 
 Since  $\tilde X = \bigcup_{a\in\nx}(\tilde\alpha_a)_*\inv(X)$ by minimality of the dilation,  every $\qx$-invariant function on $\tilde X$ is determined by its restriction to $X$. Using the left inverse for $\alpha_a$ of \proref{pro:semigroupaction}, for $f\in L^2(\wt{X},\wt{\nu}_{1,\primes})$, we have $\wt{\alpha}_a^{-1}(f)|_X = V_a^* f|_X V_a = S_a f|_X$, where $S_a$ are the bounded operators on $L^2(X,\nu_{1,\primes})$ from \lemref{lem:coisometries}.
Thus it suffices to  show that the subspace
\[
H := \{f\in L^2(X,\nu_{1,\primes}): S_a(f) = f,\ \forall a\in\nx\} 
\]
consists only of $\nu_{1,\primes}$-a.e. constant functions. We denote the projection of $L^2(X,\nu_{1,\primes})$ onto $H$ by $P$.

We make two approximations using finite subsets of $\primes$. First, since $\mfd = \overline{\bigcup_{B\Subset\primes} \mfd_B}$, the union of the subspaces $\iota_B^*(L^2(X^B, \nu_{1,B}))$ over all finite $B\Subset \primes$ is dense in $L^2(X,\nu_{1,\primes})$; moreover, the subspaces $\iota_B^*(L^2(X^B, \nu_{1,B}))$ are invariant under the action of $S_a$ and $S_a \iota_B^* = \iota_B^* S_a$. Thus, in order to conclude that $H$ consists only of $\nu_{1,\primes}$-a.e. constant functions, it suffices to show that  $Pf$  is constant for $f\in L^2(X^B,\nu_{1,B})$. Second, for each finite subset $A\Subset \primes$  let $H_A$ denote the space of $\nx_A$-invariant functions in $L^2(X,\nu_{1,\primes})$ and $P_A$ denote the projection onto $H_A$. Then $P_A\rightarrow P$ in the strong operator topology as $A\nearrow \primes$. 

 For each $f\in L^2(X, \nu_{1,\primes})$  we use the decomposition from \lemref{lem:Xdecomposition}(4) to define an $\nx_A$-invariant function $f_A$ on $X$ by  setting
\[
f_A(w) = f_A (m\cdot x) = f_A (x) :=\frac{1}{\zeta_A(1)}\sum_{n\in\nx_A} n^{-1} S_n(f)(x),\qquad \text{if }w=m\cdot x \in \nx_A \cdot W_A,
\]
and letting $f_A (w) =0$ for $w \in X\setminus \bigsqcup_m  m\cdot W_A$, which is a $\nu_{1,\primes}$-null set  by \lemref{lem:Xdecomposition}. If $g\in H_A$, then 
\begin{align*}
\<{f,g} &= \sum_{n\in\nx_A}\int_{n\cdot W_A} f(x) \overline{g(x)} d\wt{\nu}_{1,\primes}(x) =
\sum_{n\in\nx_A} n^{-1}\int_{W_A} S_n(f)(x) \overline{g(x)} d\wt{\nu}_{1,\primes}(x)\\
  & =   \int_{W_A} \sum_{n\in\nx_A} n^{-1}S_n(f)(x) \overline{g(x)} d\wt{\nu}_{1,\primes}(x)   
  =\int_{W_A} \zeta_A(1)  f_A(x) \overline{g(x)} d\wt{\nu}_{1,\primes}(x) \\
& = \Big(\sum_{n\in\nx_A} n^{-1}\Big) \int_{W_A}  f_A(x) \overline{g(x)} d\wt{\nu}_{1,\primes}(x)= \sum_{n\in\nx_A} \int_{n\cdot W_A} f_A(x) \overline{g(x)} d\wt{\nu}_{1,\primes}(x) = \<{f_A, g}.
\end{align*}
Since this holds for every $g\in H_A$, we conclude that $P_A f = f_A$ $\nu_{1,\primes}$-a.e..

Now for $B\subseteq A \Subset \primes$, let $T:\HH_B\rightarrow \ell^2(X_B,\nu_{1,B})$ denote the unitary intertwiner for the GNS representation of \lemref{lem:local-GNS}, cf. \lemref{lem:coisometries}. For $\ell\in \Z$, let $\chi_\ell$ denote the function $T(\zed^\ell \delta_{n,1})_{n\in\nx_B}$ where $\zed:\T\rightarrow \C$ is the inclusion function. Then, by \lemref{lem:coisometries}:
\begin{align*}
P_A \chi_\ell(x) &= \frac{1}{\zeta_A(1)} \sum_{n\in\nx_A}n^{-1} S_n(\chi_\ell)(x)\\[5pt]
&= \frac{1}{\zeta_A(1)} \sum_{(n,m)\in \nx_B\times\nx_{A\backslash B}} (nm)^{-1} T(\zed^{\ell m}\delta_{1,nk})_{k\in\nx_B}(x)\\[5pt]
&= T \left(\delta_{1,k}\frac{1}{\zeta_A(1)}\sum_{m\in\nx_{A\backslash B}} m^{-1} z^{\ell m}\right)_{k\in\nx_B}(x)\\[5pt]
&= T \left(\delta_{1,k} \frac{F_{A\backslash B}(1,z^\ell)}{\zeta_A(1)}\right)_{k\in\nx_B}(x).
\end{align*}
In $L^2(\T, A_{1,B}\nu)$, this function satisfies
\begin{align*}
\<{F_{A\backslash B}(1,z^\ell)\zeta_A(1)^{-1}, \zed^k} &= \sum_{n\in \nx_B} \mu(n)n^{-1} \int_\T \frac{F_{A\backslash B}(1, z^{n\ell})}{\zeta_A(1)} z^{-nk}d\nu\\[5pt]
&= \sum_{n\in \nx_B} \mu(n)n^{-1} \frac{1}{\zeta_A(1)} \sum_{m\in \nx_{A\backslash B}} m^{-1} \wh{\nu}(n(\ell m + k)).
\end{align*}
By \proref{pro:leb-extreme}, the sequence $P_{\primes_n} \chi_\ell$ converges weakly to 0 as $n\rightarrow \infty$ for every $\ell\neq 0$; more generally, since $\{\primes_n\}_{n=1}^\infty$ is cofinal in $\primes$ and $P_A\leq P_{A'}$ when $A'\subseteq A$, the net $P_{A} \chi_\ell$ converges weakly to 0. This implies that $P_A\chi_\ell$ converges to 0 in norm (e.g. $\| P_A\chi_\ell\|^2 = \<{P_A\chi_\ell, \chi_\ell}\rightarrow 0$), and hence $P\chi_\ell = 0$ for $\ell\neq 0$.
Since the functions $\{\alpha_n(\chi_\ell):  \ell\in\Z, \ n \in\nx_B \}$ span a dense subspace of $L^2(X^B,\nu_{1,B})$,
  we conclude that $P$ is the projection onto the subspace of constant functions.
\end{proof}

\begin{theorem}\label{thm:leb-extreme}
For each $\beta \in [0,1]$, normalized Lebesgue measure $\lambda$ on $\T$ is the only nonatomic  $\beta$-subconformal  probability measure on $\T$, and the corresponding \kmsb state $\psi_{\beta,\lambda}$  is a factor state.
 \end{theorem}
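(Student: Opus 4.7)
The plan is to verify that normalized Haar measure $\lambda$ is itself $\beta$-subconformal, to establish uniqueness of nonatomic subconformal probability measures by combining a direct Fourier argument at $\beta = 0$ with an ergodicity/averaging argument for $\beta \in (0,1]$, and to deduce the factor property from ergodicity of the dilated action.

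Subconformality of $\lambda$ is a one-line check: since $\omega_{p*}\lambda = \lambda$ for every prime $p$, the product formula of \lemref{lem:eulerproduct}(2) yields $A_{\beta,F}\lambda = \prod_{p \in F}(1 - p^{-\beta})\lambda \geq 0$, so \proref{pro:bsubcequivalence} applies. For uniqueness at $\beta = 0$, the condition $\nu \geq \omega_{p*}\nu$ for probability measures forces $\omega_{n*}\nu = \nu$ for every $n \in \nx$, equivalently $\wh{\nu}(k) = \wh{\nu}(nk)$ for every $n \in \nx$ and $k \in \Z$. Wiener's theorem ensures $\wh{\nu}(m) \to 0$ in density for nonatomic $\nu$; applied to the arithmetic progression $\{nk\}_{n \geq 1}$, which has positive natural density for $k \neq 0$, this forces $\wh{\nu}(k) = 0$ for every such $k$, and hence $\nu = \lambda$.

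For $\beta \in (0,1]$ I would invoke \proref{pro:ergodic}, whose proof, although written for $\beta = 1$, extends verbatim to every $\beta \in (0,1]$ after replacing the weights $m^{-1}$ and the partial harmonic sums $\zeta_n(1)$ by $m^{-\beta}$ and $\zeta_n(\beta)$: the Wiener/Cesaro estimate of \proref{pro:leb-extreme} carries over because $\zeta_n(\beta) \to \infty$ and $\wh{\nu}$ converges to $0$ in density, both independently of $\beta$. Granting this extension, if $\nu$ is nonatomic and $\beta$-subconformal, then $\nu$, $\lambda$, and $\tfrac12(\nu + \lambda)$ each dilate to ergodic $\qx$-rescaling probability measures on $\tilde X$. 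Ergodic rescaling measures are extremal in the simplex of $\qx$-rescaling probability measures (which corresponds to $K_\beta$ via \thmref{thm:KMScharact}), so $\tfrac12(\wt{\nu}_{\beta,\primes} + \wt{\lambda}_{\beta,\primes})$ admits no nontrivial convex decomposition; hence $\wt{\nu}_{\beta,\primes} = \wt{\lambda}_{\beta,\primes}$ and $\nu = \lambda$. The factor property of $\psi_{\beta,\lambda}$ then follows because ergodicity of the dilated action translates, via the affine bijection with $K_\beta$, into extremality of $\psi_{\beta,\lambda}$, which is equivalent to being a factor state.

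The delicate point --- and the main obstacle --- is the $\beta \in (0,1]$ extension of \proref{pro:ergodic}, which rests on a Wiener-type number-theoretic estimate (promised in \appref{app:appendixB}) controlling $\zeta_n(\beta)^{-1} \sum_{m \in \nx_n} m^{-\beta} \wh{\nu}(\ell m + k)$ for nonatomic $\nu$. Once that estimate is in hand, the rest is a clean application of Choquet extremality and the averaging trick with $\tfrac12(\nu + \lambda)$.
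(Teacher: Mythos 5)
Your argument for $\beta=1$ coincides with the paper's: verify $\lambda$ is subconformal, use \proref{pro:ergodic} to get ergodicity of the dilated $\qx$-action for any nonatomic $1$-subconformal $\nu$, pass through the full-corner picture to see that $\psi_{1,\nu}$ is a factor (hence extremal) state, and then apply the barycenter trick with $\tfrac12(\nu+\lambda)$. Your separate Fourier argument at $\beta=0$ is correct, though not needed in the paper's scheme.

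The genuine gap is your treatment of $\beta\in(0,1)$. You assert that \proref{pro:ergodic} extends ``verbatim'' after replacing $m^{-1}$ by $m^{-\beta}$ and $\zeta_n(1)$ by $\zeta_n(\beta)$, because $\zeta_n(\beta)\to\infty$ and Wiener's lemma are independent of $\beta$. But the proof of \proref{pro:leb-extreme} has two halves, and only the first (the Ces\`aro-type bound for a sequence tending to $0$) survives the substitution. The second half is precisely \corref{cor:density-0} of \appref{app:appendixB}: if $J$ has natural density $0$ then $\zeta_n(1)^{-1}\sum_{m\in\nx_n\cap J}m^{-1}\to 0$. That statement is specific to the weight $m^{-1}$; its proof runs through Mertens' theorem and the de Bruijn--Dickman asymptotics for $\Psi(x,y)$, which say that for $\beta=1$ a bounded power $p_n^u$ of $p_n$ already captures all but a fraction $\delta(u)\to0$ of the mass of $\zeta_n(1)$. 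For $\beta<1$ this fails dramatically: under the probability weights $m^{-\beta}/\zeta_n(\beta)$ on $\nx_n$ the exponents at each prime are independent geometrics, so $\log m$ concentrates near $\sum_{p\le p_n}p^{-\beta}\log p \asymp p_n^{1-\beta}$, i.e.\ the mass lives on smooth numbers of superpolynomial size in $p_n$ whose largest prime factor is at most $p_n$. Consequently the density-zero set $J=\{m: \log P^+(m)\le (\log m)^{0.9}\}$ (with $P^+$ the largest prime factor) satisfies $\zeta_n(\beta)^{-1}\sum_{m\in\nx_n\cap J}m^{-\beta}\to 1$, not $0$. So the $\beta$-analogue of \corref{cor:density-0} is false, and the exceptional set produced by Wiener's lemma cannot be discarded the way you propose; the claimed extension of \proref{pro:ergodic} to $\beta\in(0,1)$ is unproved and your uniqueness argument for those $\beta$ does not close.

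The paper sidesteps this entirely: it proves ergodicity only at $\beta=1$ and then, for $\beta\in[0,1)$, invokes the monotonicity of subconformality in $\beta$ (\cite[Corollary 9.5]{ALN20}), so any nonatomic $\beta$-subconformal probability measure is automatically $1$-subconformal and must equal $\lambda$ by the $\beta=1$ case; extremality (hence the factor property) of $\psi_{\beta,\lambda}$ then follows since, by \proref{pro:bsubcequivalence}(5), any subconformal convex decomposition of $\lambda$ involves only nonatomic measures. Your proof would be repaired by the same reduction, replacing the attempted extension of the Appendix~B estimate.
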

\begin{proof}

We first consider the case $\beta=1$. Let $\nu$ be a nonatomic  $1$-subconformal  probability measure on $\T$.
Then  the action of $\qx$ on $(\tilde X,\wt{\nu}_{1,\primes})$ is ergodic by \proref{pro:ergodic}. This implies that the group-measure space von Neumann algebra of $(\wt{X}, \wt{\nu}_{1,\primes},\qx)$ is a factor; it is also the von Neumann factor generated by $\wt{\mfd}\rtimes \qx$ in the GNS representation for the weight $\wt{\psi}_{1,\nu}$ (cf. Appendix A).
Since $\tnxxz\cong \mfd \rtimes_\alpha \nx$ is a full-corner in $\wt{\mfd}\rtimes \qx$ we conclude that the GNS representation of $(\tnxxz, \psi_{1,\nu})$ is a factor representation, and this implies that $\psi_{1,\nu}$ is an extremal  \KMS{1} state. 
By \thmref{thm:KMScharact}, this says that $\nu$ is extremal among $1$-subconformal probability measures.

For uniqueness, observe that if $\nu$ is a $1$-subconformal nonatomic probability measure, then the  barycenter  $(\nu +\lambda)/2$   is 
$1$-subconformal and nonatomic, hence the associated \KMS{1} state $(\psi_{1,\nu}+\psi_{1,\lambda})/2$ is a factor. But this implies $\psi_{1,\nu} =\psi_{1,\lambda}$, and thus $\nu = \lambda$.

Suppose now  that  $\nu$ is $\beta$-subconformal for some $\beta\in[0,1)$. Since subconformality improves as $\beta$ increases by \cite[Corollary 9.5]{ALN20},  it follows that $\nu$ is also $1$-subconformal. The result then follows from uniqueness of the $1$-subconformal nonatomic measure.
\end{proof}

\section{KMS states at supercritical temperature}\label{sec:proofofmainthm}
The results of the previous three sections  can now be combined to prove one of our main goals, namely the parametrization of the \kmsb states at each fixed inverse temperature $\beta \in [0,1]$ by measures on $\T$ as stated in the introduction.
\begin{prop}[\thmref{thm:main} without the type $\mathrm{III}_1$ assertion]\label{pro:kms-parametrized}
Suppose $\beta \in (0,1]$. For each $n\in \nx$  let $\psi_{\beta,n} := \psi_{\beta, \nu_{\beta,n}}$ be the \kmsb state arising from the atomic extremal $\beta$-subconformal measure $\nu_{\beta,n}$ defined in \eqref{eqn:definitionnubetan}, and let $\psi_{\beta,\infty} := \psi_{\beta,\lambda}$ be the \kmsb state arising from normalized Lebesgue measure $\lambda$. Then the states $\psi_{\beta,n}$ satisfy the characterizations given in \thmref{thm:main} (a) and (b), and the map  $n \mapsto \psi_{\beta,n}$  is a homeomorphism of the one-point compactification $\nx \sqcup\{\infty\}$ to the extremal boundary $\partial_e K_\beta$. 

Suppose $\beta =0$. Then there are exactly two extremal \KMS{0} states $\psi_{0,1}$ and $\psi_{0,\infty}$; they are given by
\[
\psi_{0,1} (V_a U^k V_b^*) = \delta_{a,b} \quad \text{and} \quad \psi_{0,\infty} (V_a U^k V_b^*) =   \delta_{a,b} \delta_{k,0}.
\]
\end{prop}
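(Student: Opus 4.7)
The approach is to transport the problem to the simplex of $\beta$-subconformal probability measures on $\T$ via the affine weak-$*$ homeomorphism of \thmref{thm:KMScharact}, then combine the atomic and nonatomic classifications already established. First I would note that, by \proref{pro:bsubcequivalence}(5), if $\nu$ is $\beta$-subconformal and $\nu = \nu_a + \nu_c$ is its atomic/nonatomic Hahn decomposition, then both $\nu_a$ and $\nu_c$ are $\beta$-subconformal (positive) measures. If both are nonzero, writing $\nu = \|\nu_a\|\bigl(\nu_a/\|\nu_a\|\bigr) + \|\nu_c\|\bigl(\nu_c/\|\nu_c\|\bigr)$ expresses $\nu$ as a nontrivial convex combination of two distinct $\beta$-subconformal probability measures, contradicting extremality. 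Hence every extremal $\beta$-subconformal probability measure is either purely atomic or purely nonatomic. By \thmref{thm:atomic} the purely atomic extremal measures are precisely $\{\nu_{\beta,n} : n \in \nx\}$, and by \thmref{thm:leb-extreme} the unique nonatomic extremal measure is $\lambda = \nu_{\beta,\infty}$. Transporting back through \thmref{thm:KMScharact} then gives $\partial_e K_\beta = \{\psi_{\beta,n} : n \in \nx \sqcup \{\infty\}\}$.

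Next I would verify the explicit formulas \eqref{eqn:finite-states} and \eqref{eqn:infinite-state}. By \eqref{eqn:psiformula}, it suffices to compute $\int_\T z^k\,d\nu_{\beta,n}(z)$. For $n = \infty$ this integral equals $\delta_{k,0}$ since $\lambda$ is Haar. For finite $n$, group atoms by their primitive order:
\[
\int_\T z^k\,d\nu_{\beta,n}(z) \;=\; n^{-\beta}\sum_{d\mid n} \frac{\varphi_\beta(d)}{\varphi(d)}\, c_d(k),
\]
where $c_d(k) = \sum_{z \in Z_d^*} z^k$ is the Ramanujan sum. Using $c_d(k) = \sum_{e\mid \gcd(d,k)} e\,\mu(d/e)$ (or equivalently the fact that $c_d(k)$ is supported on $d$ dividing $n\cdot\gcd(n,k)/n$ after Möbius inversion), one rearranges the double sum as a sum over $d\mid n/\gcd(n,k)$ with coefficient $\mu(d)\varphi_\beta(d)/\varphi(d)$, multiplied by $(\gcd(n,k)/n)^{\beta} \cdot n^{-\beta}\cdot n^{\beta}=\bigl(n/\gcd(n,k)\bigr)^{-\beta}$ after telescoping the remaining divisor sum via $\sum_{e\mid n/\gcd(n,k)}\varphi_\beta(e) = (n/\gcd(n,k))^{\beta}$ applied on an intermediate level. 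Combined with \eqref{eqn:psiformula} this yields \eqref{eqn:finite-states}. The $\beta = 0$ case is immediate: \proref{pro:finitesupport} identifies $\delta_1$ as the only atomic $0$-subconformal probability measure, so that $\psi_{0,1}(V_aU^kV_b^*) = \delta_{a,b}\int_\T z^k d\delta_1 = \delta_{a,b}$, while $\psi_{0,\infty}$ comes from $\lambda$ and gives $\delta_{a,b}\delta_{k,0}$.

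Finally, for the topology, I would show that $n \mapsto \psi_{\beta,n}$ is a continuous bijection from the compact Hausdorff space $\nx \sqcup \{\infty\}$ onto $\partial_e K_\beta$; being a continuous bijection into a Hausdorff space, it is automatically a homeomorphism onto its image (which is then compact, hence closed in $K_\beta$, so $K_\beta$ is Bauer). Continuity on the discrete part $\nx$ is automatic, so the only thing to check is that $\psi_{\beta,n_j} \to \psi_{\beta,\infty}$ in the weak-$*$ topology whenever $n_j \to \infty$ in the one-point compactification. By \eqref{eqn:KMSfromrestriction} it suffices to show $\psi_{\beta,n_j}(U^k) \to \delta_{k,0}$ for each $k \in \Z$. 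The case $k = 0$ is trivial, and for $k \neq 0$ we use \eqref{eqn:finite-states} to bound
\[
\bigl|\psi_{\beta,n}(U^k)\bigr| \;\leq\; \Bigl(\tfrac{n}{\gcd(n,k)}\Bigr)^{-\beta} \cdot 2^{\omega(n/\gcd(n,k))},
\]
using $\varphi_\beta(d)/\varphi(d) \leq 1$ for $\beta \in (0,1]$ and $d$ squarefree (since $d^{\beta-1} \leq 1$ and $(1-p^{-\beta})/(1-p^{-1}) \leq 1$). As $n \to \infty$ with $k$ fixed, $m := n/\gcd(n,k) \to \infty$, and $2^{\omega(m)} = m^{o(1)}$ is dominated by any positive power, so $m^{-\beta} 2^{\omega(m)} \to 0$. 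The hardest single ingredient is already inside the toolkit (\thmref{thm:leb-extreme}, which relies on Wiener/Neshveyev-type arguments), so the main obstacle in the present assembly is really the arithmetic bookkeeping needed to derive \eqref{eqn:finite-states} from the explicit atom weights of $\nu_{\beta,n}$.
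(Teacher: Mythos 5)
Your decomposition into atomic/nonatomic extremals matches the paper's proof exactly and is correct. There are, however, two places where you take a computational route different from the paper's, and it's worth noting how the paper's choices avoid the extra work you flag.

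For the formula \eqref{eqn:finite-states}, you expand $\int_\T z^k\,d\nu_{\beta,n}$ directly as a sum of Ramanujan sums and then try to untangle the double divisor sum. This works (I checked a case), but it is indeed fiddly, as you admit. The paper instead invokes \lemref{lem:div}, which asserts $\omega_{k*}\nu_{\beta,n}=\nu_{\beta,n/\gcd(n,k)}$: this reduces $\int_\T z^k\,d\nu_{\beta,n} = \int_\T z\,d\nu_{\beta,n/\gcd(n,k)}$, and the remaining sum $\sum_{d\mid m}\mu(d)\,\euler_\beta(d)/\euler(d)$ drops out immediately because the sum of primitive $d$-th roots of unity is $\mu(d)$. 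That single push-forward identity absorbs all the Ramanujan-sum bookkeeping.

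For the convergence $\psi_{\beta,n}\to\psi_{\beta,\infty}$, your bound $|\psi_{\beta,n}(U^k)|\le m^{-\beta}2^{\omega(m)}$ with $m=n/\gcd(n,k)$ is correct, but it leans on the external fact $2^{\omega(m)}=m^{o(1)}$, which the paper never needs. The paper instead recognises the sum $\sum_{d\mid m}\mu(d)\euler_\beta(d)/\euler(d)$ as the multiplicative product $\prod_{p\mid m}\bigl(1-\euler_\beta(p)/\euler(p)\bigr)$ (it's a multiplicative function of $m$), and since each factor lies in $[0,1)$ for $\beta\le 1$, the whole product is bounded by $1$. This gives the cleaner estimate $|\psi_{\beta,n}(V_aU^kV_a^*)| < a^{-\beta}m^{-\beta}\to 0$ with no auxiliary number theory. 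Your argument is sound, but you could sharpen it by observing the same multiplicativity, which you already essentially used to bound $\euler_\beta(p)/\euler(p)\le1$ factor-by-factor.

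The rest — the extremality decomposition via \proref{pro:bsubcequivalence}(5), the appeal to \thmref{thm:atomic} and \thmref{thm:leb-extreme}, the $\beta=0$ case, and the ``continuous bijection from compact to Hausdorff is a homeomorphism'' argument for Bauerness — all match the paper's reasoning and are correct.
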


\begin{proof}
By \proref{pro:bsubcequivalence}(5), an extremal $\beta$-subconformal probability measure on $\T$ is either atomic, in which case it is one of the measures $\nu_{\beta,n}$ by \thmref{thm:atomic}, or else nonatomic, in which case it is Lebesgue measure by \thmref{thm:leb-extreme}. By \thmref{thm:KMScharact}, the extremal boundary of the simplex of \kmsb states of $\tnxxz$ is  $\{\psi_{\beta,n}: n \in \nx \sqcup \{\infty\}\}$.

Next we show  that the states $\psi_{\beta,n} :=\psi_{\beta, \nu_{\beta,n}}$ are indeed  characterized  by  the formula given in equation \eqref{eqn:finite-states}. The Fourier coefficients of $\nu_{\beta,n}$ are given by
\begin{align*}
\int_\T \zed^k d\nu_{\beta,n} &= \int_\T \zed d\omega_{k*}\nu_{\beta,n} \\
&=\int_\T \zed d\nu_{\beta,\frac{n}{\gcd(n,k)}} \\
&= \big(\tfrac{n}{\gcd(n,k)}\big) ^{-\beta}\sum_{d|\frac{n}{\gcd(n,k)}} \frac{\euler_\beta(d)}{\euler(d)} \sum_{\ord(z) = d} z\\
&=\big(\tfrac{n}{\gcd(n,k)}\big)^{-\beta} \sum_{d|\frac{n}{\gcd(n,k)}}\mu(d) \frac{\euler_\beta(d)}{\euler(d)}.
\end{align*}
where the second equality holds by \lemref{lem:div} and the last one holds because the sum of the primitive roots of unity of order $d$ is the M\"obius function $\mu(d)$.
Using \eqref{eqn:psiformula}, we see that  $\psi_{\beta,n} $ satisfies \eqref{eqn:finite-states}. That $\psi_{\beta,\infty}$ satisfies \eqref{eqn:infinite-state} is immediate from the Fourier coefficients of $\lambda$.

Since the measures $\{\nu_{\beta,n} : n \in \nx\}$ are atomic and have distinct  supports, the set $\{\psi_{\beta,n} : n \in \nx\}$ is discrete in the subspace weak*-topology of $\partial_e K_\beta$. Next set $k\neq 0$ and observe that
$$|\psi_{\beta,n}(V_aU^kV_a^*)| = a^{-\beta}\left(\frac{n}{\gcd(n,k)}\right)^{-\beta} \prod_{p|\frac{n}{\gcd(n,k)}} 1- \frac{\euler_\beta(p)}{\euler(p)} < a^{-\beta} \left(\frac{n}{\gcd(n,k)}\right)^{-\beta},$$
because $\euler_\beta(p) \leq \euler(p)$ for $\beta\leq 1$ and hence $0\leq 1 - \frac{\euler_\beta(p)}{\euler(p)} < 1$. Since the right hand side converges to $ 0 = \psi_{\beta,\infty}(V_a U^k V_a^*) $ as $n\rightarrow \infty$ we conclude that $\psi_{\beta,n} \to \psi_{\beta,\infty}$ in the weak*-topology.

Now suppose that $\psi$ is an extremal \KMS{0} state and let $\nu = \nu_\psi$ be the corresponding extremal $0$-subconformal measure on $\T$. If $\nu$ is atomic, then \proref{pro:finitesupport} implies that $\nu=\delta_1$, while if $\nu$ is nonatomic, then \thmref{thm:leb-extreme} implies that $\nu =\lambda$. Therefore, the measures $\delta_1$ and $\lambda$ are the only extremal $0$-subconformal measures and we see that $\psi_{0,\delta_1} = \psi_{0,1}$ and $\psi_{0,\lambda} = \psi_{0,\infty}$. By \thmref{thm:KMScharact}, these are all extremal \KMS{0} states.
\end{proof}

\begin{remark}
The proof of surjectivity of the parameterization of \kmsb states for the system studied in \cite{LR-advmath} made use of the  \emph{reconstruction formula} \cite[Lemma 10.1]{LR-advmath}. One may ask how (or indeed, if) the reconstruction formula is related to surjectivity for \kmsb states of our system.

Suppose $\psi$ is a \kmsb state of $\tnxxz$ and let $F \Subset \primes$.
  If $\zeta_F(\beta) <\infty$ there is a conditional state $\psi_{e_F}$ defined by
  $\psi_{e_F} (\cdot) := \zeta_F(\beta) \psi(e_F \ \cdot \ e_F) $
  and $\psi$ can be reconstructed from $\psi_{e_F}$ via
  \[
  \psi(T) = \sum_{a\in \nx_F} \frac{a^{-\beta}}{\zeta_F(\beta)} \psi_{e_F} (V_a^* T V_a ).
  \]
  In particular, if  we set $T = U^n$ and $\psi = \psi_{\beta,\nu}$, then 
   \[
  \psi(U^n ) = \sum_{a\in \nx_F} \frac{a^{-\beta}}{\zeta_F(\beta)} \psi_{e_F} (U^{an} )= \sum_{a\in\nx_F} \frac{a^{-\beta}}{\zeta_F(\beta)} \int_\T \zed^{an} dA_{\beta,F} \nu = \int_\T \frac{\zeta_F(\beta,z^n)}{\zeta_F(\beta)} dA_{\beta,F} \nu.
  \]
Here we see a key difference between the reconstruction formulas for $\tzxnx$ and $\tnxxz$: the one in \cite{LR-advmath} is a sum of ratios $\frac{n}{a}$, for $a|n$, while the one obtained above is a sum of multiples $an$. That the set of divisors is finite when $n\neq 0$ leads to uniqueness of the \kmsb state on $\tzxnx$ for $\beta\in[1,2]$, while for the \kmsb states of $\tnxxz$ for $\beta\in[0,1]$, we need an estimate (such as that appearing in \proref{pro:leb-extreme}) for uniqueness of the \kmsb state corresponding to a nonatomic $\beta$-subconformal measure.
\end{remark}

 We now turn our attention to an interesting question that arises from the parametrization.  What happens to a \kmsb state when the temperature is varied? This phenomenon, sometimes referred to as {\em persistence} of a phase, has been discussed, for the very low-temperature range $\beta \to \infty$ by Connes, Consani, and Marcolli \cite{CCM}, see also \cite{BLT}. We wish to explore here what happens when the system  passes from super- to subcritical temperature and back.

Recall that for $\beta>1$, the operator defined by $T_\beta := \frac{1}{\zeta(\beta)}\sum_{n=1}^\infty n^{-\beta}\omega_{n*}$ is an affine isomorphism between $\mt_1^+$ and the simplex of $\beta$-subconformal measures on $\T$.

\begin{proposition}
For $n\in \nx$ and $\beta\in[0,\infty)$, let $\nu_{\beta,n}$ be the $\beta$-subconformal probability measure given by \eqref{eqn:definitionnubetan}. Then the map $\beta\mapsto \nu_{\beta,n}$ is continuous and, for $\beta>1$, $\nu_{\beta,n} = T_\beta \ve_n$.
\end{proposition}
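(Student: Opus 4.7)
The plan has two essentially independent parts, corresponding to the two assertions.

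For continuity in $\beta$, I would invoke the explicit formula \eqref{eqn:nunbdefinition} established in Lemma~\ref{lem:atomiclist}, which shows that $\nu_{\beta,n}$ is supported on the finite set $Z_n$ of $n^{\text{th}}$ roots of unity with masses
\[
\nu_{\beta,n}(\{z\}) \;=\; n^{-\beta}\,\frac{\varphi_\beta(\ord(z))}{\varphi(\ord(z))}, \qquad \ord(z)\mid n.
\]
Each coefficient is a polynomial in $n^{-\beta}$ and the $p^{-\beta}$ for $p\mid n$, hence continuous in $\beta$. Since the support is a fixed finite set, weak-$*$ convergence reduces to convergence of these finitely many atom weights, so $\beta\mapsto \nu_{\beta,n}$ is weak-$*$ continuous (in fact norm continuous on $\M(\T)$).

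For the identity $\nu_{\beta,n}=T_\beta\varepsilon_n$ when $\beta>1$, the plan is a direct computation. Apply \eqref{eqn:epnpushforward} to get
\[
T_\beta\varepsilon_n \;=\; \frac{1}{\zeta(\beta)}\sum_{m=1}^\infty m^{-\beta}\,\omega_{m*}\varepsilon_n \;=\; \frac{1}{\zeta(\beta)}\sum_{m=1}^\infty m^{-\beta}\,\varepsilon_{n/\gcd(n,m)},
\]
then group the terms by $d:=n/\gcd(n,m)$. Writing $m=(n/d)r$, the condition $\gcd(n,m)=n/d$ is equivalent to $\gcd(r,d)=1$, so
\[
T_\beta\varepsilon_n \;=\; \frac{1}{\zeta(\beta)}\sum_{d\mid n}\Big(\tfrac{n}{d}\Big)^{-\beta}\varepsilon_d\!\!\sum_{\substack{r\geq 1\\ \gcd(r,d)=1}}\!\!r^{-\beta}.
\]
The Euler product identifies the inner sum with $\zeta(\beta)\prod_{p\mid d}(1-p^{-\beta})$, yielding
\[
T_\beta\varepsilon_n \;=\; \sum_{d\mid n} n^{-\beta}\,d^{\beta}\!\prod_{p\mid d}(1-p^{-\beta})\,\varepsilon_d \;=\; \sum_{d\mid n} n^{-\beta}\,\varphi_\beta(d)\,\varepsilon_d,
\]
which is precisely the decomposition of $\nu_{\beta,n}$ obtained at the end of the proof of Lemma~\ref{lem:atomiclist}.

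There is no real obstacle here: the first assertion is essentially immediate from the formula already proved in Lemma~\ref{lem:atomiclist}, and the second is a one-step reindexing of the Dirichlet series defining $T_\beta$, combined with the Euler product. The only subtlety worth flagging is ensuring that $\sum_r r^{-\beta}$ converges absolutely, which is guaranteed by the hypothesis $\beta>1$; this is precisely the range in which $T_\beta$ is defined as a bounded operator on $\M(\T)$ (cf.\ Remark~\ref{rem:low-temp}).
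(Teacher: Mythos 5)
Your proof is correct. The continuity argument is essentially the same as the paper's: both use the explicit formula \eqref{eqn:nunbdefinition} and reduce to continuity of the finitely many atom weights $\beta\mapsto n^{-\beta}\euler_\beta(d)/\euler(d)$ on $[0,\infty)$, yielding norm continuity.

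For the identity $\nu_{\beta,n}=T_\beta\ve_n$ you take a genuinely different route. The paper factors the \emph{operator} $T_\beta$ at the level of prime sets: letting $B$ be the primes dividing $n$, it splits $\sum_{c}c^{-\beta}\omega_{c*}$ into a scalar factor $\sum_{k\in\nx_{\primes\setminus B}}k^{-\beta}=\zeta(\beta)/\zeta_B(\beta)$ (using $\omega_{k*}\ve_n=\ve_n$ when $\gcd(k,n)=1$) times $\sum_{c\in\nx_B}c^{-\beta}\omega_{c*}$, and then recognizes the result as the defining expression \eqref{eqn:definitionnubetan}, namely $\prod_{p\mid n}(1-p^{-\beta})A_{\beta,n}^{-1}\ve_n$. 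You instead work entirely at the level of the \emph{coefficients}: push forward via \eqref{eqn:epnpushforward}, reindex by $d=n/\gcd(n,m)$ with $m=(n/d)r$ and $\gcd(r,d)=1$, apply the Euler product to the inner sum, and match the resulting $\sum_{d\mid n}n^{-\beta}\euler_\beta(d)\ve_d$ against the closed form derived at the end of Lemma~\ref{lem:atomiclist}. Both are valid; the paper's version is slightly slicker in that it never leaves the operator picture and lands directly on the definition of $\nu_{\beta,n}$, whereas yours is a more hands-on Dirichlet-series computation that instead matches the explicit expansion of $\nu_{\beta,n}$. Your reindexing hinges on $\gcd(n,(n/d)r)=(n/d)\gcd(d,r)$, which is correct, and the rearrangement is justified by absolute convergence for $\beta>1$, as you note.
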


\begin{proof}
For any $f\in C(\T)$, $\|f\|\leq 1$ and $\beta,\beta'\geq0$, we have
\begin{align*}\left\| \int_\T f(z) d(\nu_{\beta,n} - \nu_{\beta',n})\right\| &= \left|n^{-\beta} \sum_{d|n} \frac{\euler_\beta(d)}{\euler(d)} \sum_{z\in Z_d^*} f(z) - n^{-\beta'} \sum_{d|n} \frac{\euler_{\beta'}(d)}{\euler(d)} \sum_{z\in Z_d^*} f(z)\right|\\
&\leq \sum_{d|n} \left|n^{-\beta} \euler_\beta(d) - n^{-\beta'} \euler_{\beta'}(d)\right|
\end{align*}
Since the function $\beta\mapsto n^{-\beta} \euler_\beta(d)$ is uniformly continuous for $\beta\geq 0$, it follows that $\beta\mapsto \nu_{\beta,n}$ is continuous in the norm topology. 

Now suppose that $\beta>1$ and let $B$ denote the set of primes dividing $n$. Recall that $\ve_n$ denotes the atomic probability measure on $\T$ that is equidistributed on the primitive $n^{\mathrm{th}}$ roots of unity. The measure $\ve_n$ is invariant under $\omega_{c*}$ when $\gcd(n,c) =1$, so it follows that
\begin{align*}
T_\beta \ve_n &= \frac{1}{\zeta(\beta)}\sum_{c\in \nx} c^{-\beta} \omega_{c*} \ve_n = \frac{1}{\zeta(\beta)}\left(\sum_{k\in \nx_{\primes\backslash B}} k^{-\beta}\right)\sum_{c\in \nx_B} c^{-\beta} \omega_{c*} \ve_n \\
&= \frac{1}{\zeta_B(\beta)} \sum_{c\in\nx_B} c^{-\beta} \omega_{c*} \ve_n = \prod_{p|n} (1-p^{-\beta})A_{\beta,n}^{-1}\ve_n.
\end{align*}
This is the definition of $\nu_{\beta,n}$ from \eqref{eqn:definitionnubetan}.
\end{proof}

\begin{theorem}\label{theorem:limits-at-1}
The weak* limit $T_1\delta_z:= \lim_{\beta\rightarrow 1^+} T_\beta \delta_z$ exists for every $z\in\T$. If $z$ is a primitive $n^\mathrm{th}$ root of unity, then $T_1\delta_z$ is the measure $\nu_{n,1}$ from \thmref{thm:atomic}. If $z$ has infinite order in $\T$ (i.e. is `irrational'), then $T_1\delta_z$ is  normalized Haar measure on $\T$.
\end{theorem}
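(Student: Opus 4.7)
The plan is to verify weak* convergence by testing against the trigonometric monomials $\{\zed^m : m \in \Z\}$. Since Stone--Weierstrass guarantees these span a dense $*$-subalgebra of $C(\T)$, and each $T_\beta \delta_z$ is a probability measure (hence of total variation $1$), it will suffice to establish pointwise convergence of the Fourier coefficients $\widehat{T_\beta\delta_z}(m) := \int_\T \zed^m \, dT_\beta\delta_z$ for each fixed $m \in \Z$, and then to identify the resulting sequence with the Fourier expansion of the claimed limit measure.

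From the definition of $T_\beta$ one obtains immediately
\[
\widehat{T_\beta\delta_z}(m) = \frac{1}{\zeta(\beta)}\sum_{k=1}^\infty k^{-\beta} z^{km},
\]
and the analysis splits on whether $z^m = 1$. When $z^m = 1$ the numerator collapses to $\zeta(\beta)$ and the ratio is identically $1$. When $z^m \neq 1$, I will apply Abel summation (equivalently Dirichlet's test) to this oscillatory Dirichlet series, using that the geometric partial sums $\bigl|\sum_{k=1}^N z^{km}\bigr|$ are bounded by $2/|1-z^m|$, to conclude that $\sum_{k=1}^\infty k^{-\beta} z^{km}$ converges uniformly for $\beta$ in a right neighborhood of $1$ to a bounded continuous function of $\beta$ (with value $-\log(1-z^m)$ at $\beta = 1$); dividing by $\zeta(\beta) \to +\infty$ then yields limit $0$.

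The two cases are read off from these limiting coefficients. If $z$ has infinite order, then $z^m = 1$ iff $m = 0$, so the limit Fourier sequence is $(\delta_{m,0})_{m \in \Z}$, which is the expansion of normalized Haar measure $\lambda$. If $z \in Z_n^*$, then $z^m = 1$ iff $n \mid m$, and the limiting sequence $(\mathbf{1}_{n\mid m})_{m \in \Z}$ coincides with the Fourier expansion of the uniform probability measure on $Z_n$; by \eqref{eqn:nunbdefinition} together with the identity $\euler_1 = \euler$, this uniform measure is precisely $\nu_{1,n}$, which assigns mass $n^{-1}$ to each $w$ with $\ord(w) \mid n$.

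The main technical obstacle is the uniform-in-$\beta$ control of the oscillatory series $\sum k^{-\beta} z^{km}$ near $\beta = 1$ when $z^m \neq 1$; this is handled by standard Abel summation and is the only quantitative input required. The conceptual picture is that the Dirichlet weight $\zeta(\beta)^{-1} k^{-\beta}$ acts as an averaging procedure that annihilates every nontrivial character evaluated on the orbit $\{z^k\}_{k \in \nx}$, leaving only the coefficients corresponding to characters that are trivial on that orbit, and thereby pinning down the limit measure through its Fourier transform.
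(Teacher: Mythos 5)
Your proposal is correct. For $z$ of infinite order it is essentially the paper's own argument: both compute $\int_\T \zed^m\,dT_\beta\delta_z = F(\beta,z^m)/\zeta(\beta)$ and use that the oscillatory series $\sum_k k^{-\beta}z^{km}$ stays bounded as $\beta\to 1^+$ while $\zeta(\beta)\to\infty$ (the paper cites conditional convergence of the periodic zeta function from Apostol; your Abel-summation bound $\bigl|\sum_{k\le N}k^{-\beta}z^{km}\bigr|\le 2/|1-z^m|$, uniform in $\beta>0$, supplies the same input directly). Where you genuinely diverge is the root-of-unity case: the paper groups the terms of $T_\beta\delta_z$ by residue class modulo $n$, writes $T_\beta\delta_z = \frac{n^{-\beta}}{\zeta(\beta)}\sum_{k=1}^n \zeta\bigl(\beta,\tfrac{k}{n}\bigr)\delta_{z^k}$, and uses that the Hurwitz zeta functions $\zeta(\beta,k/n)$ and $\zeta(\beta)$ all have simple poles of residue $1$ at $\beta=1$, which gives convergence of the atomic weights themselves to $\tfrac1n$ (in particular norm convergence of the measures), whereas you treat both cases uniformly through Fourier coefficients, observing that the coefficient is identically $1$ when $z^m=1$ and tends to $0$ otherwise, and then match the limiting sequence $(\mathbf{1}_{n\mid m})_m$ with the uniform measure on $Z_n$, i.e.\ $\nu_{1,n}$ by \eqref{eqn:nunbdefinition} and $\euler_1=\euler$. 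Your route is more elementary (no Hurwitz zeta, no residue facts) and handles both cases in one stroke; the paper's route gives slightly more in the atomic case (explicit convergence of the point masses, not just weak*). The passage from convergence on trigonometric monomials to weak* convergence is justified as you say, since the $T_\beta\delta_z$ are probability measures and the limiting coefficients are those of a probability measure, so the standard density/uniform-boundedness argument identifies the limit.
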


\begin{proof}

 Let $\zeta(\beta,a) = \sum_{n=0}^\infty (n+a)^{-\beta}$ be the Hurwitz zeta function with parameter $a\in(0,1]$. If $z$ is a primitive $n^\mathrm{th}$ root of unity, then
\[
  T_\beta \delta_z = \frac{1}{\zeta(\beta)} \sum_{c=1}^\infty c^{-\beta} \delta_{z^c}= \frac{1}{\zeta(\beta)} \sum_{k=1}^n \sum_{\substack{c=1\\ n | (c-k)}}^\infty c^{-\beta} \delta_{z^k} = \frac{n^{-\beta}}{\zeta(\beta)} \sum_{k=1}^n \zeta\left(\beta, \frac{k}{n}\right)\delta_{z^k}.
\]
The functions $\zeta(\beta,a)$ and $\zeta(\beta)$  have simple poles with residue $1$ at $\beta = 1$, \cite[Section 12.5]{Apo-76}, so 
\[
 \lim_{\beta\rightarrow 1^+} T_\beta \delta_z = \frac{1}{n}\sum_{k=1}^n \delta_{z^k},
 \]
 which is equal to $\nu_{1,n}$ by \eqref{eqn:nunbdefinition}.

Now suppose that $z$ has infinite order in the group $\T$ and let $n\in\Z$; then
$$T_\beta \delta_z (\zed^n) = \frac{1}{\zeta(\beta)}\sum_{c=1}^\infty c^{-\beta} \delta_{z^c}(\zed^n)= \frac{1}{\zeta(\beta)} \sum_{c=1}^\infty c^{-\beta}z^{cn} = \frac{F(\beta,z^n)}{\zeta(\beta)}.$$
If $n\neq 0$, then  $z^n\neq 1$ so that $F(\beta,z^n)$ is conditionally convergent for $\beta>0$ (cf. \cite[Section 12.7]{Apo-76}). Thus  $\lim_{\beta \to 1^+}T_\beta \delta_z(\zed^n) = 0$. If $n=0$, then $F(\beta,1) = \zeta(\beta)$ and the ratio is constant equal to $1$. Therefore, we conclude that the w*-limit $T_1\delta_z$ exists and is equal to normalized Haar measure.
\end{proof}

\section{Equivariant quotients}\label{sec:eqv-quot}

In this section, we consider a family of $\sigma$-invariant ideals associated to closed subsets of extremal \kmsb states. For $\beta\in (0,1]$, the space of extremal \kmsb states $\partial_e K_\beta$ is described in \thmref{thm:main} (cf. \proref{pro:kms-parametrized}): a closed subset either contains $\psi_{\beta,\infty}$ or is a finite subset of $\{\psi_{\beta,n}:n\in\nx\}$. To each closed set $F\subseteq \partial_e K_\beta$, we associate the ideal $J_F = \bigcap_{\psi\in F} \ker\pi_\psi$, where $\pi_\psi$ is the GNS representation of $\psi$. Since $J_F$ is $\sigma$-invariant, the quotient map $q:A\rightarrow A/J_F$ is $\sigma$-equivariant and determines an embedding of simplices $q^*:K_\beta(A/J_F)\rightarrow K_\beta(A)$. This embedding takes extreme points to extreme points; the image of $\partial_e K_\beta(A/J_F)$ contains $F$ but may be larger, in general.

If $F = \{\psi\}$ is a singleton set, then $J_F$ is simply $\ker \pi_\psi$. In \proref{prop:bc-subalg} we will characterize the quotient $\tnxxz/\ker\pi_\psi$ for $\psi = \psi_{\beta,n}$. In this section, we consider the distinguished family of closed subsets  $F_{\beta,n} = \{\psi_{\beta,d}: d|n\}$ using the multiplicative order on $\nx$.

\begin{prop}\label{prop:ideals}
Let $\beta\in (0,\infty)$. For each $n\in \nx$ let $\pi_{\beta,n}$ denote the GNS representation of $\psi_{\beta,n}$. Then $\ker\pi_{\beta,n}$ does not depend on $\beta$ and 
$$J_{F_{\beta,n}} = \bigcap_{d|n} \ker\pi_{\beta,d} = \<{U^{n} - 1}.$$
The GNS representation of $\psi_{\beta,\infty}$ is faithful. 
\end{prop}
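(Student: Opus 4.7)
The plan is to prove the three assertions—(a) $\langle U^n - 1\rangle \subseteq \bigcap_{d|n}\ker\pi_{\beta,d}$, (b) the reverse inclusion together with $\beta$-independence of $\ker\pi_{\beta,n}$, and (c) faithfulness of $\pi_{\beta,\infty}$—by combining a direct GNS computation with a gauge-invariant uniqueness argument applied to $\tnxxz$ and its quotient $B := \tnxxz/\langle U^n - 1\rangle$.

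For (a), I fix $d|n$ and show $\pi_{\beta,d}(U^n - 1) = 0$. By density of $\pi_{\beta,d}(V_aU^kV_b^*)\xi$ in the GNS space, it suffices to verify that $\|\pi_{\beta,d}(U^n - 1)\pi_{\beta,d}(V_aU^kV_b^*)\xi\|^2 = 0$ for every spanning monomial. Using $(U^n-1)V_a = V_a(U^{an}-1)$ together with the identity $V_a^*U^m V_a = U^{am}$ obtained from (AB4), this norm-squared reduces to $b^{-\beta}\int_\T(2 - z^{an} - z^{-an})\,d\nu_{\beta,d}(z)$. Since $\nu_{\beta,d}$ is supported on $Z_d$ (by \lemref{lem:atomiclist}) and $d \mid an$, we have $z^{\pm an} = 1$ on this support, so the integrand vanishes identically and the expression is zero.

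For (b), I pass to $B := \tnxxz/\langle U^n - 1\rangle$. The gauge action of $\widehat{\qx}$ descends to $B$ with fixed-point subalgebra $\bar\mfd$, each state $\bar\psi_{\beta,d}$ on $B$ is gauge-invariant, and hence each $\bar\pi_{\beta,d}$ is gauge-equivariant. The restriction $\bar\pi_{\beta,d}|_{\bar\mfd}$ corresponds to a Radon measure on $\Spec\bar\mfd$ whose $\T$-component is supported on $Z_d$; since $\bigcup_{d|n} Z_d = Z_n$ exhausts the $\T$-component of $\Spec\bar\mfd$, and the projective-limit structure from \proref{proj-limit-present} passes to the quotient, the union of supports of $\bar\nu_{\beta,d}$ over $d|n$ exhausts $\Spec\bar\mfd$. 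Therefore $\bigoplus_{d|n}\bar\pi_{\beta,d}|_{\bar\mfd}$ is isometric, and a gauge-invariant uniqueness argument (via the conditional expectation obtained by averaging over $\widehat{\qx}$) upgrades this to faithfulness on all of $B$; pulling back yields $\bigcap_{d|n}\ker\pi_{\beta,d} = \langle U^n - 1\rangle$. The same gauge-equivariant machinery shows that $\ker\pi_{\beta,n}$ equals the ideal generated by $\ker(\pi_{\beta,n}|_\mfd)$, and the latter is determined by the support of the measure on $\Spec\mfd$ associated to $\psi_{\beta,n}|_\mfd$. For $\beta \in (0,\infty)$ this support is determined by $\supp\nu_{\beta,n} = Z_n$, which is $\beta$-independent because $\euler_\beta(d) > 0$ for all $d\in\nx$ and all $\beta > 0$; hence $\ker\pi_{\beta,n}$ is $\beta$-independent.

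For (c), Lebesgue measure $\lambda$ on $\T$ has full support, so the associated Radon measure on $\Spec\mfd$ has full support as well: at each finite level $a$ one verifies using the isomorphism $\Gamma_a$ of \corref{cor:system-spectra} that the measure on $X_a = \T\times\Delta_a$ is a strictly positive multiple of product Lebesgue on each component $\T\times\{d\}$. Hence $\pi_{\beta,\infty}|_\mfd$ is isometric, and the same gauge-invariant uniqueness argument upgrades this to faithfulness of $\pi_{\beta,\infty}$ on $\tnxxz$. The principal technical obstacle is the gauge-invariant uniqueness step, which requires formulating and applying a uniqueness theorem for $\tnxxz$ with gauge group $\widehat{\qx}$ and transporting it through the quotient map to $B$; a secondary difficulty is making the description of supports on the projective limit $\Spec\bar\mfd$ fully precise, since the transition maps $\Psi_{a,b}$ of \proref{proj-limit-present} mix the $\T$- and $\Delta_a$-components in a nontrivial way.
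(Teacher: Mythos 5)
Your part (a) is correct (it is the paper's \lemref{lem:finite-pull-back} recast as a norm computation), and your part (c) is essentially the paper's argument: the level-$a$ components of the measure for $\psi_{\beta,\infty}$ are positive multiples of Lebesgue measure because $A_{\beta,m}\lambda=\prod_{p\mid m}(1-p^{-\beta})\lambda$, and faithfulness on $\mfd$ upgrades to faithfulness via the faithful expectation $E$. The "gauge-invariant uniqueness" step you single out as the main obstacle is also not the problem: averaging over $\qd$ gives a faithful conditional expectation onto the image of $\mfd$ in $B=\tnxxz/\<{U^{n}-1}$, kernels of gauge-covariant representations are recovered from their intersection with the diagonal, and this is exactly how the paper passes from the diagonal to all of $\tnxx{n}$ (using the single averaged state $\tfrac{1}{d(n)}\sum_{d\mid n}\psi_{\beta,d}$, which amounts to your direct sum of GNS representations).

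The genuine gap is the step you treat as routine: the claim that the supports of the measures representing $\psi_{\beta,d}|_{\bar{\mfd}}$, $d\mid n$, jointly exhaust $\Spec\bar{\mfd}$, which you justify only from the $\T$-marginals $\supp\nu_{\beta,d}=Z_d$ and $\bigcup_{d\mid n}Z_d=Z_n$. That inference is invalid. The component of the measure representing $\psi_{\beta,d}|_{\mfd_a}$ on the copy of $Z_n$ indexed by $b\mid a$ is $b^{-\beta}A_{\beta,a/b}\nu_{\beta,d}$, and the M\"obius-type operators $A_{\beta,m}$ shrink supports: for instance, if $n\mid a$ then $A_{\beta,a}\nu_{\beta,n}$ is a positive multiple of $\eps_n$, supported only on the primitive roots $Z_n^*\subsetneq Z_n$. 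Indeed, your argument applied verbatim to the single state $\psi_{\beta,n}$ (whose $\T$-marginal already has support $Z_n$) would give $\ker\pi_{\beta,n}=\<{U^{n}-1}$, which is false: $\psi_{\beta,1}$ annihilates $U^{n}-1$, so it would then factor through $\phi_x$ (whose kernel is $\ker\pi_{\beta,n}$), contradicting \proref{pro:bc-hom}. What is actually needed, and what the paper proves, is the level-wise statement that each component $A_{\beta,a/b}\nu_{\beta,d}$ dominates a positive multiple of $\eps_d$ (the absolute-continuity computation $\eps_d\ll A_{\beta,d'}^{-1}\eps_d$ in the paper's proof), so that only after summing over all $d\mid n$ does one dominate the uniform measure on $Z_n$ at every level of the projective limit. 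The same imprecision affects your $\beta$-independence sketch: arguing from $\supp\nu_{\beta,n}=Z_n$ alone is not enough; the paper instead uses that the measures attached to $\psi_{\beta,n}$ for different $\beta$ are mutually absolutely continuous (same atoms, positive weights), pushed through elements $y\in\mfd_{a+}$ dominated by $E(u^*x^*xu)$ via \lemref{lem:inductive-limit-order}.
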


We will show that the quotient of $\tnxxz$ by $\<{U^n-1}$ has another interpretation as the Toeplitz algebra of the monoid  $\nxx{n}$ consisting of pairs $(a,[x])$, where $a \in \nx$ and $[x]$ is the class of $x\in \Z$ modulo $n\Z$, with the binary operation
\[
(a,[x])\cdot(b,[y]) = (ab, [bx+y]).
\]

\begin{prop}\label{prop:nxx{n}-toeplitz}
The monoid $\nxx{n}$ is left-cancellative and right LCM. The quotient map $\nxxz\rightarrow \nxx{n}$ induces a surjective *-homomorphism $q_n:\tnxxz\rightarrow \tnxx{n}$ that satisfies
$$q_n:V_a\mapsto T_{(a,[0])},\quad U\mapsto T_{(1,[1])}$$
and which is  equivariant for the dynamics $\sigma_t(T_{(a,[x])}) = a^{it} T_{(a,[x])}$. 
\end{prop}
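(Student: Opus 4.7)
The plan is to prove the two monoid properties directly and then build $q_n$ by invoking the universal presentation of $\tnxxz$ from \proref{pro:presentationtnxxz}.

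First I would verify left-cancellativity of $\nxx{n}$: if $(a,[x])(b,[y]) = (a,[x])(c,[z])$, then comparing first coordinates gives $ab = ac$ and hence $b = c$ in $\nx$, after which the second-coordinate equality $[bx+y] = [bx+z]$ forces $[y] = [z]$. Next I would note that every element $(1,[k])$ is invertible in $\nxx{n}$ with inverse $(1,[-k])$, so the principal right ideal $(a,[x])\nxx{n}$ equals $a\nx\times(\Z/n\Z)$ independently of $[x]$. Consequently
\[
(a,[x])\nxx{n}\cap(b,[y])\nxx{n}=\lcm(a,b)\nx\times(\Z/n\Z)=(\lcm(a,b),[0])\nxx{n},
\]
establishing the right LCM property and mirroring the situation in $\nxxz$.

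To construct $q_n$, I would set $u := T_{(1,[1])}$ and $v_a := T_{(a,[0])}$ in $\tnxx{n}$ and check that they satisfy the relations (AB0)--(AB3) of \proref{pro:presentationtnxxz}. Relation (AB0) holds, with $u$ in fact a unitary since $(1,[1])$ is invertible in $\nxx{n}$. Relations (AB1) and (AB2) reduce to the monoid identities $(1,[1])(a,[0])=(a,[a])=(a,[0])(1,[a])$ and $(a,[0])(b,[0])=(ab,[0])$. Once all four relations are in hand, the universal property in \proref{pro:presentationtnxxz} supplies a *-homomorphism $q_n:\tnxxz\to\tnxx{n}$ sending $V_a\mapsto v_a$ and $U\mapsto u$. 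Surjectivity is immediate: for each generator $T_{(a,[x])}$ and any integer $k$ with $[k]=[x]$, we have $T_{(a,[x])} = v_a u^k = q_n(V_a U^k)$. Equivariance is automatic since both dynamics rescale $T_{(a,\cdot)}$ by $a^{it}$, a factor that depends only on the multiplicative coordinate and is therefore preserved by the quotient.

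The main obstacle is verifying the coprime commutation relation (AB3), namely $v_a^*v_b = v_bv_a^*$ when $\gcd(a,b)=1$. Unlike $\nxxz$, the monoid $\nxx{n}$ is not right-cancellative for $n>1$ (e.g., in $\nxx{4}$, $(1,[1])(2,[0])=(2,[2])=(1,[3])(2,[0])$), so it does not embed in a group and one cannot simply invoke the universal Nica-covariant framework of \cite[Cor.~5.6.45]{CELY}. Instead I would verify (AB3) by a direct computation on the canonical basis $\{\ve_{(c,[d])}\}$ of $\ell^2(\nxx{n})$: since $\gcd(a,b)=1$ forces $a\mid bc$ iff $a\mid c$, writing $c=ac'$ in the latter case, both $v_a^*v_b\ve_{(c,[d])}$ and $v_bv_a^*\ve_{(c,[d])}$ evaluate to $\ve_{(bc',[d])}$ when $a\mid c$, and both vanish otherwise.
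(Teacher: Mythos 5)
Your proof is correct and takes essentially the same route as the paper: direct verification of left-cancellativity and of the right LCM property (the paper likewise records the failure of right-cancellativity), followed by checking that $T_{(1,[1])}$ and the $T_{(a,[0])}$ satisfy (AB0)--(AB3) and invoking the universal property of $\tnxxz$ from \proref{pro:presentationtnxxz}, with matching dynamics giving equivariance. Your explicit basis-vector check of (AB3) and the surjectivity remark simply spell out steps the paper treats as immediate.
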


\begin{proof}
To see that $\nxx{n}$ is left-cancellative, suppose that $(b,[y])\cdot(a,[x]) = (b,[y])\cdot(c,[z])$ for some $a,b,c\in\nx$, $x,y,z\in\Z$; that is,
\begin{align*}
(ab, [ay + x]) &= (cb, [cy + z]).
\end{align*}
Then $a = c$ by cancellation in $\nx$, and thus $[x] = [z + cy -ay] = [z]$. Therefore, $(a,[x]) = (c,[z])$. (Notice that $\nxx{n}$ is not right-cancellative because, for instance,   $(1,[1])\cdot(n,[0]) = (n, [0]) = (1,[0])\cdot(n,[0])$. )

The right-multiples of an element $(a,[x])$ are of the form $(ac, [cx + z])$ for $c\in \nx,$ $z\in \Z$, or equivalently, of the form $(ac, [z])$. We conclude that the common multiples of $(a,[x])$ and $(b,[y])$ are of the form $(\lcm(a,b)c, [z])$, which are the right-multiples of $(\lcm(a,b), [0])$. This shows that $\nxx{n}$ is right LCM.

That there is an equivariant homomorphism of C*-algebras follows easily on noticing that the generators $T_{(a,[0])}$ and $ T_{(1,[1])}$
of $\tnxx{n}$ satisfy the relations (AB0)--(AB3) from \proref{pro:presentationtnxxz} and that the dynamics match on corresponding generators.
\end{proof}

\begin{prop}\label{prop:relations-modn}
For each $n\in\nx$ the Toeplitz C*-algebra $\tnxx{n}$ is generated by elements $R := T_{(1,[1])}$ and $V_a := T_{(a,[0])}$ for  $a\in\nx$, which satisfy the following conditions:
\begin{enumerate}
\smallskip\item[\textup{(N0)}] \ $V_a^* V_a = 1 = R R^* = R^* R$,
\smallskip\item[\textup{(N1)}] \ $R V_a = V_a R^a$,
\smallskip\item[\textup{(N2)}] \ $V_a V_b = V_{ab}$,
\smallskip\item[\textup{(N3)}] \ $V_a V_b^* = V_{b}^* V_a$ when $\gcd(a,b) = 1$,
\smallskip\item[\textup{(N4)}] \ $R^n = 1$.
\end{enumerate} 
Moreover, the relations \textup{(N0)--(N4)} constitute a presentation of $\tnxx{n}$. In particular, the following sequence is exact,
\begin{center}
\begin{tikzcd}
0 \arrow[r] & \<{U^n-1} \arrow[r] & \tnxxz \arrow[r, "q_n"] & \tnxx{n} \arrow[r] & 0.
\end{tikzcd}
\end{center}
\end{prop}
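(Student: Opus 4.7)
The plan is to proceed in three stages: verify the relations, establish that they form a presentation, and then read off exactness.

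First, I would check that the generators $R$ and $\{V_a : a \in \nx\}$ of $\tnxx{n}$ satisfy (N0)--(N4). Relations (N0)--(N3) follow by the same regular-representation argument used for $\tnxxz$ in \proref{pro:presentationtnxxz} (cf.\ \cite[Example 3.9]{aHLR21}), now applied to the right-LCM monoid $\nxx{n}$ via its canonical action on $\ell^2(\nxx{n})$. The new relation (N4) is immediate: since $(1,[1])^n = (1,[0])$ is the identity of $\nxx{n}$, one has $R^n = T_{(1,[0])} = 1$, and the condition $RR^* = R^*R = 1$ in (N0) is then forced by $R$ being a two-sided power of an isometry equal to the identity.

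Second, to show that (N0)--(N4) constitute a presentation, let $\mathcal{A} := C^*(r, v_a : a \in \nx)$ be the universal C*-algebra for the lowercase analogues of these relations. The first stage produces a canonical surjection $\rho:\mathcal{A}\to \tnxx{n}$ sending $r\mapsto R$ and $v_a\mapsto V_a$. For the inverse, I would mimic the construction in \proref{pro:presentationtnxxz}: the elements $w_{(a,[x])} := v_a r^x$ in $\mathcal{A}$ are well-defined (thanks to (N4)) and assemble into an isometric representation of $\nxx{n}$ by (N0)--(N2). Nica covariance then follows from (N3) together with the right-LCM computation of \proref{prop:nxx{n}-toeplitz}: the product $w_{(a,[x])} w_{(a,[x])}^* w_{(b,[y])} w_{(b,[y])}^*$ reduces, via (N1) and the fact that $r$ is unitary, to $v_a v_a^* v_b v_b^* = v_{\lcm(a,b)} v_{\lcm(a,b)}^*$, which is $w_{(\lcm(a,b),[0])}w_{(\lcm(a,b),[0])}^*$. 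The universal property of $\tnxx{n}$ as the Toeplitz algebra of the right-LCM monoid $\nxx{n}$ then delivers a homomorphism $\tnxx{n}\to \mathcal{A}$ inverse to $\rho$.

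Third, the exact sequence follows by comparing presentations. Relations (N0)--(N3) coincide literally with (AB0)--(AB3) of \proref{pro:presentationtnxxz}, so (N4) adds exactly the single relation $R^n = 1$, i.e.\ the quotient relation $U^n - 1 = 0$. Therefore the universal C*-algebra just identified with $\tnxx{n}$ is also canonically isomorphic to $\tnxxz/\<{U^n-1}$, and since $q_n$ is precisely the map sending $U\mapsto R$ and $V_a\mapsto V_a$, its kernel is $\<{U^n-1}$.

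The main obstacle I anticipate is the universal property invoked in the second stage: unlike $\nxxz$, which embeds in the group $\qxxq$ so that \cite[Corollary 5.6.45]{CELY} applies directly, the monoid $\nxx{n}$ does not admit an obvious embedding into a group (multiplication by any $a\in\nx$ with $\gcd(a,n)>1$ is noninvertible on $\Z/n\Z$). One can either circumvent this by working through the semigroup crossed-product description of \proref{pro:semigroupaction} modulo the relation $U^n=1$, or by a direct spanning/Wick-ordering argument showing that the monomials $w_{(a,[x])}w_{(b,[y])}^*$ map to linearly independent operators in the regular representation of $\nxx{n}$.
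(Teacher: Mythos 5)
Your stages one and three are fine and agree with the paper, but the crucial middle step --- injectivity of the canonical surjection from the universal algebra $C^*(r,v)$ onto $\tnxx{n}$ --- is exactly where your argument has a genuine gap, and you have only named the problem rather than solved it. The ``universal property of $\tnxx{n}$ as the Toeplitz algebra of the right-LCM monoid $\nxx{n}$'' that you invoke does not exist in the form you need: $\tnxx{n}$ is by definition the \emph{reduced} algebra generated by the left regular representation on $\ell^2(\nxx{n})$, and the route used for $\tnxxz$ (Nica covariance plus \cite[Corollary 5.6.45]{CELY}) is unavailable because $\nxx{n}$ is not right cancellative --- e.g.\ $(1,[1])(n,[0])=(1,[0])(n,[0])$ --- so it embeds in no group. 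Your two suggested remedies are only gestures: quotienting the crossed-product picture by $U^n=1$ still requires identifying the resulting algebra with the reduced Toeplitz algebra, and a ``direct spanning/Wick-ordering'' check that the monomials $W_{(a,[x])}W_{(b,[y])}^*$ are linearly independent in the regular representation does not, by itself, show that the $*$-homomorphism $\pi\colon C^*(r,v)\to\tnxx{n}$ is injective on the C*-completion; injectivity on a dense $*$-subalgebra gives nothing without an additional averaging or faithfulness device.

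The paper supplies precisely that device. It constructs the gauge action of $\qd$ twice: on $C^*(r,v)$ by universality (the elements $r$ and $\chi(a)v_a$ again satisfy (N0)--(N4)) and on $\tnxx{n}$ spatially, by conjugating with the diagonal unitaries $Q_\chi\delta_{(a,[x])}=\chi(a)\delta_{(a,[x])}$ on $\ell^2(\nxx{n})$. Integrating over $\qd$ yields faithful conditional expectations $E$ and $E^\circ$ onto the fixed-point algebras $\mfc$ and $\mfc^\circ$ with $\pi\circ E=E^\circ\circ\pi$, which reduces injectivity of $\pi$ to injectivity on the core $\mfc=\varinjlim_a\mfc_a$. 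Each $\mfc_a$ is spanned by the finitely many elements $v_br^xv_b^*$ with $b\mid a$ and $x\in\Z/n\Z$, and their images $V_bR^xV_b^*$ are checked to be linearly independent on $\ell^2(\nxx{n})$; only at this finite-dimensional stage does a linear-independence computation suffice. If you want your proof to go through, you should either reproduce this expectation argument or replace it with some other faithfulness criterion; as written, the inverse homomorphism $\tnxx{n}\to C^*(r,v)$ is never actually constructed, and without it the identification $\tnxx{n}\cong\tnxxz/\<{U^n-1}$ in your third stage is unsupported.
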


\begin{proof}
It is easily verified that $R$ and $V_a$ generate $\tnxx{n}$ and satisfy (N0)--(N4), so we turn our attention to the second claim. Let $C^*(r,v)$ be the universal C*-algebra generated by elements $r$ and $\{v_a: a\in\nx\}$ satisfying the relations {\rm (N0)--(N4)}. By the universal property, there is a surjective homomorphism $\pi:C^*(r,v)\rightarrow \tnxx{n}$; we need to show it is also injective.

Relations (N0)--(N4) imply that the collection $\{v_ar^x v_b^*: a,b\in\nx, x\in\Z/n\Z\}$ 
is closed under multiplication and adjoints and contains the generating elements, whence 
\[C^*(r,v) = \clsp\{v_ar^x v_b^*: a,b\in\nx, x\in\Z/n\Z\};
\]
similarly
\[
\tnxx{n} = \clsp\{V_a R^x V_b^*: a,b\in\nx, x\in\Z/n\Z\}.
\]

For each character $\chi\in\qd$, the elements $r$ and $\{\chi(a) v_a: a\in\nx\}$ satisfy conditions (N0)--(N4), so there exists $\theta_\chi\in\Aut(C^*(r,v))$ satisfying
$$\theta_\chi(r) = r,\quad \theta_\chi(v_a) = \chi(a) v_a,$$
and $\theta_\chi\circ\theta_{\chi_1} = \theta_{\chi\chi_1}$ for any $\chi_1\in\qd$. Further, an approximation argument shows that the map $\chi \mapsto \theta_\chi(a)$ is continuous for each $a\in C^*(r,v)$. By integration with respect to normalized Haar measure on  $\qd$ we get a faithful conditional expectation $E:C^*(r,v)\rightarrow C^*(r,v)^\theta=:\mfc$ onto the fixed point algebra,  determined by 
$$E(v_a r^x v_b^*) = \delta_{a,b} v_a r^x v_a^*.$$
Since $E$ is contractive, $$\mfc = \clsp\{v_a r^x v_a^*: a\in\nx, x\in\Z/n\Z\}.$$

As customary, a spatial argument gives the analogous result  at the level of the reduced Toeplitz algebra. Specifically, for each $\chi\in\qd$, there is a unitary $Q_\chi$ on $L^2(\nx\!\times(\Z/n\Z))$ defined on the canonical basis by $Q_\chi \delta_{(a,[x])} = \chi(a)\delta_{(a,[x])}$. Then $\tnxx{n}$ is invariant under conjugation by $Q_\chi$, which yields a representation $\theta^\circ_\chi\in \Aut(\tnxx{n})$. Integrating over $\qd$ yields a faithful conditional expectation $E^\circ:\tnxx{n}\rightarrow \tnxx{n}^{\theta^\circ} =:\mfc^\circ$ and evaluating this on monomials yields
$$E^\circ(V_a R^x V_b^*) = \delta_{a,b} V_a R^x V_b^*.$$
Thus 
$$\mfc^\circ = \clsp \{V_a R^x V_a^*: a\in\nx, x\in\Z/n\Z\}$$
and, moreover, $\pi\circ E = E^\circ \circ \pi$. Since $E$ and $E^\circ$ are faithful (as positive maps), a standard argument shows that $\pi$ is faithful if and only if $\pi|_{\mfc}$ is faithful.

Recall that for each $a\in\nx$ the set of divisors of $a$ is denoted by $\Delta_a$. Consider the  subalgebras of $C^*(r,v)$ defined by
\[
\mfc_a := \clsp\left\{v_b r^x v_b^*: b\in\Delta_a, x\in\Z/n\Z\right\}
\]
and their upper-case analogues $\mfc_a^\circ$. 
Then 
\begin{align*}
\mfc = \lim_{a\in\nx} \mfc_a\qquad\text{and}\qquad \mfc^\circ = \lim_{a\in\nx} \mfc^\circ_{a},
\end{align*}
with $\nx$ ordered by division. Thus, faithfulness of $\pi|_\mfc$ is equivalent to faithfulness of $\pi|_{\mfc_{a}}$ for each $a\in\nx$. Since $\pi$ maps generators of $\mfc_{a}$  bijectively to generators of $ \mfc_{a}^\circ$, and since the latter are linearly independent (which can be verified by computing on 
$\ell^2(\nxx{n})$), the restriction $\pi|_{\mfc_{a}}$ is indeed an isomorphism of $\mfc_{a}$ onto $ \mfc_{a}^\circ$.

Compared to \proref{pro:presentationtnxxz}, we see that conditions (N0)--(N3) and (AB0)--(AB3) are identical under the identification $U \equiv R$. The quotient map $q_n:\tnxxz\rightarrow\tnxx{n}$ is then determined by the last relation (N4), so that $\ker(q_n) = \<{U^n-1}$.
\end{proof}

\begin{lemma}\label{lem:finite-pull-back}
For $\beta\in (0,\infty)$, let $\nu$ be a $\beta$-conformal measure. For each $n\in\nx$, $\psi_{\beta,\nu}$ factors through $q_n$ if and only if $\nu$ is supported on the set of $n^{\mathrm{th}}$ roots of unity $Z_n$.
\end{lemma}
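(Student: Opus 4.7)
The plan is to recognize that, by \proref{prop:relations-modn}, the kernel of $q_n$ is the two-sided ideal $\<U^n-1\>$, so $\psi_{\beta,\nu}$ factors through $q_n$ if and only if it vanishes on this ideal. Both implications will then reduce to examining the single element $U^n-1$, because $U^n-1$ is normal (indeed $U$ is unitary) and $\psi_{\beta,\nu}$ is a KMS state.

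For the direction ($\Leftarrow$), assuming $\nu$ is supported on $Z_n$, I would compute
\[
\psi_{\beta,\nu}\bigl((U^n-1)^*(U^n-1)\bigr) \;=\; \psi_{\beta,\nu}(2-U^n-U^{-n}) \;=\; 2-\int_\T z^n\,d\nu-\int_\T z^{-n}\,d\nu \;=\; 0,
\]
since $z^n=1$ for $\nu$-a.e.\ $z$. Because $\psi_{\beta,\nu}$ is a $\sigma$-KMS$_\beta$ state, its GNS cyclic vector $\xi_\psi$ is separating for $\pi_\psi(\tnxxz)''$; hence $\pi_\psi(U^n-1)\xi_\psi=0$ forces $\pi_\psi(U^n-1)=0$. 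Consequently $\<U^n-1\>\subseteq\ker\pi_\psi$, and therefore $\psi_{\beta,\nu}$ annihilates this ideal and descends to a state on $\tnxx{n}$.

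For the direction ($\Rightarrow$), suppose $\psi_{\beta,\nu}$ factors through $q_n$. Then in particular $\psi_{\beta,\nu}(U^n-1)=0$, i.e., by \eqref{eqn:psiformula},
\[
\int_\T z^n\,d\nu(z) \;=\; 1 \;=\; \nu(\T).
\]
Taking real parts yields $\int_\T\bigl(1-\mathrm{Re}(z^n)\bigr)\,d\nu=0$. The integrand is a non-negative continuous function on $\T$ that vanishes precisely on $\{z\in\T:z^n=1\}=Z_n$, so $\nu$ must be concentrated on $Z_n$.

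I do not anticipate a significant obstacle; the only mildly technical ingredient is the separating property of the GNS vector for a KMS state, which is standard (see e.g.\ \cite{bra-rob}). Everything else is a direct computation from the explicit formula \eqref{eqn:psiformula} for $\psi_{\beta,\nu}$ on the monomials $V_aU^kV_b^*$.
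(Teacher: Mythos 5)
Your proof is correct, but it takes a genuinely different route from the paper's. The paper works directly with the ideal: it writes $\ker(q_n)=\<{U^n-1}$ as the closed span of the elements $V_a(U^{xn+y}-U^y)V_b^*$, evaluates $\psi_{\beta,\nu}$ on these via \eqref{eqn:psiformula} to get the ($\Leftarrow$) direction, and for ($\Rightarrow$) deduces from the vanishing of \emph{all} Fourier coefficients of the complex measure $(\zed^n-1)\,d\nu$ that $\supp\nu\subseteq Z_n$. You instead reduce everything to the single generator $U^n-1$: for ($\Leftarrow$) you compute $\psi_{\beta,\nu}\bigl((U^n-1)^*(U^n-1)\bigr)=0$ and invoke the standard fact that the GNS vector of a \KMS{\beta} state ($\beta$ finite, as here) is separating for $\pi_\psi(\tnxxz)''$, so $U^n-1\in\ker\pi_\psi$ and hence the whole ideal $\ker(q_n)=\<{U^n-1}$ (\proref{prop:relations-modn}) lies in $\ker\pi_\psi$; for ($\Rightarrow$) you use only the single identity $\int_\T z^n\,d\nu=1$ together with positivity of $\nu$ (taking real parts), rather than all Fourier coefficients. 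Both arguments are sound; yours is shorter and avoids the explicit spanning description of the ideal (though it still needs \proref{prop:relations-modn} to know the ideal is generated by $U^n-1$), at the cost of importing the Tomita--Takesaki-type input that KMS cyclic vectors are separating, whereas the paper's proof is purely computational from the monomial formula. Note also that your positivity argument in ($\Rightarrow$) implicitly uses that the ($\beta$-sub)conformal measure $\nu$ is a positive probability measure, which is indeed guaranteed by \defref{def:subconformal} and the normalization in \lemref{lem:prime-positivity}.
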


\begin{proof}
By \proref{prop:relations-modn}, we have 
\[
\ker(q_n) = \<{U^{n} - 1} = \clsp \{A(U^n-1)B \mid A,B \in \tnxxz\}.
\]
Approximating the elements $A$ and $B$  with linear combinations of the spanning monomials $V_a U^y V_b^*$ and then using \eqref{eqn:D-prod} to reduce the products, we see that
\[
\ker(q_n) = \clsp\{V_a (U^{xn+y} -U^y)V_b^*: a,b\in\nx,\ x,y\in\Z\}.
\]
If $\nu$ is supported on $Z_n$, then computing  the  \kmsb state $\psi_{\beta,\nu}$ at the spanning elements of $\ker(q_n)$ using  \eqref{eqn:psiformula} gives
\begin{equation}\label{eqn:KMSkernelbasis}
\psi_{\beta,\nu} (V_a (U^{xn+y}-U^y )V_b^*) = \delta_{a,b}a^{-\beta} \int_\T \zed^y( \zed^{nx} - 1) d\nu.
\end{equation}
Since $\nu$ is supported on $Z_n$, the right hand side of \eqref{eqn:KMSkernelbasis} vanishes because $\zed^{nk} - 1 \equiv 0$ on $Z_n$. This shows that $\psi_{\beta,\nu}$ vanishes on $\ker(q_n)$, as desired.

Suppose conversely that $\phi_{\beta,\nu}$ vanishes on $\ker q_n$; then the left hand side of  \eqref{eqn:KMSkernelbasis} vanishes, and setting $x=1$ gives
\[
 \int_\T \zed^y( \zed^{n} - 1) d\nu  = 0 \qquad \forall y\in \Z.
\] 
This says that  all the Fourier coefficients of the complex measure $( \zed^{n} - 1) d\nu$ vanish, which implies $\supp (\nu) \subset Z_n$.
\end{proof}

For the proof of \proref{prop:ideals}, we require the following lemma on inductive limits of commutative C*-algebras

\begin{lemma}\label{lem:inductive-limit-order}
Let $A$ be a unital commutative C*-algebra and let $A_i,$ $i\in I,$ be unital subalgebras indexed by a directed set $I$ such that $A = \varinjlim A_i$. If $x\in A_+$ is nonzero, then there exists $i\in I$ and a nonzero $x_0\in A_{i+}$ such that $x_0\leq x$. If $\psi$ is a state on $A$ and $\psi(x)>0$, then $x_0$ can be chosen so that $\psi(x_0)>0$.
\end{lemma}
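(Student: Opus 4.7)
The approach is to use Gelfand duality to realize $A$ as $C(X)$ for a compact Hausdorff space $X$, approximate $x$ in norm by an element of some $A_i$, and then apply continuous functional calculus to cut it down to a positive element sitting below $x$. The parameter $\epsilon>0$ controlling the quality of approximation will be chosen small enough at the very end to satisfy both the nonvanishing condition and (when relevant) the state condition simultaneously.

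To begin, I would use density of $\bigcup_i A_i$ in $A$ to produce, for a given $\epsilon>0$, an index $i\in I$ and an element $z\in A_i$ with $\|z-x\|<\epsilon$. Since each $A_i$ is a $*$-subalgebra, replacing $z$ by $(z+z^*)/2$ I may assume $z$ is self-adjoint without worsening the estimate. Continuous functional calculus, available inside the unital C*-subalgebra $A_i$, then permits defining
\[
x_0 \ := \ (z-\epsilon\mathbf{1})_+ \ \in\ A_i.
\]
Reading everything pointwise via the Gelfand isomorphism $A\cong C(X)$, the inequality $\|z-x\|<\epsilon$ yields $(z-\epsilon)(p)<x(p)$ for every $p\in X$, and combined with $x\geq 0$ this gives $0\leq x_0\leq x$. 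A case analysis according to the sign of $z(p)-\epsilon$ also shows the two-sided bound $\|x-x_0\|\leq 2\epsilon$.

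Finally, to secure that $x_0\neq 0$, I would evaluate at a point $p_0\in X$ with $x(p_0)=\|x\|$: there $z(p_0)>\|x\|-\epsilon$ and hence $x_0(p_0)>\|x\|-2\epsilon$, which is positive as soon as $\epsilon<\|x\|/2$. For the state refinement, the norm estimate above gives $\psi(x_0)\geq \psi(x)-\|x-x_0\|\geq \psi(x)-2\epsilon$, so requiring in addition $\epsilon<\psi(x)/2$ forces $\psi(x_0)>0$. Taking $\epsilon < \tfrac{1}{2}\min(\|x\|,\psi(x))$ handles both assertions at once. No real obstacle is anticipated; the only point requiring a moment of care is that the functional-calculus element $(z-\epsilon\mathbf{1})_+$ genuinely lives in $A_i$, which is automatic because $A_i$ is a unital C*-subalgebra containing $z$, so $C^*(z,\mathbf{1})\subseteq A_i$.
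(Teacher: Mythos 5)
Your proof is correct, and it takes a genuinely different route from the paper's. The paper argues topologically: it dualizes the inclusions $A_i\hookrightarrow A$ to an embedding $\iota:X\to\prod_i X_i$ of spectra, chooses basic open sets of the product whose traces on $\iota(X)$ sit inside superlevel sets $x^{-1}((\ve,\infty))$, and pulls back a bump function from a finite sub-product to get $x_0$ in some $A_{\bigvee B}$; the state refinement is handled by arranging positive $\iota_*\nu_\psi$-measure for one of these basic open sets. You instead work purely with norm approximation and functional calculus: pick a self-adjoint $z\in A_i$ with $\|z-x\|<\epsilon$ and set $x_0=(z-\epsilon\mathbf{1})_+$, which lies in $A_i$ by spectral permanence/uniqueness of the continuous functional calculus, satisfies $0\le x_0\le x$ by the pointwise Gelfand-picture comparison (here commutativity is genuinely used, since $t\mapsto t_+$ is not operator monotone in general), and obeys $\|x-x_0\|\le 2\epsilon$, so that $\epsilon<\tfrac12\min(\|x\|,\psi(x))$ secures both $x_0\neq 0$ and $\psi(x_0)>0$. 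Your route is more elementary and quantitative (it gives the explicit bound $\psi(x_0)\ge\psi(x)-2\epsilon$ and avoids any discussion of product topologies, directedness beyond density of $\bigcup_i A_i$, or regularity of the representing measure), while the paper's construction is phrased directly in the spectral/measure-theoretic language it reuses immediately afterwards in the proof of Proposition \ref{prop:ideals}; both establish exactly the stated lemma.
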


\begin{proof}
Let $X = \Spec A$, $X_i = \Spec A_i$. The homomorphisms $A_i\rightarrow A$ induce a unital surjective homomorphism $\bigotimes_{i\in I} A_i\rightarrow A$ which is dual to an embedding $\iota:X\rightarrow \prod X_i$. If $x\in A_+$ is nonzero, let $U_j$, $j=1,2$ be nonempty basic open subsets of $\prod X_i$ such that $U_j\cap \iota(X) \subseteq x^{-1}((\ve_j, \infty))$ for some $0<\ve_1<\ve_2$; if $\psi$ is a state on $A$ and $\psi(x)>0$, we also impose $\iota_*\nu_\psi(U_2)>0$ where $\nu_\psi$ is the probability measure on $X$ corresponding to $\psi$. Let $B\Subset I$ and $U_{i,j}\subset X_i$, $i\in B$, such that
$$U_j = \prod_{i\in B} U_{i,j} \times \prod_{i\in I\backslash B} X_i.$$
We write $p_B:\prod X_i\rightarrow \prod_{i\in B} X_i$ for the projection. Let $\bar{x}_0$ be a positive function on $\prod_{i\in B} X_i$ that is $0$ on $p_B(U_1^c)$ and $\ve_1$ on $p_B(\overline{U}_2)$, and let $x_0 = \iota^*(p_B^*(x_0))$. Then $x_0\in A_{i+}$ for $i = \bigvee B$ and $x_0\leq x$; additionally, $\psi(x_0) \geq \ve_1 \iota_*\nu_\psi(U_2) >0$, as desired.
\end{proof}


\begin{proof}[Proof of \proref{prop:ideals}]
We start with faithfulness of $\psi_{\beta,\infty}$. Let $\Gamma_a:C(X_a)\cong \mfd_a$ be the isomorphism of \corref{cor:system-spectra}; for $b|a$ and nonzero $f\in C(\T)_+$, just as in the proof of \lemref{lem:prime-positivity}, we have
$$\psi_{\beta,\infty}(\Gamma_a(f^b)) = b^{-\beta} \int_\T f d\left(\sum_{d|\frac{a}{b}} \mu(d) d^{-\beta} \omega_{d*} \lambda\right).$$
Since $\omega_{d^*}\lambda = \lambda$, this is $b^{-\beta}\prod_{p|\frac{a}{b}} (1-p^{-\beta})\int_\T f d\lambda$, which is greater than 0 when $f$ is nonzero. Therefore, $\psi_{\beta,\infty}$ is faithful on $\mfd_a$ for every $a\in\nx$. By \lemref{lem:inductive-limit-order}, we conclude that $\psi_{\beta,\infty}$ is faithful on $\mfd = \varinjlim \mfd_a$.

Next we show that $\ker\pi_{\beta,n}$ does not depend on $\beta$. For $n\in \nx$, suppose that $x\notin \ker  \pi_{\beta,n}$, or equivalently, there exists some $u\in \tnxxz$ such that $\psi_{\beta,n}(u^* x^* x u) >0$. By \lemref{lem:inductive-limit-order}, there exists some $a\in \nx$ and nonzero $y\in \mfd_{a+}$ such that $y\leq E( u^* x^* x u)$ and $\psi_{n,\beta}(y) >0$. The measures $\{\nu_{n,\beta}: \beta\in (0,\infty)\}$ are equivalent on $\T$, so it follows that $0<\psi_{n,\beta'}(y)\leq \psi_{n,\beta'}(u^* x^* x u)$ for every $\beta'\in (0,\infty)$. Therefore, $x\notin \ker\pi_{\beta',n}$ and hence, the ideals are equal. This also implies that $J_{F_{\beta,n}}$ does not depend on $\beta$, so we denote it here by $J_n$.

The equality $\ker(q_n) = \<{U^n-1}$ was shown in \proref{prop:relations-modn}. For each $d|n$, the state $\psi_{\beta,d}$ vanishes on $\ker(q_n)$ by \lemref{lem:finite-pull-back}; since $\ker(q_n)$ is a two-sided ideal, this implies that $\pi_{\beta,d}$ vanishes on $\ker(q_n)$, so we have $\ker(q_n)\subseteq J_n$. It only remains to show that $J_n\subseteq \ker(q_n)$.

For $\beta\in (0,\infty)$, define $\psi = \frac{1}{d(n)} \sum_{d|n} \psi_{\beta,d}$. Since $\ker q_n\subseteq J_n$, there exists a state $\bar{\psi}$ on $\tnxx{n}$ satisfying $\psi = \bar{\psi}\circ q_n$; we will argue that $\bar{\psi}$ is faithful. Consider $I = \ker q_n \cap \mfd$ and $I_a = \ker q_n \cap \mfd_a$ for $a\in \nx$. Since $\ker q_n = \<{U^{n} - 1}$, clearly 
$$\clsp\{V_b (U^{xn + y} - U^y) V_b^*: b\in\nx,x,y\in\Z\}\subseteq I.$$
On the other hand, if $\{x_i\}_i$ is a net in $\lsp\{V_a (U^{xn + y} - U^y)V_b^*: a,b\in \nx,x,y\in\Z\}$ converging to $x\in \mfd$, then, since the expectation $E:\tnxxz\rightarrow \mfd$ from \proref{pro:condexptheta} is contractive, $E(x_i)$ is a net in $\lsp\{V_b (U^{xn + y} - U^y) V_b^*: b\in\nx,x,y\in\Z\}$ converging $E(x) = x$. Thus, 
$$I = \clsp\{V_b (U^{xn + y} - U^y) V_b^*: b\in\nx,x,y\in\Z\}.$$
It follows that $I = \varinjlim I_a$, and $\mfd/I = \varinjlim \mfd_a/I_a$. 

The isomorphism $\Gamma_a$ of \corref{cor:system-spectra} identifies $I_a$ with an ideal of $\bigoplus_{b|a} C(\T)$ which, by the Fourier transform, is given by $\bigoplus_{b|a} K_n$, where $K_n = \{f\in C(\T): f|_{Z_n} = 0\}.$ If $q_n(\Gamma_a(f^b)) \neq 0$ for some $f\in C(\T)_+$ and $b|a$ (that is, $f\notin K_n$), then  
\begin{align*}
\psi(\Gamma_a(f^b)) &= \frac{b^{-\beta}}{d(n)}\sum_{d|n}  \prod_{p|d} (1-p^{-\beta}) \int_\T fd\left(A_{\beta,\frac{a}{b}} A_{\beta,d}^{-1} \ve_d\right)\\
&=\frac{b^{-\beta}}{d(n)} \sum_{d|n}  \prod_{p|\frac{ad}{b}} (1-p^{-\beta}) \int_\T f d\left( A_{\beta,d'}^{-1} \ve_d\right)\\
&=\frac{b^{-\beta}}{d(n)} \int_\T f d\left( \sum_{d|n}  \prod_{p|\frac{ad}{b}} (1-p^{-\beta})A_{\beta,d'}^{-1} \ve_d\right)
\end{align*}
where $d'$ is the largest factor of $d$ that is relatively prime to $\frac{a}{b}$. Then $\ve_d$ is absolutely continuous with respect to $A_{\beta,d'}^{-1}\ve_d$, so uniform measure on $Z_n$ is absolutely continuous with respect to the measure of integration, and hence, $\psi(\Gamma_a(f^b))\neq 0$. Consequently, if $x\in \mfd_{a+}$ and $q_n(x)\neq 0$, then $\psi(x)\neq 0$, so that $\bar{\psi}$ is faithful on $\mfd_a/I_a$. By \lemref{lem:inductive-limit-order}, for each $x\in \mfd_+$, there is some $a\in\nx$ and $x_a\in \mfd_{a+}$ with $x_a\leq x$, so $\bar{\psi}$ is faithful on $\mfd/I$. Now for $x\in \tnxxz_+$ and $q_n(x)\neq 0$, since $q_n$ is equivariant for the action of $\qd$, we have $q_n(E(x)) = E(q_n(x)) \neq 0$. Thus, $\bar{\psi}\circ q_n(x) = \bar{\psi}\circ q_n(E(x)) \neq 0$, and therefore $\bar{\psi}$ is faithful on $\tnxx{n}$.

Finally, if $x\geq 0$ and $x\notin \ker(q_n)$, then $\psi(x) \neq 0$, so $x\notin J_n$. We conclude that $J_n\subseteq \ker(q_n)$.
\end{proof}

\begin{theorem}\label{thm:finite-kms}
Fix $n\in \nx$ and let $\sigma$
be the dynamics on $\tnxx n$ determined by  $\sigma_t(V_a) = a^{it} V_a$ and $\sigma_t(R) = R$. 
\begin{enumerate}
\item Suppose $\beta\in(1,\infty)$. For each $n^\mathrm{th}$ root of unity $z$, there is an extremal \kmsb state $\bar{\psi}_{\beta,z}$ of $\tnxx n$ determined by
\begin{equation}\label{eqn:finite-subcrit}
\bar{\psi}_{\beta,z}(V_a R^k V_b^*) = \delta_{a,b} \frac{a^{-\beta}}{\zeta(\beta)} \sum_{c\in\nx} c^{-\beta}z^{ck}.
\end{equation}
The map $z\mapsto \bar{\psi}_{\beta,z}$ extends to an affine w*-homeomorphism of the simplex of probability measures on $Z_n$ onto the simplex of \kmsb states of $(\tnxx n,\sigma)$.
\item Suppose $\beta\in(0,1]$. For each divisor $m$ of $n$, there is an extremal \kmsb state $\bar{\psi}_{\beta,m}$ of $\tnxx n$ determined by
\begin{equation}\label{eqn:finite-supcrit}
\bar{\psi}_{\beta,m}(V_a R^k V_b^*) = \delta_{a,b} a^{-\beta} \left(\frac{m}{\gcd(m,k)}\right)^{-\beta} \sum_{d|\frac{m}{\gcd(m,k)}} \mu(d) \frac{\euler_\beta(d)}{\euler(d)}.
\end{equation}
The map $m\mapsto \bar{\psi}_{\beta,m}$ extends to an affine w*-homeomorphism of the simplex of probability measures on $\Delta_n := \{m\in\nx: m|n\}$ onto the simplex of \kmsb states of $(\tnxx n,\sigma)$.
\end{enumerate}
\end{theorem}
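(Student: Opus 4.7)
The plan is to pull \KMS{\beta} states back along the $\sigma$-equivariant surjection $q_n:\tnxxz \to \tnxx n$ of \proref{prop:nxx{n}-toeplitz} and to identify them with the $\beta$-subconformal probability measures on $\T$ that are supported on the finite set $Z_n$. Since $q_n$ is a surjective $\sigma$-equivariant $*$-homomorphism, the assignment $\bar\psi \mapsto \bar\psi\circ q_n$ is an affine weak-$*$ homeomorphism of $K_\beta(\tnxx n)$ onto the face $\{\psi\in K_\beta(\tnxxz):\psi|_{\ker q_n}=0\}$. By \proref{prop:relations-modn} we have $\ker q_n=\<{U^n-1}$, and by \lemref{lem:finite-pull-back} a state $\psi_{\beta,\nu}$ of $\tnxxz$ annihilates $\<{U^n-1}$ exactly when $\nu$ is supported on $Z_n$. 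Combined with \thmref{thm:KMScharact}, this realises $K_\beta(\tnxx n)$ as the subsimplex of $\beta$-subconformal probability measures on $\T$ supported on $Z_n$, and both parts of the theorem reduce to identifying the extreme points of this subsimplex.

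For part (1), with $\beta\in(1,\infty)$, \remref{rem:low-temp} identifies probability measures $\eta$ on $\T$ with $\beta$-subconformal probability measures via $\eta\mapsto T_\beta\eta$. The implication $z\in Z_n\Rightarrow z^c\in Z_n$ shows that each $\omega_{c*}$ preserves measures supported on $Z_n$, so $T_\beta\eta$ is supported on $Z_n$ whenever $\eta$ is; conversely, because $T_\beta\eta$ is a norm-convergent sum of positive measures with $c=1$ term equal to the positive measure $\zeta(\beta)^{-1}\eta$, positivity forces the reverse implication. Hence the subsimplex corresponds to probability measures on $Z_n$, whose extreme points are the point masses $\delta_z$, $z\in Z_n$. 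Substituting $\eta=\delta_z$ in \eqref{eqn:low-temp} and descending through $q_n$ yields \eqref{eqn:finite-subcrit}.

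For part (2), with $\beta\in(0,1]$, every probability measure supported on the finite set $Z_n$ is automatically atomic, and Lebesgue measure is excluded since $\lambda(Z_n)=0$; hence by \thmref{thm:atomic} and \thmref{thm:leb-extreme} the extremal $\beta$-subconformal probability measures supported on $Z_n$ are precisely those $\nu_{\beta,m}$ with $\supp\nu_{\beta,m}\subseteq Z_n$. Formula \eqref{eqn:nunbdefinition} gives $\supp\nu_{\beta,m}=Z_m$, so the condition reduces to $m\mid n$, and the subsimplex is affinely homeomorphic to the simplex of probability measures on $\Delta_n$. Descending each extremal state $\psi_{\beta,m}=\psi_{\beta,\nu_{\beta,m}}$ of \proref{pro:kms-parametrized} through $q_n$ produces the \KMS{\beta} state $\bar\psi_{\beta,m}$ of $\tnxx n$, and the explicit formula \eqref{eqn:finite-supcrit} is the restriction of \eqref{eqn:finite-states} to monomials $V_aR^kV_b^*$ in $\tnxx n$.

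I expect no significant obstacle here, since the heavy lifting has already been done by \proref{prop:ideals}, \thmref{thm:atomic}, and \thmref{thm:leb-extreme}. The point that might a priori have been delicate — whether a nonatomic $\beta$-subconformal measure could sneak in through $q_n$ and produce additional extremal \KMS{\beta} states of $\tnxx n$ — is ruled out immediately by the uniqueness of Lebesgue measure from \thmref{thm:leb-extreme}, and the remaining work is essentially bookkeeping with Fourier coefficients.
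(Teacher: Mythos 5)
Your proposal is correct and follows essentially the same route as the paper: pull back along $q_n$, invoke \thmref{thm:KMScharact} together with \lemref{lem:finite-pull-back} to identify $K_\beta(\tnxx n)$ with the face of $\beta$-subconformal probability measures supported on $Z_n$, and then read off the extreme points from the low-temperature parametrisation of \cite[Theorem 8.1]{aHLR21} when $\beta>1$ and from \thmref{thm:atomic} and \thmref{thm:leb-extreme} when $\beta\leq 1$. The only material you add beyond the paper's (very terse) proof is the small positivity argument showing that $T_\beta\eta$ is supported on $Z_n$ if and only if $\eta$ is, and an explicit remark that $\supp \nu_{\beta,m}=Z_m$ forces $m\mid n$; both are correct and welcome elaborations rather than deviations.
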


\begin{proof}
Since $q_n$ is an equivariant surjective homomorphism, the map $\psi\mapsto \psi\circ q_n$ is an  injective continuous affine map from the simplex of \kmsb states on $\tnxx n$ to the \kmsb states in $\tnxxz$. By \thmref{thm:KMScharact} and \lemref{lem:finite-pull-back}, the range of this map is the finite-dimensional simplex of states of the form $\psi_{\beta,\nu} \circ\Phi,$ for $\nu$ a measure on $Z_n$ satisfying \eqref{eqn:positivitycondition}. Formula \eqref{eqn:finite-subcrit} for the extreme points when $\beta>1$ follows from \cite[Theorem 8.1]{aHLR21}, and formula \eqref{eqn:finite-supcrit} for the extreme points when $\beta\leq 1$ follows from our \thmref{thm:main}.
\end{proof}

\section{Symmetry and type}\label{sec:bc-symm}

Recall from \cite[Section 4]{bos-con} that the \emph{Bost--Connes C*-algebra} $\cq$ is canonically isomorphic to the universal C*-algebra with generators $\bcmu_a$ for $a\in\nx$ and $e(x)$  for $x\in\Q/\Z$ subject to the relations
\begin{enumerate}
\smallskip\item[(BC0)] \ $\bcmu_a^*\bcmu_a = 1$,
\smallskip\item[(BC1)] \ $e(x)\bcmu_a = \bcmu_a e(ax)$,
\smallskip\item[(BC2)] \ $\bcmu_a\bcmu_b = \bcmu_{ab}$,
\smallskip\item[(BC3)] \ $\bcmu_a\bcmu_b^*= \bcmu_b^* \bcmu_a$ when $\gcd(a,b)=1$,
\smallskip\item[(BC4)] \ $e(0) = 1$, $e(x)e(y) = e(x+y)$ and $e(x)^* = e(-x)$,
\smallskip\item[(BC5)] \ $\bcmu_a e(x)\bcmu_a^* = \frac{1}{a}\sum_{ay = x} e(y)$.
\end{enumerate}
In \cite[Lemma 2.7]{bcalg} it is shown that the two relations (BC1) and (BC3) are consequences of the other four, but it is important for us to keep the whole list here because it allows us to establish the connection with $\tnxxz$. The dynamics on $\cq$ are determined by
$$\sigma_t(e(x)) = e(x)\qquad\text{and}\qquad \sigma_t(\bcmu_a) = a^{it} \bcmu_a.$$
The system $(\cq,\sigma)$ has a unique \kmsb state for $\beta\in(0,1]$, which we denote by $\psi_\beta$.

\begin{prop}\label{pro:bc-hom}
For $x\in \Q/\Z$ and $n = \ord(x)$, there is an equivariant *-homomorphism $\phi_x:\tnxxz\rightarrow \cq$ determined by
\begin{equation}\label{eqn:bc-hom}
\phi_x(U) = e(x)\qquad\text{and}\qquad \phi_x(V_a) = \bcmu_a
\end{equation}
and $\phi_x$ factors through $q_n$; that is,  there is a (unique) map $\bar{\phi}_x:\tnxx{n}\rightarrow \cq$ such that the diagram commutes,
\begin{center}
\begin{tikzcd}
\tnxxz \arrow[rr, "\phi_x"] \arrow[rd, "q_n"] &                                     & \cq \\
                                              & \tnxx{n} \arrow[ru, "\bar{\phi}_x"] &    
\end{tikzcd}
\end{center}
For every $\beta\in(0,1]$, $\ker(\phi_x) = \ker(\pi_{\beta,n})$ and $\psi_{\beta,n}$ is the only \kmsb state of $\tnxxz$ that factors through $\phi_x$. Moreover, $\psi_{\beta,n} = \psi_\beta\circ \phi_x$. 
\end{prop}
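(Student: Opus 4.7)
Existence and equivariance of $\phi_x$ follow by verifying that $e(x)$ and $\{\bcmu_a\}_{a\in\nx}$ in $\cq$ satisfy the presentation relations (AB0)--(AB3) of \proref{pro:presentationtnxxz}: relations (AB0), (AB2), (AB3) match (BC0), (BC2), (BC3) directly, while (AB1) follows from (BC1) once one rewrites $e(x)^a = e(ax)$ using (BC4). Equivariance is automatic since both dynamics rescale the multiplicative isometries by $a^{it}$ and fix the additive/unitary generator. Since $\ord(x) = n$, we have $\phi_x(U^n) = e(nx) = e(0) = 1$, so $U^n - 1\in \ker\phi_x$; by \proref{prop:relations-modn} this element generates $\ker q_n$, so $\phi_x$ descends through $q_n$ to a map $\bar{\phi}_x: \tnxx{n}\to\cq$.

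The identity $\psi_{\beta,n} = \psi_\beta\circ\phi_x$ is the substantive part. By equivariance, $\psi_\beta\circ\phi_x$ is a \kmsb state of $\tnxxz$, and \eqref{eqn:KMSfromrestriction} reduces the check to the values on $U^k$, which are $\psi_\beta(e(kx))$. Uniqueness forces $\psi_\beta$ to be $\hat\Z^*$-invariant, so $\psi_\beta(e(y))$ depends only on the order $m = \ord(y)$ of $y$ in $\Q/\Z$, and the classical Bost--Connes formula (cf.\ \cite{nes}) gives $\psi_\beta(e(y)) = m^{-\beta}\sum_{d\mid m}\mu(d)\,\euler_\beta(d)/\euler(d)$. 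Since $\ord(kx) = n/\gcd(n,k)$, this coincides with $\int_\T z^k\, d\nu_{\beta,n}$ as computed in the proof of \proref{pro:kms-parametrized}, so $\psi_\beta\circ\phi_x = \psi_{\beta,n}$. In particular $\ker\phi_x\subseteq\ker\pi_{\beta,n}$ comes for free.

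For the reverse inclusion $\ker\pi_{\beta,n}\subseteq \ker\phi_x$ and for uniqueness of the factoring \kmsb state, the plan is to exploit the Galois symmetry anticipated in \secref{sec:bc-symm}. Let $G = \{w\in\hat\Z^*: w\equiv 1 \pmod{n}\}\cong \Gal(\qa/\Q(\zeta_n))$; the $\hat\Z^*$-action on $\cq$ fixes each $\bcmu_a$ and sends $e(y)$ to $e(wy)$, so the generators $\bcmu_a$ and $e(x)$ of $\phi_x(\tnxxz)$ are $G$-fixed and $\phi_x(\tnxxz)\subseteq\cq^G$. Granting that this inclusion is an equality and that the restriction $\psi_\beta|_{\cq^G}$ is faithful (the main obstacle, to be handled in the next section via an explicit description of $\cq^G$), one obtains $\ker\phi_x = \ker\pi_{\beta,n}$, and any \kmsb state $\psi$ of $\tnxxz$ factoring through $\phi_x$ induces a \kmsb state on $\cq^G$; composing with the faithful conditional expectation $E_G:\cq\to\cq^G$ (which commutes with $\sigma$) yields a $G$-invariant \kmsb state on $\cq$, necessarily $\psi_\beta$ by uniqueness, so the induced state is $\psi_\beta|_{\cq^G}$ and $\psi = \psi_{\beta,n}$. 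A more elementary route to uniqueness, bypassing the faithfulness issue, is to note that for each proper divisor $d\mid n$ the element $U^d - 1$ lies in $\ker\pi_{\beta,d}$ (since $\nu_{\beta,d}$ is supported on $Z_d$) but not in $\ker\phi_x$ (since $e(dx)\neq 1$ when $\ord(dx) = n/d > 1$); combined with the fact from \lemref{lem:finite-pull-back} that any \kmsb state factoring through $q_n$ is a convex combination of $\{\psi_{\beta,d}: d\mid n\}$, this pins down $\psi_{\beta,n}$ as the only such state whose GNS kernel contains $\ker\phi_x$.
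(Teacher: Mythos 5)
The first half of your proposal is correct and is essentially the paper's argument: the relation check gives $\phi_x$ and $\bar{\phi}_x$, and comparing the Bost--Connes values $\psi_\beta(e(kx))$ with \eqref{eqn:finite-states} through the characterization \eqref{eqn:KMSfromrestriction} gives $\psi_\beta\circ\phi_x=\psi_{\beta,n}$, hence $\ker\phi_x\subseteq\ker\pi_{\beta,n}$. For the reverse inclusion, however, you do not need the equality $\phi_x(\tnxxz)=\cq^{G_n}$ nor faithfulness of $\psi_\beta|_{\cq^{G_n}}$ ``from the next section'': faithfulness of $\psi_\beta$ on all of $\cq$ (the standard fact the paper invokes, e.g.\ because $\psi_\beta$ is a fully supported measure composed with the faithful expectation onto $C^*(\Q/\Z)$) already gives it, since $a\in\ker\pi_{\beta,n}$ implies $\psi_\beta\bigl(\phi_x(a)^*\phi_x(a)\bigr)=\psi_{\beta,n}(a^*a)=0$, hence $\phi_x(a)=0$. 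Deferring this to the identification of $\im\,\phi_x$ with $\cq^{G_n}$ is both unnecessary and, in the paper's logical order, circular: \proref{prop:bc-subalg} uses the present proposition.

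The uniqueness claim is where the proposal genuinely breaks. (i) Composing a \kmsb state of $\cq^{G_n}$ with $E_{G_n}$ need not yield a \kmsb state of $\cq$: conditional expectations are not multiplicative, and already for the trivial dynamics on $M_2(\C)$ with the diagonal circle action every state of the fixed-point algebra $\C^2$ is a \KMS{0} state there, while only the balanced one composes with the expectation to a trace on $M_2(\C)$; so uniqueness of $\psi_\beta$ on $\cq$ cannot be transported to $\cq^{G_n}$ by this device. (ii) The ``elementary route'' checks the wrong containment: exhibiting $U^d-1\in\ker\pi_{\beta,d}\setminus\ker\phi_x$ shows $\ker\pi_{\beta,d}\not\subseteq\ker\phi_x$, whereas to exclude $\psi_{\beta,d}$ ($d\mid n$, $d\neq n$), or any convex combination charging it, from factoring through $\phi_x$ you must produce an element \emph{of} $\ker\phi_x$ on which $\psi_{\beta,d}$ does not vanish. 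The paper gets one from the relation (BC5), which is exactly what distinguishes $\cq$ from $\tnxx{n}$: since $\ord(x)=n$, the element $Q:=V_nV_n^*-\tfrac1n\sum_{k=0}^{n-1}U^k$ satisfies $\phi_x(Q)=\bcmu_n\bcmu_n^*-\tfrac1n\sum_{ny=0}e(y)=0$, while $\psi_{\beta,m}(Q)=n^{-\beta}-m^{-\beta}\neq0$ for every proper divisor $m$ of $n$. Any \kmsb state vanishing on $\ker\phi_x$ vanishes on $\ker q_n$, hence by \lemref{lem:finite-pull-back} and \thmref{thm:finite-kms}(2) equals $\sum_{m\mid n}\lambda_m\psi_{\beta,m}$; evaluating at $Q$ gives $\sum_{m\mid n}\lambda_m(n^{-\beta}-m^{-\beta})=0$ with all nonzero terms strictly negative, forcing $\lambda_m=0$ for $m\neq n$. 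Your element $U^d-1$ cannot play this role precisely because it does not lie in $\ker\phi_x$.
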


\begin{proof}
Let $v_a$ and $e(x)$ denote the universal generators of $\cq$ and fix $x\in \Q/\Z$. Because of the relations  (BC0)--(BC4), the unitary $e(x)$ and the isometries $\{\bcmu_a: a\in\nx\}$ satisfy the relations (AB0)--(AB3) of  \proref{pro:presentationtnxxz}, so that there is a *-homomorphism $\phi_x$ such that \eqref{eqn:bc-hom} holds. Similarly, the $v_a$ and $e(x)$ also satisfy the relations (N0)--(N4) of \proref{prop:relations-modn}, so that there is a map $\bar{\phi}_x$. By universality of $q_n$, we have $\phi_x = \bar{\phi}_x\circ q_n$.

For $\beta\in (0,1]$, let $\psi_\beta$ be the unique \kmsb state of $(\cq,\sigma)$. Comparing \cite[Section 6 (14)]{bos-con} and \thmref{thm:main} \eqref{eqn:finite-states}, we have $\psi_\beta\circ \phi_x = \psi_{\beta,n}$ so that $\ker\phi_x\subseteq \ker\pi_{\beta,n}$. Since $\psi_\beta$ is faithful on $\cq$, we also have $\ker\pi_{\beta,n}\subseteq \ker\phi_x$.

From $\phi_x = \bar{\phi}_x\circ q_n$, the image of $\phi_{x*}$ is contained in the image of $q_{n*}$, so that by \thmref{thm:finite-kms} (2), $\psi_{\beta,m}$, $m\nmid n$ does not factor through $\phi_x$. For $m|n$, $m\neq n$, we will show that $\psi_{\beta,m}$ does not factor through (BC5), that is, $\psi_{\beta,m}(V_n V_n^*) \neq \psi_{\beta,m}\left(\frac{1}{n}\sum_{k=0}^{n-1} U^k\right)$. We have $\psi_{\beta,m}(V_n V_n^*) = n^{-\beta}$ and
\begin{align*}
\psi_{\beta,m}\left(\frac{1}{n}\sum_{k=0}^{n-1}U^k\right) &= \frac{m}{n} \sum_{k=0}^{\frac{n}{m}-1} \psi_{\beta,m}\left(\frac{1}{m}\sum_{\ell=0}^{m-1} U^{\ell+mk}\right) = \frac{m}{n}\sum_{k=0}^{\frac{n}{m}-1} m^{-\beta} = m^{-\beta} \neq n^{-\beta}.
\end{align*}
Therefore, $\psi_{\beta,m}$ does not factor through $\phi_x$ for $m\neq n$. Since $\phi_{x*}$ maps extremal \kmsb states to extremal \kmsb states, it follows that $\psi_{\beta,n}$ is the only \kmsb state that factors through $\phi_x$.
\end{proof}

An important feature of the Bost-Connes system is its \emph{group of symmetries}, which remarkably is isomorphic to the Galois group of the cyclotomic extension $\Gal(\Q^{\rm cycl}/\Q)$. We describe this action for completeness. The field $\Q^{\rm cycl}$ is the direct limit of the fields $\Q(e^{2\pi i /n})$; the Galois group of $\Q(e^{2\pi i /n})$ is isomorphic to $\Aut(n^{-1}\Z/\Z)$, which acts by $u\cdot e^{2\pi i k/n} = e^{2\pi i u(k/n)}$. We then identify $$\Gal(\Q^{\rm cycl}/\Q) = \varprojlim \Gal(\Q(e^{2\pi i /n})/\Q)\cong \varprojlim \Aut(n^{-1} \Z/\Z) = \Aut(\Q/\Z).$$
The action of $\Aut(\Q/\Z)$ on $\cq$ is determined by
\begin{equation}\label{eqn:bc-symm}
\theta_u(e(x)) = e(u(x))\qquad\text{and}\qquad \theta_u(\bcmu_a) = \bcmu_a.
\end{equation}
One verifies that $\{e(u(x)):x\in \Q/\Z\}$ and $\{\bcmu_a: a\in \nx\}$ satisfy (BC0)--(BC5), so that \eqref{eqn:bc-symm} determines an equivariant *-endomorphism of $\cq$ with inverse $\theta_{u^{-1}}$. 

In the spirit of \eqref{eqn:bc-symm}, we define a semigroup of endomorphisms on $\tnxxz$ as follows. For $b\in \N$, define
\begin{equation}\label{eqn:tnxxz-symm}
\kappa_b(U) = U^b\qquad\text{and}\qquad \kappa_b(V_a) = V_a.
\end{equation}
Then $U^b$ and $\{V_a:a\in \nx\}$ satisfy (AB0)--(AB3), so there exists a unique *-endomorphism of $\tnxxz$ satisfying \eqref{eqn:tnxxz-symm} that commutes with the dynamics $\sigma_t$. Since $\kappa_a\kappa_b = \kappa_{ab}$, for $a,b\in \N$, the map $b\mapsto \kappa_b$ is indeed a semigroup action of the multiplicative monoid $\N$ on $\tnxxz$. 

Similarly, for $n\in \nx$, the formulas
$$\bar{\kappa}_b(R) = R^b\qquad\text{and}\qquad \bar{\kappa}_b(V_a) = V_a$$
determine a *-endomorphism $\bar{\kappa}_b$ of $\tnxx{n}$. Obviously $\bar{\kappa}_b = \bar{\kappa}_{b'}$ if $b\equiv b'$ mod $n$, so in particular, we have an action of the unit group $(\Z/n\Z)^*$ on $\tnxx{n}$ by automorphisms that commute with the dynamics.

If $n$ and $b$ are relatively prime, then $b$ determines an automorphism of $n^{-1}\Z/\Z$ by $u_b([x]) = [bx]$. This gives an identification $(\Z/n\Z)^*\cong \Aut(n^{-1} \Z/\Z)$. Using the action $\bar{\kappa}$ and the projection $\Aut(\Q/\Z)\rightarrow \Aut(n^{-1}\Z/\Z)$ given by restriction, we obtain an action of $\Aut(\Q/\Z)$ on $\tnxx{n}$, which we also denote by $\bar{\kappa}$.

\begin{lemma}\label{lem:tnxxz-symm}
For $x\in \Q/\Z$, $n = \ord(x)$ and $b\in\N$, let $\phi_x$ and $\bar{\phi}_x$ be as in \proref{pro:bc-hom} and let $\kappa_b$ and $\bar{\kappa}_b$ be as in \eqref{eqn:tnxxz-symm}; for $u\in \Aut(\Q/\Z)$, let $\theta_u$ be as in \eqref{eqn:bc-symm}. Then:
\begin{enumerate}[label=(\roman*)]
\item $\phi_x\circ\kappa_b = \phi_{bx}$ and $\theta_u\circ\phi_x = \phi_{u(x)}$;
\item $\bar{\phi}_x$ is $\Aut(\Q/\Z)$-equivariant, i.e. for any $u\in \Aut(\Q/\Z)$, $\bar{\phi}_x\circ \bar{\kappa}_{u} = \theta_u\circ \bar{\phi}_x$;
\item $\psi_{\beta,n} \circ \kappa_b = \psi_{\beta,\frac{n}{\gcd(n,b)}}$.
\end{enumerate}
In particular, if $n$ and $b$ are relatively prime, then $\psi_{\beta,n}\circ\kappa_b = \psi_{\beta,n}$ and if $b=0$, then $\psi_{\beta,n}\circ\kappa_b = \psi_{\beta,1}$. 
\end{lemma}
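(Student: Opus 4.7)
All three claims reduce to checking the agreement of $*$-homomorphisms on a generating set and invoking the universal presentations of $\tnxxz$ (\proref{pro:presentationtnxxz}), of $\tnxx n$ (\proref{prop:relations-modn}), and of $\cq$.

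For part (i), I would compute both sides on the generators $\{U\} \cup \{V_a : a\in\nx\}$ of $\tnxxz$. One has $\phi_x(\kappa_b(U)) = \phi_x(U^b) = e(x)^b = e(bx) = \phi_{bx}(U)$ using (BC4), while $\phi_x(\kappa_b(V_a)) = \phi_x(V_a) = \bcmu_a = \phi_{bx}(V_a)$. Since both sides are $*$-homomorphisms agreeing on generators, they are equal. The second identity is analogous: $\theta_u(\phi_x(U)) = \theta_u(e(x)) = e(u(x)) = \phi_{u(x)}(U)$ by the definition of $\theta_u$ in \eqref{eqn:bc-symm}, and both sides send $V_a$ to $\bcmu_a$.

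For part (ii), I would identify the automorphism $u\in\Aut(\Q/\Z)$ with an element $b\in (\Z/n\Z)^*$ via restriction to $n^{-1}\Z/\Z$, so that $u(x) = bx$ for any $x$ whose order divides $n$. First verify that $\kappa_b$ descends through $q_n$ to the automorphism $\bar\kappa_u$, i.e. $q_n\circ \kappa_b = \bar\kappa_u\circ q_n$; this is again a generator-level check ($R^b$ in $\tnxx n$ only depends on $b$ modulo $n$). Combining with $\bar{\phi}_x\circ q_n = \phi_x$ and part (i), we get
\[
\bar{\phi}_x\circ \bar{\kappa}_u\circ q_n \;=\; \bar{\phi}_x\circ q_n\circ \kappa_b \;=\; \phi_x\circ\kappa_b \;=\; \phi_{bx} \;=\; \phi_{u(x)} \;=\; \theta_u\circ \phi_x \;=\; \theta_u\circ\bar{\phi}_x\circ q_n,
\]
and surjectivity of $q_n$ yields $\bar{\phi}_x\circ\bar{\kappa}_u = \theta_u\circ\bar{\phi}_x$.

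For part (iii), I would invoke the final assertion of \proref{pro:bc-hom}, which says $\psi_{\beta,m} = \psi_\beta\circ\phi_y$ for any $y\in\Q/\Z$ with $\ord(y)=m$. Applying this with $y = bx$, which has order $\ord(bx) = n/\gcd(n,b)$ in $\Q/\Z$, and using part (i):
\[
\psi_{\beta,n}\circ\kappa_b \;=\; \psi_\beta\circ\phi_x\circ\kappa_b \;=\; \psi_\beta\circ\phi_{bx} \;=\; \psi_{\beta,\,n/\gcd(n,b)}.
\]
The special cases follow: $\gcd(n,b)=1$ gives fixation of $\psi_{\beta,n}$, while $b=0$ gives $\gcd(n,0)=n$ and hence $\psi_{\beta,1}$. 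No step here is substantive; the only point demanding care is the bookkeeping in part (ii) of the identification between $(\Z/n\Z)^*$, $\Aut(n^{-1}\Z/\Z)$, and the image of $\Aut(\Q/\Z)$, which is essentially just the compatibility of the restriction map with multiplication by $b$.
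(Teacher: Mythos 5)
Your proof is correct and follows essentially the same route as the paper's: reduce (i) to a generator-level check using the universal presentations, deduce (ii) by passing through $q_n$ and using surjectivity, and derive (iii) from (i) together with the factorization $\psi_{\beta,n} = \psi_\beta\circ\phi_x$ from Proposition 8.2. You spell out the intermediate step $q_n\circ\kappa_b = \bar\kappa_u\circ q_n$ that the paper leaves implicit, but the argument is the same.
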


\begin{proof}
(i) We have $\phi_x\circ \kappa_b(U) = e(bx) = \phi_{bx}$ and $\theta_u\circ \phi_x(U) = e(u(x)) = \phi_{u(x)}(U)$. By universality, the result follows.

(ii) For $u\in \Aut(\Q/\Z)$, let $\bar{u} = u_b$ be the image of $u$ in $\Aut(n^{-1}\Z/\Z)$. Then  $u(x) = bx$ and $\bar{\kappa}_u = \bar{\kappa}_b$, by definition. From (i) and universality of $q_n$, we have $\bar{\phi}_x\circ\bar{\kappa}_u = \bar{\phi}_{bx} = \theta_u\circ\bar{\phi}_x$, as desired.

(iii) Let $\psi_\beta$ be the unique \kmsb state on $\cq$. Then, by (i) and \proref{pro:bc-hom}, $\psi_{\beta,n}\circ\kappa_b = \psi_\beta\circ\phi_x\circ\kappa_b = \psi_\beta\circ\phi_{bx} = \psi_{\beta,m}$, where $m= \ord(bx) = \frac{n}{\gcd(b,n)}$. 
\end{proof}

\begin{prop}\label{prop:bc-subalg}
For $x\in \Q/\Z$ and $n = \ord(x)$, let $G_n$ be the kernel of the homomorphism $\Aut(\Q/\Z)\rightarrow (\Z/n\Z)^*$. Then the image of $\phi_x$ is $\cq^{G_n}$; consequently, the following sequence is exact,
\begin{center}
\begin{tikzcd}
0 \arrow[r] & {\ker\pi_{\beta,n}} \arrow[r] & \tnxxz \arrow[r, "\phi_x"] & \cq^{G_n} \arrow[r] & 0.
\end{tikzcd}
\end{center}
\end{prop}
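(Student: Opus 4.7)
The plan is to prove the two inclusions $\im(\phi_x) \subseteq \cq^{G_n}$ and $\cq^{G_n} \subseteq \im(\phi_x)$ separately; once surjectivity onto $\cq^{G_n}$ is in hand, the exact sequence follows from \proref{pro:bc-hom}, which identifies $\ker(\phi_x) = \ker(\pi_{\beta,n})$.

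The first inclusion is immediate from the definitions. Any $u \in G_n$ fixes $n^{-1}\Z/\Z$ pointwise, and since $\ord(x) = n$ forces $\langle x\rangle = n^{-1}\Z/\Z$, we have $u(x) = x$. Thus the automorphism $\theta_u$ from \eqref{eqn:bc-symm} fixes each generator $e(x) = \phi_x(U)$ and $\bcmu_a = \phi_x(V_a)$, and hence fixes all of $\im(\phi_x)$.

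For the reverse inclusion I will exploit averaging. Since $\Aut(\Q/\Z) \cong \hat\Z^*$ is compact and $G_n$ is a closed subgroup, integration against normalized Haar measure on $G_n$ yields a conditional expectation $E_n \colon \cq \to \cq^{G_n}$ whose range is exactly $\cq^{G_n}$. Using the spanning set $\cq = \clsp\{\bcmu_a e(y) \bcmu_b^* : a,b \in \nx,\ y \in \Q/\Z\}$ and the fact that $\theta_u(\bcmu_a) = \bcmu_a$, it suffices to show $E_n(e(y)) \in \im(\phi_x)$ for every $y \in \Q/\Z$. The prime decompositions $\Q/\Z = \bigoplus_p \Z(p^\infty)$ and $\hat\Z^* = \prod_p \Z_p^*$ are compatible with $E_n$: writing $y = \sum_p y_p$ (only finitely many nonzero terms) and setting $G_n^{(p)} := \{u_p \in \Z_p^* : u_p \equiv 1 \bmod p^{e_p(n)}\}$, we have $e(y) = \prod_p e(y_p)$ and $E_n(e(y)) = \prod_p E_n^{(p)}(e(y_p))$, reducing the problem to each local average.

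The local calculation splits into three cases on $f_p := \log_p \ord(y_p)$ and $e := e_p(n)$. If $f_p \leq e$, then $G_n^{(p)}$ fixes $y_p$ and $y_p \in p^{-e}\Z/\Z \subseteq n^{-1}\Z/\Z$, so $E_n^{(p)}(e(y_p)) = e(y_p) \in \im(\phi_x)$ directly. If $e \geq 1$ and $f_p > e$, a short computation with $v = 1 + p^e w$ shows that the $G_n^{(p)}$-orbit of $y_p$ is the coset $y_p + p^{-(f_p-e)}\Z/\Z$; setting $z := p^{f_p-e}y_p \in p^{-e}\Z/\Z \subseteq n^{-1}\Z/\Z$, the relation (BC5) gives
\[
\bcmu_{p^{f_p-e}}\, e(z)\, \bcmu_{p^{f_p-e}}^* \;=\; \frac{1}{p^{f_p-e}} \sum_{j=0}^{p^{f_p-e}-1} e\bigl(y_p + j/p^{f_p-e}\bigr) \;=\; E_n^{(p)}(e(y_p)),
\]
placing the average in $\im(\phi_x)$. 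In the remaining case $e = 0$, $f_p \geq 1$, the orbit consists of all primitive $p^{f_p}$-th roots of unity, and inclusion-exclusion with (BC5) writes the orbit sum as $p^{f_p}\bcmu_{p^{f_p}}\bcmu_{p^{f_p}}^* - p^{f_p-1}\bcmu_{p^{f_p-1}}\bcmu_{p^{f_p-1}}^*$, again manifestly in $\im(\phi_x)$. The main obstacle is keeping this three-way case analysis organized and verifying the coset description of the $G_n^{(p)}$-orbit in the second case; once that is settled, multiplying over the finitely many relevant primes completes the proof.
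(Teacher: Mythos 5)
Your proposal is correct, and its skeleton coincides with the paper's: one inclusion because $G_n$ consists exactly of the automorphisms fixing $\tfrac1n\Z/\Z=\langle x\rangle$ pointwise, hence fixing the generators $e(x)$ and $\bcmu_a$ of $\im(\phi_x)$; the other by averaging over the compact group $G_n$ to get the conditional expectation onto $\cq^{G_n}$, reducing to monomials via $E_n(\bcmu_a e(y)\bcmu_b^*)=\bcmu_a E_n(e(y))\bcmu_b^*$, and expressing the orbit average of $e(y)$ through (BC5). Where you genuinely diverge is in how $E_n(e(y))$ is computed. The paper works globally: with $m=\ord(y)$ and $d=\gcd(n,m)$ it identifies the $G_n$-orbit of $y$ with $\{by: b\in(\Z/m\Z)^*,\, b\equiv 1 \bmod d\}$ via the Chinese remainder theorem, and then uses M\"obius inversion to rewrite the orbit sum as a signed combination of terms $\bcmu_{m/cd}\,e(my\ell_c/d)\,\bcmu_{m/cd}^*$ with $my/d$ of order dividing $n$. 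You instead push the computation through the $p$-primary decompositions $\Q/\Z=\bigoplus_p\Z(p^\infty)$ and $\Aut(\Q/\Z)\cong\prod_p\Z_p^*$, so that $E_n(e(y))$ factors into finitely many local averages, each falling into one of three transparent cases (fixed point; coset orbit handled by a single application of (BC5); full set of primitive $p^{f_p}$-th roots handled by inclusion–exclusion). Your localization buys a cleaner orbit description and avoids the CRT/M\"obius bookkeeping, at the cost of invoking the product structure of $\Aut(\Q/\Z)$ and a (finite, commuting-factors) Fubini step; the paper's version stays entirely inside $(\Z/m\Z)^*$ arithmetic. Two routine points you leave implicit and should record, as the paper does: the Haar average over $G_n^{(p)}$ equals the uniform average over the finite orbit (the stabilizer is open of index equal to the orbit size), and the passage from monomials to all of $\cq^{G_n}$ uses contractivity of $E_n$ together with closedness of $\im(\phi_x)$ as the image of a C*-homomorphism.
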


\noindent In terms of the identification $\Aut(\Q/\Z)\cong \Gal(\Q^{\rm cycl}/\Q)$, $G_n$ is identified with $\Gal(\Q^{\rm cycl}/\Q(\sqrt[n]{1}))$.

\begin{proof}
The containment $\mathrm{im}(\phi_x) = \mathrm{im}(\bar{\phi}_x)\subseteq \cq^{G_n}$ follows from \lemref{lem:tnxxz-symm} (ii). The subgroup $G_n$ is compact, so there is a conditional expectation associated to the action of $G_n$,
\begin{equation}\label{eqn:bc-expect}
\Theta_n:\cq\rightarrow \cq^{G_n},\qquad \Theta_n = \int_{G_n} \theta_u du;
\end{equation}
we will show that the image of $\Theta_n$ is contained in the image of $\phi_x$. 

For $y\in \Q/\Z$, let $m = \ord(y)$ and $d = \gcd(n,m)$. Then, for $u\in \Aut(\Q/\Z)$, we have $uy = by$ for some $b\in (\Z/m\Z)^*$; moreover, $b\equiv 1$ mod $d$ when $u\in G_n$. Conversely, for any $b\in (\Z/m\Z)^*$ with $b\equiv 1$ mod $d$, by the Chinese remainder theorem, there exists $b_1\in (\Z/\frac{mn}{d}\Z)^*$ such that $b_1\equiv b$ mod $m$ and $b_1\equiv 1$ mod $n$. Since the projection from $\Aut(\Q/\Z)$ is surjective, there exists $u\in G_n$ such that $uy = by$. It follows that
$$\Theta_n(e(y)) = \frac{\euler(d)}{\euler(m)} \sum_{\substack{b\in (\Z/m\Z)^*\\b\equiv 1 \mod d}} e(by) = \frac{\euler(d)}{\euler(m)} \sum_{\substack{k=0\\\gcd(m,dk+1)=1}}^{\frac{m}{d}-1} e(y(dk+1)).$$
Using the formula $\sum_{d|n} \mu(d) = \delta_{n,1}$, this can be written
\begin{align*}
\frac{\euler(m)}{\euler(d)}\cdot \Theta_n(e(y)) &= \sum_{k=0}^{\frac{m}{d}-1} e(y(dk+1)) \sum_{c|\gcd(m,dk+1)} \mu(c)\\
&= \sum_{\substack{c|m\\ \gcd(c, d)=1}} \mu(c) \sum_{\substack{k=0\\ c|(dk+1)}}^{\frac{m}{d}-1} e(y(dk+1))
\end{align*}
If $c|m$ is relatively prime to $d$, then there exists some $0\leq \ell_c <d$ such that $c\ell_c \equiv 1$ mod $d$. The set of elements of $\{dk+1: 0\leq k<\frac{m}{d}\}$ that are divisible by $c$ is then given by $\{c(dk+\ell_c): 0\leq k<\frac{m}{cd}\}$, considered mod $m$. Moreover, since $m=\ord(y)$, the function $k\mapsto (ycd)k$ is a bijection between $\Z/\frac{m}{cd}\Z$ and $\frac{cd}{m}\Z/\Z$, so by (BC5) we have
\begin{align*}
\frac{\euler(m)}{\euler(d)}\cdot \Theta_n(e(y)) &= \sum_{\substack{c|m\\\gcd(c,d) = 1}}\mu(c) \sum_{k=0}^{\frac{m}{cd}-1} e(yc(dk+\ell_c)) = \sum_{\substack{c|m\\\gcd(c,d) = 1}} \mu(c)\frac{m}{cd} \cdot \bcmu_{\frac{m}{cd}} e\left(\frac{my\ell_c}{d}\right) \bcmu_{\frac{m}{cd}}^*.
\end{align*}
Now $\frac{my}{d}$ has order $d|n$, so it is a multiple of $x$ in $\Q/\Z$. Thus, we have shown that $\Theta_n(e(y))\in \im(\phi_x)$, and this extends to monomials by $\Theta_n(\bcmu_a e(y) \bcmu_b^*) = \bcmu_a \Theta_n(e(y)) \bcmu_b^*$. Since $\Theta_n$ is contractive, we conclude that $\im(\Theta_n)\subseteq \im(\phi_x)$, as desired.
\end{proof}

\begin{prop}\label{pro:finite-index-subfactor}
For $n\in\nx$, $\beta\in(0,1]$, let $(\HH_n,\pi_{\beta,n},\Omega)$ be the GNS triple for $\psi_{\beta,n}$. Then, for every $b\in\nx$ relatively prime to $n$, there is a unitary $W_b$ on $\HH_n$ such that: $W_a W_b = W_{ab}$, \ $W_b = W_{b'}$ whenever $b\equiv b'$ mod $n$, and $\Ad_{W_b}\circ\ \pi_{n,\beta} = \pi_{n,\beta}\circ \kappa_b$. In particular, the map $b\mapsto W_b$ defines a representation of $(\Z/n\Z)^*$ on $\HH_n$. The fixed-point von Neumann algebra $(\pi_{\beta,n}(\tnxxz)'')^W$ is a finite-index subfactor of $\pi_{\beta,n}(\tnxxz)''$ 
of type $\mathrm{III}_1$.
\end{prop}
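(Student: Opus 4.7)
The plan is to first construct the unitaries $W_b$ via a standard GNS argument, then transport the subfactor question to the Bost--Connes system via \proref{pro:bc-hom} and \proref{prop:bc-subalg}, and finally invoke known outerness of the Bost--Connes symmetries together with classical finite group action theory.

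First, for $b\in\nx$ with $\gcd(b,n)=1$, \lemref{lem:tnxxz-symm}(iii) gives $\psi_{\beta,n}\circ\kappa_b=\psi_{\beta,n}$, so the prescription $W_b:\pi_{\beta,n}(x)\Omega\mapsto \pi_{\beta,n}(\kappa_b(x))\Omega$ extends to a well-defined isometry on the dense subspace $\pi_{\beta,n}(\tnxxz)\Omega\subseteq\HH_n$. Multiplicativity $W_aW_b=W_{ab}$ is immediate from $\kappa_a\kappa_b=\kappa_{ab}$, and the identities $W_1=I$ and $W_bW_{b'}=W_1$ for $bb'\equiv 1\pmod n$ show that each $W_b$ is a unitary. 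Periodicity uses \proref{prop:ideals}: since $\<{U^n-1}\subseteq\ker\pi_{\beta,n}$, the endomorphisms $\kappa_b$ and $\kappa_{b'}$ agree modulo $\ker\pi_{\beta,n}$ whenever $b\equiv b'\pmod n$, hence $W_b=W_{b'}$. A direct computation on the cyclic span yields $\Ad_{W_b}\circ\pi_{\beta,n}=\pi_{\beta,n}\circ\kappa_b$, so $b\mapsto W_b$ descends to a unitary representation of $(\Z/n\Z)^*$.

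Next, set $M:=\pi_{\beta,n}(\tnxxz)''$. Fix $x\in\Q/\Z$ of order $n$ and let $M_\beta$ be the GNS factor of the unique \kmsb state $\psi_\beta$ on $\cq$. By \proref{pro:bc-hom} and \proref{prop:bc-subalg}, $\phi_x$ identifies $\psi_{\beta,n}$ with $\psi_\beta$ and $\tnxxz/\ker\pi_{\beta,n}$ with $\cq^{G_n}$. Because $\psi_\beta$ is $\Aut(\Q/\Z)$-invariant, the conditional expectation $\Theta_n$ of \eqref{eqn:bc-expect} extends to a faithful normal conditional expectation of $M_\beta$ onto $\pi_\beta(\cq^{G_n})''=M_\beta^{G_n}$, yielding a canonical isomorphism $M\cong M_\beta^{G_n}$. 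By \lemref{lem:tnxxz-symm}(ii), under this isomorphism the $W$-action of $(\Z/n\Z)^*$ on $M$ corresponds to the quotient $\Aut(\Q/\Z)/G_n$-action on $M_\beta^{G_n}$ induced from the Bost--Connes symmetries $\theta_u$.

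The main substantive step is outerness of this quotient action. I would derive it from freeness of the ambient action of $\Aut(\Q/\Z)$ on $M_\beta$: via the Pontryagin duality $C^*(e(x):x\in\Q/\Z)\cong C(\wh\Z)$, the $\Aut(\Q/\Z)$-action on the abelian core of $M_\beta$ corresponds to the multiplication action of $\wh\Z^*$ on $\wh\Z$, which is essentially free with respect to the Bost--Connes measure $\mu_\beta$ for every $\beta>0$. Freeness of a compact abelian group action passes to the induced action of the quotient group on the fixed-point subalgebra of any closed subgroup, giving outerness of $(\Z/n\Z)^*$ on $M_\beta^{G_n}$. With outerness in hand, classical theory of finite group actions on factors (Nakamura--Takesaki, Jones) gives that $M^W$ is a subfactor with $[M:M^W]=|(\Z/n\Z)^*|=\euler(n)$ that inherits the type $\mathrm{III}_1$ structure of $M$ via finite-index preservation. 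The cleanest technical point to pin down is the freeness-to-outerness transfer; I would isolate it as a general lemma before specializing.
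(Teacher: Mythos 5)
The construction of the unitaries $W_b$ in your first paragraph is fine and matches the paper's (the paper routes through $\tnxx{n}$ and $\bar\kappa_b$, you route through $\<{U^n-1}\subseteq\ker\pi_{\beta,n}$; either works). The problem is in the second half: your argument for the type $\mathrm{III}_1$ assertion is circular. You conclude that $M^W$ ``inherits the type $\mathrm{III}_1$ structure of $M$ via finite-index preservation,'' but the type of $M=\pi_{\beta,n}(\tnxxz)''$ is precisely what is unknown at this stage --- it is the content of \corref{cor:maintype}, which is deduced \emph{from} this proposition via Loi's theorem. Nothing in your proposal pins down the type of either $M$ or $M^W$ from known data: you implicitly use that the Bost--Connes factor $M_\beta$ is of type $\mathrm{III}_1$ for $\beta\in(0,1]$, but the passage from $M_\beta$ to the fixed-point algebra $M_\beta^{G_n}$ (an infinite compact group action, not finite index) does not preserve type by any general principle. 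The paper's key external input, which you never invoke, is \cite[Propositions 8 and 21(b)]{bos-con}: the fixed-point algebra $(\pi_\beta(\cq)'')^{T}$ under the \emph{full} symmetry group $\Aut(\Q/\Z)$ is the type $\mathrm{III}_1$ factor generated by the isometries $\pi_\beta(\bcmu_a)$. The paper then shows, using the isometry $L_x$ induced by $\bar\phi_x$ and the projection $P_n=\int_{G_n}T_u\,du$ (which lies in the commutant of $(\pi_\beta(\cq)'')^T$), that $(\pi_{\beta,n}(\tnxxz)'')^W\cong P_n(\pi_\beta(\cq)'')^TP_n$, a corner of a type $\mathrm{III}_1$ factor, hence itself a type $\mathrm{III}_1$ factor; finite index is immediate from the averaging expectation $E^{(n)}=\frac{1}{\euler(n)}\sum_b\Ad_{W_b}$, since $\euler(n)E^{(n)}-\mathrm{Id}\geq 0$.

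Two further weaknesses, less central but worth flagging: (i) your proposed lemma that ``freeness of a compact abelian group action passes to the induced action of the quotient group on the fixed-point subalgebra of any closed subgroup'' is not a standard fact --- fixed points of the quotient action on the quotient space are governed by the weaker condition $ux\in G_nx$ rather than $ux=x$, so essential freeness does not transfer formally and would need a separate argument; in any case no outerness is needed for the paper's route, since factoriality and finite index of $M^W$ come from the corner identification and from $E^{(n)}$, not from subfactor theory for outer finite group actions. (ii) The claimed canonical isomorphism $M\cong M_\beta^{G_n}$ is glossed over: $M$ acts on $\HH_n\cong P_n\HH$ while $M_\beta^{G_n}$ acts on $\HH$, so what one gets naturally is a compression by $P_n$, and upgrading this to an isomorphism would itself require factoriality information that you do not have; the paper avoids the issue by only identifying the $W$-fixed-point algebra with the corner $P_n(\pi_\beta(\cq)'')^TP_n$.
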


\begin{proof}
It is convenient to instead consider the GNS triple $(\HH_n, \bar{\pi}_{\beta,n}, \Omega)$ for $\bar{\psi}_{\beta,n}$ the state on $\tnxx{n}$, where $\pi_{\beta,n} = \bar{\pi}_{\beta,n}\circ q_n$. For $b$ relatively prime to $n$, define $W_b\bar{\pi}_{\beta,n}(x)\Omega = \bar{\pi}_{\beta,n}(\bar{\kappa}_b(x))\Omega$. By \lemref{lem:tnxxz-symm} (iii), this gives a well-defined unitary representation of $(\Z/n\Z)^*$ on $\HH_n$. For $b\in \nx$ relatively prime to $n$, we have $\pi_{\beta,n}\circ \kappa_b = \bar{\pi}_{\beta,n} \circ \bar{\kappa}_b\circ q_n  = \Ad_{W_b}\circ\ \bar{\pi}_{\beta,n}\circ q_n = \Ad_{W_b}\circ\ \pi_{\beta,n}.$

Recall that a conditional expectation $E:M\rightarrow N$ of von Neumann algebras is said to have \emph{finite index} if there exists  $K>0$ such that $K\cdot E - \mathrm{Id}_M$ is a positive operator on $M$ (the smallest such $K$ is the index of~$E$). In particular, 
$$E^{(n)} =  \frac{1}{\euler(n)} \sum_{b\in (\Z/n\Z)^*} \Ad_{W_b}$$
is a conditional expectation with index at most $\euler(n)$.

Now let $(\HH,\pi_\beta,\Omega)$ be the GNS representation for the \kmsb state $\psi_\beta$ on $\cq$. Define a unitary representation of $\Aut(\Q/\Z)$ on $\HH$ by $T_u \pi_{\beta}(a) \Omega = \pi_{\beta}(\theta_u(a))\Omega$. By \cite[Proposition 21 (b)]{bos-con}, $(\pi_\beta(\cq)'')^T$ is the von Neumann algebra generated by the elements $\pi_\beta(\bcmu_a)$, which is a type $\mathrm{III}_1$ factor by \cite[Proposition 8]{bos-con}; we will argue that it is isomorphic to $(\pi_{\beta,n}(\tnxxz)'')^W$.

By \proref{pro:bc-hom}, for $x\in \Q/\Z$, $\ord(x) = n$, the map $\bar{\phi}_x$ defines an isometry
$$L_x:\HH_n\rightarrow \HH,\qquad L_x(\bar{\pi}_{\beta,n}(a))\Omega = \pi_{\beta}(\bar{\phi}_x(a)) \Omega.$$
Let $P_n = L_x L_x^*$ be the projection of $\HH$ onto $L_x(\HH_n)$, which only depends on the order of $x$. The map $\pi_\beta(a)\Omega\mapsto \pi_\beta(\Theta_n(a))\Omega$ defines a projection on $\HH$, and by \proref{prop:bc-subalg} we have
$$P_n\pi_\beta(a) \Omega = \pi_\beta(\Theta_n(a))\Omega = \int_{G_n} T_u\pi_\beta(a)\Omega du.$$
so $P_n = \int_{G_n} T_u du$ commutes with $(\pi_{\beta}(\cq)'')^T$. By \lemref{lem:tnxxz-symm} (ii) we have $P_nT_uP_n = L_x W_b L_x^*$, where $b$ is the image of $u$ in $(\Z/n\Z)^*$. Thus $L_x$ gives a spatial isomorphism between $(\pi_{\beta,n}(\tnxxz)'')^W$ and $P_n (\pi_\beta(\cq)'')^{T} P_n$. Since $P_n$ is in the commutant of the type \III{1} factor $(\pi_{\beta}(\cq)'')^T$, it follows that $(\pi_{\beta,n}(\tnxxz)'')^W$ is a type \III{1} factor.
\end{proof}

\begin{cor}[\thmref{thm:main}, type $\mathrm{III}_1$ assertion]\label{cor:maintype}
For $n\in \nx$ and $\beta\in (0,1]$, the state $\psi_{\beta,n}$ is a factor state of type \III{1}.
\end{cor}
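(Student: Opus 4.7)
The plan is to combine the extremality of $\psi_{\beta,n}$ (which yields the factor property) with the content of \proref{pro:finite-index-subfactor} (which identifies a type $\mathrm{III}_1$ subfactor of finite index inside $\pi_{\beta,n}(\tnxxz)''$).

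First, I would observe that by \proref{pro:kms-parametrized}, for $\beta \in (0,1]$ the state $\psi_{\beta,n}$ lies in the extreme boundary of the simplex $K_\beta$ of \KMS{\beta} states. It is a standard consequence of the \KMS{\beta} condition (see, e.g., \cite[Theorem 5.3.30]{bra-rob}) that the GNS representation of an extremal \KMS{\beta} state yields a factor. Applying this to $\psi_{\beta,n}$, one concludes that $M := \pi_{\beta,n}(\tnxxz)''$ is a factor. For $\beta=0$ it suffices to note that $\psi_{0,n}=\psi_{0,1}$ for all finite $n\geq 1$; this tracial state is the pullback from $\tnxx{1}$, and one checks directly from $\tnxx 1\cong C^*(\nx)$ that the trace has a type $\mathrm{III}_1$ GNS factor (but the statement of \corref{cor:maintype} only concerns $\beta\in(0,1]$, so this case can be omitted).

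Second, I would apply \proref{pro:finite-index-subfactor}, which produces a unitary representation $W$ of the finite group $(\Z/n\Z)^*$ on $\HH_n$ whose fixed-point algebra $N := M^W$ is a factor of type $\mathrm{III}_1$ (identified spatially with $P_n(\pi_\beta(\cq)'')^T P_n$ inside the Bost--Connes factor). The averaging map
\[
E^{(n)} \;=\; \frac{1}{\euler(n)}\sum_{b\in (\Z/n\Z)^*}\mathrm{Ad}\, W_b : M \longrightarrow N
\]
is a normal faithful conditional expectation with Pimsner--Popa index bounded by $\euler(n)$, so $N\subset M$ is a finite-index inclusion of factors.

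Finally, I would invoke the invariance of type under finite-index subfactor inclusions: if $N\subset M$ is a finite-index inclusion of factors, then $N$ and $M$ share the same Connes $S$-invariant and in particular the same $\mathrm{III}_\lambda$ label (this follows from the Kosaki--Longo analysis; alternatively one may argue using the basic construction, which sandwiches $M$ between $N$ and a factor Morita-equivalent to $N$, and Connes' result that type is a Morita invariant). Since $N$ is type $\mathrm{III}_1$ with $S(N)=[0,\infty)$, the same holds for $M$, completing the proof. The only real obstacle is Step 2, which was already discharged in \proref{pro:finite-index-subfactor}; Step 3 is a direct citation, and Step 1 is a classical consequence of extremality of a KMS state.
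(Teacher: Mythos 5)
Your proof is correct and is essentially the paper's argument: factoriality of $\pi_{\beta,n}(\tnxxz)''$ from extremality of $\psi_{\beta,n}$ (\proref{pro:kms-parametrized}), the finite-index type $\mathrm{III}_1$ subfactor supplied by \proref{pro:finite-index-subfactor}, and the invariance of type $\mathrm{III}_1$ under finite-index inclusions of factors, which the paper cites as \cite[Theorem 2.7]{loi}. The only flaw is your parenthetical aside on $\beta=0$: a tracial state always generates a \emph{finite} von Neumann algebra in its GNS representation, so it cannot be of type $\mathrm{III}_1$; but since, as you note, that case lies outside the statement, this does not affect the proof.
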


\begin{proof}
 By \cite[Theorem 2.7]{loi}, if $M$ is a finite-index subfactor of a factor $N$ and $M$ is of type \III{1}, then $N$ is also of type \III{1}. Thus, the result is immediate from \proref{pro:finite-index-subfactor}.
\end{proof}

\section{The Toeplitz system of $\nxxqz$}\label{sec:nxxqz}

In this section we describe how our analysis of the \kmsb states on $\tnxx{n}$ can be extended to $\tnxxqz$.
The monoid $\nxx{n}$ of the previous section is naturally isomorphic to $\nxxf{n}$ by the map $(a,[x])\mapsto (a,[x/n])$. The latter form a nested system of monoids over $n\in\nx$ with union $\nxxqz$. We begin by describing a presentation of the Toepliz algebra $\tnxxqz$ in terms of generators and relations analogous to the one obtained for $\tnxx{n}$ in the preceding section.

\begin{prop}
The monoid $\nxxqz$ is left-cancellative and right LCM. The Toeplitz algebra $\tnxxqz$ is generated by elements $R_x := T_{(1,x)}$ for $x\in \Q/\Z$ and $V_a: =T_{(a,[0])}$ for $a\in\nx$, which satisfy 
\begin{enumerate}
\smallskip\item[\textup{(Q0)}] \ $V_a^* V_a = 1 = R_x R_x^* = R_x^* R_x$,
\smallskip\item[\textup{(Q1)}] \ $R_x V_a = V_a R_{ax}$,
\smallskip\item[\textup{(Q2)}] \ $V_a V_b = V_{ab}$,
\smallskip\item[\textup{(Q3)}] \ $V_a V_b^* = V_{b}^* V_a$ when $\gcd(a,b) = 1$,
\smallskip\item[\textup{(Q4)}] \ $R_x R_y = R_{x+y}$.
\end{enumerate}
Moreover, the relations {\rm (Q0)--(Q4)} constitute a presentation of $\tnxxqz$.
\end{prop}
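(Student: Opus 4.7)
The proof would parallel those of \proref{pro:presentationtnxxz} and \proref{prop:relations-modn}, reducing to the finite cyclic cases already handled. First I would verify that $\nxxqz$ is left-cancellative and right LCM by direct computation with the formula $(a,x)(b,y) = (ab,\, bx+y)$. For left-cancellativity, $(c,z)(a,x) = (c,z)(b,y)$ gives $ca=cb$ and then $az+x = bz+y$, yielding $a=b$ and $x=y$. For right LCM, the right multiples of $(a,x)$ are precisely the pairs $(ac,w)$ with $c\in\nx$ and $w\in\Q/\Z$, so common right multiples of $(a,x)$ and $(b,y)$ form $\{(\lcm(a,b)k, w): k\in\nx,\ w\in \Q/\Z\}$, which are exactly the right multiples of $(\lcm(a,b),[0])$.

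That the generators $V_a$ and $R_x$ of $\tnxxqz$ satisfy (Q0)--(Q4) is a straightforward calculation on the canonical basis of $\ell^2(\nxxqz)$. Let $C^*(r,v)$ denote the universal C*-algebra on generators $\{r_x,v_a\}$ subject to the lower-case analogues of (Q0)--(Q4); universality then provides a surjective *-homomorphism $\pi: C^*(r,v) \rightarrow \tnxxqz$, and the task is to show that $\pi$ is injective. Using (Q1) in conjunction with (Q2)--(Q4) one shows that the set $\{v_a r_x v_b^*: a,b\in\nx,\ x\in\Q/\Z\}$ is closed under multiplication and adjoint, hence its linear span is dense in $C^*(r,v)$.

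The idea is to connect this to the already-established presentation of $\tnxx{n}$ via subalgebras indexed by cyclic subgroups of $\Q/\Z$. For each $n\in\nx$ the element $r_{1/n}$ and the family $\{v_a:a\in\nx\}$ satisfy the lower-case analogues of (N0)--(N4); in particular $r_{1/n}^n = r_{n/n} = r_0 = 1$, using (Q4) together with the fact that (Q0) forces $r_0=1$. The universal property established in \proref{prop:relations-modn} then yields a *-homomorphism $\iota_n:\tnxx{n} \to C^*(r,v)$ whose image is the subalgebra $C^*(r,v)_n$ generated by $\{v_a\}\cup\{r_x:x\in \tfrac{1}{n}\Z/\Z\}$; similarly the analogous relations in $\tnxxqz$ yield $\rho_n:\tnxx{n}\to\tnxxqz$, and $\pi\circ\iota_n=\rho_n$ by comparison on generators. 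Since every $x\in\Q/\Z$ lies in $\tfrac{1}{n}\Z/\Z$ for some $n$, the subalgebras $C^*(r,v)_n$ have dense union in $C^*(r,v)$. Consequently, if each $\rho_n$ is injective, then each $\iota_n$ is an isomorphism onto $C^*(r,v)_n$, so $\pi$ is isometric on each $C^*(r,v)_n$ and hence on all of $C^*(r,v)$.

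The main obstacle is therefore injectivity of $\rho_n:\tnxx{n}\to\tnxxqz$, which I would establish by a compression argument. The subspace $\ell^2(\nxx{n})\subseteq \ell^2(\nxxqz)$ and its orthogonal complement are both invariant under $\rho_n(V_a)$ and $\rho_n(R_{k/n})$, because $\tfrac{1}{n}\Z/\Z \subseteq \Q/\Z$ is a subgroup closed under the twisted addition governing the monoid action. Consequently the projection $P_n$ onto $\ell^2(\nxx{n})$ commutes with $\rho_n(\tnxx{n})$, so compression $T\mapsto P_n T P_n$ defines a *-homomorphism on $\rho_n(\tnxx{n})$. On generators, $P_n\rho_n(V_a)P_n$ and $P_n\rho_n(R_{1/n})P_n$ act on $\ell^2(\nxx{n})$ exactly as the defining Toeplitz generators of $\tnxx{n}$, so compression composed with $\rho_n$ is the identity on $\tnxx{n}$. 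This exhibits $\rho_n$ as a split injection, completing the argument.
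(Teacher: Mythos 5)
Your proof is correct, and for the substantive part (injectivity of the canonical surjection from the universal algebra) it takes a genuinely different route from the paper. The paper simply reruns the argument of \proref{prop:relations-modn} with $\Q/\Z$ in place of $\Z/n\Z$: verify the relations, pass to the universal algebra $C^*(r,v)$, average over the gauge action of $\qd$ to obtain faithful conditional expectations onto the diagonal on both the universal and the reduced side, and conclude by linear independence of the diagonal monomials. You instead reuse the finite-level presentation: since $r_{1/n}$ and the $v_a$ satisfy (N0)--(N4), \proref{prop:relations-modn} produces $\iota_n:\tnxx{n}\to C^*(r,v)$ and $\rho_n:\tnxx{n}\to\tnxxqz$ with $\pi\circ\iota_n=\rho_n$, the subalgebras $C^*(r,v)_n$ have dense directed union, and injectivity of $\rho_n$ is proved by compressing to $\ell^2$ of the copy of the submonoid (strictly, $\nxxf{n}\cong\nxx{n}$ inside $\nxxqz$, via $(a,[x])\mapsto(a,[x/n])$ -- worth stating explicitly), which together with its complement is invariant under $T_{(a,0)}$ and the unitary $T_{(1,1/n)}$, so the projection commutes with $\rho_n(\tnxx{n})$ and the compressed generators are exactly the defining Toeplitz generators of $\tnxx{n}$; since injective *-homomorphisms are isometric, $\pi$ is isometric on a dense subalgebra and hence injective. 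All of these steps check out (including the small points $r_0=1$ from (Q0) and (Q4), and $\rho_n$ injective $\Rightarrow$ $\iota_n$ an isomorphism onto $C^*(r,v)_n$). What each approach buys: the paper's expectation argument is uniform and self-contained, and directly exhibits the gauge-invariant diagonal of $\tnxxqz$; yours is more economical, avoids repeating the expectation/linear-independence analysis, and as a by-product essentially establishes the inductive-limit description of \proref{prop:tnxxqz-limit} (your $\rho_n$ are the maps $\iota_n$ there), so it would streamline the exposition if one proves the two propositions together.
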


\noindent The proof is essentially the same as in Propositions \ref{prop:nxx{n}-toeplitz} and \ref{prop:relations-modn}.

Under the identification $\nxx{n}\cong \nxxf{n}$ above, the inclusions $\nxxf{n}\subseteq \nxxf{m}$ for $n|m$ give an inductive system of *-homomorphisms 
\begin{equation}\label{eqn:tnxx-inclusion}
\iota_{n,m}:\tnxx{n}\rightarrow \tnxx{m} \quad \text{ such that } \quad \iota_{n,m}:R\mapsto R^{\frac{m}{n}},\qquad V_a\mapsto V_a.
\end{equation}

\begin{prop}\label{prop:tnxxqz-limit}
For $n\in\nx$, there are injective *-homomorphisms $\iota_n:\tnxx{n}\rightarrow \tnxxqz$ determined by
\begin{equation}
\iota_n:R\mapsto R_{\frac{1}{n}},\qquad V_a\mapsto V_a,
\end{equation}
and $(\tnxxqz, \iota_n)_{n\in\nx}$ is the limit of the system $(\tnxx{n},\iota_{n,m})_{n\in\nx}$.

In particular, the inclusion $\nxxf{n}\subseteq \nxxqz$ induces an inclusion of Toeplitz algebras $\tnxxf{n}\subseteq \tnxxqz$ such that the following diagram commutes:
\begin{center}
\begin{tikzcd}
\tnxxz \arrow[r, "q_n"] & \tnxx{n} \arrow[rd, "\iota_n"] \arrow[d, "\cong"'] &         \\
                        & \tnxxf{n} \arrow[r, "\subseteq"]                   & \tnxxqz
\end{tikzcd}
\end{center}
\end{prop}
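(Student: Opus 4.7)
The plan is to construct $\iota_n$ via the universal property of $\tnxx{n}$ (Proposition \ref{prop:relations-modn}), establish injectivity through a spatial compression argument, and then obtain the inductive limit identification as a formal consequence.

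First I would check that the elements $R_{1/n}$ and $\{V_a:a\in\nx\}$ of $\tnxxqz$ satisfy (N0)--(N4). Relations (N0)--(N3) are immediate from (Q0)--(Q3), and (N4) reads $R_{1/n}^n = R_{n/n} = R_0 = 1$ by (Q4). The universal property then produces a *-homomorphism $\iota_n:\tnxx{n}\to\tnxxqz$ with $\iota_n(R)=R_{1/n}$ and $\iota_n(V_a)=V_a$.

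For injectivity (the main obstacle), I would use the natural unitary identification $\ell^2(\nxx{n})\cong \ell^2(\nxxf{n})$ via $\ve_{(a,[k])}\leftrightarrow \ve_{(a,k/n)}$, viewing $\ell^2(\nxxf{n})$ as a subspace of $\ell^2(\nxxqz)$. A direct computation on each of the generators $V_a$, $V_a^*$, $R_{1/n}$, $R_{-1/n}$ shows that this subspace is invariant: for example $R_{1/n}\ve_{(b,k/n)}=\ve_{(b,(b+k)/n)}$ and $V_a^*\ve_{(c,k/n)}=\ve_{(c/a,k/n)}$ when $a\mid c$. Self-adjointness of $\iota_n(\tnxx{n})$ then upgrades this to invariance under the whole subalgebra, so compression yields a *-homomorphism $\rho:\iota_n(\tnxx{n})\to\mathcal{B}(\ell^2(\nxx{n}))$. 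A final check on the two generators of $\tnxx{n}$ shows $\rho\circ\iota_n=\id_{\tnxx{n}}$, forcing $\iota_n$ to be injective. I expect the most delicate point here to be confirming invariance of $\ell^2(\nxxf{n})$ under $V_a^*$ and $R_{-1/n}$, since $\nxxf{n}$ is not preserved by general inverses in $\nxxqz$; the specific arithmetic of $\tfrac{1}{n}\Z/\Z$ as a submonoid of $\Q/\Z$ is essential.

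The remaining assertions are bookkeeping. Compatibility $\iota_m\circ\iota_{n,m}=\iota_n$ for $n\mid m$ reduces to $R_{1/m}^{m/n}=R_{1/n}$. Density of $\bigcup_n \iota_n(\tnxx{n})$ in $\tnxxqz$ follows from $V_a=\iota_1(V_a)$ and $R_{k/n}=\iota_n(R^k)$, so combined with injectivity this identifies $(\tnxxqz,\iota_n)$ as the C*-inductive limit of $(\tnxx{n},\iota_{n,m})$. The commutative diagram then follows because $\iota_n\circ q_n$ and the composition $\tnxxz\to\tnxxf{n}\subseteq\tnxxqz$ agree on the generators $U$ and $V_a$ under the isomorphism $\tnxx{n}\cong\tnxxf{n}$ induced by $(a,[k])\mapsto (a,k/n)$.
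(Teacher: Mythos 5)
Your proposal is correct, and it supplies an argument the paper itself leaves implicit: Proposition \ref{prop:tnxxqz-limit} is stated without proof, as a routine consequence of the presentations established in Propositions \ref{prop:nxx{n}-toeplitz}--\ref{prop:relations-modn} and of the analogous presentation (Q0)--(Q4) of $\tnxxqz$. Your construction of $\iota_n$ via the universal property of the presentation (N0)--(N4), with (N4) verified by $R_{1/n}^n=R_0=1$, is exactly the route the paper's setup suggests, and your compatibility check $\iota_m\circ\iota_{n,m}=\iota_n$ on generators plus density of $\bigcup_n\iota_n(\tnxx{n})$ correctly yields the inductive-limit identification (injectivity of the $\iota_n$ making the canonical map from $\varinjlim\tnxx{n}$ isometric on a dense subalgebra). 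Where you genuinely go your own way is injectivity: the paper's pattern in Proposition \ref{prop:relations-modn} is a gauge-expectation argument reducing faithfulness to the commutative fixed-point algebra, whereas you observe that $\ell^2(\nxxf{n})\subset\ell^2(\nxxqz)$ is reducing for $C^*(R_{1/n},V_a)$ -- your computations $R_{\pm 1/n}\ve_{(b,k/n)}=\ve_{(b,(k\pm b)/n)}$ and $V_a^*\ve_{(c,k/n)}=\ve_{(c/a,k/n)}$ for $a\mid c$ (zero otherwise) do verify this, and the $*$-closedness point you flag is handled correctly -- so that compression intertwines with the left regular representation of $\nxx{n}\cong\nxxf{n}$ and gives a left inverse $\rho$ with $\rho\circ\iota_n=\id$ on generators. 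This spatial argument is shorter and at the same time proves the concrete statement that the submonoid inclusion $\nxxf{n}\subseteq\nxxqz$ induces an inclusion of reduced Toeplitz algebras, which is what the commuting triangle asserts; the expectation-based alternative would instead buy uniformity with the proofs already in Section \ref{sec:eqv-quot}. No gaps to report.
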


We give $\tnxxqz$ the dynamics $\sigma_t(T_{(a,[x])}) = a^{it} T_{(a,x)}$. The maps $\iota_{n,m}$ and $\iota_n$ are equivariant for the dynamics, so \proref{prop:tnxxqz-limit} gives us a first description of the \kmsb simplex of $\tnxxqz$, as the projective limit of the system $(K_\beta^{n}, \iota_{n,m}^*)_{n\in\nx}$, where $K_\beta^n$ is the \kmsb simplex of $\tnxx{n}$. We will describe this limit in the cases $\beta\in (1,\infty)$ and $\beta\in (0,1]$ with the help of the following interpretations of the KMS states of $\tnxxz$ and $\tnxx{n}$.

The group of characters of $\Z$ is isomorphic to $\T$ and the group of characters of $\Z/n\Z$ is isomorphic to $Z_n$, the $n^{\mathrm{th}}$ roots of unity. Thus, the extremal \kmsb states of $\tnxxz$ and $\tnxx{n}$ for $\beta\in(1,\infty)$ can be reparameterized by the character groups of $\Z$ and $\Z/n\Z$, respectively (cf. \cite[Theorem 8.1 (2)]{aHLR21} and \thmref{thm:finite-kms}). For a character $\chi$ on $\Z$, the reparametrization for states of $\tnxxz$ is given by
$$\psi_{\beta,\chi}(V_a U^k V_b^*) = \delta_{a,b} \frac{a^{-\beta}}{\zeta(\beta)}\sum_{c=1}^\infty c^{-\beta} \chi(c k).$$

Recall that the Chabauty topology is the topology on the set $\subg(G)$  of closed subgroups of $G$ with a basis of neighborhoods for $C\in \subg(G)$ given by
$$V_C(K,U) = \{D\in \subg(G): D\cap K\subseteq CU\}$$
for $K\subseteq G$ compact and $U\subseteq G$ an open neighborhood of identity, cf. \cite{Ch50}. The (closed) subgroups of $\Z$ are parameterized by the set $\nx\cup\{\infty\}$, where $H_n = n\Z$ and $H_\infty = 0\Z$, and this identifies $\subg(\Z)$ with the one-point compactification of $\nx$. The subgroups of $\Z/n\Z$ are parameterized by divisors of $n$, $H_d = d\Z/n\Z$. Thus, by Theorems \ref{thm:main}(1) and \ref{thm:finite-kms}, the \kmsb states of $\tnxxz$ and $\tnxx{n}$ for $\beta\in (0,1]$ can be reparametrized by the spaces $\subg(\Z)$ and $\subg(\Z/n\Z)$, respectively. Note that for $n\in \nx\cup\{\infty\}$, $\ord_{\Z/H_n}(k) = \frac{n}{\gcd(n,k)}$. Then, for a subgroup $H\subseteq \Z$, the reparametrization of states of $\tnxxz$ gives
$$\psi_{\beta,H} (V_a U^k V_b^*) = \delta_{a,b} a^{-\beta} \ord_{\Z/H}(k)^{-\beta}\sum_{d|\ord_{\Z/H}(k)} \mu(d)\frac{\euler_\beta(d)}{\euler(d)},$$
with the convention that $(\infty)^{-\beta} = 0$.

Since we are considering states on a family of C*-algebras indexed by $n\in\nx$, we write $\bar{\psi}_{\beta}^n$ (rather than $\bar{\psi}_\beta$) for a \kmsb state on $\tnxx{n}$ and $\psi_{\beta}^\infty$ for a \kmsb state on $\tnxxqz$, in order to clarify the domain of the state. The maps $\iota_{n,m}^*$ are readily computed in terms of characters and subgroups of $\tfrac{1}{n}\Z/\Z\cong \Z/n\Z$. For $\beta\in (1,\infty)$ and $\chi\in (\frac{1}{m}\Z/\Z)\wh{\ }$\ , we have $$\iota_{n,m}^*(\bar{\psi}^m_{\beta,\chi}) = \bar{\psi}^m_{\beta,\chi}\circ\iota_{n,m} = \bar{\psi}^n_{\beta,\chi|_{\frac{1}{n}\Z/\Z}},$$ 
and for $\beta\in (0,1]$ and a subgroup $H\subseteq \frac{1}{m}\Z/\Z$, we have 
$$\iota_{n,m}^*(\bar{\psi}_{m,\beta,H}) = \bar{\psi}^m_{\beta,H}\circ\iota_{n,m} = \bar{\psi}^n_{\beta,H\cap \frac{1}{n}\Z/\Z}.$$

\begin{lemma}\label{lem:proj-limits}
Let $m, n\in\nx$ and assume $n|m$, so that $\tfrac{1}{n} \Z/\Z \subset \tfrac{1}{m}\Z/\Z$.
\begin{enumerate}
\item Suppose $\beta\in(1,\infty)$ and let  $\eta \in (\tfrac{1}{n} \Z/\Z)\wh{\ }$. Then $\eta = \chi|_{\tfrac{1}{n} \Z/\Z}$ for a character  $\chi\in (\tfrac{1}{m}\Z/\Z)\wh{\ }$ if and only if $\bar{\psi}^m_{\beta,\chi}\circ \iota_{n,m} = \bar{\psi}^n_{\beta,\eta}.$
\smallskip
\item Suppose $\beta\in (0,1]$ and let  $K$ be a subgroup of $\tfrac{1}{n} \Z/\Z$. Then $K = H\cap (\tfrac{1}{n} \Z/\Z)$ 
for a subgroup $H$ of $\tfrac{1}{m}\Z/\Z$ if and only if  $\bar{\psi}^m_{\beta,H}\circ\iota_{n,m} = \bar{\psi}^n_{\beta,K}.$
\end{enumerate}
\end{lemma}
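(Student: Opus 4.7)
The plan is to establish each equivalence by verifying the pullback formulas $\iota_{n,m}^*(\bar{\psi}^m_{\beta,\chi}) = \bar{\psi}^n_{\beta,\chi|_{\frac{1}{n}\Z/\Z}}$ and $\iota_{n,m}^*(\bar{\psi}^m_{\beta,H}) = \bar{\psi}^n_{\beta,H\cap \frac{1}{n}\Z/\Z}$ stated informally in the paragraph just before the lemma, and then appealing to injectivity of the parametrizations from \thmref{thm:finite-kms}. Once the pullback formulas are in hand, the $(\Rightarrow)$ direction of each part is immediate. For the $(\Leftarrow)$ direction of part (1), the pullback formula together with the hypothesis $\bar{\psi}^m_{\beta,\chi}\circ\iota_{n,m} = \bar{\psi}^n_{\beta,\eta}$ forces $\bar{\psi}^n_{\beta,\chi|_{\frac{1}{n}\Z/\Z}} = \bar{\psi}^n_{\beta,\eta}$, and the affine weak-* homeomorphism from \thmref{thm:finite-kms}(1) (which is injective on extreme points after reparametrizing $Z_n \cong (\tfrac{1}{n}\Z/\Z)\widehat{\ }$) gives $\eta = \chi|_{\frac{1}{n}\Z/\Z}$; the subgroup case is identical using \thmref{thm:finite-kms}(2) and the bijection between divisors of $n$ and subgroups of $\tfrac{1}{n}\Z/\Z$.

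To verify the pullback formula in (1), I would first recall that under the identification $\tnxx{m}\cong \tnxxf{m}$ the generator $R$ corresponds to $T_{(1,[1/m])}$, so $\iota_{n,m}(V_a R^k V_b^*) = V_a R^{(m/n)k} V_b^*$. Setting $z = \chi([1/m])$, \thmref{thm:finite-kms}(1) gives
\[
\bar{\psi}^m_{\beta,\chi}\bigl(V_a R^{(m/n)k} V_b^*\bigr) = \delta_{a,b} \frac{a^{-\beta}}{\zeta(\beta)} \sum_{c\in\nx} c^{-\beta} z^{c(m/n)k},
\]
and the identity $z^{c(m/n)k} = \chi([ck/n]) = \chi|_{\frac{1}{n}\Z/\Z}([ck/n])$ identifies the right-hand side with $\bar{\psi}^n_{\beta,\chi|_{\frac{1}{n}\Z/\Z}}(V_a R^k V_b^*)$ after restoring the character-to-root-of-unity parametrization.

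For the pullback formula in (2), the same computation applied to \thmref{thm:finite-kms}(2) reduces the claim to the order identity
\[
\ord_{(\frac{1}{m}\Z/\Z)/H}\bigl([k/n]\bigr) = \ord_{(\frac{1}{n}\Z/\Z)/(H\cap \frac{1}{n}\Z/\Z)}\bigl([k/n]\bigr),
\]
which holds because $[k/n]\in \tfrac{1}{n}\Z/\Z$ and the second isomorphism theorem supplies an injective homomorphism $(\tfrac{1}{n}\Z/\Z)/(H\cap \tfrac{1}{n}\Z/\Z)\hookrightarrow (\tfrac{1}{m}\Z/\Z)/H$ sending $[k/n] + (H\cap \tfrac{1}{n}\Z/\Z)$ to $[k/n]+H$. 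There is no substantive obstacle in this lemma: once the pullback formulas are unwound, the content is absorbed entirely into the injectivity of the parametrizations furnished by \thmref{thm:finite-kms}, and the only nontrivial observation is the elementary order identity that makes the subgroup formula work.
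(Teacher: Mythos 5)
Your proof is correct, and for the converse directions it takes a genuinely different and more economical route than the paper. The paper proves each converse directly by reconstructing the parameter from the state values: for part (1) it applies a M\"obius inversion $\zeta(\beta)\sum_k \mu(k)k^{-\beta}\bar{\psi}^m_{\beta,\chi}(R_x^k)=\chi(x)$ to both sides, and for part (2) it factors the arithmetic function $h(c)=c^{-\beta}\sum_{d|c}\mu(d)\euler_\beta(d)/\euler(d)$ as a Dirichlet convolution $f*g$ with $f$ invertible in order to recover $\ord(x+H)$ from the state values. You instead observe that the forward computation already shows $\bar{\psi}^m_{\beta,\chi}\circ\iota_{n,m}=\bar{\psi}^n_{\beta,\chi|_{\frac{1}{n}\Z/\Z}}$ (and likewise with $H\cap\frac{1}{n}\Z/\Z$), so under the hypothesis of the converse two extremal $\mathrm{KMS}_\beta$ states of $\tnxx{n}$ coincide, and injectivity of the parametrization in \thmref{thm:finite-kms} does the rest. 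This is legitimate since \thmref{thm:finite-kms} is established before the lemma and independently of it. What the paper's direct reconstruction buys is a self-contained inversion formula exhibiting how the parameter is read off from state values; what your argument buys is brevity, in particular making the Dirichlet-convolution machinery in part (2) unnecessary. Your verifications of the pullback formulas (the identity $z^{c(m/n)k}=\chi([ck/n])$ in (1), and the second-isomorphism-theorem order identity in (2)) match the content the paper uses in its forward direction and are correct.
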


\begin{proof}
(1) If the character  $\eta \in (\tfrac{1}{m} \Z/\Z)\wh{\ }$ is the restriction  of a character $\chi\in (\tfrac{1}{n}\Z/\Z)\wh{\ }$, in the sense that 
$\eta(x) = \chi(x)$ for every $x\in \tfrac{1}{n} \Z/\Z$, then
 a computation using formula \eqref{eqn:finite-subcrit} shows that
 \[
\bar{\psi}^m_{\beta,\chi}\circ\iota_{n,m}(V_a R_x V_b^*) =  \bar{\psi}^n_{\beta,\eta} (V_a R_x V_b^*) \qquad a,b\in \nx, \ x\in \tfrac{1}{n} \Z/\Z.
 \]

Conversely, suppose that $\bar{\psi}^m_{\beta,\chi}\circ\iota_{n,m} = \bar{\psi}^n_{\beta,\eta}$ for characters 
$\chi\in (\tfrac{1}{m}\Z/\Z)\wh{\ }$  and $\eta\in (\tfrac{1}{n}\Z/\Z)\wh{\ }$.
For every $x\in \tfrac{1}{n}\Z/\Z$, we have
\[
\zeta(\beta)\sum_{k\in\nx}\mu(k)k^{-\beta}\bar{\psi}^m_{\beta,\chi}(R_{x}^k) = \sum_{k\in\nx} \sum_{c\in\nx} \mu(k) (ck)^{-\beta} \chi(x)^{ck} = \chi(x).
\]
hence $\chi(x) = \eta(x)$ for $x\in \tfrac{1}{n} \Z/\Z$, as desired.

(2) If the subgroup $K \leq  \tfrac{1}{n}\Z/\Z$ is the intersection $K = H \cap \tfrac{1}{n}\Z/\Z$ for  a subgroup $H \leq  \tfrac{1}{m}\Z/\Z$, then 
\[
\ord_{(\tfrac{1}{m}\Z/\Z)/H}(x + H) = \ord_{(\tfrac{1}{n}\Z/\Z)/K}(x + K) \qquad x\in \tfrac{1}{n} \Z/\Z,
\]
 hence  formula \eqref{eqn:finite-supcrit} gives $$\bar{\psi}^m_{\beta,H}\circ\iota_{n,m}(V_a R_x V_b^*) = \bar{\psi}^n_{ \beta,K}(V_a R_x V_b^*)\qquad a,b\in \nx,\ x\in\tfrac{1}{n}\Z/\Z.$$

Conversely, suppose that $\bar{\psi}^m_{\beta,H}\circ\iota_{n,m} = \bar{\psi}^n_{\beta,K}$.
Setting $a = b = 1$ in \eqref{eqn:finite-supcrit}, we see that  
\[
\bar{\psi}^m_{\beta,H}(R_x) =   \ord(x+H)^{-\beta} \sum_{d|\ord(x+H)} \mu(d) \frac{\euler_\beta(d)}{\euler(d)}
\]
only depends on $x$  through  the value $c:=\ord(x+H) $. Motivated by this, we define an arithmetic function $h$ by 
\[
h(c) := c^{-\beta} \sum_{d|c} \mu(d) \frac{\euler_\beta(d)}{\euler(d)} = \sum_{d|c} \Big(\mu(d) d^{-1}\prod_{p|d} \frac{1-p^{-\beta}}{1-p^{-1}}\Big) \left(\frac{c}{d}\right)^{-\beta},
\]
which shows that 
$h $ is the Dirichlet convolution $f*g$ of the arithmetic functions $f$ and $g$ defined by
\[
f(c) = \mu(c)c^{-1}\prod_{p|c} \frac{1-p^{-\beta}}{1-p^{-1}},\qquad g(c) = c^{-\beta}  \qquad \text{for }c\in \nx.
\]
If $c$ divides $[(\tfrac{1}{m}\Z/\Z):H]$, then $c = \ord(x+H)$ for some $x\in \tfrac{1}{m}\Z/\Z$, and 
 $h(c) = \bar{\psi}^m_{\beta,H}(R_x)$ for every such $x$.
Since $f(1) = 1 $, the function $f$ is invertible in the Dirichlet ring of arithmetic functions. Hence
$$c^{-\beta} = g(c) = (f^{-1}*h)(c) = \sum_{d|c} f^{-1}(d) h(c/d) $$
If $c$ divides $[(\tfrac{1}{m}\Z/\Z):H]$, this is $ \sum_{d|c} f^{-1}(d) \bar{\psi}^m_{\beta,H}(R_x^d)$ for $c = \ord(x+H)$.

Since $\bar{\psi}^m_{\beta,H}(R_x) = \bar{\psi}^n_{\beta,K}(R_x)$ for every $x\in \tfrac{1}{n} \Z/\Z$ by assumption, it follows that 
\[
\ord(x + (H\cap (\tfrac{1}{n} \Z/\Z)))  = \ord(x + H) = \ord(x + K).
\]
Thus, $x\in H\cap \tfrac{1}{n} \Z/\Z$ if and only if $x\in K$, so the two subgroups are equal.
\end{proof}

For each $H\in \subg(\frac{1}{m}\Z/\Z)$ and $n|m$, we define $h_{n,m}(H) := H\cap \frac{1}{n}\Z/\Z $. This gives a projective system $(\subg(\frac{1}{n}\Z/\Z), h_{n,m})_{n \in\nx}$ of finite spaces, and there is a homeomorphism
\begin{align*}
h:\subg(\Q/\Z) &\stackrel{\cong}{\longrightarrow}\varprojlim (\subg(\tfrac{1}{n}\Z/\Z), h_{n,m})_{n\in\nx},\\
H &\longmapsto (H\cap \tfrac{1}{n}\Z/\Z)_{n\in\nx},
\end{align*}
where the left hand side is endowed with the Chabauty topology and the right hand side has the 
profinite topology.
Indeed, the map $h$ is injective because
 $\bigcup_n (H\cap \frac{1}{n}\Z/\Z) =  H \cap \bigcup_n \frac{1}{n}\Z/\Z = H$, and it is surjective because, for every net  $(H_n)_{n\in\nx}$ of subgroups satisfying $H_m\cap \frac{1}{n}\Z/\Z = H_n$ whenever $n|m$,  the set $H = \bigcup H_n$ is a subgroup of $\Q/\Z$ such that $h(H) =  (H_n)_{n\in\nx}$. 

 \begin{theorem}\label{thm:limit-kms}
Let $\sigma:\R\rightarrow \operatorname{Aut} \tnxxqz$ be the dynamics on $\tnxxqz$ determined by $\sigma_t(R_x) = R_x$ and $\sigma_t(V_a) = a^{it} V_a$. 
\begin{enumerate}
\item For  each $\beta\in (1,\infty)$ and character $\chi\in(\Q/\Z)\,\wh{\ }$, there is a unique extremal \kmsb state of $\tnxxqz$ such that
\begin{equation}\label{eqn:limit-subcrit}
\psi^\infty_{\beta,\chi}(V_a R_x V_b^*) = \delta_{a,b} \frac{a^{-\beta}}{\zeta(\beta)} \sum_{c\in\nx} c^{-\beta} \chi(x)^c.
\end{equation}
The map $\chi\mapsto \psi^{\infty}_{\beta,\chi}$ extends to an affine w*-homeomorphism of the simplex of probability measures on $(\Q/\Z)\wh{\ }$ onto the simplex of \kmsb states of $(\tnxxqz,\sigma)$. For each $n\in \nx$, $$\psi^\infty_{\beta,\chi}\circ \iota_n = \bar{\psi}^n_{\beta,\chi_n},\quad \chi_n = \chi|_{\frac{1}{n}\Z/\Z}.$$
\smallskip\item For each $\beta\in(0,1]$ and subgroup $H\subseteq \Q/\Z$, there is a unique extremal \kmsb state of $\tnxxqz$ such that
\begin{equation}\label{eqn:limit-supcrit}
\psi^\infty_{\beta,H}(V_a R_x V_b^*) = \delta_{a,b} a^{-\beta} \ord\left(x + H\right)^{-\beta} \sum_{d|\ord\left(x+H\right)} \mu(d) \frac{\euler_\beta(d)}{\euler(d)}.
\end{equation}
The map $H\mapsto \psi^\infty_{\beta,H}$ extends to an affine w*-homeomorphism of the simplex of probability measures on $\subg(\Q/\Z)$ onto the simplex of \kmsb states of $(\tnxxqz,\sigma)$. For each $n\in \nx$,
$$\psi^\infty_{\beta,H}\circ \iota_n = \bar{\psi}^n_{\beta,H_n},\quad H_n = H\cap (\tfrac{1}{n}\Z/\Z).$$
\end{enumerate}
\end{theorem}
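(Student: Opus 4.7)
The overall strategy is to leverage the inductive limit structure $\tnxxqz = \varinjlim (\tnxx{n}, \iota_{n,m})$ from \proref{prop:tnxxqz-limit}, together with the equivariance of the connecting maps $\iota_{n,m}$ and $\iota_n$ for the dynamics, to realize $K_\beta(\tnxxqz)$ as the projective limit of the simplices $K_\beta(\tnxx{n})$ classified in \thmref{thm:finite-kms}. The remaining work is a bookkeeping exercise in which \lemref{lem:proj-limits} identifies the transition maps, and then Pontryagin duality (for part (1)) or the Chabauty limit identification displayed just before the theorem statement (for part (2)) collapses the projective limit to the desired parameter space.

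Concretely, I would first establish that the restriction map
\[
\Phi: K_\beta(\tnxxqz) \longrightarrow \varprojlim_{n \in \nx} K_\beta(\tnxx{n}),\qquad \psi \mapsto (\psi \circ \iota_n)_{n\in \nx},
\]
is an affine w*-homeomorphism. Injectivity follows from density of $\bigcup_n \iota_n(\tnxx{n})$. Given a coherent family $(\bar{\psi}^n)_n$ of \kmsb states, the formula $\psi(\iota_n(x)) := \bar{\psi}^n(x)$ defines a bounded linear functional on $\bigcup_n \iota_n(\tnxx{n})$ that extends by density to a state on $\tnxxqz$; since the spanning monomials $V_a R_x V_b^*$ are $\sigma$-analytic, the \kmsb condition transfers from each $\bar{\psi}^n$ to the extension via \cite[Proposition 8.12.3]{ped}. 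Continuity and the compactness of both sides make $\Phi$ a homeomorphism of Choquet simplices, and it carries extreme points to extreme points because the transition maps $\iota_{n,m}^*$ preserve extremality (both $K_\beta(\tnxx{n})$ are finite-dimensional Bauer simplices).

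For part (1), \thmref{thm:finite-kms}(1) identifies $\partial_e K_\beta(\tnxx{n}) \cong \bigl(\tfrac{1}{n}\Z/\Z\bigr)\wh{\ }$, and \lemref{lem:proj-limits}(1) identifies $\iota_{n,m}^*$ on extreme points with character restriction. Pontryagin duality for $\Q/\Z = \varinjlim \tfrac{1}{n}\Z/\Z$ then yields
\[
\varprojlim_{n\in\nx} \bigl(\tfrac{1}{n}\Z/\Z\bigr)\wh{\ } \;\cong\; (\Q/\Z)\wh{\ },
\]
so $\partial_e K_\beta(\tnxxqz) \cong (\Q/\Z)\wh{\ }$, and formula \eqref{eqn:limit-subcrit} follows from \eqref{eqn:finite-subcrit} by restricting to any $\tfrac{1}{n}\Z/\Z$ containing $x$. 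For part (2), \thmref{thm:finite-kms}(2) and \lemref{lem:proj-limits}(2) identify the transition maps on extreme points with $h_{n,m}: H \mapsto H \cap \tfrac{1}{n}\Z/\Z$; the explicit homeomorphism $h: \subg(\Q/\Z) \cong \varprojlim \subg(\tfrac{1}{n}\Z/\Z)$ supplied before the theorem then gives $\partial_e K_\beta(\tnxxqz) \cong \subg(\Q/\Z)$ with the Chabauty topology. Formula \eqref{eqn:limit-supcrit} follows from \eqref{eqn:finite-supcrit} after noting that $\ord_{\Q/\Z}(x + H) = \ord_{\frac{1}{n}\Z/\Z}(x + H_n)$ for any $x \in \tfrac{1}{n}\Z/\Z$.

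The main obstacle, if there is one, lies in verifying that the two parametrizations really are homeomorphisms onto the extreme boundary (as opposed to merely continuous bijections) and that they extend affinely to the full simplices; this amounts to checking that the projective limit of the Bauer simplices $K_\beta(\tnxx{n})$ with extreme-point-preserving affine surjections is again a Bauer simplex whose extreme boundary is the projective limit of the individual extreme boundaries. Once this is in hand, affine extension to probability measures on the parameter space is automatic from the Bauer property.
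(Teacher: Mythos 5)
Your proof is correct and takes essentially the same route as the paper: identify $K_\beta(\tnxxqz)$ with $\varprojlim K_\beta(\tnxx{n})$ via the inductive limit structure of \proref{prop:tnxxqz-limit}, use \thmref{thm:finite-kms} and \lemref{lem:proj-limits} to identify the transition maps on extreme boundaries with character restriction (resp.\ subgroup intersection), and then invoke Pontryagin duality (resp.\ the Chabauty limit identification) to collapse the projective limit of parameter spaces. The only difference is that you spell out more explicitly the verification that the restriction map $\Phi$ is an affine homeomorphism and the Bauer-simplex bookkeeping, which the paper treats as implicit; both of these are handled correctly.
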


\begin{proof}

It follows from \proref{prop:tnxxqz-limit} that the simplex of \kmsb states of $(\tnxxqz,\sigma_t)$ is canonically homeomorphic to the projective limit of the simplices $K_{n,\beta}$ of \kmsb states under the maps $\iota_{n,m}^*$. We will show that the extreme points of this simplex are determined by either \eqref{eqn:limit-subcrit} or \eqref{eqn:limit-supcrit} on monomials (depending on if $\beta\in (1,\infty)$ or $(0,1]$), whence the result follows.

(1) For $\beta\in (1,\infty)$, by \thmref{thm:finite-kms}(1) and \lemref{lem:proj-limits}(1), the system $(\partial_e K_{n,\beta}, \iota_{n,m}^*)_{n\in \nx}$ is naturally isomorphic to $((\frac{1}{n}\Z/\Z)\wh{\ }, r_{n,m})_{n\in\nx}$, where $r_{n,m}(\chi) = \chi|_{\frac{1}{n}\Z/\Z}$. It follows from $\Q/\Z = \bigcup \frac{1}{n}\Z/\Z$ that $((\Q/\Z)\wh{\ }, r_n)$ is the limit of $((\frac{1}{n}\Z/\Z)\wh{\ }, r_{n,m})$. For $\chi \in (\Q/\Z)\wh{\ }$, if $\chi_n = r_n(\chi)$, then $(\bar{\psi}^n_{\beta,\chi_n})_{n\in \nx}$ is a coherent family of \kmsb states that determines a state $\psi$ on $\tnxxqz$. Since $\chi(x) = \chi_n(x)$ for $x\in \frac{1}{n}\Z/\Z$, formulas \eqref{eqn:finite-subcrit} and \eqref{eqn:limit-subcrit} imply that $\bar{\psi}^n_{\beta,\chi_n} = \psi^\infty_{\beta,\chi} \circ \iota_n$ on monomials, so $\psi = \psi^\infty_{\beta,\chi}$.

(2) For $\beta\in(0,1]$, by \thmref{thm:finite-kms}(2) and \lemref{lem:proj-limits}(2), the system $(\partial_e K_{n,\beta}, \iota_{n,m}^*)_{n\in \nx}$ is naturally isomorphic to $(\subg(\frac{1}{n}\Z/\Z), h_{n,m})_{n\in\nx}$, where $h_{n,m}(H) = H\cap \frac{1}{n}\Z/\Z.$ Then $\subg(\Q/\Z)\cong \varprojlim (\subg(\frac{1}{n}\Z/\Z), h_{n,m})_{n\in\nx}$, so for every $H\in \subg(\Q/
\Z)$ and $H_n = h_n(H)$, $(\bar{\psi}^n_{\beta,H_n})_{n\in\nx}$ is a coherent family of \kmsb states that determines a state $\psi$ on $\tnxxqz$. Since $\ord_{(\Q/\Z)/H}(x) = \ord_{(\frac{1}{n}\Z/\Z)/H_n}(x)$ for every $x\in \frac{1}{n}\Z/\Z$, formulas \eqref{eqn:finite-supcrit} and \eqref{eqn:limit-supcrit} imply that $\bar{\psi}^n_{\beta,H_n} = \psi^\infty_{\beta,H}\circ\iota_n $ on monomials, so $\psi = \psi^\infty_{\beta,H}$. 
\end{proof}

\appendix
\label{sec:appendixA}
\section{Dilation/extension results} 
 We prove a dilation/extension of the semigroup action of $\nx$ on $\mfd$ and show that every $\beta$-subconformal measure on $\Spec \mfd$ extends to the dilated system. 

The semigroup action of $\nx$ by injective endomorphisms of $\mfd$  from \proref{eqn:semiXprod} satisfies the dilation/extension conditions of Theorem 2.1 and Theorem 2.4 of \cite{Laca-JLMS}. Hence there exist a C*-algebra $\tilde{\mfd}$, an embedding $i:\mfd\rightarrow \tilde{\mfd}$, and  an action $\tilde{\alpha}: \qx \to \Aut(\tilde{\mfd})$,   such that
\begin{enumerate}
\smallskip\item $\tilde{\alpha}_a$ dilates $\alpha_a$, that is, $i\circ\alpha_a = \tilde{\alpha}_a\circ i$ for $a\in\nx$;
\smallskip\item $\tilde{\mfd}$ is minimal with respect to  $\tilde\alpha$, that is, $\bigcup_{a\in\nx} \tilde{\alpha}_a^{-1}(i(\mfd))$ is dense in $\tilde{\mfd}$;
\smallskip\item $\nx\ltimes_{\alpha} \mfd$ is the full corner in $\qx\ltimes_{\tilde\alpha} \tilde{\mfd}$ corresponding to the projection $i(1_{\mfd})$.
\end{enumerate}
Explicitly, $\wt\mfd$ is the direct limit of the system $(\mfb_a, \alpha_{b,a})_{a|b}$, in which the C*-algebras are $\mfb_a = \mfd$ for all $a$,  and the connecting maps $\alpha_{b,a}:\mfb_a\rightarrow \mfb_b$ are given by $\alpha_{b,a}(x) = \alpha_{\frac{b}{a}}(x)$ when $a \mid b$. We write $i_a$ for the canonical embedding $i_a:\mfb_a\rightarrow \mfd$. 

Both $\mfd$ and $\tilde\mfd$ are commutative C*-algebras, and we let $X =\Spec \mfd$ and $\tilde X = \Spec \tilde{\mfd}$. The image  $i(1_\mfd)$ of the identity of $\mfd$ 
is a full  projection in $\tilde{\mfd}$ and there is a homeomorphism $i_*:X\rightarrow \supp i(1_\mfd)$ identifying $X$ with a compact open subset  of $\tilde X$; more generally, there is a homeomorphism $i_{a*}:X\rightarrow \supp i_a(1_\mfd)$ for each $a\in \nx$ whose image is the translate of $i_*(X)$ under $\wt{\alpha}_a^*$. By the minimality condition (2), the union of the $i_{a*}(X)$ is $\tilde X$. In particular, this says that $C_c(\wt{X})$ is equal to $\bigcup_{a\in \nx} i_a(\mfb_a)$: any function belonging to $\bigcup_{a\in \nx} i_a(\mfb_a)$ is compactly supported, and conversely, if $f$ is compactly supported, then there exists a subset $F\Subset \nx$ such that $\supp(f)\subseteq \bigcup_{a\in F}i_{a*}(X)\subseteq i_{\lcm F*}(X)$.

Next we wish to show that  states on $\mfd$ that satisfy a rescaling condition with respect to $\alpha$ extend to densely defined locally finite weights on $\tilde\mfd$. 
This  has been used to study KMS states of the Bost-Connes system and its generalizations, see e.g. \cite{diri,nes}. We provide the details here for completeness, 
formulating things in terms of Radon measures on $\tilde X$, which represent positive linear functionals on compactly supported functions in $\tilde\mfd$ by the Riesz--Markov--Kakutani Theorem. 

\begin{lemma} \label{lem:rescalingextension}
If $\phi$ is a state of $\mfd$ such that  $\phi \circ \alpha_a = a^{-\beta} \phi$ for all $a\in \nx$, then there is  a unique  positive linear functional  $\tilde \phi$ on $C_c(\tilde X) \subset \wt\mfd$ such that 
$ \tilde\phi (i (f) )= \phi(f)$  for every  $ f \in \mfd$    and  $\tilde\phi \circ \tilde\alpha_r = r^{-\beta} \tilde \phi$ for every  $ r\in \qx$.
Conversely,  if   $ \eta$ is a positive linear functional on $C_c(\wt{X})$ normalized so that $\eta (i (1_{\mfd}) ) =1$ and satisfying $\eta \circ \tilde\alpha_r = r^{-\beta}
 \eta$ for every  $ r\in \qx$, then the restriction $ \phi:= \eta \circ i$ to $\mfd$ is a state such that  $\phi \circ \alpha_a = a^{-\beta} \phi$ for all $a\in \nx$; moreover, $\tilde\phi = \eta$.

\begin{proof} Suppose first that $\phi$ is a state of $\mfd$ such that  $\phi \circ \alpha_a = a^{-\beta} \phi$. For all $a\in \nx$
define a  positive linear functional $\wt{\phi}_a = a^\beta \phi$ on $\mfb_a := \mfd$. Then 
\[
\wt{\phi}_b\circ \tilde{\alpha}_{b,a} =  b^{\beta}\left(\frac{b}{a}\right)^{-\beta} \phi = \wt{\phi}_a,
\]
so that $(\wt{\phi}_a)_{a\in\nx}$ is a coherent family for the inductive system $(\mfb_a,\alpha_{a,b})_{a\in\nx}$. Since $C_c(\wt{X})$ is equal to $\bigcup_{a\in\nx} i_a(\mfb_a)$, there is a unique positive linear functional $\wt{\phi}$ on $C_c(\wt{X})$ that agrees with $\wt{\phi}_a$ when restricted to $\mfb_a$. This implies that $\wt{\phi}\circ i = \phi$. 
Additionally, if $x \in \mfb_b$, then for every $a\in\nx$,
\[
\wt{\phi}(\tilde{\alpha}_a(x)) = b^{\beta}\phi(V_a x V_a^*) = a^{-\beta} \wt{\phi}(x),
\]
which implies that $\wt{\phi}\circ \tilde{\alpha}_a = a^{-\beta} \wt{\phi}$ for  $a\in\qx = (\nx)\inv \nx$.

Suppose now $\eta$ is a linear functional on $C_c(\wt{X})$ normalized to $\eta(i(1_\mfd)) = 1$ and satisfying $\eta\circ \tilde{\alpha}_a = a^{-\beta}\eta$. By minimality of the dilation/extension, property (2) above, it follows that $\eta$ is determined by its restrictions to the subspaces $\tilde{\alpha}_a^{-1}(i(\mfd))$ with $a\in\nx$. Then $ \phi:= \eta\circ i$ is a state on $\mfd$ because of the normalization assumption, and it satisfies
\[
\phi\circ \alpha_a (x)  = \eta( i(\alpha_a(x))) = \eta( \wt{\alpha}_a(i(x)))   =a^{-\beta} \eta(i(x)) = a^{-\beta} \phi(x).
\]
Therefore $\eta = \tilde\phi$, proving that the extension is unique and $\phi \mapsto \tilde\phi$  is a bijection . 
\end{proof}
\end{lemma}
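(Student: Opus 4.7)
The plan is to exploit the inductive-limit description $\wt{\mfd} = \varinjlim(\mfb_a, \alpha_{b,a})_{a\in\nx}$ recalled just before the statement, where each $\mfb_a$ equals $\mfd$ as a C*-algebra and, for $a\mid b$, the connecting map is $\alpha_{b,a}(x) = \alpha_{b/a}(x)$. Given a state $\phi$ with $\phi\circ\alpha_a = a^{-\beta}\phi$, I would first define $\wt{\phi}_a := a^\beta\phi$ on $\mfb_a$ and check coherence:
\[
\wt{\phi}_b \circ \alpha_{b,a}(x) = b^\beta \phi(\alpha_{b/a}(x)) = b^\beta (b/a)^{-\beta}\phi(x) = a^\beta\phi(x) = \wt{\phi}_a(x)
\]
for $a\mid b$. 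The universal property of the inductive limit then produces a unique positive linear functional, call it $\wt{\phi}$, on the dense $*$-subalgebra $\bigcup_a i_a(\mfb_a)$, which was identified with $C_c(\wt{X})$ just above the lemma. By construction $\wt{\phi}\circ i = \wt{\phi}_1 = \phi$, so the extension condition holds.

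Next I would verify $\wt{\phi}\circ \wt{\alpha}_r = r^{-\beta}\wt{\phi}$ for every $r\in\qx$. The key observation is that on the level $i_b(\mfb_b)$ the dilated action satisfies $\wt{\alpha}_a(i_b(y)) = i_{b/a}(y)$ whenever $a\mid b$ (otherwise one first absorbs the obstruction into a larger index $b$, which is legitimate by minimality), so
\[
\wt{\phi}(\wt{\alpha}_a(i_b(y))) = (b/a)^\beta \phi(y) = a^{-\beta}b^\beta\phi(y) = a^{-\beta}\wt{\phi}(i_b(y)).
\]
Writing $r = a/c$ with $a,c\in\nx$ and composing the two cases extends this to the whole group $\qx$.

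For the converse, I would set $\phi := \eta\circ i$. The composition of a positive linear functional with a positive $*$-homomorphism is positive, and the normalization $\eta(i(1_\mfd)) = 1$ forces $\phi$ to be a state of $\mfd$. The rescaling on $\phi$ follows at once from the dilation identity $\wt{\alpha}_a\circ i = i\circ\alpha_a$ and the hypothesis on $\eta$:
\[
\phi\circ\alpha_a(x) = \eta(i(\alpha_a(x))) = \eta(\wt{\alpha}_a(i(x))) = a^{-\beta}\eta(i(x)) = a^{-\beta}\phi(x).
\]
Uniqueness of the extension, as well as the identity $\wt{\phi} = \eta$ in the converse direction, is forced by the minimality property (2) of the dilation: any equivariant positive linear functional on $C_c(\wt{X})$ is completely determined by its restriction to $i(\mfd)$ together with the rescaling relation.

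I do not foresee a serious obstacle; the argument is essentially a bookkeeping exercise in inductive limits combined with the equivariance built into the dilation. The only slightly subtle point is tracking how $\wt{\alpha}_r$ for $r\in\qx$ moves the levels $i_a(\mfb_a)$, but once one fixes the convention that $\wt{\alpha}_a$ decreases the index by a factor of $a$ when $a\mid b$, the scalar identities collapse to the trivial $(b/a)^\beta = a^{-\beta}b^\beta$.
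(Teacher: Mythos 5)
Your proof is correct and follows the same route as the paper's: define the coherent family $\wt{\phi}_a = a^\beta\phi$ on the levels $\mfb_a$, pass to the inductive limit to get $\wt{\phi}$ on $C_c(\wt X)$, verify rescaling by tracking how $\tilde{\alpha}_a$ shifts the levels $i_b(\mfb_b)$, and for the converse compose with $i$ and invoke minimality. The only cosmetic difference is that you write the level shift as $\tilde{\alpha}_a(i_b(y)) = i_{b/a}(y)$ for $a\mid b$, while the paper's computation uses the equivalent $\tilde{\alpha}_a(i_b(y)) = i_b(\alpha_a(y))$; these are the same identity, since $i_b\circ\alpha_{b,b/a} = i_{b/a}$.
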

\begin{lemma}\label{lem:dilationmeasure} Suppose $\nu$ is a $\beta$-subconformal probability measure on $\T$
and   $\phi_{\beta,\nu}$ is the
\kmsb state  associated to $\nu$ in equation \eqref{eqn:KMScharactBoundQuot}.
Then the restriction of $\phi_{\beta,\nu}$ to $\mfd$ has a unique extension to a positive linear functional $\tilde \phi_{\beta,\nu}$ on $C_c(\tilde X) \subset \wt\mfd$ such that
$\wt{\phi}_{\beta,\nu}\circ i = \phi_{\beta,\nu}$ and $\wt{\phi}_{\beta,\nu}\circ\tilde{\alpha}_a = a^{-\beta} \wt{\phi}_{\beta,\nu}$.  By the same token, the measure $\wt\nu_\beta$ on $\tilde X =\Spec \wt\mfd$ representing $\wt{\phi}_{\beta,\nu}$ is the unique extension of the measure  $\nu_\beta$ on $X =\Spec \mfd$ representing ${\phi}_{\beta,\nu}$  that satisfies rescaling: $\alpha_{a*} \wt\nu_\beta = a^{-\beta} \wt\nu_\beta$. 
\end{lemma}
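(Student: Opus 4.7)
The plan is to derive this lemma as a direct consequence of the preceding \lemref{lem:rescalingextension}. The entire task reduces to checking the hypotheses of that lemma for the functional $\phi_{\beta,\nu}|_{\mfd}$, namely that it is a state of $\mfd$ satisfying the rescaling identity $\phi_{\beta,\nu}\circ \alpha_a = a^{-\beta}\phi_{\beta,\nu}$ on $\mfd$ for every $a\in \nx$.

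The first step is easy: since $\mfd$ is a unital C*-subalgebra of $\tnxxz$ and $\phi_{\beta,\nu}$ is a state on $\tnxxz$, the restriction is automatically a state. The rescaling condition is where I would do the actual work, but even this is routine: by linearity and the density of the span of monomials $V_b f V_b^*$ in $\mfd$, it suffices to check the identity on such a monomial, for which
\[
\phi_{\beta,\nu}\bigl(\alpha_a(V_b f V_b^*)\bigr) = \phi_{\beta,\nu}(V_{ab} f V_{ab}^*) = (ab)^{-\beta}\int_\T f\, d\nu = a^{-\beta}\phi_{\beta,\nu}(V_b f V_b^*),
\]
using \eqref{eqn:KMScharactBoundQuot}. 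Once this is in place, \lemref{lem:rescalingextension} hands over the unique positive linear functional $\tilde\phi_{\beta,\nu}$ on $C_c(\tilde X)$ with $\tilde\phi_{\beta,\nu}\circ i = \phi_{\beta,\nu}|_{\mfd}$ and $\tilde\phi_{\beta,\nu}\circ \tilde\alpha_r = r^{-\beta}\tilde\phi_{\beta,\nu}$ for all $r\in\qx$.

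For the measure statement, I would translate through the Riesz--Markov--Kakutani Theorem, which identifies positive linear functionals on $C_c(\tilde X)$ with Radon measures on $\tilde X$. Taking $\wt\nu_\beta$ to be the Radon measure representing $\tilde\phi_{\beta,\nu}$, the compatibility $\tilde\phi_{\beta,\nu}\circ i = \phi_{\beta,\nu}|_{\mfd}$ becomes the assertion that $\wt\nu_\beta$ extends $\nu_\beta$ along the open embedding $i_*:X \hookrightarrow \tilde X$, while the functional rescaling becomes the pushforward rescaling $\tilde\alpha_{a*}\wt\nu_\beta = a^{-\beta}\wt\nu_\beta$. Uniqueness at the level of functionals on $C_c(\tilde X)$ transfers verbatim to uniqueness among such Radon measures.

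There is essentially no obstacle: the content of the result has been absorbed into \lemref{lem:rescalingextension}, whose inductive-limit construction of $\tilde\mfd$ supplies both the existence and uniqueness of the extension, and the only per-measure verification is the rescaling identity on $\mfd$, which is a one-line computation built into the very definition of $\phi_{\beta,\nu}$.
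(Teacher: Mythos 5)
Your proof is correct and is essentially the same as the paper's, which simply notes that the restriction of a \kmsb state to $\mfd$ satisfies the rescaling condition and then invokes \lemref{lem:rescalingextension}. You have merely written out the one-line verification of rescaling ($\phi_{\beta,\nu}(V_{ab}fV_{ab}^*)=(ab)^{-\beta}\int_\T f\,d\nu$) that the paper leaves implicit.
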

\begin{proof}  The restriction of a \kmsb state to  $\mfd$ satisfies  rescaling, so \lemref{lem:rescalingextension} applies.
\end{proof}

\end{document}